\numberwithin{equation}{section}
\newtheorem{lemma}{Lemma}[section]
\newtheorem{proposition}{Proposition}[section]
\newtheorem{theorem}{Theorem}[section]
\newtheorem{corollary}{Corollary}[section]
\newtheorem{Definition}{Definition}[section]
\newtheorem{Remark}{Remark}[section]
\newtheorem{Conjecture}{Conjecture}
\newtheorem{definition}{Definition}[section]
\newcommand{\mM}{\mathbb{M}}
\newcommand{\mn}{\mathbf{n}}
\newcommand{\mQ}{\mathbb{Q}}
\newcommand{\mR}{\mathbb{R}}
\newcommand{\mN}{\mathbb{N}}
\newcommand{\mZ}{\mathbb{Z}}
\newcommand{\mA}{\mathbb{A}}
\newcommand{\mC}{\mathbb{C}}
\newcommand{\mO}{\mathcal{O}}
\newcommand{\mc}{\mathfrak{c}}
\newcommand{\mw}{\mathbf{w}}
\title{On a certain local identity for Lapid--Mao's conjecture and formal degree conjecture : even unitary group case}
\author{Kazuki Morimoto}
\begin{document}
\begin{abstract}
Lapid and Mao formulated a conjecture on an explicit formula of Whittaker Fourier coefficients of automorphic forms
on quasi-split classical groups and metaplectic groups as an analogue of Ichino-Ikeda conjecture.
They also showed that this conjecture is reduced to a certain local identity in the case of unitary groups.
In this paper, we study even unitary group case.
Indeed, we prove this local identity over $p$-adic fields.
Further, we prove an equivalence between this local identity and a refined formal degree conjecture over any local field of characteristic zero.
As a consequence, we prove a refined formal degree conjecture over $p$-adic fields
and we get an explicit formula of Whittaker Fourier coefficients under certain assumptions.
\end{abstract}
\maketitle
%
%
%
%
%
%
%
%
%
%
%
%
%
%
\section{Introduction}
A relationship between periods of automorphic forms and special values of 
certain $L$-functions has been studied in several situations.
For example, Gross and Prasad~\cite{GP} conjectured a relationship between non-vanishing of Bessel period on 
special orthogonal groups and non-vanishing of central values of $L$-functions.
Moreover, Ichino and Ikeda~\cite{II} refined the Gross--Prasad conjecture and conjectured
an explicit formula between Bessel periods and central values.
As an analogue of Ichino--Ikeda conjecture,  
Lapid and Mao~\cite{LMe} formulated a similar conjecture on an explicit formula of Whittaker periods
for quasi-split reductive groups and metaplectic groups,
which in the spirit of Sakellaridis and Venkatesh~\cite{SV}.

In this paper, we consider Lapid-Mao conjecture in the case of quasi-split skew-hermitian even unitary group $\mathrm{U}_{2n}$
associated to a quartic extension $E$ of $F$.
Let us recall their conjecture in our case.
For an automorphic form $\varphi$ on $\mathrm{U}_{2n}(\mA)$, we consider the Whittaker coefficient
\[
\mathcal{W}^{\psi_N}(\varphi) := \left(\mathrm{vol}(N(F) \backslash N(\mA)) \right)^{-1}
\int_{N(F) \backslash N(\mA)} \varphi(n) \psi_N(n)^{-1} \, dn.
\]
Here, $N$ is a maximal unipotent subgroup of $\mathrm{U}_{2n}$ and $\psi_N$ is a non-degenerate character of $N(\mA)$.
We also consider the $\mathrm{U}_{2n}(\mA)$-invariant pairing
\[
(\varphi_1, \varphi_2) := \left(\mathrm{vol}(\mathrm{U}_{2n}(F) \backslash \mathrm{U}_{2n}(\mA)) \right)^{-1}
\int_{\mathrm{U}_{2n}(F) \backslash \mathrm{U}_{2n}(\mA)} \varphi_1(g) \varphi_2(g) \, dg
\]
of two square-integrable automorphic forms $\varphi_1, \varphi_2$ on $\mathrm{U}_{2n}(\mA_F)$.
Then we would like to construct Whittaker functional using pairing $(-, -)$.
Given a finite set of places $S$ of $F$, Lapid and Mao~\cite{LMe} defined a stable integral
\[
\int_{N(\mA^S)}^{st} f(n) \, dn
\]
for a suitable class of smooth function $f$ on $N(\mA^S)$ with $\mA^S = \prod_{v \in S} F_v$.
In particular, when $S$ consists of finite places, there is a sufficiently large compact open subgroup $N_0$ of $N(\mA^S)$
such that the above integral is equal to 
\[
\int_{N_0^\prime} f(n) \, dn
\]
for any compact open subgroup $N_0^\prime \supset N_0$.
Let $\sigma$ be an irreducible $\psi_{N}$-generic cuspidal automorphic representation of $\mathrm{U}_{2n}(\mA_F)$.
Lapid and Mao~\cite{LMe} proved that for a $\varphi \in \sigma, \varphi^\vee \in \sigma^\vee$ and for 
a sufficiently large finite set $S$ of places, 
\[
I(\varphi, \varphi^\vee) := \int_{N(\mA^S)}^{st} (\sigma(n)\varphi, \varphi^\vee) \psi_{N}^{-1}(n) \, dn
\]
is defined, and $I(\varphi, \varphi^\vee)$ satisfies $I(\sigma(n)\varphi, \sigma^\vee(n^{\prime-1})\varphi^\vee) = \psi_N(n n^\prime) I(\varphi, \varphi^\prime)$.
Then they formulated a conjecture on an explicit formula between these two Whittaker functional 
in the sprit of Ichino-Ikeda conjecture.

Let $\pi$ be the base change lift of $\sigma$ to $\mathrm{GL}_{2n}(\mA_E)$ constructed by 
Kim and Krishnamurthy~\cite{KK}. It is an isobaric sum $\pi_1 \boxplus \cdots \boxplus \pi_k$
where $\pi_i$ is an irreducible cuspidal automorphic representation of $\mathrm{GL}_{n_i}(\mA_E)$
with $n_1 + \cdots n_k = 2n$, such that $L^S(s, \pi_i, \mathrm{As}^{-})$ has a simple pole at $s =1$.
Here, $L^S(s, \pi_i, \mathrm{As}^{-})$ is the partial Asai $L$-function of $\pi_i$ defined  in \cite{GRS}.
Also, we have another Asai $L$-function $L^S(s, \pi_i, \mathrm{As}^{+})$ of $\pi_i$, which satisfies
$L^S(s, \pi_i, \mathrm{As}^-) = L^S(s, \pi_i \otimes \Upsilon, \mathrm{As}^+)$
where $\Upsilon$ is a character of $\mA_E^\times \slash E^\times$ whose restriction
to $\mA^\times$ is the quadratic character $\eta_{E \slash F}$ corresponding to the quadratic extension $E \slash F$.
\begin{Conjecture}[Conjecture~1.2, 5.1 in \cite{LMe}]
\label{main conj}
Let $\sigma$ and $\pi$ be as above.
Then for any $\varphi \in \sigma$ and $\varphi^\vee \in \sigma^\vee$ 
and for any sufficiently large $S$ finite set of places, we have
\[
\mathcal{W}^{\psi_{N}}(\varphi)\mathcal{W}^{\psi_{N}^{-1}}(\varphi^\vee)
=2^{1-k}  \frac{\prod_{j=1}^{2n} L^{S}(j, \eta_{E \slash F}^j)}{L^S(1, \pi, \mathrm{As}^+)}
(\mathrm{vol}(N(\mathcal{O}_S) \backslash N(\mA^S) ))^{-1}
\int_{N(F_S)}^{st} (\sigma(u)\varphi, \varphi^\prime) \psi_{N}(u)^{-1} \, du.
\]
Here, $\mathcal{O}_S$ is the ring of $S$-integers of $F$.
\end{Conjecture}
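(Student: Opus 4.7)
The plan is to use the reduction, due to Lapid and Mao~\cite{LMe}, of the global identity in the conjecture to a product of local identities at the places of $F$. After choosing $\varphi, \varphi^\vee$ to be pure tensors, the left-hand side $\mathcal{W}^{\psi_N}(\varphi)\mathcal{W}^{\psi_N^{-1}}(\varphi^\vee)$ factors as an Euler product of local Whittaker pairings, while the stable integral on the right factors into local stable integrals. At unramified places outside $S$ a Casselman--Shalika-type computation, together with the explicit evaluation of the local Asai $L$-factor in terms of Satake parameters, produces the $L$-function ratio $\prod_{j=1}^{2n} L^S(j, \eta_{E \slash F}^j)/L^S(1, \pi, \mathrm{As}^+)$ and accounts for the constant $2^{1-k}$, which records the number of isobaric summands in the base change.

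The remaining task is a local identity at each $v \in S$, of the form
\[
\mathcal{W}_v(\varphi_v)\,\mathcal{W}_v(\varphi_v^\vee) = c_v \int_{N(F_v)}^{st} (\sigma_v(u)\varphi_v, \varphi_v^\vee)\,\psi_N(u)^{-1}\, du,
\]
where $c_v$ is a specific ratio of local $L$- and $\gamma$-factors attached to the Asai representation of $\pi_v$. Both sides define $(N(F_v) \times N(F_v), \psi_N \otimes \psi_N^{-1})$-equivariant bilinear forms on $\sigma_v \otimes \sigma_v^\vee$, and uniqueness of the Whittaker model forces them to be proportional; the entire content of the identity is therefore the identification of $c_v$. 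For non-archimedean $v$ I would evaluate both sides on a carefully chosen test vector, for example a matrix coefficient supported in a small neighborhood of the identity, or a pseudo-coefficient when $\sigma_v$ is a discrete series, and read off the constant. For archimedean $v$ I would appeal to the equivalence, established elsewhere in this paper, between the local identity and a refined formal degree conjecture; whenever the formal degree conjecture at $v$ is not known unconditionally, it is carried as a hypothesis into the global statement, which accounts for the \emph{``under certain assumptions''} phrasing in the abstract.

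The principal obstacle is the non-archimedean local identity. The difficulty is largely normalizational: one must thread together the definitions of the Plancherel measure on $\mathrm{U}_{2n}(F_v)$, the Haar measure on $N(F_v)$, the Asai $L$- and $\gamma$-factors as defined by \cite{GRS}, the base change construction of \cite{KK}, and the stable integral itself, and then verify that the resulting constant matches $c_v$ exactly. A secondary issue is the stability of the local integral on the chosen test vector: in the even unitary case one must verify that the matrix coefficient $(\sigma_v(u)\varphi_v, \varphi_v^\vee)$, restricted to $N(F_v)$ and averaged against $\psi_N$, becomes compactly supported after truncation by a sufficiently deep compact open subgroup, which requires a careful analysis of the orbital behaviour of $\sigma_v$ along $N(F_v)$.
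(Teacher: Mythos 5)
The high-level skeleton of your proposal is right: Lapid and Mao's reduction (Theorem~5.5 of \cite{LMa}, quoted in the introduction) does reduce Conjecture~\ref{main conj} to showing that a local proportionality constant $c_{\pi_v}$ equals $\omega_{\sigma_v}(-1)$ at every place, and you correctly observe that by uniqueness of Whittaker models the entire local content is pinning down a single scalar. You also correctly anticipate that the archimedean case is handled by equating the local identity with the refined formal degree conjecture, and you correctly flag that the global conclusion ends up conditional. Those pieces all match the paper's strategy.

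Where your proposal departs seriously from the paper — and where I think it would break down — is in the non-archimedean local identity, which is the paper's actual main theorem and occupies Sections~3 through~7 plus two appendices. You propose to ``evaluate both sides on a carefully chosen test vector, for example a matrix coefficient supported in a small neighborhood of the identity, or a pseudo-coefficient when $\sigma_v$ is a discrete series, and read off the constant.'' This is not what the paper does, and it is hard to see how it could work: the two sides of the identity involve the stable integral $\int_{N'}^{\rm st}$ of a matrix coefficient and the local descent $A_\#^{\psi,\Upsilon}(M^\ast W, e)$, neither of which admits a closed-form evaluation on any obvious test vector. The paper instead follows the route of \cite{LMb}: it first reduces to tempered $\pi$ with tempered descent via meromorphic continuation in auxiliary complex parameters (Proposition~\ref{first reduction prp}); it then rewrites the stable integral as an absolutely convergent iterated integral over the torus using the bilinear form $B(W,W^\vee,s)$ (Section~5); it applies the functional equation for the intertwining operator $M(s)$ to move the computation from $s=-\tfrac12$ to $s=\tfrac12$ (Proposition~\ref{Proposition 7.1}, Corollary~\ref{Corollary 7.2}); and the decisive input is Theorem~\ref{Theorem 8.1}, quoted from \cite{Mo2}, which realises the Langlands quotient of $\mathrm{Ind}(\mathbb{W}^{\psi_{N_M}}(\pi),\tfrac12)$ in $C^{\rm sm}(H\backslash G)$ for $H=\mathrm{Sp}_{4n}(F)$ and lets one compute the proportionality constant as $\omega_\pi(\tau)$. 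That ``model transition'' step is the genuine new idea, and nothing in a test-vector argument would substitute for it. Stability of the relevant integrals, which you correctly identify as a secondary concern, is itself nontrivial and is established in Appendix~B via a Bruhat-cell analysis for general quasi-split groups.

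Two further points you should make explicit. First, this paper does \emph{not} prove the conjecture unconditionally; Corollary~\ref{into cor 1} requires $F$ totally real, $\sigma_v$ unramified at split finite places and at $v\mid 2$, and $\sigma_v$ a discrete series at every real place. The unramified hypothesis at split places is essential because the local identity is only proved here at non-split finite $v$; at split places the constant is handled by the unramified computation of \cite[Lemma~5.4]{LMa}, which is why unramifiedness has to be assumed there. Second, the unramified-places-outside-$S$ analysis you attribute to a Casselman--Shalika computation is not carried out in this paper at all; it is part of the reduction already present in \cite{LMe,LMa}, and the present paper's contribution is entirely the local identity at the places inside $S$.
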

In \cite{LMa}, for any place $v$ of $F$, Lapid and Mao defined the constant $c_{\pi_v}$ depending on $\pi_v$ (also on $\sigma_v$),
which is given as a proportionality constant of two local Whittaker periods.
Then they proved the following theorem.
\begin{theorem}[Theorem~5.5 in Lapid--Mao \cite{LMa}]
Keep the above notation.
Then for any $\varphi \in \sigma$ and $\varphi^\vee \in \sigma^\vee$ 
and for any sufficiently large $S$ finite set of places, we have
\[
\mathcal{W}^{\psi_{N}}(\varphi)\mathcal{W}^{\psi_{N}^{-1}}(\varphi^\vee)
=2^{1-k} (\prod_{v} c_{\pi_v}^{-1}) \frac{\prod_{j=1}^{2n} L^{S}(j, \eta_{E \slash F}^j)}{L^S(1, \pi, \mathrm{As}^+)}
(\mathrm{vol}(N(\mathcal{O}_S) \backslash N(\mA^S) ))^{-1}
\int_{N(F_S)}^{st} (\sigma(u)\varphi, \varphi^\prime) \psi_{N}(u)^{-1} \, du.
\]
\end{theorem}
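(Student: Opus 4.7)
The plan is to establish this identity via a global-to-local reduction in which the discrepancy between the global product of Whittaker coefficients and the global stable integral is captured, place by place, by the local constants $c_{\pi_v}$, leaving only the explicit ratio of $L$-values as a global factor.

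First, I would exploit the descent construction of Ginzburg--Rallis--Soudry to realize $\sigma$ as a descent from the isobaric representation $\pi$ on $\mathrm{GL}_{2n}(\mA_E)$, using the fact that each $L^S(s,\pi_i,\mathrm{As}^-)$ has a simple pole at $s=1$. Combining the descent with a Rallis-type inner product formula (or, equivalently, a residual Eisenstein series computation on a larger unitary group) yields a global identity that expresses $\mathcal{W}^{\psi_N}(\varphi)\mathcal{W}^{\psi_N^{-1}}(\varphi^\vee)$ as a regularized global integral of the matrix coefficient $(\sigma(u)\varphi,\varphi^\vee)$ against $\psi_N^{-1}$, multiplied by the explicit $L$-value ratio $\prod_{j=1}^{2n}L^S(j,\eta_{E/F}^j)/L^S(1,\pi,\mathrm{As}^+)$ and the combinatorial factor $2^{1-k}$.

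The next step is to identify this regularized global integral with the stable integral $\int_{N(F_S)}^{st}$ appearing on the right-hand side. Both objects define $(N,\psi_N)$-equivariant bilinear forms on $\sigma\otimes\sigma^\vee$ that factor as Euler products of local equivariant bilinear forms on the $\sigma_v\otimes\sigma_v^\vee$. By the local uniqueness of such bilinear forms on irreducible generic representations, the local factors must be proportional, and by the defining property of $c_{\pi_v}$ in \cite{LMa} this proportionality constant is exactly $c_{\pi_v}$. Assembling the local comparisons produces the overall correction factor $\prod_v c_{\pi_v}^{-1}$, with the product being finite because $c_{\pi_v}=1$ at places where all data are unramified (a consequence of the Casselman--Shalika formula and standard unramified computations for the Asai integral).

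The most delicate point will be the convergence and compatibility of the Euler product: one must show that the local constants $c_{\pi_v}$ are well-defined on both sides of the comparison using the same normalization of local Haar measures and of the local Whittaker functional, and that the implicit measures on $N(\mathcal{O}_S)\backslash N(\mA^S)$ and on $N(F_S)$ are compatible with the Tamagawa-type measure used globally. A secondary technical obstacle is justifying the exchange of the stable integral with the Euler product expansion at ramified places, which requires the uniform convergence properties of the stable integral established in \cite{LMe}. Once these normalizations are matched, the $L$-value factor emerges unambiguously and the stated identity follows.
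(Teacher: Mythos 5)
The paper does not prove this theorem --- it is cited from Lapid--Mao~\cite{LMa} (their Theorem~5.5), and the remark immediately following it records that Lapid--Mao established it under two working assumptions (analytic properties of certain local zeta integrals, and irreducibility of the global descent for $\mathrm{U}_{2n}$), which were subsequently verified in~\cite{BAS} and~\cite{Mo1}. So there is no in-paper proof to compare against; the paper's own contribution is Theorem~3, the computation of the local constants $c_{\pi_v}$.

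Evaluating your sketch on its merits: the skeleton --- realize $\sigma$ as a descent of $\pi$, compute the Whittaker coefficient of the descent to produce the $L$-value ratio, factor both sides as Euler products, and absorb the ramified discrepancy into the constants $c_{\pi_v}$ via uniqueness of $(N,\psi_N)$-equivariant bilinear forms --- does match the broad architecture of Lapid--Mao's argument, and your observations that $c_{\pi_v}=1$ at unramified places and that measure normalizations must be matched are both well placed. However, the mechanism you invoke is off: it is not a ``Rallis-type inner product formula'' or doubling-method computation but the unfolding of a Rankin--Selberg integral of $\varphi$ against the explicit descent module, whose local incarnation is exactly the functional $J(W',W,s)$ of Section~3 of the paper. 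More importantly, your proposal treats as background facts precisely the two working assumptions that are the technical crux of Lapid--Mao's proof: holomorphy and nonvanishing of the local Rankin--Selberg integral at the relevant point (without which the proportionality constant $c_{\pi_v}$ is not even well defined), and irreducibility of the global descent (without which the global Euler product cannot be matched to $\sigma$). Those inputs are serious theorems in their own right, not routine facts; leaving them implicit is a genuine gap, and one the present paper is explicitly careful to flag.
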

\begin{Remark}
They proved this reduction under two working assumptions; 
certain properties of certain local zeta integrals and the irreducibility of global descents for $\mathrm{U}_{2n}$.
The first assumption was proved in Ben-Artzi--Soudry~\cite{BAS} and the 
second assumption was proved by the author~\cite{Mo1}.
Hence, the above theorem holds without any assumption.
\end{Remark}
Because of this theorem, in order to prove Conjecture~\ref{main conj}, it suffices to show $\prod_{v} c_{\pi_v} =1$.
Indeed, they conjectured the following identity.
\begin{Conjecture}[Conjecture~5.8 in \cite{LMa}]
\label{LM local conjecture}
Let $v$ be a place of $F$.
Then 
\[
c_{\pi_v} = \omega_{\sigma_v}(-1).
\]
\end{Conjecture}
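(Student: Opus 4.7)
The plan is to pass through the refined formal degree conjecture of Hiraga--Ichino--Ikeda. The constant $c_{\pi_v}$ is defined as a proportionality of Whittaker functionals, and the strategy is to express it as a formal degree $\deg(\sigma_v)$ multiplied by an explicit ratio of local $L$- and $\gamma$-factors, then to invoke the formal degree formula to identify the resulting expression with $\omega_{\sigma_v}(-1)$.

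As a first step I would dispose of the unramified case. When $\sigma_v$ is an unramified principal series with a spherical vector fixed by a hyperspecial maximal compact subgroup, the stable integral of $(\sigma_v(n)\varphi,\varphi^\vee)$ can be evaluated by combining the Casselman--Shalika formula with a Gindikin--Karpelevich computation, and one reads off $c_{\pi_v}=1$; on the other hand $\omega_{\sigma_v}(-1)=1$ at such places, so the identity holds trivially. This also pins down the normalizations of Haar measures that are implicit in the definition of the stable integral. Next, I would establish the multiplicativity of $c_{\pi_v}$ under parabolic induction in order to reduce to the case where $\sigma_v$ is essentially a discrete series representation. This reduction is not purely formal: one has to compute the action of standard intertwining operators on Whittaker functionals and match the local Asai $L$- and $\varepsilon$-factors appearing in the definition of $c_{\pi_v}$ with the corresponding factors on the inducing datum. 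Since the central character factors compatibly through a Levi, the right hand side $\omega_{\sigma_v}(-1)$ is transparently compatible with this reduction.

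The core step is the discrete series case. Taking $\varphi=\varphi^\vee$ to be a matrix coefficient of $\sigma_v$ supported in a small neighbourhood of the identity, one expands the stable integral of $(\sigma_v(n)\varphi,\varphi^\vee)$ and compares the result with Harish-Chandra's Plancherel formula at $\sigma_v$. This comparison identifies $c_{\pi_v}$ with $\deg(\sigma_v)$ multiplied by an explicit ratio involving the local $\gamma$-factor of the Asai representation and the factors $\prod_{j=1}^{2n}L(j,\eta_{E\slash F}^{\,j})$ that already appear in Conjecture~\ref{main conj}. The refined formal degree conjecture predicts $\deg(\sigma_v)$ as a normalized adjoint $\gamma$-factor; after substitution the various $L$- and $\gamma$-factors cancel in pairs, and what remains is precisely the sign produced by the central-character twist in the local functional equation, namely $\omega_{\sigma_v}(-1)$.

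The hard part will be the comparison between the stable Whittaker integral and the Plancherel formula: one must control the truncation of $N(F_v)$, justify the exchange of the stable integral with a spectral expansion, and absorb the contributions from the boundary into local $\gamma$-factors. Over a $p$-adic field this is feasible via Harish-Chandra's germ expansion together with the now-substantial body of work on formal degrees for classical groups (notably Beuzart-Plessis), which is enough to establish Conjecture~\ref{LM local conjecture} at non-archimedean places. At archimedean $v$, the same strategy naturally yields an equivalence between Conjecture~\ref{LM local conjecture} and the refined formal degree conjecture, rather than an unconditional identity, since the missing ingredient is a genuinely new piece of archimedean harmonic analysis. This is consistent with what the abstract promises, and it is the step I would expect to occupy most of the paper.
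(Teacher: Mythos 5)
Your plan inverts the logical direction of the paper and, in the non-archimedean case, rests on an input that is not actually available. You propose to deduce $c_{\pi_v}=\omega_{\sigma_v}(-1)$ from the refined formal degree conjecture, citing Beuzart-Plessis for the latter. But Beuzart-Plessis proves the original Hiraga--Ichino--Ikeda conjecture, which pins down only $|\gamma(1,\sigma,\mathrm{As}^-,\psi)|\,d_\sigma$; it does not determine the sign. The entire content of the \emph{refined} formal degree conjecture for $\mathrm{U}_{2n}$ is precisely the extra sign $\omega_{\sigma}(-1)$, and the paper shows (Section~8) that this refined statement is \emph{equivalent} to $c_{\pi_v}=\omega_{\sigma_v}(-1)$, essentially by the functional equation \eqref{FQ} for the local descent integral together with the factorization $\gamma(\cdot,\pi\otimes\mc(\pi))=\gamma(\cdot,\mathrm{As}^+)\gamma(\cdot,\mathrm{As}^-)$. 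Substituting one for the other is therefore circular: you would be assuming the sign you need to prove.

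The paper proves the local identity \emph{directly}, in the reverse order. Its reduction to the tempered case (Proposition~\ref{first reduction prp}) is not via multiplicativity of $c_\pi$ under parabolic induction, but via a deformation-and-globalization argument using Matringe's classification of unitary-type generic representations and holomorphicity of both sides in the deformation parameters. For tempered good $\pi$ it then establishes the Main Identity through a chain of local manipulations: the stability theorem for Bessel-type integrals on quasi-split groups (Appendix~B), the explicit Fourier--Jacobi map $A_\#^{\psi,\Upsilon}$ and the explicit local descent $\mathcal{D}_\psi^\Upsilon$, a sequence of functional equations (Proposition~\ref{Proposition 7.1}, Corollary~\ref{Corollary 7.2}), and, crucially, the model transition results from~\cite{Mo2} relating the Whittaker integral $E^\psi(W_s,t)$ at $s=1/2$ to the $\mathrm{GL}_{2n}(F)$-period $\mathfrak{P}^{H_\mM}$ realized in $C^{\rm sm}(H\backslash G)$ with $H=\mathrm{Sp}_{4n}(F)$. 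None of these ingredients appear in your plan; they are exactly what replaces the "hard Plancherel comparison" you flag. The refined formal degree identity is then a \emph{consequence} (Theorem~\ref{formal deg thm}), not an input. Your archimedean paragraph is consistent with the paper, where the direction is indeed from the formal degree identity (provable over $\mathbb{R}$ by explicit computation of $\lambda(E/F,\psi)^n\omega_\sigma(-1)\gamma(1,\sigma,\mathrm{As}^-,\psi)$ for parameters of the form $\oplus\chi_i$) back to the local conjecture, via the same equivalence. But that argument relies on the parameters being completely explicit over $\mathbb{R}$, and it does not transport to $p$-adic $F$, which is exactly the gap in your proposal.
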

In particular, this conjecture concludes that $\prod_{v} c_{\pi_v} =1$.
We note that when $E_v:= E \otimes_F F_v$ is a quadratic extension, we have $\omega_{\sigma_v}(-1) = \omega_{\pi_v}(\tau)$
for $\tau \in E_v$ such that $\mc(\tau) = - \tau$ with $1 \ne \mc \in \mathrm{Gal}(E_v \slash F_v)$.
Then the above conjecture is equivalent to
\begin{equation}
\label{main thm eq}
c_{\pi_v} = \omega_{\pi_v}(\tau).
\end{equation}
One of our local main theorems of the present paper is the proof of this conjecture in an inert case.
\begin{theorem}
\label{main thm}
Suppose that $v$ is non-split finite place (i.e. $\mathrm{U}_{2n}(F_v)$ is quasi-split unitary group).
Then Conjecture~2 holds. 
\end{theorem}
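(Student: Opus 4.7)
The plan is to prove the identity $c_{\pi_v} = \omega_{\sigma_v}(-1)$ via a globalization argument combined with Lapid--Mao's reduction theorem stated above.

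First, I would globalize the local datum. Choose a totally real number field $F$ with a non-archimedean place $v_0$ such that $F_{v_0} \cong F_v$, a CM extension $E/F$ inert at $v_0$ with $E_{v_0} \cong E_v$, and an irreducible $\psi_N$-generic cuspidal automorphic representation $\sigma$ of $\mathrm{U}_{2n}(\mA_F)$ with $\sigma_{v_0} \cong \sigma_v$, unramified at every other finite place, and with sufficiently simple archimedean components. The existence of such a globalization for a $p$-adic generic representation can be arranged via a Poincar\'e series construction or a simple trace formula for unitary groups. I would further arrange that the base change $\pi$ of $\sigma$ is cuspidal on $\mathrm{GL}_{2n}(\mA_E)$, so that the isobaric decomposition is trivial ($k = 1$) and the reduction theorem quoted above applies in its cleanest form.

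Second, I would apply the reduction theorem to $\sigma$. Since Lapid--Mao's unramified computation gives $c_{\pi_w} = 1 = \omega_{\sigma_w}(-1)$ at every finite place $w \neq v_0$, the product $\prod_w c_{\pi_w}$ collapses to $c_{\pi_{v_0}} \cdot \prod_{w \mid \infty} c_{\pi_w}$. To eliminate the unknown archimedean contributions, I would compare two carefully chosen globalizations $\sigma$ and $\sigma'$ that agree at every place except $v_0$ and a single auxiliary finite place $w^*$. Taking the ratio of the Lapid--Mao identities cancels all archimedean and global arithmetic factors, so the remaining information is a computable ratio $c_{\pi_{w^*}}/c_{\pi'_{w^*}}$, chosen to be split (where the group is a general linear group and $c$ is trivially identified with the central character) or of a type where the identity is already known. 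Isolating $v_0$ then yields $c_{\pi_{v_0}} = \omega_{\sigma_{v_0}}(-1)$, which is the content of the theorem.

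The main obstacle is the simultaneous globalization with the required local control: $\sigma$ must be $\psi_N$-generic and cuspidal, with prescribed $p$-adic component $\sigma_v$ (including the case of supercuspidals), unramified at all other non-archimedean places, and with cuspidal base change $\pi$. Matching all of these simultaneously, while retaining enough flexibility to produce the companion $\sigma'$ with compatible archimedean data, is the central technical step. A secondary difficulty is the unramified calculation itself in the even unitary group setting, which may require a careful adaptation of Lapid--Mao \cite{LMa} to cover the full range needed here. A possible alternative, should globalization prove too restrictive, is to reduce by parabolic induction to the discrete series case and exploit the expected compatibility of $c_{\pi_v}$ with induction, thereby concentrating the difficulty on tempered supercuspidal data where the ratio trick is most effective.
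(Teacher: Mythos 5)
Your globalization strategy is circular and cannot isolate $c_{\pi_{v_0}}$. The Lapid--Mao reduction theorem (Theorem~5.5 in \cite{LMa}, quoted above) is an \emph{unconditional} identity of the form
\[
\mathcal{W}^{\psi_{N}}(\varphi)\,\mathcal{W}^{\psi_{N}^{-1}}(\varphi^\vee)
=2^{1-k}\Bigl(\prod_{v} c_{\pi_v}^{-1}\Bigr)\,R(\sigma,\varphi,\varphi^\vee),
\]
where $R$ denotes the explicit $L$-value and stable-integral expression. This theorem does not give an independent computation of the Whittaker coefficient on the left: computing $\mathcal{W}\,\mathcal{W}^\vee$ in closed form is \emph{exactly} the content of Conjecture~1, which is what the product $\prod_v c_{\pi_v}=1$ is supposed to deliver. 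So the ratio you propose, even when $R$ cancels and all but one $c_{\pi_w}$ are known, only produces the quantity $\mathcal{W}\,\mathcal{W}^\vee / R$, which you have no separate handle on. You would need an a priori formula for the global Whittaker period to break the circle; nothing in the reduction theorem supplies it. There is also a secondary obstruction: two cuspidal $\sigma$, $\sigma'$ that agree at every place except a single auxiliary $w^*$ have base changes $\pi$, $\pi'$ agreeing at almost all places, hence $\pi\simeq\pi'$ by strong multiplicity one for $\mathrm{GL}_{2n}$, and the two globalizations collapse to essentially the same object. So the proposed comparison cannot even be set up.

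The paper's argument is genuinely local. Globalization enters only in Proposition~\ref{first reduction prp} to produce ``good'' representations and to reduce to the tempered case by an analytic-continuation argument in $\underline{s}$; it is never used to extract a value of $c_{\pi_v}$ from global period identities. The main identity (MI) is then proven by purely local analysis: Section~4 establishes stable integrals and identifies Bessel functions, Sections~5--6 express the left side as a bilinear form $B(W,M(\tfrac12)W^\wedge,\tfrac12)$, Section~7 applies the functional equation $\underline{B}(W,M(s)W^\wedge,s)=\underline{B}(M(s)W,W^\wedge,-s)$, and the decisive input (Theorem~\ref{Theorem 8.1}) is a model-transition formula from \cite{Mo2} expressing the descent-normalization in terms of an $\mathrm{Sp}_{4n}(F)$-period, from which the constant $\omega_\pi(\tau)$ drops out in Corollary~\ref{cor8.5}. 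If you want a route that does use external global/spectral input, the paper's Section~8 shows the local identity is \emph{equivalent} to the refined formal degree conjecture, and the archimedean case is established that way using the computations of Hiraga--Ichino--Ikeda; but that is not the comparison-of-globalizations idea you propose, and in the $p$-adic case it is again the local identity that feeds the formal degree conjecture, not the reverse.
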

A similar identity was proved by \cite{LMb} in the case of metaplectic groups.
In this paper, following their argument we shall prove this theorem.

This local identity is important not only for Lapid--Mao conjecture but also for formal degree conjecture in Hiraga--Ichino--Ikeda~\cite{HII}.
Using an observation by Gross-Reeder~\cite{GR}, Gan--Ichino~\cite[14.5]{GI} formulated a refinement of this conjecture for classical groups.
We call this refinement refined formal degree conjecture.
In \cite{ILM}, Ichino--Lapid--Mao showed that a similar local identity to Conjecture~\ref{LM local conjecture} proved in \cite{LMb} is equivalent to refined formal degree conjecture
for metaplectic groups. As a consequence of \cite{LMb}, they proved refined formal degree conjecture in this case.
In a similar argument as \cite{ILM}, we prove the equivalence between refined formal degree conjecture for $\mathrm{U}_{2n}$
and Theorem~\ref{main thm}. As a consequence, we can prove refined formal degree conjecture.
\begin{theorem}
\label{formal deg intro}
Let $F$ be a non-arhimedean local field of characteristic zero and $E$ a quadratic extension of $F$.
Let $\pi$ be an irreducible representation of $\mathrm{GL}_{2n}(E)$ of the form 
$\pi = \tau_1 \times \cdots \times \tau_k$ where $\tau_i$ are mutually inequivalent irreducible discrete series representations
of $\mathrm{GL}_{n_i}(E)$ such that $n=n_1+\cdots + n_k$ and $L(s, \tau_i, \mathrm{As}^+)$ has a pole at $s=0$.
Write $\sigma = \mathcal{D}_{\psi^{-1}}^{\Upsilon^{-1}}(\mc(\pi))$, which is an irreducible generic discrete series representation of $G^\prime$ 
Then we have
\[
d_\psi =2^k
\lambda(E \slash F)^n \omega_{\sigma}(-1) \gamma(1, \mathfrak{c}(\pi), \mathrm{As}^-, \psi) d_\sigma.
\]
Here, $d_\sigma$ is the formal degree (See Section~8) and $d_\psi$ is a certain measure on $\mathrm{U}_{2n}(F)$ (See Section~2.4).
\end{theorem}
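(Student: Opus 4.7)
The plan is to follow the two-step strategy of Ichino--Lapid--Mao~\cite{ILM} from the metaplectic case, transplanted to the even unitary setting. First, I would establish, over an arbitrary local field of characteristic zero, an \emph{unconditional} local identity expressing the ratio $d_\psi/d_\sigma$ in terms of the Lapid--Mao constant $c_{\pi_v}$ and an Asai gamma factor. Second, I would invoke Theorem~\ref{main thm} to substitute $c_{\pi_v} = \omega_{\sigma_v}(-1)$ at non-archimedean places, recovering the refined formal degree formula as stated.

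The heart of the argument is the comparison of two expressions for the stable Whittaker-type integral
\[
J(\varphi,\varphi^\vee) := \int_{N(F)}^{st} (\sigma(u)\varphi,\varphi^\vee)\,\psi_N(u)^{-1}\,du.
\]
On one hand, inserting the local Plancherel/matrix-coefficient expansion for the discrete series $\sigma$ expresses $J$ as $d_\sigma^{-1}$ times a product of Whittaker values on $\varphi$ and $\varphi^\vee$ computed with respect to the measure $d_\psi$. On the other hand, $c_{\pi_v}$ is defined as precisely the proportionality constant between this stable Whittaker functional and the classical one; this rewrites $J$ as $c_{\pi_v}$ times the same Whittaker values, multiplied by the local analogue of the $L$-factor ratio in Conjecture~\ref{main conj}. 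Matching the two expressions and using the local functional equation to replace the ratio of Asai $L$-factors by $\gamma(1,\mathfrak{c}(\pi),\mathrm{As}^-,\psi)$ yields
\[
d_\psi = 2^k\,\lambda(E/F)^n\,c_{\pi_v}\,\gamma(1,\mathfrak{c}(\pi),\mathrm{As}^-,\psi)\,d_\sigma,
\]
in which the factor $2^{k}$ comes from the normalization appearing in Conjecture~\ref{main conj} tied to the number of isobaric constituents, and $\lambda(E/F)^n$ arises from the Langlands $\lambda$-constant for induction from a Levi of type $\prod\mathrm{GL}_{n_i}(E)$.

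The main technical obstacle is the careful bookkeeping of normalizations throughout this chain of equalities. In particular, one must verify that the measure $d_\psi$ on $\mathrm{U}_{2n}(F)$ fixed in Section~2.4 is compatible with the Haar measure implicit in the formal degree $d_\sigma$ and with the measure defining the stable integral on $N$. One must also check that the descent correspondence $\mathcal{D}_{\psi^{-1}}^{\Upsilon^{-1}}$ used to pass between $\sigma$ and $\mathfrak{c}(\pi)$ matches the Plancherel-side normalizations on $\mathrm{GL}_{2n}(E)$, and that the self-dual Haar measures on $E$ and $F$ arising from $\psi$ and its restriction induce the correct powers of $\lambda(E/F)$. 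Once these normalizations are aligned, the derivation is parallel to the metaplectic calculation of \cite{ILM}, and the substitution of $c_{\pi_v}=\omega_{\sigma}(-1)$ from Theorem~\ref{main thm} completes the proof.
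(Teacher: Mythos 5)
Your proposal is correct and follows the same two-step strategy as the paper's proof in Section~8, transplanting the Ichino--Lapid--Mao argument to the unitary setting: derive the unconditional identity relating $d_\psi/d_\sigma$, $c_\pi$, and the Asai $\gamma$-factor via the functional equation \eqref{FQ}, the defining relation \eqref{ILM 3.7}, and a double-integral manipulation of the formal degree relation, then substitute $c_\pi = \omega_\sigma(-1)$ from Theorem~\ref{main thm}. One small attributional clarification: the factor $2^k$ arises locally from the limit \eqref{ILM 3.6}, comparing the order-$k$ poles of $\gamma(2s, \mathfrak{c}(\pi), \mathrm{As}^+, \psi)$ and $\gamma(s+\tfrac{1}{2}, \mathfrak{c}(\pi), \mathrm{As}^+, \psi)$ at $s=\tfrac12$, rather than from the global normalization $2^{1-k}$ in Conjecture~\ref{main conj}, and the factor $\lambda(E/F,\psi)^n$ enters through the normalization of the intertwining operator in the functional equation \eqref{FQ}.
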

\begin{Remark}
Recently, Beuzart-Plessis~\cite{BP} proved the original formal degree conjecture by \cite{HII}
for any unitary groups using a different method. 
\end{Remark}
On the other hand, we can show refined formal degree conjecture for $\mathrm{U}_{2n}$ over real field using computations in  \cite{HII}.
In a similar argument as the non-archimedean case, we can prove the equivalence between refined formal degree conjecture
and Conjecture~\ref{LM local conjecture}. Hence, we obtain the following result.
\begin{theorem}
\label{main thm archimedean}
Suppose that $v$ is a real place and that $E_v \simeq \mC$, i.e. $\mathrm{U}_{2n}(F_v) \simeq \mathrm{U}_{2n}(\mR)$.
Then Conjecture~2 holds for diecerete series representations of $\mathrm{U}_{2n}(\mR)$. 
\end{theorem}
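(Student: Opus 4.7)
The plan is to reverse the direction of the $p$-adic argument of this paper: instead of proving the local identity \eqref{main thm eq} first and deducing the refined formal degree conjecture, I would establish the archimedean refined formal degree conjecture directly and then use the equivalence with Conjecture~\ref{LM local conjecture} to read off the theorem. Thus the proof splits into two essentially independent parts, one analytic on $\mathrm{U}_{2n}(\mR)$ and one formal deriving one identity from the other.

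First I would prove the analogue of Theorem~\ref{formal deg intro} at the real place. For a discrete series $\sigma$ of $\mathrm{U}_{2n}(\mR)$, the formal degree $d_\sigma$ has a completely explicit expression in terms of the Harish-Chandra parameter, and the original formal degree conjecture of \cite{HII} was verified by them in this case. The refined version only modifies the original identity by a sign $\omega_\sigma(-1)$ and by adjoint-type $\gamma$-factors. Since the base change parameter $\mc(\pi)$ is a sum of explicit characters of $W_{\mR}$, the archimedean Asai $\gamma$-factor $\gamma(1, \mc(\pi), \mathrm{As}^-, \psi)$ reduces to a product of standard $\Gamma$-factors and elementary local constants. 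The proposed step is then a direct matching of both sides of
\[
d_\psi = 2^k \lambda(E \slash F)^n \omega_{\sigma}(-1) \gamma(1, \mc(\pi), \mathrm{As}^-, \psi)\, d_\sigma
\]
for every discrete series $\sigma$.

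Second, I would adapt the equivalence argument of \cite{ILM}, and of the corresponding $p$-adic part of this paper, to the archimedean setting. The constant $c_{\pi_v}$ is defined as a proportionality between two local Whittaker functionals, one built from the matrix coefficient pairing and the stable integral over $N$, the other from the Whittaker model of $\pi$. The algebraic manipulation that relates $c_{\pi_v} = \omega_{\sigma}(-1)$ to the refined formal degree identity involves only the Plancherel formula, the definition of $d_\psi$, the base change relation $\omega_{\sigma}(-1) = \omega_{\pi}(\tau)$, and the functional equation encoded in the Asai $\gamma$-factor. None of these ingredients is intrinsically $p$-adic, so once the archimedean versions are in place the formal chain of equalities from the first step carries over verbatim.

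The main obstacle, to my mind, is the analytic input required in the second step: one must verify that the stable integral $\int_{N(F_v)}^{st}(\sigma(n)\varphi, \varphi^\vee)\, dn$ is well-defined for real discrete series and satisfies the same equivariance and proportionality properties as in the non-archimedean case, and that the resulting Whittaker functional matches the one coming from the real Plancherel formula. A subsidiary technical point will be to reconcile the Haar measure normalizations on $\mathrm{U}_{2n}(\mR)$, on $N(\mR)$, and on the relevant tori used in \cite{HII} with those used to define $d_\psi$ in Section~2.4, so that the constants $2^k$ and $\lambda(E \slash F)^n$ appear with the correct power and no spurious factor contaminates the final identity.
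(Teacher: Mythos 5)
Your proposal follows essentially the same route as the paper: prove the refined formal degree identity for real discrete series directly from the computations of Hiraga--Ichino--Ikeda (the original conjecture gives the identity up to $|\gamma|$, and a direct sign/positivity check for the explicit parameter $\oplus \chi_i$ supplies the rest), then run the Ichino--Lapid--Mao equivalence in reverse to extract $c_{\pi_v}=\omega_{\sigma_v}(-1)$. The analytic difficulties you flag at the end (well-definedness of the stable integral and the constant $c_\sigma$, convergence of the double integral, Whittaker square-integrability, and measure normalizations) are exactly the points the paper handles via a globalization to make $\sigma$ ``good'' together with Lemma~\ref{4.11 arch}, Remark~\ref{remark arch}, and Wallach's estimates, so the two arguments coincide.
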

As a corollary of Theorem~\ref{main thm}, Theorem~\ref{main thm archimedean} and \cite[Lemma~5.4]{LMa}, the following global formula follows.
\begin{corollary}
\label{into cor 1}
Let $\sigma$ be as in Conjecture~\ref{main conj}.
Suppose that $F$ is totally real field.
Assume that for a places $v$ of $F$,
\[
\left\{
\begin{array}{ll}
\sigma_v\text{ is unramified} & \text{ if $v$ is split finite place or $v | 2$}
\\
&
\\
\sigma_v\text{ is discrete series} & \text{ if $v$ is real place}
\end{array}
\right.
\]
Then the formula in Conjecture~\ref{main conj} holds.
\end{corollary}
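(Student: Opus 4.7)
The plan is to apply the reduction theorem of Lapid and Mao (Theorem~5.5 of \cite{LMa}), which reduces the global formula of Conjecture~\ref{main conj} to the single identity $\prod_v c_{\pi_v} = 1$. Thus the corollary becomes a combination of the local identity $c_{\pi_v} = \omega_{\sigma_v}(-1)$ at every place $v$ together with the product formula for the central character of $\sigma$.

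The verification of the local identity splits into four cases depending on the type of $v$. At non-split finite places, Theorem~\ref{main thm} supplies $c_{\pi_v} = \omega_{\sigma_v}(-1)$ with no further ramification hypothesis on $\sigma_v$. At real places of $F$ (all archimedean places, since $F$ is totally real), the hypothesis that $\sigma_v$ is discrete series places us in the setting of Theorem~\ref{main thm archimedean}, which gives the same identity; in particular, the discrete-series assumption forces $E_v \simeq \mC$ since the split form $\mathrm{GL}_{2n}(\mR)$ admits no essentially square-integrable representations for $2n \geq 3$. At split finite places and at dyadic places $v \mid 2$, the representation $\sigma_v$ is unramified by hypothesis, and \cite[Lemma~5.4]{LMa} (an explicit unramified computation) furnishes the local identity. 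Combining the four cases, $c_{\pi_v} = \omega_{\sigma_v}(-1)$ holds at every place of $F$.

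To close the argument I would take the product over all places. Since $-I \in \mathrm{U}_{2n}(F)$ is a rational central element and $\omega_\sigma$ is trivial on the diagonally embedded global center by cuspidality, one has
\[
\prod_v c_{\pi_v} \;=\; \prod_v \omega_{\sigma_v}(-1) \;=\; \omega_\sigma(-I) \;=\; 1.
\]
Substituting this into the formula of \cite[Theorem~5.5]{LMa} yields the identity predicted by Conjecture~\ref{main conj}, proving the corollary.

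The main difficulty has already been absorbed into Theorems~\ref{main thm} and~\ref{main thm archimedean} proved in the body of this paper; the corollary itself is then just a routine packaging of local identities together with the product formula for $\omega_\sigma$. The only role of the unramifiedness hypothesis at split finite places and at places above $2$ is to ensure that \cite[Lemma~5.4]{LMa} is applicable, and any strengthening of the corollary removing those hypotheses would require extending Theorem~\ref{main thm} to cover split and dyadic places as well.
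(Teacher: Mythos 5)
Your proposal is correct and follows exactly the route the paper intends: the paper merely states that the corollary ``follows'' from Theorem~\ref{main thm}, Theorem~\ref{main thm archimedean} and \cite[Lemma~5.4]{LMa}, and your sketch spells out precisely how—namely by invoking \cite[Theorem~5.5]{LMa} to reduce to $\prod_v c_{\pi_v}=1$, verifying $c_{\pi_v}=\omega_{\sigma_v}(-1)$ place by place via these three inputs, and then using triviality of $\omega_\sigma$ on the rational center. The only latent wrinkle (also not addressed in the paper) is that the inference ``discrete series at a real $v$ forces $E_v\simeq\mC$'' requires $2n\geq 3$, so the $n=1$ split-real case would need a separate word; this does not affect the overall correctness of the approach.
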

In \cite{FM}, we proved the refined Gross-Prasad conjecture for special 
Bessel periods for $\mathrm{SO}(2n+1)$ using the explicit formula of Whittaker periods of 
metaplectic groups by Lapid--Mao~\cite{LMb}.
In a similar computation as \cite{FM}, Corollary~\ref{into cor 1} should yield a formula of Bessel periods 
for $(\mathrm{U}_{2n}, \mathrm{U}_1)$. In our future work, we will study this problem.

This paper is organized as follows.
In Section~2, we define basic notations.
In Section~3, we formulate Lapid-Mao's local conjecture 
and reduce it to tempered case.
In Section~4--6, following the idea by Lapid and Mao~\cite{LMb},
we rewrite our required identity by another local identity using local analysis and certain functional equations.
(See \cite[Section~4]{LMb} for the idea of the proof).
Most parts are proved in a similar argument as \cite{LMb}.
Hence, we give a proof only when there are non-trivial difference,
and we only give statements if the proof is essentially same as the corresponding results in \cite{LMb}.
In Section~7, we complete the proof of Theorem~\ref{main thm}.
In Section~8, we prove Theorem~\ref{formal deg intro} and Theorem~\ref{main thm archimedean}.
\subsection*{Acknowledgements}
The author express his deep gratitude to Erez Lapid for suggesting him to the problems in the present paper
and for useful discussion.
%
%
%
%
%
%
%
%
%
%
%
%
%
%
\section{Notation and Preliminaries}
\subsection{Groups, homomorphisms and group elements}
\label{notation 1}
\begin{itemize}
\item Let $F$ be a local field of characterstic zero and $E$ a quadratic extension of $F$.
Let $\mc$ be a non-trivial element of $\mathrm{Gal}(E \slash F)$. We take $\tau \in E$ such that $\mc(\tau) = -\tau$.
\item $\eta_{E \slash F}$ denotes the quadratic character of $F^\times$ corresponding to $E \slash F$.
\item Fix a character $\Upsilon$ of $E^\times$ such that $\Upsilon |_{F^\times} = \eta_{E \slash F}$.
\item $I_m$ is the identity matrix in $\mathrm{GL}_m$, $w_m$is the $m \times m$-matrix with ones on the non- principal diagonal and zeros elsewhere.
\item For any group $Q$, $Z_Q$ is the center of $Q$; $e$ is the identity element of $Q$. 
We denote the modulus function of $Q$ (i.e., the quotient of a right Haar measure by a left Haar measure) by $\delta_Q$.
\item $\mathrm{Mat}_m$ is the vector space of $m \times m$ matrices over $F$.
\item for $x = (x_ij) \in \mathrm{Mat}(E)$, $x^\mc$ denotes the matrix $(\mc(x_{ij}))$
\item $x \mapsto {}^t x$ is the transpose on $\mathrm{Mat}_m$; $x \mapsto x^{\vee}$ is the twisted transpose map on $\mathrm{Mat}_m$
given by $x^{\vee}= w_m {}^{t}x^\mc w_m$; 
$g \mapsto g^\ast$ is the outer automorphism of $\mathrm{GL}_m$ given by $g^\ast= w_m^{-1} ({}^t g^\mc)^{-1} w_m$.
\item $\mathfrak{u}_n = \left\{ x \in \mathrm{Mat}_{n}(E) : x^\vee = x \right\}$
\item $\mM = \mathrm{GL}_{2n}(E), \mM = \mathrm{GL}_{n}(E)$
\item $G = \mathrm{U}_{4n} = \left\{ g \in \mathrm{GL}_{4n}(E) : {}^{t} g \begin{pmatrix} &w_{2n}\\ -w_{2n}&\end{pmatrix}g = 
\begin{pmatrix} &w_{2n}\\ -w_{2n}&\end{pmatrix} \right\}$
\item $G^\prime = \mathrm{U}_{2n} = \left\{ g \in \mathrm{GL}_{2n}(E) : {}^{t} g \begin{pmatrix} &w_{n}\\ -w_{n}&\end{pmatrix}g = 
\begin{pmatrix} &w_{n}\\ -w_{n}&\end{pmatrix} \right\}$
\item $G^\prime$ is embedded as a subgroup of $G$ via $g \mapsto \eta(g) = \mathrm{diag}(I_n, g, I_n)$.
\item $P=MU$ (resp., $P^\prime = M^\prime U^\prime$) is the Siegel parabolic subgroup of $G$ (resp., $G^\prime$),
with its standard Levi decomposition.
\item $\overline{P}= {}^{t}P$ is the opposite parabolic of $P$, with unipotent radical $\overline{U} = {}^t U$.
\item We use the isomorphism $\varrho(g) = \mathrm{diag}(g, g^\ast)$ to identify $\mM$ with $M \subset G$. 
Similarly for $\varrho^\prime : \mM^\prime \rightarrow M^\prime \subset G^\prime$.
\item We use the embeddings $\eta_\mM(g) = \mathrm{diag}(g, I_n)$ and $\eta_\mM^\vee(g) = \mathrm{diag}(I_n, g)$ 
to identify $\mM^\prime$ with subgroups of $\mM$. We also set $\eta_M =\varrho \circ \eta_\mM$ 
and $\eta_\mM^\vee =\varrho \circ \eta_{\mM^\prime} =\eta \circ \varrho^\prime$
\item $K$ is the standard maximal compact subgroup of $G$. (In the p-adic case it consists
of the matrices with integral entries.)
\item $N$ is the standard maximal unipotent subgroup of $G$ consisting of upper unitriangular matrices; 
$T$ is the maximal torus of G consisting of diagonal matrices; $B=TN$ is the Borel subgroup of $G$.
\item For any subgroup $X$ of $G$ we write $X^\prime = \eta^{-1}(X)$, $X_M = X \cap M$
and $X_\mM =\varrho^{-1}(X_M)$;
similarly $X_{M^\prime}^\prime =X^\prime \cap M^\prime$ and $X_{\mM^\prime}^\prime = (\varrho^\prime)^{-1}(X_{M^\prime}^\prime)$.
\item $\ell_\mM : \mathrm{Mat}_n \rightarrow N_\mM$ is the group embedding given by 
$\ell_\mM(x) = \left( \begin{smallmatrix} I_n &x\\ &I_n\end{smallmatrix} \right)$ and $\ell_M = \varrho \circ \ell_{\mM}$.
\item $\ell : \mathfrak{u}_{2n} \rightarrow U$ is the group isomorphism given by $\ell(x) = \left( \begin{smallmatrix} I_n &x\\ &I_n\end{smallmatrix} \right)$.
\item $\xi_m = (0, \dots, 0, 1) \in F^m$
\item $\mathcal{P}$ is the mirabolic subgroup of $\mM$ consisting of the elements $g$ such that $\xi_{2n} g = \xi_{2n}$
\item Put $H_\mM = \mathrm{GL}_{2n}(F)$
\item $\mathfrak{t} = \mathrm{diag}(1, -1, \dots, 1, -1) \in \mM$.
\item $w_0^\prime =  \left( \begin{smallmatrix} &w_n\\ -w_n&\end{smallmatrix} \right) \in G^\prime$ represents the longest Weyl element of $G^\prime$.
\item $w_U = \left( \begin{smallmatrix} &I_{2n}\\ -I_{2n}&\end{smallmatrix} \right) \in G$ represents the longest $M$-reduced Weyl element of $G^\prime$
\item $w_{U^\prime}^\prime = \left( \begin{smallmatrix} &I_{n}\\ -I_{n}&\end{smallmatrix} \right) \in G^\prime$ represents the longest $M^\prime$-reduced 
Weyl element of $G^\prime$
\item  $w_0^{\mM} = w_{2n} \in \mM$ represents the longest Weyl element of $\mM$; $w_0^M = \varrho(w_0^{\mM})$.
\item $w_0^{\mM^\prime} = w_{n} \in \mM$ represents the longest Weyl element of $\mM$; $w_0^{M^\prime} = \varrho(w_0^{\mM^\prime})$.
\item $w_{2n,  n} =  \left( \begin{smallmatrix} &I_n\\ I_{n}&\end{smallmatrix} \right) \in \mM$, $w_{2n,  n}^\prime =  
\left( \begin{smallmatrix} &I_n\\ w_0^{\mM^\prime}&\end{smallmatrix} \right) \in \mM$
\item $\gamma = w_U \eta(w_{U^\prime}^\prime)^{-1} =  \left( \begin{smallmatrix} &I_n&&\\ &&&I_n\\ -I_n&&&\\ &&I_{n}&\end{smallmatrix} \right) \in G$
\item $\mathfrak{d} = \mathrm{diag}(1, -1, \dots, (-1)^{n-1}) \in \mathrm{Mat}_n$, 
$\varepsilon_1 = (\hat{w} \gamma)^{-1} \varrho(\varepsilon_3) w_U = \ell_M((-1)^n \mathfrak{d})$,
$\varepsilon_2 = \ell_\mM(\mathfrak{d})$,
$\varepsilon_3 = w_{2n, n}^\prime \varepsilon_2$, 
$\varepsilon_4 = \ell_{\mM} (- \frac{1}{2} \mathfrak{d} w_0^{\mM^\prime})$.
\item $V$ (resp. $V^{\#}$) is the unipotent radical of the standard parabolic subgroup of $G$ with Levi $\mathrm{GL}_1(E)^n \times \mathrm{U}_{2n}$
(resp. $\mathrm{GL}_1(E)^{n-1} \times \mathrm{U}_{2n+2}$).
Thus, $N = \eta(N^\prime) \ltimes V$, $V^{\#}$ is normal in $V$ and $V \slash V^{\#}$ is isomorphic to the Heisenberg group of dimension $2n+1$
over $E$. Also $V = V_M \ltimes V_{U}$ where $V_U = V\cap U = \left\{\ell \left(\begin{smallmatrix}x&y\\ &x^{\vee} \end{smallmatrix} \right) : 
x \in \mathrm{Mat}_n(E), y \in \mathfrak{u}_n \right\}$
\item $V_- = V_M^{\#} \ltimes V_U$ (Recall $V_M^{\#} = V^{\#} \cap M$ by our convention)
\item $V_\gamma = V \cap \gamma^{-1} N \gamma = \eta(w_{U^\prime}^\prime) V_M \eta(w_{U^\prime}^\prime) 
= \eta_M(N_{\mM^\prime}^\prime) \ltimes \left\{\ell \left(\begin{smallmatrix}x&\\ &x^{\vee} \end{smallmatrix} \right) : 
x \in \mathrm{Mat}_n(E) \right\} \subset V_-$
\item $V_+ \subset V$ is the image under $\ell_M$ of the space of $n \times n$-matrices over $E$ whose rows are zero
except possibly for the last one. Thus, $V = V_+ \ltimes V_-$.
For $c = \ell_M(x) \in V_+$ we denote by $\underline{c}$ the last row of $x$.
\item $N^{\#} = V_{-} \rtimes \eta(N^\prime)$.It is the stabilizer in $N$ of the character $\psi_U$ defined below.
\item $N_{\mM}^{\flat} = (N_{\mM}^{\#})^\ast$
\item $J$ is the subspace of $\mathrm{Mat}_n$ consisting of the matrices whose first column is zero.
\item $\bar{R} =  \left\{\ell \left(\begin{smallmatrix}I_n&\\ x&{}^{t}n \end{smallmatrix} \right) : 
x \in J, n \in N_{\mM^\prime}^\prime \right\}$.
\end{itemize}
%
%
%
%
%
%
%
%
%
%
%
%
%
%
\subsection{Characters}
We fix a non-trivial additive character $\psi_F$ of $F$ and define
an additive character $\psi$ of $E$ by $\psi(x) = \psi_F (\frac{x +\mc(x)}{2})$ for $x \in E$.
\begin{itemize}
\item $\psi_{N_\mM}(u) = \psi(u_{1, 2} + \cdots + u_{2n-1, 2n})$
\item $\psi_{N_M} \circ \varrho = \psi_{N_\mM}$
\item $\psi_{N_{\mM^\prime}^\prime}(u) = \psi(u_{1, 2}^\prime + \cdots + u_{n-1, n}^\prime)$
\item $\psi_{N_{M^\prime}^\prime} \circ \varrho^\prime = \psi_{N_{\mM^\prime}^\prime}$.
\item $\psi_{N^\prime}(nu) = \psi_{N_{\mM^\prime}^\prime}(n) \psi(\frac{1}{2} u_{n, n+1})^{-1}$, $n \in N_{M^\prime}^\prime, u \in U^\prime$
\item $\psi_N(nu) = \psi_{N_M}(n)$, $n \in N_M, u \in U$ (a degenerate character).
Then $\psi_{N_{M^\prime}^\prime}(u) = \psi_{N} (\gamma \eta(u) \gamma^{-1})$
\item $\psi_{V}(vu) = \psi_{N_M}(w_U u w_U^{-1})$ where we write an element of $V$
by $vu$ so that $u$ fixes $e_1, \dots e_n$, $v$ fixes $e_{n+1}, \dots e_{n+1}, \dots e_{-1-n}$ 
\item $\psi_{V_-}(vu) = \psi_{N_M}(v)^{-1} \psi_{U}(u)$, $v \in V_M^{\#}, u \in V_U$. (Note that this is not a restriction of $\psi_{V}$ to $V_-$.)
\item $\psi_{U}(\ell(v)) = \psi \left(\frac{1}{2}(v_{n, n+1}-v_{2n, 1}) \right)$
\item $\psi_{\bar{U}}(\bar{v}) = \psi(v_{2n+1,1})$, $\bar{v} \in \bar{U}$.
\item $\Upsilon_M(\varrho(g)) = \Upsilon(\det g), g \in \mM$ and $\Upsilon_{M^\prime}(\varrho^\prime(g^\prime)) = \Upsilon(\det g^\prime), g^\prime \in \mM^\prime$ 
\end{itemize}
%
%
%
%
%
%
%
%
%
%
%
%
%
%
\subsection{Other notation}
\begin{itemize}
\item We use the notation $a \ll_d  b$ to mean that $a \leq cb$ with $c > 0$ a constant depending on $d$.
\item For any $g \in G$ define $\nu(g) \in \mR >0$ by $\nu(u \varrho(m) k)= \|\det m \|_E$ for any $u \in U$, $m \in \mM$, $k \in K$. 
Let $\nu^\prime(g)= \nu(\eta(g))$ for $g \in G^\prime$.
\item $\mathcal{CSGR}(Q)$ is the set of compact open subgroups of a topological group $Q$.
\item For an $\ell$-group $Q$ let $C(Q)$ (resp., $\mathcal{S}(Q)$) be the space of continuous (resp., Schwartz)
functions on $Q$ respectively.
\item When $F$ is $p$-adic, if $Q^\prime$ is a closed subgroup of $Q$ and $\chi$ is a character of $Q^\prime$, we
denote by $C(Q^\prime \backslash Q, \chi)$ (resp., $C^{\rm sm}(Q^\prime \backslash Q, \chi), C_c^\infty(Q^\prime \backslash Q, \chi))$) 
the spaces of continuous (resp. $Q$-smooth, smooth and compactly supported modulo $Q^\prime$) complex-valued 
left $(Q^\prime, \chi)$-equivariant functions on $Q$.
\item For an $\ell$-group $Q$ we write $\mathrm{Irr} Q$ for the set of equivalence classes of irreducible representations of $Q$. 
If $Q$ is reductive we also write $\mathrm{Irr}_{\rm sqr} Q$ and $\mathrm{Irr}_{\rm temp} Q$ for the subsets of irreducible unitary 
square-integrable (modulo center) and tempered representations respectively. We write $\mathrm{Irr}_{\rm gen} \mM$ and 
$\mathrm{Irr}_{\rm ut} \mM$ for the subset of irreducible generic representations of $\mM$ and representations of unitary type (see below), respectively. 
For the set of irreducible generic representations of $G$ we use the notation $\mathrm{Irr}_{{\rm gen}, \psi_{N^\prime}} G^\prime$ to emphasize the dependence on the character $\psi_{N^\prime}$.
\item For $\pi \in \mathrm{Irr} Q$, let $\pi^{\vee}$ be the contragredient of $\pi$.
\item For $\pi \in \mathrm{Irr}_{\rm gen} \mM$, $\mathbb{W}^{\psi_{N_\mM}}(\pi)$ denotes the (uniquely determined) Whittaker space of
$\pi$ with respect to the character $\psi_{N_\mM}$. Similarly we use the notation $\mathbb{W}^{\psi_{N_\mM}^{-1}}$, $\mathbb{W}^{\psi_{N_M}}$,
$\mathbb{W}^{\psi_{N_M}^{-1}}$, $\mathbb{W}^{\psi_{N^\prime}}$, $\mathbb{W}^{\psi_{N^\prime}^{-1}}$.
\item For $\pi \in \mathrm{Irr}_{\rm gen} \mM$ let $\mathrm{Ind}(\mathbb{W}^{\psi_{N_M}}(\pi))$ be the space of smooth left $U$-invariant functions 
$W : G \rightarrow \mC$ such that for all $g \in G$, the function $ m \mapsto \delta_P(m)^{-\frac{1}{2}} W(mg)$ on $M$
belongs to $\mathbb{W}^{\psi_{N_M}}(\pi)$. Similarly define $\mathrm{Ind}(\mathbb{W}^{\psi_{N_M}^{-1}}(\pi))$
\item
If a group $G_0$ acts on a vector space $W$ and $H_0$ is a subgroup of $G_0$, we denote by $W^{H_0}$ the subspace of $H_0$-fixed points.
\item
We use the following bracket notation for iterated integrals: $\int \int (\int \int \dots) \dots$
implies that the inner integrals converge as a double integral and after evaluating them, the outer double integral is absolutely convergent.
\end{itemize}
%
%
%
%
%
%
%
%
%
%
%
%
%
%
\subsection{Measures}
\label{measures}
The Lie algebra $\mathfrak{M}$ of $\mathrm{Res}_{E \slash F} \mathrm{GL}_m$ consists of the $m \times m$-matrices $X$ over $E$. 
Let $\mathfrak{M}_{\mathcal{O}}$ be the lattice
of integral matrices in $\mathfrak{M}$.
For any algebraic subgroup $\mathbf{Q}$ of $\mathrm{Res}_{E \slash F} \mathrm{GL}_m$ defined over $F$, let $\mathfrak{q} \subset \mathfrak{M}$
be the Lie algebra of $\mathbf{Q}$. 
The lattice $\mathfrak{q} \cap \mathfrak{M}_{\mathcal{O}}$ of $\mathfrak{q}$ 
gives rise to a gauge form of $\mathbf{Q}$ (determined up to multiplication by an element of $\mathcal{O}_E^\ast$) 
and we use it (together with $\psi$) to define a Haar measure on $Q$ by the recipe of Kneser~\cite{Kn}.
For example, when $n=1$, the measure on $N_{\mM} = \{ \left( \begin{smallmatrix} 1&x\\ &1\end{smallmatrix} \right) :x \in E \} \simeq E$
is the self-dual Haar measure $dx$ on $E$ with respect to $\psi$.
It is written as follows using measure on $F$.
Let $dx_i (i=1, 2)$ and $dy_i (i=1, 2)$  be self-dual Haar measures on $F$ with respect to $\psi_F$, i.e. the measure on $F$ such that 
\[
\int_{F} \int_{F} f(x_i) \psi_F(x_i y_i) \, dx_i \, dy_i = f(0)
\]
for a smooth function $f$ on $F$, provided the integral converges.
We define a Haar measure on $E$ by 
\begin{equation}
\label{measure E to FF}
dz_i = |\tau|^{\frac{1}{2}}dx_{i} dy_i
\quad
\text{with}
\quad
z_i = x_i + \tau y_i.
\end{equation}
Then for a smooth function $g$ on $E$, we have
\[
\int_{E} \int_{E}g(z_1) \psi(z_1 z_2) \, dz_1 \, dz_2 = g(0),
\]
provided the integral converges. Namely, $dz_i$ are self-dual Haar measures with respect to $\psi$.
Further, we note that for $f \in L^{1}(F)$, we have
\[
\int_{F} \int_{F} f(x) \psi(axy) \, dx \, dy
=|a|^{-1} \cdot f(0)
\]
and
\begin{equation}
\label{measure E/F}
\int_{F} \left( \int_{F \backslash E} f(x) \psi_{E}(\tau x y) \, dx \right) \, dy = |\tau|^{-\frac{1}{2}} f(0),
\quad
\int_{F} \left( \int_{F \backslash E} f(x) \psi_{E}(\tau^{-1} x y) \, dx \right) \, dy = |\tau|^{\frac{1}{2}} f(0).
\end{equation}
%
%
%
%
%
%
%
%
%
%
%
%
%
%
\subsection{Weil representation}
%
Let $Y$ be a $2n$-dimensional space over $E$, equipped with a non-degenerate, anti-Hermitian form $(\cdot, \cdot)$.
Assume that it has a maximal isotropic subspace of dimension $n$ over $E$.
We denote its isometry group by $\mathrm{U}(Y) \simeq \mathrm{U}_{2n}$.
We may view $Y$ as symplectic space over $F$ with the symplectic form $\langle \cdot , \cdot \rangle = \mathrm{Tr}_{E \slash L} (\cdot, \cdot)$.
Then it is $4n$ dimensional vector space over $F$, and we denote this space by $Y^\prime$.
Let $\widetilde{\mathrm{Sp}}(Y^\prime)$ be the metaplectic cover of $\mathrm{Sp}(Y^\prime)$ with respect to the cocycle 
given in \cite[p.59]{MVW} (see also \cite[p.455]{GeRo}).
It is clear that $\mathrm{U}(Y) \subset \widetilde{\mathrm{Sp}}(Y^\prime)$, and we know that $\widetilde{\mathrm{Sp}}(Y^\prime)$
splits over $\mathrm{U}(Y)$. Fix a character $\Upsilon$ of $E^\times$ such that $\Upsilon |_{F^\times} = \eta_{E \slash F}$, and 
we choose the splitting as in \cite[p.9]{GRS} corresponding to $\Upsilon$.
Let us write 
\[
Y = Y^+ + Y^-
\]
as a direct sum of two maximal isotropic subspaces of $Y$, which are duality under $(\cdot, \cdot)$.
When we consider $Y^{\pm}$ as subspace of $Y^\prime$, they are isotropic for $\langle \cdot, \cdot \rangle$
,and thus in duality under $\langle \cdot, \cdot \rangle$.
Consider the Weil representation $\omega_{\psi}^{\Upsilon}$ of the group $\mathcal{H}_Y \rtimes \widetilde{\mathrm{Sp}}(Y^\prime)$ on $\Phi \in \mathcal{S}(E^n)$,
where $\mathcal{H}_Y := Y \oplus F$ is the Heisenberg group attached to $Y$ with the multiplication
\[
(w_1, t_1) \cdot (w_2, t_2) = (w_1 +w_2, t_1 +t_2 + \frac{1}{2} \mathrm{Tr}_{E \slash L} ( w_1, w_2)).
\]
Explicitly, for any $\Phi \in \mathcal{S}(Y^+)$ and $X \in Y^+$, the action of $\mathcal{H}_Y$ is given by
\begin{align}
\label{2.1a} \omega_{\psi}^{\Upsilon}(a, 0)\Phi(X) &= \Phi(X+a), \quad a \in Y^+,\\
\label{2.1b} \omega_{\psi}^{\Upsilon}(b, 0)\Phi(X) &= \psi \left( \langle X, b \rangle) \right) \Phi(X), \quad b \in Y^-,\\
\label{2.1c} \omega_{\psi}^{\Upsilon}(0, t)\Phi(X) &= \psi \left(t \right) \Phi(X), \quad t \in F
\end{align}
while the action of $ \widetilde{\mathrm{Sp}}(Y^\prime)$ is (partially) given by 
\begin{align}
\omega_{\psi}^{\Upsilon} \left( \begin{pmatrix} g& \\ &g^\ast \end{pmatrix}, \varepsilon \right)\Phi(X) &= \varepsilon
\gamma_{\psi} \left( N_{E \slash L}(\det g)\right) \Upsilon(\det g) |\det g|^{1 \slash 2} \Phi(X \cdot g)\\
\omega_{\psi}^{\Upsilon} \left( \begin{pmatrix} I&y\\ &I \end{pmatrix}, \varepsilon \right)\Phi(X) &= \varepsilon
\psi \left(\frac{1}{2} \langle X, X \cdot y \rangle) \right)\Phi(X)
\end{align}
We now take $Y = E^{2n}$ with the skew-hermitian form 
\[
\langle (x_1, \dots, x_{2n}), (y_{1}, \dots, y_{2n}) \rangle = \sum_{i=1}^{n} x_i \mc (y_{2n+1-i})- \sum_{i=1}^n y_i \mc(x_{2n+1-i})
\]
and the standard polarization $Y^{+} = \left\{(y_1, \dots, y_n, 0 \dots, 0) \right\}$ and 
$Y^{-} = \left\{(0 \dots, 0, y_1, \dots, y_n) \right\}$ with the standard basis $e_1, \dots, e_{n}, e_{-n}, \dots, e_{-1}$.
For $\Phi \in \mathcal{S}(E^{n})$, we define the Fourier transform
\[
\hat{\Phi}(X) = \int_{E^n} \Phi(X^\prime) \psi \left( \langle X, X^\prime \left( \begin{smallmatrix}&I_n\\ -I_n& \end{smallmatrix} \right) \rangle \right)
\]
Then, reapplied on $\mathcal{S}(E^n)$, the Weil representation satisfies
\begin{align}
\label{2.3a} \omega_{\psi}^{\Upsilon} \left( \varrho^\prime(g) \right)\Phi(X) &= 
\alpha_{\psi}(\varrho^\prime(g)) \Upsilon_{M^\prime}(\varrho^\prime(g)) \nu^{\prime}(\varrho^\prime(g))^{1 \slash 2} \Phi(X \cdot g)\\
\label{2.3b} \omega_{\psi}^{\Upsilon} \left( \begin{pmatrix} I&y\\ &I \end{pmatrix} \right)\Phi(X) &= 
\psi \left(\frac{1}{2} \langle X, X \cdot y \rangle) \right)\Phi(X)\\
\label{2.3c} \omega_{\psi}^{\Upsilon} \left(w_{U^\prime}^\prime \right)\Phi(X) &= \alpha_{\psi}(w_{U^\prime}^\prime) \hat{\Phi}(X)
\end{align}
where $\alpha_{\psi}(\cdot)$ is a certain eight root of unity, which satisfies $\alpha_{\psi}(\cdot)^{-1} = \alpha_{\psi^{-1}}(\cdot)$

Let $V_0 \subset V$ be the unipotent radical of the standard parabolic subgroup of $G$ with Levi 
$\mathrm{GL}_1(E)^{n-1} \times \mathrm{U}_{2n+2}$. The the map 
\[
v \mapsto v_{\mathcal{H}} := \left( (v_{n, n+j})_{j=1, \dots, 2n}, \frac{1}{2} \mathrm{Tr}_{E \slash L}(v_{n, 3n+1}) \right)
\]
gives an isomorphism from $V \slash V_0$ to a Heisenberg group $\mathcal{H}_Y$.
Then we may regard $\omega_{\psi}^{\Upsilon}$ as a representation of $\widetilde{\mathrm{Sp}(Y)} \ltimes V \slash V_0$.
Further, we extend $\omega_{\psi}^\Upsilon$ to a representation  $\omega_{\psi_{N_\mathbb{M}}}^\Upsilon$ of 
$V \rtimes G^\prime$ by setting
\[
\omega_{\psi}^\Upsilon (v g) \Phi = \psi_{V}(v) \omega_{\psi}^\Upsilon (v_{\mathcal{H}}) 
(\omega_{\psi}^\Upsilon(g) \Phi), \quad v \in V, g \in G^\prime.
\]
\subsection{Stable integral}
Many integral studied in this paper do not absolutely converge.
Instead of such integrals, we shall use the notion of stable integrals as in \cite{LMb}.
For the convenience to the reader, we shall recall the notion of stable integral.
Suppose that $F$ is $p$-adic.
Let $U_0$ be a unipotent subgroup over $F$ with a fixed Haar measure on $U_0$.
\begin{Definition}[Definition~2.1 in \cite{LMb}]
Let $f$ be a smooth function on $U_0$. 
We say that $f$ has a stable integral over $U_0$ if there exists $U_1 \in \mathcal{CSGR}(U_0)$ such that for any 
$U_2 \in \mathcal{CSGR}(U_0)$ containing $U_1$ we have
\begin{equation} 
\label{def stable eq}
\int_{U_2} f(u) \, du = \int_{U_1} f(u) \, du. 
\end{equation}
In this case we write $\int_{U_0}^{\rm st} f(u) \, du$ for the common value \eqref{def stable eq} and say that  
$\int_{U_0}^{\rm st} f(u) \, du$ stabilizes at $U_1$. In other words, $\int_{U_0}^{\rm st} f(u) \, du$ is the limit of the net 
$(\int_{U_1} f(u) \, du)_{U_1 \in \mathcal{CSGR}(U_0)}$ with respect to the discrete topology of $\mC$.
\end{Definition}
We also define uniformally stable integral (see \cite[pp.10]{LMb})
\begin{Definition}
Given a family of functions $f_x \in C^{\rm sm}(U_0)$, we say that the integral $\int_{U_0}^{\rm st} f_x(u) \,du$ stabilizes
uniformly in x if $U_1$ as above can be chosen independently of $x$. 
Similarly, if $x$ ranges over a topological space $X$ then we say that $\int_{U_0}^{\rm st} f_x(u) \,du$ stabilizes locally uniformly in $x$ 
if any $y \in X$ admits a neighborhood on which $\int_{U_0}^{\rm st} f_x(u) \,du$ stabilizes uniformly.
\end{Definition}
\section{Statement of main result}
\subsection{Local Fourier-Jacobi transform}
For any $f  \in C(G)$ and $s \in \mC$, define $f_s(g) = f(g)\nu(g)^s$, $g \in G$. 
Let $\pi \in \mathrm{Irr}_{\rm gen} \mM$ with Whittaker model $W^{\psi_{N_\mM}}(\pi)$.
Let $\mathrm{Ind}(W^{\psi_{N_\mM}}(\pi))$ be the space of $G$-smooth left $U$-invariant functions 
$W : G \rightarrow \mC$ such that for all $g \in G$, the function $\delta_{P}^{-\frac{1}{2}}W(mg)$ on $M$ 
belongs to $W^{\psi_{N_\mM}}(\pi)$. 
For any $s \in \mC$ we have a
representation $\mathrm{Ind}(W^{\psi_{N_\mM}}(\pi), s)$ on the space $\mathrm{Ind}(W^{\psi_{N_\mM}}(\pi))$ 
given by $(I(s,g)W)_s(x) = W_s(xg)$, $x, g \in G$. 
It is equivalent to the induced representation of $\pi \otimes \nu^s$ 
from $P$ to $G$. 
The family $W_s$, $s \in \mC$ is a holomorphic section of this family of induced representations.

For any $W \in C^{\rm sm}(N \backslash B, \psi_N)$ and $\Phi \in \mathcal{S}(E^n)$, we define a  function on $G^\prime$ by
\[
A^{\psi, \Upsilon}(W, \Phi, g, s) := \int_{V_\gamma \backslash V} W_s(\gamma v g) \omega_{\psi_{N_\mathbb{M}}^{-1}}^{\Upsilon^{-1}}(vg)
\Phi(\xi_n) \, dv, \quad g \in G^\prime.
\]
where with $\xi_n, \gamma, V$ and $V_\gamma$ are defined in \ref{notation 1}.
Its property was studied by \cite[Lemma~5.1]{LMa}.
In particular, it satisfies 
\begin{equation}
\label{equiv A}
A^{\psi, \Upsilon} \left(I(s, vx)W, \omega_{\psi_{N_{\mathbb{M}}}^{-1}}^{\Upsilon^{-1}}(vx) \Phi, g, s \right)
= A^{\psi, \Upsilon} \left(W, \Phi, gx, s \right)
\end{equation}
for any $g, x \in G^\prime$ and $v \in V$ (see Lapid--Mao~\cite[(5.2)]{LMa}).
Further, the integrand of the definition is compactly supported, and $A^{\psi, \Upsilon}$ gives rise 
to a $V \ltimes G$-intertwining map
\[
A^{\psi, \Upsilon} : C^{sm}(N \backslash G, \psi_N) \otimes \mathcal{S}(E^n)
\rightarrow C^{sm}(N^\prime \backslash G_\circ, \psi_{N_\circ})
\]
where $V \ltimes G_\circ$ acts via $V \ltimes \eta(G_\circ)$ by right translation on $C^{sm}(N^\prime \backslash G_\circ, \psi_{N^\prime})$.

In order to simplify the notation, we introduce the map $A_{\#}^{\psi, \Upsilon}$ as follows.
Let $V_+ \subset V$ be the image under $\ell_M$ of the space of $\mn \times \mn$-matrices whose 
rows are zero except possibly for the last one.
For $c = \ell_M(x) \in V_+$ we denote by $\underline{c} \in E^n$ the last row of $x$.
Then we have
\[
A^{\psi, \Upsilon}(W(\cdot c), \Phi(\cdot +\underline{s}), g) = A^{\psi, \Upsilon}(W, \Phi, g), \quad c \in V_+, g \in G^\prime.
\]
This implies that $A^{\psi, \Upsilon}(W, \Phi, \cdot)$ factors through $W \otimes \Phi \mapsto \Phi \ast W$
where for any function $f \in C^\infty(G)$ we set 
\[
\Phi \ast f (g) = \int_{V_+} f(gc) \Phi(\underline{c}) \, dc.
\]
Then we define the map 
\[
A_{\#}^{\psi, \Upsilon} : C^{\rm sm}(N \backslash G, \psi_N) \rightarrow C^{\rm sm}(N^\prime \backslash G^\prime, \psi_N).
\]
by 
\begin{equation}
\label{A sharp}
A_{\#}^{\psi, \Upsilon}(W, \cdot) = A^{\psi, \Upsilon}(W, \Phi, \cdot)
\end{equation}
for any $\Phi \in \mathcal{S}(E^n)$ such that $\Phi \ast W = W$.
Then $A_{\#}^{\psi, \Upsilon}$ does not have the invariance by $G^\prime$ and $V$,
but it has the following equivariance property.
Set 
\[
\psi_{V_-}(vu) = \psi_{N_M}(v)^{-1} \psi_{U}(u)
\]
Recall that $\psi_{U}(\ell(x)) = \psi(\frac{1}{2}(x_{n, n+1}-x_{2n, 1}))$
\begin{lemma}[cf. Lemma~3.1 in \cite{LMb}]
\label{Lemma 3.1}
For any $v \in V_-$, $p=\varrho^\prime(g) u \in P^\prime$ where $g \in \mM^\prime$ and $u \in U^\prime$,
we have
\[
A_{\#}^{\psi, \Upsilon}(W(\cdot v \eta(p)), g) = \nu^\prime(m)^{\frac{1}{2}} \Upsilon_{M^\prime}(m) \alpha_{\psi^{-1}}(m)^{-1} \psi_{V_-}(v)
A_{\#}^{\psi, \Upsilon}(W, gp)
\]
\end{lemma}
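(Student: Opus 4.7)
The plan is to reduce the identity to the basic equivariance \eqref{equiv A} of $A^{\psi,\Upsilon}$, combined with the explicit Weil-representation formulas \eqref{2.3a}--\eqref{2.3c} and the extension rule $\omega_\psi^\Upsilon(vg)\Phi = \psi_V(v)\,\omega_\psi^\Upsilon(v_{\mathcal{H}})(\omega_\psi^\Upsilon(g)\Phi)$ from the end of Section~2.5. First I would fix $\Phi \in \mathcal{S}(E^n)$ with $\Phi \ast W = W$, so that $A_{\#}^{\psi,\Upsilon}(W, gp) = A^{\psi,\Upsilon}(W, \Phi, gp)$ by \eqref{A sharp}. Applying \eqref{equiv A} with $x = p \in P^\prime \subset G^\prime$ and the given $v \in V_- \subset V$ rewrites the right-hand side as $A^{\psi,\Upsilon}(I(0, v\eta(p))W, \omega_{\psi^{-1}}^{\Upsilon^{-1}}(v\eta(p))\Phi, g)$. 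Setting $W^\prime := W(\cdot\, v \eta(p))$ and $\Phi^\prime := \omega_{\psi^{-1}}^{\Upsilon^{-1}}(v\eta(p))\Phi$, the remaining task is to show
\[
\Phi^\prime \ast W^\prime = \lambda^{-1} W^\prime, \qquad \lambda := \nu^\prime(m)^{1/2} \Upsilon_{M^\prime}(m) \alpha_{\psi^{-1}}(m)^{-1} \psi_{V_-}(v);
\]
for then \eqref{A sharp} applied to $\lambda \Phi^\prime$ identifies $A^{\psi,\Upsilon}(W^\prime, \Phi^\prime, g)$ with $\lambda^{-1} A_{\#}^{\psi,\Upsilon}(W^\prime, g)$, yielding the desired formula after a rearrangement.

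The computation of $\Phi^\prime \ast W^\prime$ naturally splits into three pieces, matching the factorization $v\eta(p) = v \cdot \eta(\varrho^\prime(m)) \cdot \eta(u)$. The Levi piece $\varrho^\prime(m)$ contributes, via \eqref{2.3a}, the scalar $\alpha_{\psi^{-1}}(\varrho^\prime(m))\Upsilon_{M^\prime}^{-1}(\varrho^\prime(m)) \nu^\prime(\varrho^\prime(m))^{1/2}$ together with the substitution $X \mapsto Xm$; convolving along $V_+$ and changing variables using that $\varrho^\prime(m)$ normalizes $V_+$ and rescales $\underline{c}$ by right multiplication by $m$ absorbs the substitution cleanly. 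The unipotent piece $u \in U^\prime$ contributes, via \eqref{2.3b}, a quadratic phase in $X$ that, after convolution along $V_+$, is absorbed into a change of variable on $V_+$ (and is matched by the built-in $\psi_N$-equivariance of $W^\prime$). The most delicate piece is $v \in V_-$: expanding the map $v \mapsto v_{\mathcal{H}}$ on $V_- = V_M^{\#} \ltimes V_U$ and applying \eqref{2.1a}--\eqref{2.1c}, one must verify that the combined contribution of the $\psi_V(v)$ factor and the Heisenberg action collapses, after convolution with $W^\prime$, to the character $\psi_{V_-}(v)^{-1} = \psi_{N_M}(v)\psi_U(v)^{-1}$.

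Putting the three contributions together gives $\Phi^\prime \ast W^\prime = \lambda^{-1} W^\prime$ and closes the argument. The hard part is the $V_-$ computation: since $\psi_{V_-}$ is explicitly \emph{not} the restriction of $\psi_V$ (cf.\ the warning following the definition of $\psi_{V_-}$ in Section~2.2), one must carefully isolate which portion of the Heisenberg contribution survives convolution against $W^\prime$ (which is left $(N,\psi_N)$-equivariant) and check that the surviving factor is exactly $\psi_{V_-}(v)$. This is the unitary-group analogue of the metaplectic computation in \cite[Lemma~3.1]{LMb}; although the block structure is different because of the skew-Hermitian form over $E/F$, the bookkeeping of $\psi_V$, $\psi_U$, and the identification $V/V_0 \simeq \mathcal{H}_Y$ runs entirely in parallel, and the only genuinely new input is the appearance of $\Upsilon_{M^\prime}(m)$ and $\alpha_{\psi^{-1}}(m)$ from \eqref{2.3a} in place of their metaplectic counterparts.
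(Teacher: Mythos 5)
Your proposal is correct and follows essentially the same route as the paper's proof: both fix $\Phi$ with $\Phi\ast W=W$, apply the equivariance \eqref{equiv A}, reduce the lemma to the scalar identity $\Phi'\ast W'=\lambda^{-1}W'$ with $\Phi'=\omega_{\psi^{-1}}^{\Upsilon^{-1}}(vp)\Phi$, and verify that identity via \eqref{2.1a}--\eqref{2.1c} and \eqref{2.3a}--\eqref{2.3c} exactly as in the metaplectic case of Lapid--Mao. Your factorization of the Weil-representation computation into Levi, unipotent, and $V_-$ contributions, and the warning that $\psi_{V_-}$ is not the restriction of $\psi_V$, are both accurate and match the intended argument.
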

\begin{proof}
This is proved in the same argument as the proof of \cite[Lemma~3.1]{LMb}.
Indeed, for $\Phi \in \mathcal{S}(E^n)$ with sufficiently small support and $\int_{E^n} \Phi(x) \, dx =1$, we can show that 
by \eqref{2.1b}, \eqref{2.1c}, \eqref{2.3a}, \eqref{2.3b},
\[
\Phi^\prime \ast W (\cdot vp) = \psi_{V_-}(v)^{-1} \alpha_{\psi^{-1}}(m) \nu^\prime(m)^{-\frac{1}{2}} \Upsilon_{M^\prime}(m)^{-1} W(\cdot vp).
\]
where $\Phi^\prime = \omega_{\psi^{-1}}^{\Upsilon^{-1}}(vp) \Phi$. 
This identity shows that 
\[
A^{\psi, \Upsilon}(W(\cdot v \eta(p)), \Phi^\prime, g) = 
 \nu^\prime(m)^{-\frac{1}{2}} \Upsilon_{M^\prime}(m)^{-1} \alpha_{\psi^{-1}}(m) \psi_{V_-}(v)^{-1}
A_{\#}^{\psi, \Upsilon}(W(\cdot v \eta(p)), g).
\]
Further, from the definition, we have 
\[
A^{\psi, \Upsilon}(W(\cdot v \eta(p)), \Phi^\prime, g) = A^{\psi, \Upsilon} (W, \Phi, gp)
\]
and the lemma follows from these two identities.
\end{proof}
As in \cite[Lemma~3.2]{LMb}, we record the following result, 
which is proved as in the proof of \cite[Lemma~4.5]{LMc} (See also \cite[Lemma~5.1]{LMa}).
\begin{lemma}
\label{Lemma 3.2}
For any $K_0 \in \mathcal{CSGR}(G)$, there exists $\Omega \in \mathcal{CSGR}(V_U)$
such that for any $W \in C(N \backslash G, \psi_N)^{K_0}$ the support of $W(\gamma \cdot)_{V_-}$
is contained in $V_\gamma \eta(w_{U^\prime}^\prime) \Omega \eta(w_{U^\prime}^\prime)^{-1}$.
\end{lemma}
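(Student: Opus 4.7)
The plan is to adapt the standard support argument of \cite[Lemma~4.5]{LMc} (cf.\ \cite[Lemma~5.1]{LMa} and \cite[Lemma~3.2]{LMb}). The mechanism is to combine the left $(N,\psi_N)$-equivariance of $W$ with its right $K_0$-invariance so that any direction in which $W(\gamma\cdot)|_{V_-}$ has unbounded support produces a nontrivial $\psi_N$-phase, forcing vanishing.

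First, I would reduce to controlling the support modulo $V_\gamma$. For $v_\gamma \in V_\gamma$ one has $\gamma v_\gamma\gamma^{-1} \in N$ by the very definition of $V_\gamma$, so
\[
W(\gamma v_\gamma v) = \psi_N(\gamma v_\gamma\gamma^{-1})\, W(\gamma v),
\]
showing that the set $\{v\in V_-: W(\gamma v)\ne 0\}$ is $V_\gamma$-saturated on the left. Using the explicit descriptions $V_\gamma = \eta_M(N_{\mM^\prime}^\prime)\ltimes \{\ell\bigl(\begin{smallmatrix} x & \\ & x^\vee\end{smallmatrix}\bigr):x\in \mathrm{Mat}_n(E)\}$ and $V_U = \{\ell\bigl(\begin{smallmatrix} x & y\\ & x^\vee\end{smallmatrix}\bigr) : x \in \mathrm{Mat}_n(E),\, y \in \mathfrak{u}_n\}$, a set of coset representatives for $V_\gamma\backslash V_-$ can be realized inside $\eta(w_{U^\prime}^\prime) V_U \eta(w_{U^\prime}^\prime)^{-1}$ (the conjugation rearranging the block coordinates in a way matching the statement). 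It therefore suffices to produce a single compact open $\Omega \in \mathcal{CSGR}(V_U)$ outside of which $W(\gamma\cdot)$ vanishes on the corresponding conjugates.

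Second, for each elementary direction of $V_U$ I would construct a one-parameter root subgroup $\{n_s\}_{s\in E}\subset N$ with two properties: (a) $\psi_N(n_s) = \psi(\lambda s)$ for some $\lambda\ne 0$ coming from a simple root of $N_M$ that enters $\psi_{N_M}$ nontrivially, and (b) for $v$ in the chosen direction with parameter $t$, the commutator correction $(\gamma v)^{-1}\, n_s\, (\gamma v) \cdot n_s^{-1}$ lies in any prescribed $K_0$ provided $|s|\cdot |t|$ is sufficiently small. The choice of $n_s$ is dictated by the identity $\gamma = w_U\eta(w_{U^\prime}^\prime)^{-1}$: one picks the root subgroup of $N$ whose $\gamma^{-1}$-conjugate pairs nondegenerately, via the commutator, with the chosen direction in $V_U$. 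For $|t|$ outside a sufficiently large compact set, one can then choose $s$ so that $|s|\cdot|t|$ is small while $\psi(\lambda s)\ne 1$, and the chain
\[
W(\gamma v) = W(n_s \gamma v) = \psi_N(n_s)\, W\bigl(\gamma v\cdot k\bigr) = \psi_N(n_s)\,W(\gamma v),
\]
with $k \in K_0$, forces $W(\gamma v) = 0$. Running over a basis of $V_U$ gives the desired $\Omega$.

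The main obstacle is the explicit construction and verification of (a)--(b): one has to match each root direction of $V_U$ (both the $x$-coordinates and the hermitian $y$-coordinates) to a suitable simple root subgroup of $N$ and carry out the commutator computation with entries in $E$. This is formally parallel to the symplectic calculation of \cite[Lemma~3.2]{LMb}, the only modifications being the bookkeeping forced by the involution $x\mapsto x^\vee$ and by the replacement of symmetric matrices with elements of $\mathfrak{u}_n$; I do not expect essentially new difficulties beyond these adaptations.
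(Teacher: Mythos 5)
Your overall strategy --- pin down the support of $W(\gamma\cdot)$ on $V_-$ by playing the left $(N,\psi_N)$-equivariance against the right $K_0$-invariance, and reduce modulo $V_\gamma$ to a slice inside $\eta(w_{U^\prime}^\prime)V_U\eta(w_{U^\prime}^\prime)^{-1}$ --- is the right one, and matches what the paper does by deferring to \cite[Lemma~4.5]{LMc} and \cite[Lemma~3.2]{LMb}. But the mechanism you describe in step (b) and in the final chain does not actually close the argument, and the gap is not cosmetic.

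First, the chain
\[
W(\gamma v) = W(n_s \gamma v) = \psi_N(n_s)\, W(\gamma v\cdot k) = \psi_N(n_s)\,W(\gamma v)
\]
is internally inconsistent: the first equality already presupposes $\psi_N(n_s)=1$, whereas the conclusion requires $\psi_N(n_s)\ne1$. What one needs is
$\psi_N(n_s)W(\gamma v)=W(n_s\gamma v)=W(\gamma v\cdot(\gamma v)^{-1}n_s(\gamma v))=W(\gamma v)$,
for which the hypothesis is $(\gamma v)^{-1}n_s(\gamma v)\in K_0$ --- not that the commutator correction $(\gamma v)^{-1}n_s(\gamma v)\cdot n_s^{-1}$ lies in $K_0$. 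With your formulation you would only obtain $\psi_N(n_s)W(\gamma v)=W(\gamma v n_s)$, which gives nothing unless $n_s$ is itself so small that $\psi_N(n_s)=1$.

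Second, and more seriously, the condition ``$|s|\cdot|t|$ sufficiently small'' is incompatible with ``$\psi(\lambda s)\ne1$'' exactly in the regime you need. To have $\psi(\lambda s)\ne1$ the parameter $s$ must stay outside a fixed neighborhood of $0$, say $|s|\ge c>0$; but then for $|t|$ outside a large compact set, $|s|\cdot|t|\ge c|t|\to\infty$, so the commutator condition fails. As stated, your step cannot produce vanishing for $|t|$ large, which is the only case that matters. The actual argument in \cite[Lemma~4.5]{LMc} (cf.\ \cite[Lemma~5.1]{LMa}) works because the relevant phase picks up a $t$-dependence through the commutator (the nontrivial character one compares against involves $st$, not $s$ alone), so that for $|t|$ large a \emph{small} $s$ already yields a nontrivial phase while keeping the conjugated element inside $K_0$; equivalently, the conjugation must be arranged so that the element of $N$ produced from the commutator, not the original $n_s$, is the one evaluated against $\psi_N$. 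Until you set this up explicitly --- and this is exactly where the interaction between $\gamma=w_U\eta(w_{U^\prime}^\prime)^{-1}$, the degenerate character $\psi_N$ (trivial on $U$), and the Heisenberg structure of $V/V^\#$ enters --- the argument has a genuine hole, not merely a bookkeeping burden.
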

%
%
%
%
%
%
%
%
%
%
%
\subsection{Explicit local descent}
Define the intertwining operator 
\[
M(\pi, s) = M(s) : \mathrm{Ind} \left( \mathbb{W}^{\psi_{N_M}}(\pi), s\right) \rightarrow 
\mathrm{Ind} \left( \mathbb{W}^{\psi_{N_M}}(\pi^\vee), -s \right) 
\]
by (the analytic continuation of)
\[
M(s)W(g) = \nu(g)^s \int_{U} W_s(\varrho(\mathfrak{t}) w_U u g) \, du
\]
where $\mathfrak{t} = \mathrm{diag}(1, -1, \dots, 1, -1)$ is introduced in order to preserve the character $\psi_{N_M}$.
\[
w_U = \begin{pmatrix} &1_{2n}\\ -1_{2n}&\end{pmatrix}.
\]
Then we know that $M(s)$ is holomorphic at $s = \frac{1}{2}$ when $\pi \in \mathrm{Irr}_{\rm gen} \mM$ is of unitary type
by Lapid--Mao~\cite[Proposition~2.1]{LMa}.
By abuse of notation, we will also denote by $M(\pi, s)$ the intertwining operator 
$\mathrm{Ind} \left( \mathbb{W}^{\psi_{N_M}^{-1}}(\pi), s\right) \rightarrow 
\mathrm{Ind} \left( \mathbb{W}^{\psi_{N_M}^{-1}}(\pi^\vee), -s \right)$ defined in the same way.

For simplicity we denote $M_s^\ast W := (M(s)W)_{-s}$ so that 
\[
M_s^\ast W = \int_{U} W_s(\varrho(\mathfrak{t}) w_U u \cdot ) \, du
\]
for $\mathrm{Re}(s) \gg_{\pi} 1$. Set $M^\ast W := M_{\frac{1}{2}}^\ast W$.
%
%
%
\begin{definition}[explicit local descent]
Suppose that $\pi \in \mathrm{Irr}_{\rm gen} \mM$ is of unitary type.
The we denote by $\mathcal{D}_\psi^\Upsilon(\pi)$ the space of Whittaker function on $G^\prime$ generated by 
\[
A^{\psi, \Upsilon}\left(M \left( \frac{1}{2}\right)W, \Phi, \cdot,-\frac{1}{2} \right), \quad W \in \mathrm{Ind} (\mathbb{W}^{\psi_{N_M}}(\pi)), \Phi \in \mathcal{S}(E^n).
\]
\end{definition}
We note that from \cite[Proposition~2.1]{LMa} and the remark below \cite[Lemma~5.1]{LMa},
 $\mathcal{D}_\psi^\Upsilon(\pi)$ is not zero.
%
%
%
%
%
%
%
%
%
%
%
\subsection{Good representations}
Let $\pi \in \mathrm{Irr}_{\rm gen} \mM$ and $\sigma \in \mathrm{Irr}_{\psi_{gen, N^\prime}^{-1}} G^\prime$
with the Whittaker model $\mathbb{W}^{\psi_{N^\prime}^{-1}} (\sigma)$.
For any $W^\prime \in \mathbb{W}^{\psi_{N^\prime}^{-1}} (\sigma)$ and $W \in \mathrm{Ind} (\mathbb{W}^{\psi_{N_M}} (\pi))$, 
we define
\[
J(W^\prime, W, s) := \int_{N^\prime \backslash G^\prime} W^\prime(g^\prime)  A_{\#}^{\psi, \Upsilon}(W_s, g^\prime)\, dg^\prime.
\]
By \cite[Proposition~3.1]{BAS}, $J$ converges for $\mathrm{Re}(s) \gg_{\pi, \sigma} 1$ and admits 
a meromorphic continuation in $s$. Moreover, by \cite[Proposition~4.1]{BAS}, for any $s \in \mC$, 
we can choose $W$ and $W^\prime$ such that $J(W^\prime, W, s) \ne 0$.
%
%
%
\begin{definition}
\label{def inner sigma}
Let $\pi \in \mathrm{Irr}_{\rm gen, ut} \mM$. We say $\pi$ is \textit{good} if it satisfies the following three assumptions.
\begin{enumerate}
\item $\mathcal{D}_\psi^\Upsilon(\pi)$ and $\mathcal{D}_{\psi^{-1}}^{\Upsilon^{-1}}(\mc(\pi))$ are irreducible.
\item $J(W^\prime, W, s)$ is holomorphic at $s = \frac{1}{2}$ for any $W^\prime \in \mathcal{D}_{\psi^{-1}}^{\Upsilon^{-1}}(\mc(\pi))$ 
and $W \in \mathrm{Ind}(\mathbb{W}^{\psi_{N_M}}(\pi) )$.
\item There exists a non-degenerate $G$-invariant bilinear form $[ \cdot, \cdot ]_\sigma$ 
on $\mathcal{D}_\psi^\Upsilon(\pi) \times \mathcal{D}_{\psi^{-1}}^{\Upsilon^{-1}}(\mc(\pi))$ such that
\[
J \left(W^\prime, W, \frac{1}{2} \right) = \left[W^\prime, A_{\#}^{\psi, \Upsilon}(M^\ast W, \cdot ) \right]_\sigma
\]
\end{enumerate}
\end{definition}
Good representations appear as a local component of certain automorphic representations as follows.
Let $F$ be a number field and $E$ a quadratic extension of $F$.
For an irreducible cuspidal automorphic representation $\pi$ of $\mathrm{GL}_{m}(\mA_E)$, whose central character is trivial on $\mA_F^\times$,
we say that $\pi$ is of unitary type if 
\[
\int_{\mA_F^\times \mathrm{GL}_{m}(F) \backslash \mathrm{GL}_{m}(\mA_F)} \varphi(h) \, dh \ne 0
\]
for some $\varphi \in \pi$.
We write $\mathrm{Cusp}_{\rm uni} \mathrm{GL}_{m}$ for the set of irreducible cuspidal 
automoprhic representations of unitary type.
Consider the set $\mathrm{UCusp} \mM$ of isobaric automorphic representations $\pi_1 \boxplus \cdots \boxplus \pi_k$
of pairwise inequivalent $\pi_i \in \mathrm{Cusp}_{\rm uni} \mathrm{GL}_{n_i}$ for $1 \leq i \leq k$ and $n_1 + \cdots n_k = 2n$.
Then we have the following crucial fact.
\begin{theorem}[Theorem~5.5 in \cite{LMa}]
If $\pi \in \mathrm{UCusp} \mM$, then its local component $\pi_v$ are good.
\end{theorem}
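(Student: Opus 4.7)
The plan is to prove goodness of $\pi_v$ by a global-to-local argument, globalizing $\pi_v$ to an isobaric automorphic representation $\pi \in \mathrm{UCusp} \mM$ and exploiting the properties of the global automorphic descent. First, I would arrange a globalization: by a standard construction (e.g.\ the local-global methods of Shahidi or the globalization of Henniart-Vign\'eras adapted to the unitary setting), one can realize any $\pi_v \in \mathrm{Irr}_{\rm gen, ut} \mM$ as the local component at $v$ of some $\pi = \pi_1 \boxplus \cdots \boxplus \pi_k \in \mathrm{UCusp} \mM$, with the other local places chosen so that control of their behavior (e.g.\ supercuspidal or spherical) is possible. It then suffices to verify the three conditions assuming $\pi$ is global.

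Next, for condition (1), the global automorphic descent $\mathcal{D}_\psi^\Upsilon(\pi)$ is a cuspidal representation of $G^\prime(\mA)$ whose irreducibility is known from \cite{Mo1}. I would verify directly that the global descent factorizes through the local descents $\mathcal{D}_{\psi_v}^{\Upsilon_v}(\pi_v)$ as a restricted tensor product, using the explicit integral formula defining $\mathcal{D}_\psi^\Upsilon$ together with the fact that at almost every place $\pi_v$ is unramified and the descent is the unique spherical constituent. Irreducibility of the global representation then forces irreducibility of each $\mathcal{D}_{\psi_v}^{\Upsilon_v}(\pi_v)$, and the corresponding statement for $\mathcal{D}_{\psi^{-1}}^{\Upsilon^{-1}}(\mathfrak{c}(\pi))$ follows symmetrically.

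For condition (2), I would consider the global analogue of $J$, obtained by pairing a cusp form $\varphi^\vee \in \mathcal{D}_{\psi^{-1}}^{\Upsilon^{-1}}(\mathfrak{c}(\pi))$ against the global Fourier-Jacobi transform $A_{\#}^{\psi,\Upsilon}$ of an Eisenstein section induced from $\pi \otimes \nu^s$. After an unfolding along $N^\prime$, this global integral factorizes into a product of the local zeta integrals $J(W^\prime_v, W_v, s)$, weighted by a quotient of Asai $L$-functions and the $L$-values $\prod_j L^S(j, \eta_{E/F}^j)$. The holomorphy of $J(W^\prime_v, W_v, s)$ at $s = \tfrac12$ is then reduced to controlling the poles of the global integral and of the ratio of $L$-functions at $s=\tfrac12$. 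This is where the unitarity of $\pi$ and the fact that $L^S(s, \pi, \mathrm{As}^+)$ is regular at $s=1$ (because the $\pi_i$ are of unitary, not conjugate-dual, type for $\mathrm{As}^+$) play the crucial role.

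Finally, for condition (3), I would construct the bilinear form from the global Petersson pairing on $\mathcal{D}_\psi^\Upsilon(\pi) \times \mathcal{D}_{\psi^{-1}}^{\Upsilon^{-1}}(\mathfrak{c}(\pi))$. Unfolding the global Petersson inner product of a descent form with $A_{\#}^{\psi,\Upsilon}(M^\ast W, \cdot)$, and comparing with the factorization of $J$ at $s=\tfrac12$ via the functional equation $M(\tfrac12)$, one obtains a global identity
\[
J\!\left(\varphi^\vee, \varphi, \tfrac12\right) \;=\; \bigl[\varphi^\vee,\, A_{\#}^{\psi,\Upsilon}(M^\ast \varphi,\cdot)\bigr]
\]
(up to explicit $L$-factors and volumes) which factorizes into the desired local identity at each place. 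Non-degeneracy of $[\,\cdot\,,\,\cdot\,]_\sigma$ follows from non-vanishing of $J$ at $s=\tfrac12$ for a suitable choice of data, guaranteed by \cite{BAS}. The main obstacle will be the holomorphy in step (2), since handling the poles of the partial Asai $L$-function and showing that the global integral is regular at $s=\tfrac12$ demand rather delicate use of the structure of isobaric sums of unitary-type cuspidals, together with careful choice of test vectors to avoid spurious contributions at the globalized auxiliary places.
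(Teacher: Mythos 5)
The paper does not prove this statement --- it is quoted verbatim from Lapid--Mao \cite[Theorem~5.5]{LMa}, and the remark immediately after records that the two working assumptions (Assumptions~4.2 and 5.2) used in that reference have since been removed by \cite{Mo1} and \cite{BAS}. So the paper's own ``proof'' is a citation; your proposal attempts to reconstruct the argument of \cite{LMa} from scratch, which is a different and much larger task.

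More importantly, your opening paragraph misreads the hypothesis. The theorem supplies a global $\pi \in \mathrm{UCusp}\,\mM$ and asks you to show each local component is good; it does not ask you to realize an arbitrary $\pi_v \in \mathrm{Irr}_{\rm gen, ut}\,\mM$ as such a local component. The globalization step you propose as ``first'' is therefore not needed, and it is also a strictly stronger assertion than what is being proved --- globalization of an arbitrary local unitary-type generic representation to an element of $\mathrm{UCusp}\,\mM$ is a nontrivial matter (a version of it, restricted to good representations built from square-integrable data via \cite[Corollary~A.8]{ILM}, is used in the reduction to the tempered case in Proposition~\ref{first reduction prp}, but it is not a consequence of this theorem and would require its own proof). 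Once the hypothesis is read correctly, the remaining three paragraphs of your proposal give a plausible sketch of the Lapid--Mao global-to-local mechanism: factoring the global cuspidal descent (irreducibility from \cite{Mo1}) through local descents, factoring the global Rankin--Selberg integral into local $J$-integrals times partial $L$-factors to get holomorphy at $s=\tfrac12$, and constructing the local bilinear form from the global Petersson pairing, with non-vanishing from \cite{BAS}. That outline is in the right spirit, but absent the detailed unfolding computations it remains at the level of a plan rather than a proof, and the spurious globalization step should be dropped.
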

\begin{Remark}
In \cite[Theorem~5.5]{LMa}, they assume two working assumptions 4.2 and 5.2.
The working assumption 5.2 is proved by Ben-Artzi and Soudry \cite{BAS} and 
the working assumption 4.2 is proved by the author \cite{Mo1}.
\end{Remark}
The definition of good representations was introduced and discussed in \cite[5.3]{LMa}.
In particular, if $\pi$ is good and $\sigma^\prime = \mathcal{D}^{\Upsilon^{-1}}_{\psi^{-1}}(\pi)$,
then there exists a constant $c_\pi$ such that for any $W^\prime$
\begin{equation}
\label{ILM 3.7}
\int_{N^\prime}^{st} [\sigma(n)W^\prime, (W^\prime)^{\vee}]_\sigma \psi_{N^\prime}(n) \, dn
= c_\pi W^\prime(e) (W^\prime)^\vee(e).
\end{equation}
More explicitly, for any $W \in \mathrm{Ind}(\mathbb{W}^{\psi_{N_M}}(\pi))$
and $W^{\wedge} \in \mathrm{Ind}(\mathbb{W}^{\psi_{N_M}^{-1}}(\pi)$ we have
\[
\int_{N^\prime}^{st} J(A_{\#}^{\psi^{-1}, \Upsilon^{-1}}(M^\ast W^\wedge , \cdot n), W, \frac{1}{2}) \psi_{N^\prime}(n) \, dn
=c_\pi A_{\#}^{\psi^{-1}, \Upsilon^{-1}}(M^\ast W^\wedge, e)   A_{\#}^{\psi, \Upsilon}(M^\ast W, e).  
\]
In the rest of the paper we will prove the following statement:
\begin{theorem}
\label{main theorem}
For any unitarizable $\pi \in \mathrm{Irr}_{\rm gen, ut} \mM$ which is good we have $c_\pi = \omega_{\pi}(\tau)$.
\end{theorem}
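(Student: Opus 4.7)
\emph{Proof proposal.} The plan is to follow the strategy of Lapid--Mao \cite{LMb} in the metaplectic setting, adapting it to the skew-Hermitian even unitary group $G^\prime = \mathrm{U}_{2n}$ and the additional character $\Upsilon$ of $E^\times$. The argument has three conceptual stages.

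\textbf{Step 1: Reduction to the tempered case.} Any unitarizable good $\pi \in \mathrm{Irr}_{\mathrm{gen, ut}} \mM$ is obtained via the Langlands classification from tempered inducing data. Using compatibility of the local descent $\mathcal{D}^\Upsilon_\psi(\pi)$, the intertwining operator $M(s)$ at the critical point $s = 1/2$, and the bilinear pairing $[\cdot, \cdot]_\sigma$ with parabolic induction, one shows that the desired identity $c_\pi = \omega_\pi(\tau)$ propagates from the tempered inducing datum to $\pi$ itself. After this step it suffices to treat tempered $\pi$.

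\textbf{Step 2: Rewriting the identity via a functional equation.} Following Sections~4--6 of the paper (which mirror Section~4 of \cite{LMb}), I would derive a functional equation of the shape
\[
A^{\psi, \Upsilon}\bigl(M(s)W, \Phi, \cdot, -s\bigr) = \kappa(s, \pi) \cdot A^{\psi^{-1}, \Upsilon^{-1}}\bigl(\widetilde{W}, \widetilde{\Phi}, \cdot, s\bigr),
\]
where the proportionality factor $\kappa(s, \pi)$ encodes an Asai gamma factor, the Weil-representation constants $\alpha_\psi$ and $\Upsilon_{M^\prime}$ arising from (2.3a)--(2.3c), and the normalization from the measure convention of Section~2.4. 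The derivation uses the compact-support statement of Lemma~3.2 and the equivariance of Lemma~3.1 to commute stable integration past $M(s)$, combined with a Jacquet-type unfolding of the inner integral.

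\textbf{Step 3: Extraction of $\omega_\pi(\tau)$.} Substitute the functional equation into the defining equation \eqref{ILM 3.7} for $c_\pi$ at $s = 1/2$. The holomorphy of $J(W^\prime, W, s)$ at this point, guaranteed by \cite{BAS}, isolates the surviving contribution. Using the identity $L(s, \pi, \mathrm{As}^-) = L(s, \pi \otimes \Upsilon, \mathrm{As}^+)$ between the two Asai $L$-functions and tracking the normalization factors, the product of constants collapses to $\omega_\pi(\tau)$.

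\textbf{Principal obstacle.} The hardest part is Step 2: establishing the functional equation with the correct proportionality factor. Compared with the metaplectic case, the unitary group setting introduces two new complications---the character $\Upsilon$ of $E^\times$ and the eighth-root-of-unity Weil constant $\alpha_\psi$---both of which must be tracked through a long chain of variable changes and stable integrations. The appearance of $\omega_\pi(\tau)$ in the final answer is ultimately traced to how $\Upsilon$, evaluated implicitly at an element $\tau$ with $\mc(\tau) = -\tau$, interacts with the base change from $\pi$ and the aforementioned Asai $L$-function relation.
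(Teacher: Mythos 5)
Your Step~1 is in the right spirit but glosses over the mechanism: the paper's reduction (Proposition~\ref{first reduction prp}) does not argue directly from Langlands classification compatibility but instead deforms the parameter $\underline{s}$ of $\pi(\underline{s})$, globalizes a dense set of tempered points via \cite[Corollary~A.8]{ILM} and Kim--Krishnamurthy base change to establish goodness and the identity there, and then uses holomorphy in $\underline{s}$ together with constancy of $\omega_{\pi(\underline{s})}(\tau)$ to conclude $c_{\pi(\underline{s})}$ is constant. This is a genuine argument that needs to be made, not a formal compatibility.

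Steps~2 and~3 contain a real gap, and it is the central one. You propose to extract $\omega_\pi(\tau)$ from a functional equation for $A^{\psi,\Upsilon}$ whose proportionality factor $\kappa(s,\pi)$ involves an Asai gamma factor, and then to use the relation $L(s,\pi,\mathrm{As}^-)=L(s,\pi\otimes\Upsilon,\mathrm{As}^+)$ to make the constants collapse. That gamma-factor functional equation is not what the paper uses in the proof of Theorem~\ref{main theorem}; it is the tool of Section~8 for the \emph{formal degree} identity \eqref{FQ}, and applied in Section~3--7 it would compare $c_\pi$ with a formal degree rather than pin down $c_\pi$ itself. The functional equation actually used in the proof of the Main Identity is the self-symmetric identity $\underline{B}(W, M(s)W^\wedge, s)=\underline{B}(M(s)W, W^\wedge, -s)$ (Proposition~\ref{Proposition 7.1}, Corollary~\ref{Corollary 7.2}), which has no gamma factor at all. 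More importantly, the appearance of $\omega_\pi(\tau)$ is not traced to any $L$-function relation: it comes from the period and model-transition computations of \cite{Mo2}, specifically Theorem~\ref{Theorem 8.1} (the identity $f\ast E^\psi(W_s,t)|_{s=1/2}=\omega_\pi(\tau)^n A_e^\psi(M^\ast W)\int_{S_F}f$), the $H_\mM$-period functional $\mathfrak{P}^{H_\mM}$ for $H_\mM=\mathrm{GL}_{2n}(F)$-distinguished representations, and the realization of the Langlands quotient in $C^{\rm sm}(H\backslash G)$ with $H=\mathrm{Sp}_{4n}(F)$. Those are the inputs that produce the sign $\omega_\pi(\tau)^{n}$ and $\omega_\pi(\tau)^{n+1}$ in Corollaries~\ref{cor8.2} and~\ref{cor8.5}, whose combination yields $\omega_\pi(\tau)$. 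Without identifying these period-based results your argument does not reach the target, and the mechanism you suggest (Asai $L$-function relation plus gamma-factor normalization) would not deliver $\omega_\pi(\tau)$ in the main identity.
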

\begin{Remark}
As remarked in Introduction, from the definition, we have $\omega_\pi(\tau) = \omega_\sigma(-1)$.
\end{Remark}
The following special case for our application to formal degree conjecture.
\begin{corollary}
\label{Cor3.6}
Let $\pi$ be an irreducible representation of $\mathrm{GL}_{2n}(E)$ of the form 
$\pi = \tau_1 \times \cdots \times \tau_k$ where $\tau_i$ are mutually inequivalent irreducible discrete series representations
of $\mathrm{GL}_{n_i}(E)$ such that $n=n_1+\cdots + n_k$ and $L(s, \tau_i, \mathrm{As})$ has a pole at $s=0$.
Then $\pi$ is good and $\sigma = \mathcal{D}_{\psi^{-1}}^{\Upsilon^{-1}}(\mc(\pi))$
is discrete series. Moreover, 
\[
\int_{N^\prime}^{st} J(W, W^\prime, \frac{1}{2}) \psi_{N^\prime}(n) \, dn
=\omega_\sigma(-1) W(e)   A_{\#}^{\psi, \Upsilon}(M^\ast W^\prime, e).  
\]
for any $W \in \mathbb{W}^{\psi_{N^\prime}^{-1}}(\sigma)$ and $W^\prime \in \mathrm{Ind}(\mathbb{W}^{\psi_{N_M}}(\pi))$.
\end{corollary}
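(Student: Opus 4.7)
The plan is to deduce the corollary from Theorem~\ref{main theorem} via a globalization argument, using the theorem just above (a consequence of \cite[Theorem~5.5]{LMa}) which asserts that every local component of a representation in $\mathrm{UCusp}\,\mM$ is good. The first step is to realize the given non-archimedean $F$ as the completion at a place $v_0$ of a totally real number field, equipped with a quadratic extension $E/F$ of number fields localizing correctly at $v_0$. For each $i$ I would globalize the local discrete series $\tau_i$ to a cuspidal automorphic representation $\Pi_i$ of $\mathrm{GL}_{n_i}(\mA_E)$ of unitary type whose component at $v_0$ is $\tau_i$; the hypothesis that $L(s,\tau_i,\mathrm{As})$ has a pole at $s=0$ is precisely the local compatibility at $v_0$ that allows the global $\Pi_i$ to land in $\mathrm{Cusp}_{\mathrm{uni}}$. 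At an auxiliary finite place one prescribes distinct supercuspidal local factors to ensure pairwise inequivalence of the $\Pi_i$. Then $\Pi := \Pi_1 \boxplus \cdots \boxplus \Pi_k \in \mathrm{UCusp}\,\mM$ satisfies $\Pi_{v_0} = \pi$, so $\pi$ is good by the cited theorem.

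Given goodness, the next step is to identify $\sigma := \mathcal{D}_{\psi^{-1}}^{\Upsilon^{-1}}(\mc(\pi))$ as a discrete series representation. Since each $\tau_i$ is a discrete series representation whose Langlands parameter is irreducible and conjugate-self-dual of the sign picked out by the pole of $L(s,\tau_i,\mathrm{As})$ at $s=0$, the parameter of $\mc(\pi)$ decomposes as a multiplicity-free direct sum of such irreducible conjugate-self-dual parameters of the correct sign; this is exactly the shape of the Langlands parameter of a tempered discrete series L-packet on quasi-split $\mathrm{U}_{2n}$. Combined with the irreducibility of $\mathcal{D}_{\psi^{-1}}^{\Upsilon^{-1}}(\mc(\pi))$ furnished by condition~(1) of Definition~\ref{def inner sigma}, this forces $\sigma$ to be an irreducible discrete series. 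The central character identity $\omega_\sigma(-1) = \omega_\pi(\tau)$ is recorded in the remark following Theorem~\ref{main theorem}.

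The stable-integral identity then follows from Theorem~\ref{main theorem}, which supplies $c_\pi = \omega_\pi(\tau) = \omega_\sigma(-1)$, substituted into the explicit form of \eqref{ILM 3.7} stated just after Definition~\ref{def inner sigma}:
\[
\int_{N^\prime}^{\mathrm{st}} J\bigl(A_{\#}^{\psi^{-1},\Upsilon^{-1}}(M^\ast W^\wedge,\cdot\,n),\, W^\prime,\,\tfrac12\bigr)\,\psi_{N^\prime}(n)\,dn \;=\; \omega_\sigma(-1)\, A_{\#}^{\psi^{-1},\Upsilon^{-1}}(M^\ast W^\wedge,e)\, A_{\#}^{\psi,\Upsilon}(M^\ast W^\prime,e).
\]
Since $\sigma$ is irreducible, the explicit descent map $W^\wedge \mapsto A_{\#}^{\psi^{-1},\Upsilon^{-1}}(M^\ast W^\wedge,\cdot)$ surjects onto $\mathbb{W}^{\psi_{N^\prime}^{-1}}(\sigma)$, and translating the first argument of $J$ by $n$ amounts to applying $\sigma(n)$ to the corresponding Whittaker function; rewriting in these terms yields the identity as stated in the corollary, for arbitrary $W \in \mathbb{W}^{\psi_{N^\prime}^{-1}}(\sigma)$.

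The hard part will be the globalization step. When the $\tau_i$ are all supercuspidal one can produce $\Pi_i$ by a standard Poincar\'e-series construction while enforcing the Asai-pole condition via an auxiliary place. For general discrete series one must invoke a simultaneous Shin--Templier type globalization that preserves the unitary-type invariant (equivalently, compatibility with the descent from $\mathrm{GL}$ to quasi-split $\mathrm{U}_{2n}$); this is essentially the input underlying \cite[Theorem~5.5]{LMa}. Once such a $\Pi$ has been produced, the remainder of the argument is bookkeeping with Theorem~\ref{main theorem} and Definition~\ref{def inner sigma}.
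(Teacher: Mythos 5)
Your overall plan (globalize, invoke the goodness criterion from \cite[Theorem~5.5]{LMa}, then substitute $c_\pi=\omega_\pi(\tau)$ from Theorem~\ref{main theorem} into the defining identity \eqref{ILM 3.7}) is the same as the paper's. The last paragraph — translating $J(A^{\psi^{-1},\Upsilon^{-1}}_\#(M^\ast W^\wedge,\cdot\,n),W',\tfrac12)$ into the form with $W\in\mathbb{W}^{\psi_{N'}^{-1}}(\sigma)$, using surjectivity of the explicit descent — is correct. But the middle two ingredients each have a gap.

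First, the globalization. You propose to globalize each local discrete series $\tau_i$ (which is $\mathrm{GL}_{n_i}(F)$-distinguished by the Asai-pole hypothesis) directly to a cuspidal $\Pi_i$ of \emph{unitary type} on $\mathrm{GL}_{n_i}(\mA_E)$ with $\Pi_{i,v_0}=\tau_i$. This is the genuinely hard part, as you yourself flag, and you do not supply a reference or argument; local distinction is necessary but not obviously sufficient, and there is no off-the-shelf theorem producing such a globalization on the $\mathrm{GL}$-side. The paper avoids this entirely by globalizing on the unitary-group side: Theorem~\ref{descent BC local} (in the appendix) exhibits an irreducible generic discrete series $\sigma_0$ of $\mathrm{U}_{2n}(F)$ with $l(\sigma_0)=\pi$, globalizes $\sigma_0$ to a globally generic cuspidal $\Sigma$ via \cite[Corollary~A.6]{ILM}, and then takes $\Pi=\mathrm{BC}(\Sigma)\in\mathrm{UCusp}\,\mM$ with $\Pi_{v_0}=\pi$. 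That route produces the required globalization unconditionally.

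Second, the discrete-series claim. You argue from the shape of the Langlands parameter of $\mc(\pi)$ plus irreducibility of $\mathcal{D}_{\psi^{-1}}^{\Upsilon^{-1}}(\mc(\pi))$, but irreducibility alone does not force the descent to be a discrete series; one must know that the descent realizes the inverse of local base change, i.e.\ that the $L$-parameter of $\sigma$ is indeed the conjugate-self-dual decomposition you describe. That compatibility is precisely the content of Theorem~\ref{descent BC local}, proved in the appendix by a converse-theorem argument that does not presuppose the local Langlands correspondence for $\mathrm{U}_{2n}$. Your argument implicitly assumes this compatibility (or LLC for $\mathrm{U}_{2n}$ together with compatibility of $\mathcal{D}$ with it), which is a circularity unless you cite the appendix result — which the paper's proof simply does, reducing the whole corollary to ``By Theorem~\ref{descent BC local} $\sigma$ is discrete series; by \cite[Corollary~A.6]{ILM} and the proof of Theorem~\ref{descent BC local} $\pi$ is good; apply Theorem~\ref{main theorem}.''
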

\begin{proof}
By Theorem~\ref{descent BC local}, we find that $\sigma$ is discrete series representation.
Moreover, by \cite[Corollary~A.6]{ILM} (see also proof of Theorem~\ref{descent BC local}), we see that $\pi$ is good.
Then our assertion follows from Theorem~\ref{main theorem}.
\end{proof}
%
%
%
%
%
%
%
%
%
%
%
%
%
%
%
%
%
%
%
%
%
%
%
%
%
%
%
%
\subsection{A reduction to tempered case}
In this section, we shall reduce Theorem~\ref{main thm}  to the case of tempered good representations.
For our reduction, we shall need lemma concerning analytic properties of local integrals.
In the same argument as the proof of \cite[Lemma~4.6]{LMc},
we can prove the following lemma using  Soudry--Tanay~\cite[Proposition~2.1]{ST14}
\begin{lemma}
\label{lemma4.6}
Suppose that $\pi$ is an irreducible generic admissible representation of $\mathbb{M}$ and $\alpha \in \mR$ with $\alpha > e(\pi)$.
Then for any $W \in \mathrm{Ind} (\mathbb{W}^{\psi_{N_M}} (\pi))$ and $\Phi \in \mathcal{S}(E^n)$,
there exists $c >0$ such that for any $t^\prime = \varrho^\prime(t) \in T^\prime$ with $t= (t_1, \dots, t_n) \in \mathbb{M}^\prime$,
$k \in K^\prime$ and  $s \in \mC$, we have $A^{\psi, \Upsilon}(W, \Phi, tk^\prime, s) = 0$
unless $|t_i| \leq c |t_{i+1}|$ for $i=1, \dots, n$ with $t_{n+1}=1$ in which case
\[
|A^{\psi, \Upsilon}(W, \Phi, \varrho^\prime(t)k^\prime, s)| \ll \delta_{B^\prime}(t^\prime)^{\frac{1}{2}} |\det (t)|^{\mathrm{Re}(s) + \frac{1}{2}-\alpha}.
\]
\end{lemma}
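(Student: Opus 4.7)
The plan is to mimic the proof of \cite[Lemma~4.6]{LMc} in the unitary setting, replacing the metaplectic Weil representation by the representation $\omega_{\psi_{N_\mM}^{-1}}^{\Upsilon^{-1}}$ of Section~2.5 and invoking \cite[Proposition~2.1]{ST14} for the required Whittaker gauge estimate on $\mM$. The key observation is that evaluating $A^{\psi,\Upsilon}(W,\Phi,\cdot,s)$ at a torus element factors, up to harmless scalars of modulus one, through a Whittaker function of $\pi$ on $\mM$ at a diagonal element $m(t)$ whose entries are built from $t_1,\ldots,t_n$ and their images under $g \mapsto g^\ast$, multiplied by a Weil-representation value on $\Phi$ at $\xi_n \cdot t$.

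First, by the equivariance \eqref{equiv A} and the compactness of $K^\prime$, I reduce to the case $k^\prime = e$ with constants uniform in $k^\prime$. Next, using \eqref{2.3a}, I pull the scalar factor $\alpha_{\psi^{-1}}(\varrho^\prime(t))\,\Upsilon_{M^\prime}(\varrho^\prime(t))^{-1}\,\nu^\prime(\varrho^\prime(t))^{1/2}$ out of the integrand; this replaces $\Phi(\xi_n)$ by $\Phi(\xi_n \cdot t)=\Phi(0,\ldots,0,t_n)$, so the compact support of $\Phi$ forces $|t_n|\leq c$, which is precisely the required bound with $t_{n+1}=1$. Then I conjugate the integration variable $v$ over $V_\gamma\backslash V$ by $\eta(\varrho^\prime(t))$, which normalizes both $V$ and $V_\gamma$; the Jacobian combines with the factor coming from the Iwasawa decomposition of $\gamma\,\eta(\varrho^\prime(t))$ to yield the correct power of $|\det t|_E$. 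By Lemma~\ref{Lemma 3.2}, the remaining $v$-integration is supported in a fixed compact subset of $V$ independent of $t$, so it contributes only a bounded factor.

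After these reductions, the integrand is pointwise dominated by a constant times $\delta_P^{1/2}(\varrho(m(t)))\,\nu(\varrho(m(t)))^{\mathrm{Re}(s)}\,|W_\pi(m(t))|$, where $W_\pi \in \mathbb{W}^{\psi_{N_\mM}}(\pi)$ is the Whittaker function underlying $W$. Applying \cite[Proposition~2.1]{ST14} to $W_\pi$ yields the remaining support condition $|t_i|\leq c|t_{i+1}|$ for $i<n$ together with the standard gauge bound on $|W_\pi(m(t))|$ by the square root of the modular character of the Borel subgroup of $\mM$ times the decay factor $|\det t|^{-\alpha}$, valid for any $\alpha>e(\pi)$. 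The main obstacle, and the bulk of the work, is the bookkeeping: one must track the change-of-variables Jacobian from conjugation by $\eta(\varrho^\prime(t))$, the Iwasawa factor for $\gamma\,\eta(\varrho^\prime(t))$, the effect of the $\ast$-involution on $\mM$, and the extra scalars from $\alpha_{\psi^{-1}}$ and $\Upsilon_{M^\prime}$ (which are of modulus one and hence harmless for the absolute bound), and verify that all modular-character contributions collapse to precisely $\delta_{B^\prime}(t^\prime)^{1/2}\,|\det t|^{\mathrm{Re}(s)+1/2-\alpha}$.
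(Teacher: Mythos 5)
Your approach matches the paper's exactly: the author gives no independent proof here, merely noting that the argument of \cite[Lemma~4.6]{LMc} carries over verbatim to the unitary setting once one replaces the metaplectic Weil representation input by the estimate of \cite[Proposition~2.1]{ST14}. Your reductions (factoring out the $K'$-dependence via \eqref{equiv A}, extracting the modulus-one scalars and $\nu'(t')^{1/2}$ from \eqref{2.3a} to obtain $|t_n|\le c$ from the compact support of $\Phi$, conjugating the $v$-integral by $\eta(\varrho'(t))$ and invoking Lemma~\ref{Lemma 3.2} for the compactly supported remainder, then dominating by a Whittaker gauge estimate on $\mM$ via \cite[Proposition~2.1]{ST14}) are precisely the steps that adaptation requires.
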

Similarly as the proof of \cite[Lemma~4.12]{LMc}, the following lemma readily follows from the above lemma.
\begin{lemma}
\label{j conv}
Suppose that $\sigma \in \mathrm{Irr}_{\psi_{N^\prime}^{-1}, gen} G^\prime$ is essentially discrete series 
representation and $\alpha = e(\pi)$. Then there exists $\delta > 0$ such that 
$J(W^\prime, W, \Phi, s)$ is absolutely convergent locally uniformly (hence, holomorphic) 
for $\mathrm{Re}(s) \geq \alpha - \delta - \frac{1}{2}$
where  $W^\prime \in \mathbb{W}^{\psi_{N^\prime}^{-1}} (\sigma)$, $W \in \mathrm{Ind} (\mathbb{W}^{\psi_{N_M}} (\pi))$
and $\Phi \in \mathcal{S}(E^n)$.
Similarly, for any $W \in \mathrm{Ind} (\mathbb{W}^{\psi_{N_M}} (\pi))$, $W^\vee \in \mathrm{Ind} (\mathbb{W}^{\psi_{N_M}^{-1}} (\pi))$,
$\Phi, \Phi^\vee \in \mathcal{S}(E^n)$,
\[
J\left( A^{\psi^{-1}, \Upsilon^{-1}} \left(M \left(\frac{1}{2} \right)\mc(W^\vee), \Phi^\vee, \cdot, -\frac{1}{2} \right), W, \Phi, s \right)
\]
converges absolutely and locally uniformly (and hence, is holomorphic) for $\mathrm{Re} s > 2 \alpha - \frac{1}{2}$.
In particular, if $\pi$ is unitarizable then $J(W^\prime, W, \Phi, s)$ is holomorphic at $s = \frac{1}{2}$ for any
$W^\prime \in \mathcal{D}_{\psi^{-1}}(\pi)$, $W \in \mathrm{Ind} (\mathbb{W}^{\psi_{N_M}} (\pi))$ and $\Phi \in \mathcal{S}(E^n)$.
\end{lemma}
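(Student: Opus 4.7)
The plan is to follow the strategy of \cite[Lemma~4.12]{LMc}, combining the Iwasawa decomposition of $G^\prime$ with the pointwise torus bound of Lemma~\ref{lemma4.6} and the standard asymptotics of Whittaker functions of essentially square-integrable representations.

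First I would use $G^\prime = N^\prime T^\prime K^\prime$ together with the $(N^\prime, \psi_{N^\prime}^{-1})$-equivariance of $W^\prime$ and the $(N^\prime, \psi_{N^\prime})$-equivariance of $A_\#^{\psi,\Upsilon}(W_s, \cdot)$ (coming from Lemma~\ref{Lemma 3.1}) to collapse $J(W^\prime, W, \Phi, s)$ into an integral over $K^\prime \times (Z_{G^\prime} \backslash T^\prime)$ of $W^\prime(t^\prime k^\prime)\,A^{\psi,\Upsilon}(W, \Phi, t^\prime k^\prime, s)\,\delta_{B^\prime}(t^\prime)^{-1}$, after unwinding the implicit $V_+$-convolution in $A_\#^{\psi,\Upsilon}$. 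Picking $\alpha_0 > e(\pi) = \alpha$ arbitrarily close to $\alpha$, Lemma~\ref{lemma4.6} restricts the effective support to the cone $\{|t_i| \leq c\,|t_{i+1}|\}$ and yields
\[
|A^{\psi,\Upsilon}(W, \Phi, \varrho^\prime(t) k^\prime, s)| \ll \delta_{B^\prime}(t^\prime)^{1/2}\,|\det t|^{\mathrm{Re}(s)+1/2-\alpha_0}.
\]

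Since $\sigma$ is essentially square-integrable and generic, the Casselman--Shahidi asymptotic expansion provides a majorant $|W^\prime(t^\prime k^\prime)| \ll \delta_{B^\prime}(t^\prime)^{1/2} \prod_{i=1}^{n-1} |t_i/t_{i+1}|^{\beta_i}\,|t_n|^{\beta_n}$ on the positive chamber of $T^\prime$, with all $\beta_i > 0$ (this strict positivity is exactly the essentially-discrete-series input). Multiplying the two estimates, changing coordinates to $u_i = t_i / t_{i+1}$, and using the strict positivity of each $\beta_i$, the torus integral is dominated by a product of one-dimensional integrals that converge absolutely and locally uniformly in $s$ whenever $\mathrm{Re}(s) + \tfrac{1}{2} - \alpha_0 + \min_i \beta_i > 0$. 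Letting $\alpha_0 \searrow \alpha$ and setting $\delta := \min_i \beta_i > 0$ proves the first assertion; integration over the compact $K^\prime$ is harmless.

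For the second assertion, I would apply Lemma~\ref{lemma4.6} a second time, to the descent Whittaker function $W^\prime := A^{\psi^{-1},\Upsilon^{-1}}(M(\tfrac{1}{2})\mc(W^\vee), \Phi^\vee, \cdot, -\tfrac{1}{2})$ viewed as a function on $G^\prime$, replacing the Casselman--Shahidi majorant by one of the form $\delta_{B^\prime}(t^\prime)^{1/2}\,|\det t|^{-\alpha_0}$ on a constrained cone. The product of the two bounds then controls the integrand by $|\det t|^{\mathrm{Re}(s)+1/2-2\alpha_0}$, yielding absolute convergence for $\mathrm{Re}(s) > 2\alpha - \tfrac{1}{2}$. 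The third assertion is immediate: when $\pi$ is unitarizable one has $e(\pi) < \tfrac{1}{2}$, so $\alpha - \delta - \tfrac{1}{2} < \tfrac{1}{2}$ lies in the first convergence region, giving holomorphy of $J(W^\prime, W, \Phi, \cdot)$ at $s = \tfrac{1}{2}$. The main subtlety is establishing the strict positivity of the $\beta_i$ uniformly in the parameters; this is exactly the essentially-discrete-series hypothesis on $\sigma$, and beyond that the argument is routine bookkeeping with the two applications of Lemma~\ref{lemma4.6}.
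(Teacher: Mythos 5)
Your overall strategy matches the paper's intended route: the paper simply cites the proof of \cite[Lemma~4.12]{LMc} and states that the result "readily follows" from Lemma~\ref{lemma4.6}, and what you have reconstructed is indeed the Iwasawa-plus-torus-estimate argument that underlies that reference. Your use of the lemma's vanishing cone, the two pointwise bounds, the cancellation of the $\delta_{B^\prime}^{\pm 1}$ factors against the measure, and the one-variable reduction $u_i = t_i/t_{i+1}$ with a small positive margin coming from the essentially-discrete-series exponents is the right skeleton. The double application of Lemma~\ref{lemma4.6} for the second assertion is exactly what the $2\alpha$ in the threshold $\mathrm{Re}\,s > 2\alpha - \tfrac12$ is signalling, and you identified that correctly.

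There is one genuine slip in the third assertion. You derive holomorphy at $s = \tfrac12$ by observing that it lies in "the first convergence region". But the first assertion requires $\sigma = \mathcal{D}_{\psi^{-1}}(\pi)$ to be essentially discrete series, and for a general unitarizable $\pi \in \mathrm{Irr}_{\rm gen, ut}\,\mM$ the descent $\mathcal{D}_{\psi^{-1}}(\pi)$ need not be essentially square-integrable (it may be tempered, or even a proper Langlands quotient in the non-tempered case), so that route is not available. The third assertion should instead be read off from the \emph{second}: any $W^\prime \in \mathcal{D}_{\psi^{-1}}(\pi)$ is by definition a linear combination of $A^{\psi^{-1},\Upsilon^{-1}}(M(\tfrac12)W^\vee, \Phi^\vee, \cdot, -\tfrac12)$, so the second assertion applies with no discrete-series hypothesis; since unitarizability gives $e(\pi) < \tfrac12$, one has $2\alpha - \tfrac12 < \tfrac12$ and $s=\tfrac12$ sits in an open sub-region of $\mathrm{Re}\,s > 2\alpha - \tfrac12$, giving holomorphy there. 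A second, more cosmetic point: after the substitution $u_i = t_i/t_{i+1}$ one has $|\det t| = \prod_i |u_i|^i$, so the exponent on $|u_i|$ becomes $\beta_i + i(\mathrm{Re}\,s + \tfrac12 - \alpha_0)$ and the convergence threshold is governed by $\min_i \beta_i / i$ rather than $\min_i \beta_i$; this only affects the numerical value of $\delta$, which the statement leaves unspecified, so it is harmless, but the bookkeeping should reflect it.
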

When the base field $F$ is real field, we may prove similar results in the same argument as the proof of \cite[Lemma~4.11]{LMa}.
For later use, we would like to state them here.
\begin{lemma}
\label{4.11 arch}
Suppose that $\pi$ is an irreducible generic admissible representation of $\mathbb{M}$ and $\alpha \in \mR$ with $\alpha > e(\pi)$.
Then for any $W \in \mathrm{Ind} (\mathbb{W}^{\psi_{N_M}} (\pi))$ and $\Phi \in \mathcal{S}(E^n)$,
there exists a compact set $D \subset \mC$ and $m \geq 1$ we have
 \[
|A^{\psi, \Upsilon}(W, \Phi, \varrho^\prime(t)k^\prime, s)| \ll \delta_{B^\prime}(t^\prime)^{\frac{1}{2}} |\det (t)|^{\mathrm{Re}(s) + \frac{1}{2}-\alpha}
(1+|t_i t_{i+1}^{-1}|)^{-m}
\]
for any $t^\prime = \varrho^\prime(t) \in T^\prime$ with $t= (t_1, \dots, t_n) \in \mathbb{M}^\prime$,
$k \in K^\prime$ and  $s \in D$.
\end{lemma}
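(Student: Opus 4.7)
The plan is to adapt the argument of \cite[Lemma~4.6]{LMc} (our Lemma~\ref{lemma4.6}) to the archimedean setting, mirroring \cite[Lemma~4.11]{LMa}. In the $p$-adic case the Whittaker function $W$ on $\mM$, restricted to the torus in the positive chamber, has compact support in the directions of the simple coroots; this is precisely what forces the support condition $|t_i| \leq c|t_{i+1}|$. In the real setting, the hard support condition is replaced by Schwartz-type rapid decay in the ratios $|t_i/t_{i+1}|$, which is exactly what the factor $(1+|t_i t_{i+1}^{-1}|)^{-m}$ encodes.

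First, I would unwind the definition of $A^{\psi, \Upsilon}(W, \Phi, \varrho^\prime(t)k^\prime, s)$ by moving $\varrho^\prime(t)k^\prime$ through the integral over $V_\gamma \backslash V$. Using the equivariance \eqref{equiv A} together with the Weil representation formulas \eqref{2.3a}--\eqref{2.3c}, the integrand rewrites (after a change of variable in $v$) as
\[
W_s(\gamma v)\,\alpha_{\psi^{-1}}^{-1}(\varrho^\prime(t))\,\Upsilon_{M^\prime}(\varrho^\prime(t))\,\nu^\prime(\varrho^\prime(t))^{s+\tfrac{1}{2}}\,\Phi'(\xi_n\cdot t)
\]
times a bounded factor, where $\Phi' \in \mathcal{S}(E^n)$ depends only on $\Phi$ and $k'$. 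The Schwartz decay of $\Phi'$ in $\xi_n\cdot t$ already gives rapid decay of the integrand in the ``outer'' components of $t$.

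Second, the essential input is the archimedean asymptotic estimate for Whittaker functions of generic admissible representations of $\mM$. By Wallach's bound, made explicit in Soudry--Tanay \cite[Proposition~2.1]{ST14}, for $\alpha > e(\pi)$ and any $m \geq 1$ there exist a compact $D \subset \mC$ and a continuous Sobolev seminorm such that
\[
|W_s(\varrho(a)k)| \ll \delta_{B_{\mM}}(a)^{1/2}\,|\det a|^{-\alpha+\mathrm{Re}(s)}\,\prod_{i=1}^{2n-1}\bigl(1+|a_i/a_{i+1}|\bigr)^{-m},
\]
uniformly in $k \in K$ and $s \in D$. Applying this on the torus part of the Iwasawa decomposition of $\gamma v \varrho^\prime(t)$, and combining with the Schwartz decay of $\Phi'$ in the coordinates of $v$, the integral over $V_\gamma \backslash V$ converges absolutely and yields a bound of the desired shape.

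The main obstacle is the bookkeeping: one must identify the simple-root ratios $a_i/a_{i+1}$ of the $\mM$-torus appearing after Iwasawa decomposition with the claimed ratios $t_i/t_{i+1}$ of the $\mM^\prime$-torus, taking into account the conjugation by $\gamma$ and the splitting $V = V_M \ltimes V_U$. Once this identification is established and the Weil representation factor $\alpha_{\psi^{-1}}$ is checked to be bounded uniformly in $t$, the remaining estimates are routine and follow the same pattern as in \cite[Lemma~4.11]{LMa}; hence I would omit the explicit calculations.
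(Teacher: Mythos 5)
Your plan is the intended one: the paper offers no argument for this lemma beyond invoking ``the same argument as the proof of [LMa, Lemma~4.11],'' and you correctly identify the two structural changes from the $p$-adic Lemma~\ref{lemma4.6} --- the hard support condition $|t_i| \leq c|t_{i+1}|$ is replaced by rapid decay in the simple-root ratios, and the Weil-representation factors from \eqref{2.3a}--\eqref{2.3c} together with the Schwartz decay of $\Phi$ control the remaining coordinates of $t$ and the integral over $V_\gamma\backslash V$.

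However, the central reference in your argument is misplaced. You cite [ST14, Proposition~2.1] as providing the archimedean Whittaker asymptotics; that estimate is the non-archimedean one, which is exactly what the paper uses for the $p$-adic Lemma~\ref{lemma4.6}. For $F=\mR$ the needed estimate --- moderate growth together with rapid decay in the directions of the simple roots for smooth vectors --- is Wallach's, and the paper itself points to [Wa1, Theorem~7.2.1] and [Wa2, Theorem~15.2.4] where it handles the remaining archimedean issues; this is also what [LMa, Lemma~4.11] relies on. A secondary slip is the quantifier order in your intermediate bound for $W_s(\varrho(a)k)$: you write ``for any $m\geq 1$ there exist $D$ and a seminorm,'' whereas the lemma only asserts the existence of some $m$ (and in practice $m$ is produced by a fixed Sobolev seminorm of $W$ and $\Phi$, so the order matters for uniformity of the implied constant in $s\in D$). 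Finally, you defer the genuine content --- tracking how the ratios $t_i/t_{i+1}$ on the $\mM^\prime$-torus emerge from the Iwasawa decomposition of $\gamma v\varrho^\prime(t)$ after conjugating $\varrho^\prime(t)$ past $\gamma$ --- as ``routine bookkeeping.'' Since the paper also leaves this implicit by citing [LMa, Lemma~4.11], your proposal is at a comparable level of detail and takes essentially the same route, but the incorrect reference would not yield the estimate you need and should be replaced by Wallach's bound.
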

\begin{Remark}
\label{remark arch}
In the same argument as the proof of \cite[Lemma~4.12]{LMa} with Lemma~\ref{4.11 arch}, Lemma~\ref{j conv} holds over real field.
\end{Remark}

Let us go back to our situation, namely we suppose that the base field is non-arhchimedean local field.
Then our aim in this section is to prove the following reduction.
\begin{proposition}
\label{first reduction prp}
Suppose that \eqref{main thm eq} holds for any good $\pi \in \mathrm{Irr}_{\rm temp, ut} \mM$ 
such that $\mathcal{D}_{\psi^{-1}}^{\Upsilon^{-1}}(\pi)$ is tempered.
Then Theorem~\ref{main theorem} holds.
\end{proposition}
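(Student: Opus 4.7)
The plan is a deformation and analytic continuation argument in the spirit of \cite[Section~4]{LMb} and \cite[Section~4]{ILM}. Let $\pi \in \mathrm{Irr}_{\rm gen, ut} \mM$ be good and unitarizable. By the classification of the unitary dual of $\mathrm{GL}_{2n}(E)$ together with the unitary-type constraint, one may realize $\pi$ as the specialization at $\underline{s} = \underline{s}_0$ of a holomorphic family
\[
\pi_{\underline{s}} = \mathrm{Ind}_Q^{\mM}\bigl(\pi_0 \otimes \chi_{\underline{s}}\bigr),
\]
where $\pi_0$ is an irreducible tempered representation of unitary type on a Levi subgroup of $\mM$, $\chi_{\underline{s}}$ is an unramified character depending holomorphically on a complex parameter $\underline{s}$, and where for $\underline{s}$ in a non-empty open subset $\Omega$ (for instance, near the imaginary axis) the representation $\pi_{\underline{s}}$ is irreducible, tempered, good in the sense of Definition~\ref{def inner sigma}, and has tempered descent $\mathcal{D}_{\psi^{-1}}^{\Upsilon^{-1}}(\mc(\pi_{\underline{s}}))$.

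First I would show that both sides of the identity $c_{\pi_{\underline{s}}} = \omega_{\pi_{\underline{s}}}(\tau)$ extend to holomorphic functions on a connected neighborhood of $\underline{s}_0$ meeting $\Omega$. The right-hand side depends on $\underline{s}$ only through the central character and is an entire monomial in $\underline{s}$. For the left-hand side, I would fix flat sections $W_{\underline{s}} \in \mathrm{Ind}(\mathbb{W}^{\psi_{N_M}}(\pi_{\underline{s}}))$ and $W^\wedge_{\underline{s}} \in \mathrm{Ind}(\mathbb{W}^{\psi_{N_M}^{-1}}(\pi_{\underline{s}}))$, invoking \cite[Proposition~2.1]{LMa} and \cite[Proposition~4.1]{BAS} to ensure that the normalizing product
\[
A_{\#}^{\psi, \Upsilon}(M^\ast W_{\underline{s}}, e) \cdot A_{\#}^{\psi^{-1}, \Upsilon^{-1}}(M^\ast W^\wedge_{\underline{s}}, e)
\]
is holomorphic and non-vanishing near $\underline{s}_0$. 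I would then use Lemma~\ref{j conv} to control the inner integral $J(\cdot, \cdot, \tfrac{1}{2})$ and a uniform version of Lemma~\ref{Lemma 3.2} to control the support of the integrand of the outer $N^\prime$-integral, thereby showing that the stable integral in the characterizing formula \eqref{ILM 3.7} stabilizes uniformly for $\underline{s}$ in a neighborhood of $\underline{s}_0$. Dividing by the normalizing factor yields a holomorphic extension of $\underline{s} \mapsto c_{\pi_{\underline{s}}}$.

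With both sides realized as holomorphic functions of $\underline{s}$ on a connected neighborhood of $\underline{s}_0$ that meets $\Omega$ in a set with accumulation point $\underline{s}_0$, analytic continuation from the hypothesis applied at tempered $\underline{s}$ forces $c_\pi = c_{\pi_{\underline{s}_0}} = \omega_{\pi_{\underline{s}_0}}(\tau) = \omega_\pi(\tau)$. The main obstacle I expect is the set-up of the deformation: verifying that $\pi_{\underline{s}}$ remains good along the family, and that the explicit descent stays tempered and irreducible on a set sufficiently large to apply the hypothesis. This relies on the compatibility of $\mathcal{D}_\psi^\Upsilon$ with parabolic induction and on uniform irreducibility statements in the family, which form the technical heart of the analogous argument in \cite[Section~4]{LMb}.
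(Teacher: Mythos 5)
Your overall strategy — write $\pi$ as a specialization at $\underline{s}_0$ of a family $\pi_{\underline{s}}$ induced from a tempered unitary-type representation twisted by unramified characters, show both sides of $c_{\pi_{\underline{s}}} = \omega_{\pi_{\underline{s}}}(\tau)$ are analytic in $\underline{s}$, and continue from a set of tempered $\underline{s}$ with accumulation point — is exactly the route the paper takes, starting from Matringe's classification (Theorem~\ref{mat}) and deforming the parameters $(a_1,\dots,a_k)$ in $\mathfrak{D}$.

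However, there is a genuine gap at the step you yourself flag as "the main obstacle": establishing that $\pi_{\underline{s}}$ is \emph{good} (Definition~\ref{def inner sigma}) with tempered irreducible descent for $\underline{s}$ in a large enough subset of $i\mR^k$. You propose to handle this by "compatibility of $\mathcal{D}_\psi^\Upsilon$ with parabolic induction and uniform irreducibility," but no such local statement is available here, and the hypothesis of the proposition only covers $\pi$ that are \emph{already known} to be good, so you cannot simply assume goodness propagates along the family. The paper closes this gap globally rather than locally: it chooses a quadratic extension of number fields $K/L$ with $L_{v_0}=F$, $K_{v_0}=E$, uses the correspondence of Theorem~\ref{backward lifting} to attach to $\delta_1\times\cdots\times\delta_l$ a representation $\rho$ of a smaller unitary group, invokes \cite[Corollary~A.8]{ILM} to realize $\sigma(\underline{s})$ for a dense open set of $\underline{s}\in i\mR^k$ as the $v_0$-component of an irreducible cuspidal automorphic representation, verifies $\mathcal{D}_\psi^{\Upsilon^{-1}}(\pi(\underline{s}))=\sigma(\underline{s})$ via the local base change compatibility, and then invokes \cite[Theorem~5.5]{LMa} — that local components of $\pi\in\mathrm{UCusp}\,\mM$ are good — to conclude that $\pi(\underline{s})$ is good at those $\underline{s}$. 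Temperedness of $\sigma(\underline{s})$ comes from Konno's result \cite[Corollary~4.3]{TK}. Without this globalization input your continuation argument has nothing to continue from; the analytic-continuation scaffolding (holomorphy via Lemma~\ref{j conv} and the nonvanishing of the normalizing factor via \cite[Proposition~2.1]{LMa}) is otherwise in line with what the paper does.
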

%
%
%
%
%
%
%
%
%
%
%
%
%
%
For a proof, recall the following classification of generic representations of unitary type by Matringe.
\begin{theorem}[Theorem~5.2 in Matringe~\cite{Ma10}]
\label{mat}
The set $\mathrm{Irr}_{\rm gen, ut}\mM$ consists of the irreducible representations of the form
\[
\pi = \mc(\sigma_1) \times \sigma_1^\vee \times \cdots  \mc(\sigma_k) \times \sigma_k^\vee
\times \delta_1 \times \cdots \times \delta_l 
\]
where $\sigma_1, \dots, \sigma_k$ are essentially square-integrable representations,
and $\delta_1, \dots, \delta_l$ are square-integrable representations of unitary type
(i.e., $L(0, \tau_i, \mathrm{As}) = \infty$ for all $i$).
Here, $\times$ denotes the parabolic induction of $\mathrm{GL}_m$.
\end{theorem}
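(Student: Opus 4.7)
The plan is to follow Matringe's strategy, interpreting ``of unitary type'' for a generic $\pi \in \mathrm{Irr}_{\rm gen} \mM$ as distinction by $H_\mM = \mathrm{GL}_{2n}(F)$ (up to the appropriate quadratic twist), equivalently the existence of a pole of the Asai L-function $L(s, \pi, \mathrm{As}^-)$ at $s = 0$. The argument then splits into an easy ``if'' direction, constructing distinguishing functionals on the alleged parametrizing representations, and a hard ``only if'' direction, which is a classification via the Bernstein--Zelevinsky geometric lemma.

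For the easy direction I would construct an $H_\mM$-invariant linear form on each representation of the stated shape. The block $\mc(\sigma) \times \sigma^\vee$ with $\sigma$ essentially square-integrable is automatically distinguished by the classical open-orbit argument: among the double cosets $P \backslash \mM / H_\mM$ there is an open orbit whose stabilizer carries exactly the character matching $\delta_P^{1/2}$ twisted by $\mc$, so Mackey theory produces a nonzero $H_\mM$-invariant functional. For each block $\delta_j$ the invariant form is given by the hypothesis that $\delta_j$ is unitary-type. The multiplicativity of distinction under parabolic induction, together with the corresponding factorization of Asai L-functions (Kable; Anandavardhanan--Kable--Tandon), then assembles these pieces into a nonzero $H_\mM$-invariant form on the full induced representation $\pi$, proving it is of unitary type.

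For the hard direction, start with $\pi \in \mathrm{Irr}_{\rm gen, ut} \mM$ and write $\pi \cong \tau_1 \times \cdots \times \tau_r$ as a parabolic induction of essentially square-integrable $\tau_i$ via the Langlands classification of generic representations. Apply the Bernstein--Zelevinsky geometric lemma to $\mathrm{Hom}_{H_\mM}(\pi, \mathbf{1})$: the resulting filtration is indexed by $P \backslash \mM / H_\mM$, equivalently by involutions $s$ on $\{1, \dots, r\}$ compatible with the Galois action $\mc$. Each orbit contributes a Hom on a $\mc$-twisted Jacquet module, and one must show that only ``matching'' involutions survive, namely those with either $s(i) = i$ and $\tau_i$ itself unitary-type, or $s(i) = j \ne i$ and $\tau_j \cong \mc(\tau_i)^\vee$. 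The nonvanishing of $\mathrm{Hom}_{H_\mM}(\pi, \mathbf{1})$ then forces the multiset $\{\tau_1, \dots, \tau_r\}$ to decompose into such pairs and singletons; irreducibility of the full induction allows one to reorder the factors into the stated form.

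The main obstacle is the orbit-by-orbit vanishing analysis in the geometric-lemma step: ruling out non-matching involutions requires a delicate combination of central-character obstructions on the twisted Jacquet module and the fact that the $\mc$-twisted Jacquet module of a non-unitary-type discrete series carries no $H$-invariant functional. The essential analytic input here is the equivalence between ``unitary type'' and the pole of $L(s, \tau, \mathrm{As}^-)$ at $s = 0$ for a discrete series $\tau$, which is precisely what drives the entire dichotomy and must be established for the ``baseline'' discrete-series case before the inductive orbit analysis can be closed.
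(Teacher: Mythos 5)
The paper does not give a proof of this statement. It is quoted verbatim as Theorem~5.2 of Matringe~\cite{Ma10} and used as a black box in the reduction to tempered representations (Proposition~\ref{first reduction prp}); there is no argument for it anywhere in this text or in the appendices. So there is nothing internal to this paper to compare your attempt against.

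That said, your sketch does track the strategy that Matringe actually uses, so let me assess it on its own terms. Interpreting ``unitary type'' locally as distinction by $\mathrm{GL}_{2n}(F)$ (equivalently, a pole of the Asai $L$-function at $s=0$) is the right starting point, the Bernstein--Zelevinsky geometric lemma with its orbit parametrization by $\mathfrak{c}$-compatible involutions is the right engine for the ``only if'' direction, and the open-orbit Mackey argument for distinction of $\mathfrak{c}(\sigma)\times\sigma^\vee$ is standard. There are two places where the sketch is thinner than what a complete proof needs. First, in the ``if'' direction, distinction is \emph{not} multiplicative in parabolic induction in general: a non-zero $H_\mathbb{M}$-invariant functional on the inducing data lives on the closed orbit, whereas the filtration from the geometric lemma has the open orbit as a quotient; one has to show the relevant step in the filtration actually carries an invariant functional, which is a genuine argument and not a formal consequence of distinction of the blocks. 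Matringe handles this by producing the functional explicitly at the level of the appropriate orbit, not by an abstract multiplicativity principle. Second, in the ``only if'' direction, you write that ``irreducibility of the full induction allows one to reorder the factors,'' but the harder issue is showing that an involution with a non-matching fixed point (i.e., $s(i)=i$ with $\tau_i$ \emph{not} of unitary type) contributes nothing, and this requires the full strength of the discrete-series dichotomy (distinguished vs.\ $\eta_{E/F}$-distinguished, cf.\ Kable and Anandavardhanan--Kable--Tandon, used elsewhere in the paper) plus a central-character argument to kill contributions from ``wrong'' pairings $s(i)=j$ with $\tau_j\not\simeq\mathfrak{c}(\tau_i)^\vee$. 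You do name these ingredients in your final paragraph, so I would classify this as a correct outline with the details deferred to exactly the right places, matching the structure of the cited proof rather than giving an independent route.
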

We shall prove the reduction following the argument in \cite[3.6]{LMa}.
\begin{proof}
Let $\pi \in \mathrm{Irr}_{\rm gen, ut} \mM$ be unitarizable and good.
Then by Theorem~\ref{mat}, we may write 
\[
\pi = \mc(\sigma_1)[a_1] \times \sigma_1^\vee[-a_1] \times \cdots \times \mc(\sigma_k)[a_k] \times \sigma_k^\vee[-a_k]
\times \delta_1 \times \cdots \times \delta_l
\]
where $\sigma_i \in \mathrm{Irr}_{\rm sqr} \mathrm{GL}_{n_i}(E)$, $\delta_i  \in \mathrm{Irr}_{\rm sqr} \mathrm{GL}_{m_j}(E)$ 
and $(a_1, \dots, a_k) \in \mC^k$ is in the domain
\[
\mathfrak{D} = \left\{(s_1, \dots, s_k) \in \mC^k : -\frac{1}{2} < \mathrm{Re} s_i < \frac{1}{2} \text{ for all $i$} \right\}.
\]
Here, we write $\sigma[s]$ for the twist of $\sigma \in \mathrm{Irr} \mathrm{GL}_m(E)$ by $|\det |^s$
with $s \in \mC$.
For $\underline{s} = (s_1, \dots, s_k) \in \mC^k$, let us consider the following representation
\[
\pi(\underline{s}) := \mc(\sigma_1)[s_1] \times \sigma_1^\vee[-s_1] \times \cdots \times \mc(\sigma_k)[s_k] \times \sigma_k^\vee[-s_k]
\times \delta_1 \times \cdots \times \delta_l
\]
which is irreducible for $\underline{s} \in \mathfrak{D}$ by Zelevinsky~\cite[Theorem~9.7]{Ze}.

Take a quadratic extension of number fields $K \slash L$ such that for some finite place $v_0$,
$L_{v_0} = F$ and $K_{v_0} = E$.
Let $\rho \in \mathrm{Irr}_{\rm sqr, gen} \mathrm{U}_{\mathbf{m}}$ be the representation 
corresponding to $\delta_1 \times \cdots \times \delta_l$ under the correspondence established in Theorem~\ref{backward lifting}
with $\mathbf{m} = m_1 + \cdots + m_l$.
Then we put
\[
\sigma(\underline{s}) :=  \mc(\sigma_1)[s_1] \times \sigma_1^\vee[-s_1] \times \cdots \times \mc(\sigma_k)[s_k] \times \sigma_k^\vee[-s_k] \rtimes \rho
\]
By Konno~\cite[Corollary~4.3]{TK}, for a dense open subset of $\underline{s} \in i \mR^k$, $\sigma(\underline{s}) \in \mathrm{Irr}_{\rm temp} \mathrm{U}_{2n}$.
Further, by \cite[Corollary~A.8]{ILM}, for a dense open subset of $\underline{s} \in i \mR^k$,
$\sigma(\underline{s})$ is a local component of an irreducible cuspidal automorphic representation of $\mathrm{U}_{2n}(\mA_L)$.
From our proof of Theorem~\ref{backward lifting}, we find that 
its base change lift to $\mathrm{GL}_{2n}(\mA_K)$ by Kim--Krishnamurthy~\cite{KK} gives a globalization of $\pi(\underline{s})$,
and $\mathcal{D}_\psi^{\Upsilon^{-1}}(\pi(\underline{s})) = \sigma(\underline{s})$.
In particular, by \cite[Theorem~5.5]{LMa}, $\pi(\underline{s})$ is good for such $\underline{s}$.
Further, it is tempered since $\underline{s} \in i \mR^k$.
We note that $\omega_{\pi(\underline{s})}(\tau) = \omega_{\tau_1}(\tau) \cdots \omega_{\tau_l}(\tau) = \omega_{\pi}(\tau)$.
Hence, from our assumption, for $\underline{s} \in i\mR^k$ given in the above discussion,  we have
\begin{equation}
\label{constant ft}
c_{\pi(\underline{s})} = \omega_{\pi(\underline{s})}(\tau) = \omega_{\pi}(\tau)
\end{equation}

On the other hand, in the same argument as \cite[3.6]{LMe}, Lemma imply that both sides of 
are holomorphic functions of $\underline{s} \in \mathfrak{D}$.
This shows that $c_{\pi(\underline{s})}$ is a meromorphic function on $\underline{s} \in \mathfrak{D}$, and thus 
$c_{\pi(\underline{s})}$ is a constant function of $\underline{s}$ by \eqref{constant ft}.
In particular, we have
\[
c_{\pi} =  \omega_{\pi}(\tau).
\]
\end{proof}
In the reminder of the paper we will prove Theorem~\ref{main theorem}, i.e., the Main Identity
\[
\tag{MI}
\int_{N^\prime}^{st} ( \int_{N^\prime \backslash G^\prime} A_{\#}^{\psi, \Upsilon}(W_s, g) A_{\#}^{\psi^{-1}, \Upsilon^{-1}}(M^\ast W^\wedge, gu) \, dg)
\psi_{N^\prime}(u) \, du |_{s= \frac{1}{2}}
= \omega_{\pi}(\tau) A_{\#}^{\psi^{-1}, \Upsilon^{-1}}(M^\ast W^\wedge, e)A_{\#}^{\psi, \Upsilon}(M^\ast W, e)
\]
under the assumptions that $\pi \in \mathrm{Irr}_{\rm temp, ut} \mM$ is good and 
$\sigma:= \mathcal{D}_{\psi^{-1}}^{\Upsilon^{-1}}(\mc(\pi))$ is tempered.
%
%
%
%
%
%
%
%
%
%
%
%
%
%
\section{A bilinear form}
Following \cite{LMa}, we shall study the main identity.
A key ingredient of this section is the stability of the integral defining a Bessel functions,
which is given in Appendix~B.
Indeed, we apply Theorem~\ref{stability} to the function $A_{\#}^{\psi, \Upsilon}(W, g)$
(See also Remark~\ref{stability rmk}).
\begin{proposition}
\label{stable Y def}
Let $K_0 \in \mathcal{CSGR}(G)$.
Then the integral
\[
Y^{\psi, \Upsilon}(W, t)
:=
\int_{N^\prime}^{st} A_{\#}^{\psi, \Upsilon}(W, w_{U^\prime}^\prime w_0^{M^\prime} tn) \psi_{N^\prime}(n)^{-1} \, dn, \quad t \in T^\prime
\]
stabilizes uniformly for $W \in C(N \backslash G, \psi_N)^{K_0}$ and locally uniformly in $t \in T^\prime$.
In particular, $Y^{\psi, \Upsilon}(W_s, t)$ is entire in $s \in \mC$ and if $\pi \in \mathrm{Irr}_{gen} \mM$ and 
$W \in \mathrm{Ind}(\mathbb{W}^{\psi_{N_M}}(\pi))$ then $Y^{\psi, \Upsilon}(M_s^\ast W, t)$
is meromorphic in $s$.
Both $Y^{\psi, \Upsilon}(W_s, t)$ and $Y^{\psi, \Upsilon}(M_s^\ast W, t)$ are locally constant in $t$, uniformly in $s \in \mC$.

Finally, if we assume that $\pi \in \mathrm{Irr}_{meta, gen} \mM$ and that $\pi_\circ = \mathcal{D}_{\psi^{-1}}^{\Upsilon^{-1}}(\pi)$
is irreducible then for any $W^{\wedge} \in \mathrm{Ind}(\mathbb{W}^{\psi_{N_M}^{-1}}(\pi))$ we have
\[
Y^{\psi^{-1}, \Upsilon^{-1}}(M^\ast W^\wedge, t)
= \mathbb{B}_{\pi_\circ}^{\psi_{N_\circ}^{-1}} (w_{U^\prime}^\prime w_0^{M^\prime} t) A_{\#}^{\psi^{-1}, \Upsilon^{-1}}(M^\ast W^\wedge, e)
\]
\end{proposition}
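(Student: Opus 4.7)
The strategy is to reduce the proposition to the stability theorem for Bessel integrals given in Appendix~B (Theorem~\ref{stability}), applied to the Whittaker-type function $A_{\#}^{\psi, \Upsilon}(W, \cdot) \in C^{\rm sm}(N^\prime \backslash G^\prime, \psi_{N^\prime})$. Since $w_{U^\prime}^\prime w_0^{M^\prime}$ represents the longest Weyl element of $G^\prime$, the integral defining $Y^{\psi, \Upsilon}(W, t)$ is precisely the Bessel-type stable integral treated in that theorem, and pointwise stability follows directly.

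To promote pointwise to uniform stability, I would combine Theorem~\ref{stability} with Lemma~\ref{Lemma 3.2}: the latter provides, for any $K_0 \in \mathcal{CSGR}(G)$, a compact set $\Omega \in \mathcal{CSGR}(V_U)$ inside which $W(\gamma \cdot)|_{V_-}$ is supported uniformly for $W \in C(N \backslash G, \psi_N)^{K_0}$. This uniform support bound, together with the explicit level estimate in Theorem~\ref{stability}, allows the stabilization to occur at a single compact open subgroup of $N^\prime$, valid uniformly for all such $W$ and all $t$ in any prescribed compact subset of $T^\prime$. Once uniform stability is in hand, the analytic assertions are formal: $Y^{\psi, \Upsilon}(W_s, t)$ equals the integral of the holomorphic section $W_s$ over a fixed compact open, hence is entire in $s$; $Y^{\psi, \Upsilon}(M_s^\ast W, t)$ inherits the meromorphy of $M(s)$ supplied by \cite[Proposition~2.1]{LMa}; and local constancy in $t$ uniformly in $s \in \mC$ follows from the $K_0$-smoothness of the integrand combined with the uniform stabilizing level.

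For the final identity, the hypothesis that $\pi_\circ = \mathcal{D}_{\psi^{-1}}^{\Upsilon^{-1}}(\pi)$ is irreducible and generic places $W^\prime := A_{\#}^{\psi^{-1}, \Upsilon^{-1}}(M^\ast W^\wedge, \cdot)$ in the Whittaker model $\mathbb{W}^{\psi_{N^\prime}^{-1}}(\pi_\circ)$. The Bessel function $\mathbb{B}_{\pi_\circ}^{\psi_{N_\circ}^{-1}}$ from Appendix~B is characterized by the property that, for any such Whittaker function $W^\prime$ and any $t \in T^\prime$,
\[
\int_{N^\prime}^{\rm st} W^\prime(w_{U^\prime}^\prime w_0^{M^\prime} t n) \psi_{N^\prime}(n)^{-1} \, dn = \mathbb{B}_{\pi_\circ}^{\psi_{N_\circ}^{-1}}(w_{U^\prime}^\prime w_0^{M^\prime} t) \, W^\prime(e),
\]
the factorization resting on the uniqueness of the Whittaker functional on the irreducible representation $\pi_\circ$. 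Specializing this to our $W^\prime$ yields the asserted formula. I expect the main technical obstacle to be the careful interleaving of Lemma~\ref{Lemma 3.2} with the explicit level estimates in Theorem~\ref{stability}, so as to produce a single stabilizing compact open valid uniformly across $(W, t)$, and verifying that the character conventions employed in the appendix align with the Bessel integral as written here.
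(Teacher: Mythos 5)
Your proposal takes essentially the same approach as the paper: the paper itself gives no explicit proof of Proposition~\ref{stable Y def} beyond stating that one applies Theorem~\ref{stability} to the function $A_{\#}^{\psi, \Upsilon}(W, g)$ together with Remark~\ref{stability rmk}, which is exactly your reduction to the Bessel-integral stability theorem and the uniqueness-of-Whittaker-functional characterization of $\mathbb{B}_{\pi_\circ}^{\psi_{N_\circ}^{-1}}$. Your appeal to Lemma~\ref{Lemma 3.2} is a reasonable (if slightly imprecise) way of noting that the smoothness level of $A_{\#}^{\psi,\Upsilon}(W,\cdot)$ is controlled by $K_0$ alone, which is the only additional ingredient needed to upgrade Theorem~\ref{stability} to the stated uniformity.
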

We use Theorem~\ref{stability} in another way as follows,
which is proved in the same argument as \cite[Lemma~5.4]{LMb}. 
%
%
%
%
%
%
%
%
%
\begin{lemma}
\label{Lemma 5.4}
Let $W^\prime \in C^{sm}(N^\prime \backslash G^\prime, \psi_{N^\prime})$ and $(W^\prime)^\wedge \in C^{sm}(N^\prime \backslash G^\prime, \psi_{N^\prime}^{-1})$.
Assume that the function $(t, n) \in T^\prime \times N^\prime \mapsto W^\prime(w_0^\prime t n)$ is compactly supported.
(In particular, $W^\prime \in C_{c}^{sm}(N^\prime \backslash G^\prime, \psi_{N^\prime})$.)  
Then the iterated integral
\[
\int_{N^\prime}^{st} (\int_{N^\prime \backslash G^\prime}
W^\prime(g) (W^\prime)^\wedge(gu) \, dg) \psi_{N^\prime}(u) \, du
\]
is well-defined and is equal to 
\[
\int_{T^\prime}
\left( \int_{N^\prime} \delta_{B^\prime}(t) W^\prime(w_{U^\prime}^\prime w_0^{M^\prime} tn)
 \psi_{N^\prime}(n^{-1}) \, dn \right)
 \left( \int_{N^\prime}^{st} (W^\prime)^\wedge(w_{U^\prime}^\prime w_{0}^{M^\prime} tu) 
 \psi_{N^\prime}(u) \, du \right) \, dt.
\]
\end{lemma}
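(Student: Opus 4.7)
The plan is to open the inner integral $\int_{N^\prime \backslash G^\prime}$ via the open Bruhat cell, perform a change of variable in $N^\prime$ that decouples the $n$- and $u$-dependences, and finally justify a Fubini-type exchange with the stable integral.

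First I would note that $w_0^\prime = w_{U^\prime}^\prime \, w_0^{M^\prime}$ by direct matrix multiplication, so both parametrizations describe the same Bruhat cell. The hypothesis that $(t,n) \mapsto W^\prime(w_0^\prime tn)$ is compactly supported confines $W^\prime$ to the open cell $N^\prime w_0^\prime T^\prime N^\prime$. Parametrizing this cell by $(t,n) \mapsto w_0^\prime t n$, the invariant measure on $N^\prime \backslash G^\prime$ pulls back to $\delta_{B^\prime}(t)\, dt\, dn$, and so for each fixed $u \in N^\prime$ the inner integral is the genuinely (compactly) absolutely convergent double integral
\[
F(u) := \int_{N^\prime \backslash G^\prime} W^\prime(g)(W^\prime)^\wedge(gu)\, dg = \int_{T^\prime}\!\int_{N^\prime} \delta_{B^\prime}(t)\, W^\prime(w_0^\prime tn)\, (W^\prime)^\wedge(w_0^\prime tnu)\, dn\, dt.
\]

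Next, inside $\int_{N^\prime}^{st} F(u)\psi_{N^\prime}(u)\, du$ I would substitute $u \mapsto n^{-1}u$ in the outer variable for each fixed $n$. Because $du$ is left-Haar and $\psi_{N^\prime}$ is a character of the unipotent group $N^\prime$,
\[
(W^\prime)^\wedge(w_0^\prime tn\cdot u)\, \psi_{N^\prime}(u) \ \longmapsto\  (W^\prime)^\wedge(w_0^\prime tu)\, \psi_{N^\prime}(u)\, \psi_{N^\prime}(n)^{-1},
\]
so the $n$- and $u$-dependences separate. Granting a Fubini exchange, grouping the three factors produces the claimed right-hand side
\[
\int_{T^\prime} \delta_{B^\prime}(t)\!\left(\int_{N^\prime} W^\prime(w_0^\prime tn)\psi_{N^\prime}(n)^{-1}dn\right)\!\left(\int_{N^\prime}^{st}(W^\prime)^\wedge(w_0^\prime tu)\psi_{N^\prime}(u)du\right)\! dt.
\]

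The main obstacle is precisely this Fubini step, since the outer integral is stable rather than absolutely convergent. What is needed is that the stable integral $\int_{N^\prime}^{st}(W^\prime)^\wedge(w_0^\prime tu)\psi_{N^\prime}(u)\, du$ stabilizes on a single compact open subgroup $U_1 \subset N^\prime$ chosen \emph{uniformly} as $t$ ranges over the (compact) projection of the support of $W^\prime$ onto $T^\prime$. Once such a uniform $U_1$ is secured, the outer stable integral is literally the ordinary integral over $U_1$, all three integrations run over compact sets, and Fubini is legitimate. This uniform stabilization is of Bessel-integral type and is exactly the kind of input provided by the stability result of Appendix~B (and used in Proposition~\ref{stable Y def}); applied to the smooth function $(W^\prime)^\wedge$, together with an open compact subgroup of $G^\prime$ that fixes it on the right, this yields the required uniformity in $t$ and completes the proof.
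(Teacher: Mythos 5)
Your proof is correct and follows the same route the paper implicitly takes by deferring to \cite[Lemma~5.4]{LMb}: open-cell parametrization giving the measure $\delta_{B^\prime}(t)\,dt\,dn$ on $N^\prime\backslash G^\prime$, the substitution $u\mapsto n^{-1}u$ (legitimate once $U_2$ absorbs the compact $n$-support), and uniform-in-$t$ stabilization of the inner regularized integral, which simultaneously justifies the Fubini exchange and establishes the well-definedness of the outer stable integral. One precision worth making: the uniformity over the compact projection of $\mathrm{supp}\,W^\prime$ to $T^\prime$ is not quite read off from the statement of Theorem~\ref{stability} (which is per fixed $g$) nor from Proposition~\ref{stable Y def} (which concerns the specific functions $A_\#^{\psi,\Upsilon}(W,\cdot)$); it comes from combining Propositions~\ref{prp1}--\ref{prp2} with the joint compact-support assertion of Lemma~\ref{lmm1}, applied to $(W^\prime)^\wedge$ after rewriting $w_0^\prime t = (w_0^\prime t w_0^{\prime\,-1})\,w_0^\prime$ so that the torus variable sits to the left of the Weyl representative.
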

%
%
%
%
%
%
%
%
%
\subsection{}
\label{def of whittaker space}
In order to apply Lemma~\ref{Lemma 5.4} for $A_{\#}^{\psi, \Upsilon}(W_s, \cdot)$
we make a special choice of $W$.
Consider the $P$-invariant subspace $\mathrm{Ind}(\mathbb{W}^{\psi_{N_M}}(\pi))^\circ$
of $\mathrm{Ind}(\mathbb{W}^{\psi_{N_M}}(\pi))$ consisting of functions supported in the big cell $Pw_U P = Pw_U U$.
Any element of $\mathrm{Ind}(\mathbb{W}^{\psi_{N_M}}(\pi))^\circ$ is a linear combination of functions of the form
\begin{equation}
\label{(5.2)}
W(u^\prime m w_U u) \delta_P(m)^{\frac{1}{2}} W^M(m) \phi(u), m \in M, u, u^\prime \in U
\end{equation}
with $W^M \in \mathbb{W}^{\psi_{N_M}}(\pi)$ and $\phi \in C_c^\infty(U)$.
Let $\eta_{\mM}$ be the embedding $\eta_{\mM}(g) =\left(\begin{smallmatrix} g&\\ &I_n\end{smallmatrix}\right)$
of $\mM^\prime$ into $\mM$. Also let $\eta_M = \varrho \circ \eta_{\mM}$ so that $\eta_M(g) = \left(\begin{smallmatrix} g&&\\ &I_{2n}& \\ &&g^\ast \end{smallmatrix}\right)$
%
%
%
%
%
%
%
%
%
\begin{Definition}
Let $\mathrm{Ind}(\mathbb{W}^{\psi_{N_M}}(\pi))^\circ_\#$ be the linear subspace of $\mathrm{Ind}(\mathbb{W}^{\psi_{N_M}}(\pi))$
generated by $W^s$ as in \eqref{(5.2)} that satisfy the additional property that the function
$(t, n) \mapsto W^M(\eta_M(t w_0^{\mM^\prime} n))$ is compactly supported on $T_{\mM^\prime}^\prime \times N_{\mM^\prime}^\prime$,
or equivalently, that the function $W^M \circ \eta_M$ on $\mM^\prime$ is supported in the big cell 
$B_{\mM^\prime}^\prime w_0^{\mM^\prime} N_{\mM^\prime}^\prime$ and 
its support is compact module $N_{\mM^\prime}^\prime$.
\end{Definition}
Note that this space is non-zero by the proof of \cite[Lemma~6.13]{LMb}.Further, this space is invariant under $\eta(T^\prime) \ltimes N$.
\begin{lemma}
\label{Lemma 5.6}
For any $W \in \mathrm{Ind}(\mathbb{W}^{\psi_{N_M}}(\pi))^\circ_\#$, the function 
$A_{\#}^{\psi, \Upsilon}(W_s, w_{U^\prime}^\prime w_0^{M^\prime}tn)$ is compactly supported in $t \in T^\prime$
and $n \in N^\prime$ uniformly in $s \in \mC$.
\end{lemma}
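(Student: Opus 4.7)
The plan is to reduce to a single generator and then exploit the big-cell support hypothesis on $W^M \circ \eta_M$ directly via the integral formula for $A^{\psi, \Upsilon}$. By linearity, I may assume $W(u' m w_U u) = \delta_P(m)^{1/2} W^M(m) \phi(u)$ with $\phi \in C_c^\infty(U)$ and $W^M \circ \eta_M$ compactly supported modulo $N_{\mM^\prime}^\prime$ in the big cell $B_{\mM^\prime}^\prime w_0^{\mM^\prime} N_{\mM^\prime}^\prime \subset \mM^\prime$. Choosing $\Phi \in \mathcal{S}(E^n)$ with $\Phi \ast W = W$, I identify
\[
A_{\#}^{\psi, \Upsilon}(W_s, g) = \int_{V_\gamma \backslash V} W_s(\gamma v g)\, \omega_{\psi_{N_\mM}^{-1}}^{\Upsilon^{-1}}(v g)\Phi(\xi_n)\, dv.
\]
Applying Lemma~\ref{Lemma 3.2} to a right-invariance compact open subgroup $K_0 \subset K$ of $W$ (which, since $\nu(\cdot)^s$ is right $K$-invariant, works uniformly for every $W_s$), the $v$-integration is effectively over a compact set modulo $V_\gamma$.

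The heart of the argument is to analyze $\gamma v \eta(w_0^\prime t n)$ for $(t, n) \in T^\prime \times N^\prime$. Using $\gamma \eta(w_{U^\prime}^\prime) = w_U$ and $w_0^\prime = w_{U^\prime}^\prime w_0^{M^\prime}$, I rewrite it as
\[
w_U \cdot \bigl(\eta(w_{U^\prime}^\prime)^{-1} v \eta(w_{U^\prime}^\prime)\bigr) \cdot \eta(w_0^{M^\prime} t n),
\]
and decompose $n = n_M u'$ with $n_M \in N_{M^\prime}^\prime$ and $u' \in U^\prime$. A Bruhat-style manipulation, along representatives of $V_\gamma \backslash V$ in the effective compact range, brings the above argument into the form $u_1 \cdot \varrho(m_0(v, t, n_M)) \cdot w_U \cdot u_2$, where the Levi factor has the shape $m_0 = \eta_\mM(h(t, n_M)) \cdot n_0(v)$ with $n_0(v) \in N_\mM$ and $h(t, n_M) \in B_{\mM^\prime}^\prime w_0^{\mM^\prime} N_{\mM^\prime}^\prime$ depending essentially on $w_0^{\mM^\prime} t^{-1} (w_0^{\mM^\prime})^{-1} \cdot w_0^{\mM^\prime} n_M^{-1} (w_0^{\mM^\prime})^{-1}$ modulo $N_{\mM^\prime}^\prime$. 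Using the left $(N, \psi_N)$-equivariance of $W$ and the support hypothesis on $W^M \circ \eta_M$, the integrand then vanishes unless $h(t, n_M)$ lies in a fixed compact subset of $\mM^\prime$ modulo $N_{\mM^\prime}^\prime$, forcing $(t, n_M)$ into a compact subset of $T^\prime \times N_{M^\prime}^\prime$.

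For the $U^\prime$-coordinate $u'$, I apply Lemma~\ref{Lemma 3.1}: right translation by $u'$ on $G^\prime$ corresponds, up to a unitary Weil factor, to right translation of $W$ by $\eta(u')$, and since $\phi \in C_c^\infty(U)$, sufficiently large $u'$ shifts $\phi$ off its support and the integrand vanishes. Combining the three coordinates yields joint compactness of the support in $(t, n) \in T^\prime \times N^\prime$, and the uniformity in $s \in \mC$ is automatic because the $s$-dependence enters only through the continuous factor $\nu(\gamma v g)^s$, which does not alter supports, and $K_0$ was chosen independently of $s$. The principal obstacle is executing the Bruhat-type decomposition in paragraph two cleanly and producing the explicit formula for $h(t, n_M)$; this is a concrete matrix computation on $\mathrm{U}_{4n}$ directly analogous to the metaplectic case treated in \cite[Lemma 5.6]{LMb}, with the additional bookkeeping of the Weil-representation factor $\omega_{\psi_{N_\mM}^{-1}}^{\Upsilon^{-1}}(vg)\Phi(\xi_n)$, which is smooth in $v$ and remains bounded on the compact $v$-range furnished by Lemma~\ref{Lemma 3.2}, and hence does not interfere with the support analysis.
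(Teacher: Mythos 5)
Your proposal follows the same route the paper takes: the paper's proof of Lemma~\ref{Lemma 5.6} consists of a single sentence referring the reader to the proof of \cite[Lemma~5.6]{LMb}, and your sketch is precisely an unpacking of that metaplectic argument (generator form from Section~\ref{def of whittaker space}, compactness of the $v$-range via Lemma~\ref{Lemma 3.2} together with the choice of $\Phi$, the Bruhat-cell manipulation feeding into the support hypothesis on $W^M\circ\eta_M$, Lemma~\ref{Lemma 3.1} to handle the $U'$-coordinate, and the observation that $s$ enters only through $\nu(\cdot)^s$). You correctly identify that the matrix computation producing the Levi factor $h(t,n_M)$ is the substantive content you have not carried out, which is exactly the part the paper also declines to reproduce and instead imports from the metaplectic case.
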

\begin{proof}
A proof is identical to the proof of \cite[Lemma~5.6]{LMb}.
\end{proof}
%
%
%
%
%
%
%
%
%
\subsection{}
For $W \in \mathrm{Ind}(\mathbb{W}^{\psi_{N_M}}(\pi))_{\#}^\circ$ and $W^\vee \in \mathrm{Ind}(\mathbb{W}^{\psi_{N_M}^{-1}} (\pi^\vee))$,
we define
\[
B(W, W^\vee, s):=
\int_{N^\prime}^{st} \int_{N^\prime \backslash G^\prime} A_{\#}^{\psi, \Upsilon}(W_s, g) A_{\#}^{\psi^{-1}, \Upsilon^{-1}}(W_{-s}^\vee, gu)
\psi_{N^\prime}(u) \, du.
\]
By Lemma~\ref{Lemma 5.6}, we may apply the argument in the proof of \cite[Lemma~5.4]{LMb}.
Then we have
\[
\int_{T^\prime} \int_{N^\prime} \int_{\Omega} \delta_{B^\prime}(t)
A_{\#}^{\psi, \Upsilon}(W_s, w_{U^\prime}^\prime w_0^{M^\prime} tn) 
A_{\#}^{\psi^{-1}, \Upsilon^{-1}}(W_{-s}^\vee, w_{U^\prime}^\prime w_{0}^{M^\prime} tnu)
\psi_{N^\prime}(u) \, du \, dn \, dt
\]
for any sufficiently large $\Omega \in \mathcal{CSGR}(N^\prime)$. This implies that 
$B(W, W^\vee, s)$ is an entire function of $s$ and from Proposition~\ref{stable Y def}, we have
\begin{equation}
\label{5.3}
B(W, W^\vee, s)=
\int_{T^\prime} Y^{\psi, \Upsilon}(W_s, t) Y^{\psi^{-1}, \Upsilon^{-1}}(W_{-s}^\vee, t) \delta_{B^\prime}(t) \, dt
\end{equation}
for any $W \in \mathrm{Ind}(\mathbb{W}^{\psi_{N_M}}(\pi))_{\#}^\circ$ and $W^\vee \in \mathrm{Ind}(\mathbb{W}^{\psi_{N_M}^{-1}} (\pi))$.

Assume that $\pi \in \mathrm{Irr}_{\rm gen, ut} \mM$ and $\sigma = \mathcal{D}_{\psi^{-1}}^{\Upsilon^{-1}}(\mc(\pi))$ is irreducible.
Then for $W \in \mathrm{Ind}(\mathbb{W}^{\psi_{N_M}}(\pi))_{\#}^\circ$ and $W^\vee \in \mathrm{Ind}(\mathbb{W}^{\psi_{N_M}^{-1}} (\pi))$,
\begin{equation}
\label{5.4}
\text{the left-hand side of the Main Identity (MI) is $B \left(W, M\left(\frac{1}{2}\right)W^\vee, \frac{1}{2} \right)$}
\end{equation}
%
%
%
%
%
%
%
%
%
%
%
%
%
%
\section{Further analysis}
Fix an element $\varepsilon_1$ of the form $\ell_M(X)$ where $X \in \mathrm{Mat}_{\mn \times \mn}$
and the last row of $X$ is $-\xi_{\mn}$.
Then we can check that 
\[
\psi_{V_-}(\varepsilon_1^{-1} v \varepsilon_1) = \psi_{V_-}(v) \psi(v_{\mn, 2\mn +1}).
\]
For any $W \in C^{sm}(N \backslash G, \psi_N)$, define
\begin{equation}
\label{6.1}
A_e^\psi(W) :=
\int_{V_\gamma \backslash V_-} W(\gamma v \varepsilon_1) \psi_{V_-}(\varepsilon_1^{-1} v \varepsilon_1)^{-1} \, dv.
\end{equation}
\begin{lemma}
\label{Lemma 6.1}
For any $W \in C^{\rm sm}(N \backslash G, \psi_N)$, the integrand in is compactly supported on $V_\gamma \backslash V$
and we have $A_{\#}^{\psi, \Upsilon}(W, e) = A_{e}^\psi(W)$.
\end{lemma}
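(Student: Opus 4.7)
The plan is to choose $\Phi \in \mathcal{S}(E^n)$ satisfying $\Phi \ast W = W$ so that
\[
A_{\#}^{\psi, \Upsilon}(W, e) = \int_{V_\gamma \backslash V} W(\gamma v) \, \omega_{\psi^{-1}}^{\Upsilon^{-1}}(v) \Phi(\xi_n) \, dv,
\]
and then to unfold using the semidirect decomposition $V = V_+ \ltimes V_-$. Since $V_-$ is normal, every $v \in V$ is uniquely of the form $v_- v_+$, and since $V_\gamma \subset V_-$ acts on $V$ from the left, one gets $V_\gamma \backslash V \cong (V_\gamma \backslash V_-) \times V_+$ compatibly with Haar measures. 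Under the Heisenberg quotient $V / V_0 \simeq \mathcal{H}_Y$, the subgroup $V_+$ maps into $Y^+$ with coordinate $\underline{c}$ given by the last row of the parameter matrix, while $V_- = V_M^{\#} \ltimes V_U$ maps into $Y^- \oplus F$, since $V_M^{\#} \subset V_0$ has trivial image and the image of $v_U \in V_U$ lies in $Y^- \oplus F$. Applying \eqref{2.1a}, \eqref{2.1b}, \eqref{2.1c}, one obtains an identity
\[
\omega_{\psi^{-1}}^{\Upsilon^{-1}}(v_- v_+)\Phi(\xi_n) = \chi(v_-) \, \Phi(\xi_n + \underline{c})
\]
for an explicit character $\chi$ on $V_-$ absorbing the $\psi_V$-factor and the contribution of the Heisenberg image of $v_-$ evaluated at $\xi_n$.

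Next, since $V_+$ is abelian and isomorphic to $(E^n, +)$ via $\underline{c}$, I would change variables $v_+ \mapsto \varepsilon_1 v_+$, which shifts $\underline{c} \mapsto \underline{c} - \xi_n$ by the hypothesis $\underline{\varepsilon_1} = -\xi_n$. The inner $V_+$-integral then collapses via the convolution identity:
\[
\int_{V_+} W(\gamma v_- \varepsilon_1 v_+) \Phi(\underline{c}) \, dv_+ = (\Phi \ast W)(\gamma v_- \varepsilon_1) = W(\gamma v_- \varepsilon_1).
\]
This reduces matters to
\[
A_{\#}^{\psi, \Upsilon}(W, e) = \int_{V_\gamma \backslash V_-} W(\gamma v_- \varepsilon_1) \, \chi(v_-) \, dv_-,
\]
and compact support of the integrand on $V_\gamma \backslash V_-$ then follows from Lemma~\ref{Lemma 3.2} (applied to any $K_0 \in \mathcal{CSGR}(G)$ stabilizing $W$), together with the fact that $\varepsilon_1$ is fixed; this gives the first assertion of the lemma.

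It remains to identify $\chi(v_-)$ with $\psi_{V_-}(\varepsilon_1^{-1} v_- \varepsilon_1)^{-1}$. By the identity $\psi_{V_-}(\varepsilon_1^{-1} v \varepsilon_1) = \psi_{V_-}(v) \psi(v_{n, 2n+1})$ recalled at the start of Section~6, this is equivalent to the pointwise identity $\chi(v_-) = \psi_{V_-}(v_-)^{-1} \psi(v_{n, 2n+1})^{-1}$, to be checked separately on the two factors of $V_- = V_M^{\#} \ltimes V_U$. On $V_M^{\#}$, the Heisenberg image is trivial, so $\chi$ reduces to $\psi_V$ and the identity amounts to matching $\psi_V$ with $\psi_{N_M}^{-1}$ via the definition of $\psi_{V_-}$. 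On $V_U$, writing an element as $\ell(\begin{smallmatrix} x & y \\ & x^\vee \end{smallmatrix})$, one computes $(v_U)_{\mathcal{H}} = ((0, \dots, 0, x_{n, 1}, \dots, x_{n, n}),\, \tfrac{1}{2}\mathrm{Tr}_{E/F}(y_{n, 1}))$ and $v_{n, 2n+1} = x_{n, 1}$, so the pairing $\langle \xi_n, b \rangle$ reproduces the entry $\mathrm{Tr}(x_{n, 1})$ while $\psi_U$ reproduces the $y_{n, 1}$-contribution. The main obstacle is precisely this combinatorial character identity: keeping track of the $\tfrac{1}{2}$-factors arising from the Heisenberg composition law, the inversion from using $\psi^{-1}$ in the Weil representation, and the trace relation $\psi(z) = \psi_F(\tfrac{z + \mc(z)}{2})$, and verifying that they align on the nose. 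This is the analog of the corresponding identity proved in \cite{LMb} in the metaplectic case and is where the bulk of the work lies.
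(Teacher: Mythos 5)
Your proposal follows the paper's route: the paper's own proof of this lemma consists of invoking the identical argument from \cite[Lemma~6.1]{LMb}, citing Lemma~\ref{Lemma 3.2} for compact support and \eqref{2.1a}--\eqref{2.1c} for the Heisenberg action, and these are exactly the ingredients you assemble in your unfolding/Heisenberg/convolution reconstruction. The one imprecision worth flagging is that the substitution $v_+ \mapsto \varepsilon_1 v_+$ does not literally preserve $V_+$ unless $\varepsilon_1$ is chosen in $V_+$; the honest substitution is by the unique $c_0 \in V_+$ with $\underline{c_0} = -\xi_n$, and the leftover $\varepsilon_1 c_0^{-1} \in V_M^{\#}$ is then absorbed into the $V_-$-integral, where it is harmless because $\psi_V$ vanishes on $V_M$ and $V_M^{\#} \subset V_0$ has trivial Heisenberg image, so the character $\chi$ is trivial on it.
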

\begin{proof}
The proof is identical to the proof of \cite[Lemma~6.1]{LMb} because of Lemma~\ref{Lemma 3.2}
and \eqref{2.1a}--\eqref{2.1c}
\end{proof}
\begin{Remark}
The definition of $A_{\#}^{\psi, \Upsilon}(W, \cdot )$ depends on the choice of $\Upsilon$,
and  so does its invariance by Lemma~\ref{Lemma 3.1}.
In this lemma, we evaluate $A_{\#}^{\psi, \Upsilon}(W, \cdot)$ only at $e$,
and we see that this value is independent of the choice of  $\Upsilon$.
Further, from Lemma~\ref{Lemma 3.1}, $W \mapsto A_{\#}^{\psi, \Upsilon}(W, \cdot )$ is $(N^{\#, \psi_{N^{\#}}})$
equivariant, and the same is for $A_{e}^\psi(W)$ by this lemma
(We may check this condition by a direct computation).
\end{Remark}
We now explicate $A_{\#}^{\psi, \Upsilon}(W_s, \cdot)$ on the big cell $N^\prime w_{U^\prime}^\prime P^{\prime}$.
By Lemma~\ref{Lemma 3.1} it is enough to consider the element $w_{U^\prime}^\prime$.
\begin{lemma}
Let $\pi \in \mathrm{Irr}_{\rm gen} M$.
Then for $\mathrm{Re} s \gg_{\pi} 1$ and $W \in \mathrm{Ind}(\mathbb{W}^{\psi_{N_M}}(\pi))$
we have 
\begin{equation}
\label{6.3}
A_{\#}^{\psi, \Upsilon}(W_s, w_{U^\prime}^\prime)
= \alpha_{\psi^{-1}}(w_{U^\prime}^\prime) \int_{V_U} W_s(w_U v) \psi_{U}(v)^{-1} \, dv
=\alpha_{\psi^{-1}}(w_{U^\prime}^\prime) \int_{V_M^\# \backslash V_-} W_s(w_U v) \psi_{V_-}(v)^{-1} \, dv.
\end{equation}
\end{lemma}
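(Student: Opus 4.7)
The plan is to compute the integral defining $A^{\psi,\Upsilon}(W_s,\Phi,w_{U'}',s)$ directly, for a test function $\Phi$ satisfying $\Phi\ast W = W$ (so that this integral equals $A_{\#}^{\psi,\Upsilon}(W_s,w_{U'}')$ by definition~\eqref{A sharp}), via a change of variables that converts the cell representative $\gamma v\eta(w_{U'}')$ into $w_U v'$ and exploits the Weil-representation identity \eqref{2.3c} at $w_{U'}'$.

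First, set $a=\eta(w_{U'}')$. The identity $\gamma a = w_U$ follows directly from $\gamma = w_U\eta(w_{U'}')^{-1}$. Substituting $v\mapsto a v a^{-1}$ (a measure-preserving bijection of $V$, since $a\in G'$ normalizes $V$) in
\[
A^{\psi,\Upsilon}(W_s,\Phi,w_{U'}',s) = \int_{V_\gamma\backslash V} W_s(\gamma v a)\,\omega_{\psi^{-1}}^{\Upsilon^{-1}}(va)\Phi(\xi_n)\,dv,
\]
turns $\gamma v a$ into $w_U v$ and carries $V_\gamma$ to $a^{-1}V_\gamma a = V_M$ by the description of $V_\gamma$ in Section~2. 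Then, using $V = V_M\ltimes V_U$, the domain $V_M\backslash V$ is identified with $V_U$.

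Second, I would unravel the Weil representation factor. By the extension formula for $\omega_{\psi^{-1}}^{\Upsilon^{-1}}$ on $V\rtimes G'$, we have $\omega_{\psi^{-1}}^{\Upsilon^{-1}}(av)\Phi = \omega_{\psi^{-1}}^{\Upsilon^{-1}}(a)\,\omega_{\psi^{-1}}^{\Upsilon^{-1}}(v)\Phi$; applying \eqref{2.3c} to the first factor pulls out $\alpha_{\psi^{-1}}(w_{U'}')$ and replaces $\Phi$ by its $\psi^{-1}$-Fourier transform $\hat\Phi$. For $v\in V_U$, the image $v_{\mathcal{H}}\in V/V_0$ lies in the $Y^-$ direction, so by \eqref{2.1b} the action $\omega_{\psi^{-1}}(v_{\mathcal{H}})$ is just multiplication by the character $X\mapsto\psi(\langle X,v_{\mathcal{H}}\rangle)^{-1}$; combined with the $\psi_V(v)$ factor from the extension formula, and specializing to $X=\xi_n$, all the residual $v$-dependence is packaged into a character that is identified, via the definitions $\psi_V(v) = \psi_{N_M}(w_U v w_U^{-1})$ and $\psi_U(\ell(x)) = \psi(\tfrac12(x_{n,n+1}-x_{2n,1}))$, as exactly $\psi_U(v)^{-1}$. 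The remaining $\Phi$-dependence reduces to a factor of $\hat\Phi(\xi_n)$, which we may normalize to $1$ by the freedom in the choice of $\Phi$ subject to $\Phi\ast W = W$, or equivalently absorbed through Fourier inversion on $V_+$. This yields the first displayed equality in \eqref{6.3}.

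For the second equality, I would use the decompositions $V = V_M\ltimes V_U$ and $V_- = V_M^\#\ltimes V_U$ to identify $V_M^\#\backslash V_-\simeq V_U$. Writing $v\in V_-$ as $v = v_1 v_2$ with $v_1\in V_M^\#$ and $v_2\in V_U$, the left $(N,\psi_N)$-equivariance of $W$ gives $W_s(w_U v_1 v_2) = \psi_{N_M}(w_U v_1 w_U^{-1})\,W_s(w_U v_2)$, and the definition $\psi_{V_-}(v_1 v_2) = \psi_{N_M}(v_1)^{-1}\psi_U(v_2)$ together with the fact that $\psi_{N_M}(w_U v_1 w_U^{-1}) = \psi_{N_M}(v_1)$ on $V_M^\#$ shows that the integrand in the second form is literally constant on $V_M^\#$-cosets with value $W_s(w_U v_2)\psi_U(v_2)^{-1}$, matching the first form after choosing compatible measures.

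The main obstacle will be pinning down the precise character bookkeeping at the second step: the interplay between the Weil-representation extension (with its $\psi_V^{\pm1}$ factor), the Heisenberg central character $\psi^{-1}$, and the definition of $\psi_U$ must combine to cancel all of the residual $\hat\Phi$-dependence and leave exactly the constant $\alpha_{\psi^{-1}}(w_{U'}')$ with no spurious sign or root-of-unity factor. Convergence of the intermediate integral for $\mathrm{Re}(s)\gg_\pi 1$ is standard from the usual asymptotic bounds on $W_s$ (e.g.\ via Lemma~\ref{lemma4.6}).
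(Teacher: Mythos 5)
The outline is along the right lines (change variables via $\gamma a = w_U$, invoke the Weil‐representation formulas, then restrict the integral from $V_\gamma\backslash V$ to the smaller domains), but the central step — the computation of the Weil‐representation factor — does not go through as you describe, and the hole it leaves is exactly where the content of the lemma lies.

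After the substitution $v\mapsto ava^{-1}$ (with $a=\eta(w_{U'}')$), the integrand is $W_s(w_U w)\,\omega(aw)\Phi(\xi_n)$ for $w\in V_U$. You write $\omega(aw)\Phi=\omega(a)\bigl[\omega(w)\Phi\bigr]$ and observe, correctly, that $\omega(w)\Phi$ is a character in $X$ times $\Phi$, since $w_{\mathcal H}$ lies in the $Y^-$ direction. But then you cannot simply ``specialize to $X=\xi_n$'': the outer operator $\omega(a)$ is the Fourier transform, which does not commute with multiplication by the character $X\mapsto\psi^{-1}(\langle X,b\rangle)$. Concretely, one gets
$\bigl(\omega(aw)\Phi\bigr)(\xi_n)=\psi_V(w)\psi^{-1}(t)\,\alpha_{\psi^{-1}}(w_{U'}')\,\widehat{\Phi}^{\,-}\!\bigl(\xi_n+c(w)\bigr)$,
where $c(w)$ is a nonzero shift governed by the last row of the $\mathrm{Mat}_n$--part of $w$. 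Equivalently, in the $vg$ normal form $\omega(aw)=\psi_V(awa^{-1})\,\omega\bigl((awa^{-1})_{\mathcal H}\bigr)\,\omega(a)$, and $(awa^{-1})_{\mathcal H}$ lies in $Y^+$ (the conjugation by $a$ swaps $Y^-\leftrightarrow Y^+$ — indeed the $Y^-$ direction of $V/V_0$ is swallowed up into $V_\gamma$, so any cross-section of $V_\gamma\backslash V$ necessarily has Heisenberg coordinate in the $Y^+$ direction), so $\omega\bigl((awa^{-1})_{\mathcal H}\bigr)$ is a \emph{translation}, not multiplication by a character. Either way the outcome is the shifted $\widehat{\Phi}^{\,-}(\xi_n+c(w))$, not $\widehat{\Phi}^{\,-}(\xi_n)$.

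Because the shift $c(w)$ ranges over all of $E^n$ as $w$ runs through $V_U$, you cannot ``normalize $\widehat{\Phi}(\xi_n)$ to $1$'' by the freedom in choosing $\Phi$; that freedom fixes a single value, not the function along a line. The clause ``or equivalently absorbed through Fourier inversion on $V_+$'' is not an alternative phrasing of the same non-argument: the Fourier inversion \emph{is} the essential step. One must integrate out the shift direction (equivalently, the $V_+\cong V_-\backslash V$ factor of the domain), use Fourier inversion to turn $\widehat{\Phi}^{\,-}(\xi_n+c)$ back into a value of $\Phi$, and then feed that through the constraint $\Phi\ast W=W$ to reconstitute $W$; only then is the $\Phi$-dependence eliminated and the domain of integration brought down to $V_U$ (equivalently $V_M^{\#}\backslash V_-$). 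None of this bookkeeping is carried out in the proposal. Finally, in the second equality you assert $\psi_{N_M}(w_Uv_1w_U^{-1})=\psi_{N_M}(v_1)$ on $V_M^\#$; the correct identity is $\psi_{N_M}(w_Uv_1w_U^{-1})=\psi_{N_M}(v_1)^{-1}$ (the conjugation lands in $\varrho(g^*)$ and flips the character), and it is precisely this inverse that cancels against the $\psi_{N_M}(v_1)^{-1}$ built into $\psi_{V_-}$ — as stated, your chain of identities would leave a spurious $\psi_{N_M}(v_1)^2$.
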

\begin{proof}
A proof is identical to the proof of \cite[Lemma~6.3]{LMb}.
\end{proof}
%
%
%
%
%
%
%
%
%
%
%
%
%
%
\subsection{}
We can now explicate $Y^{\psi, \Upsilon}(W_s, t)$ for $\mathrm{Re} s \gg 1$.
\begin{lemma}
\label{Lemma 6.6}
Let $\pi \in \mathrm{Irr}_{\rm gen} M$.
For $\mathrm{Re} s \gg_{\pi} 1$ and any $W \in \mathrm{Ind}(\mathbb{W}^{\psi_{N_M}}(\pi))$, $t \in T^\prime$
we have the identity
\begin{equation}
\label{6.5}
Y^{\psi, \Upsilon}(W_s, t) = \nu^\prime(t)^{n-\frac{1}{2}}\Upsilon_{M^\prime}(t)^{-1}
 \alpha_{\psi^{-1}}(w_{U^\prime}^\prime)
\alpha_{\psi^{-1}}(w_0^{M^\prime} t)
\int_{V_M^\# \backslash N^\#} W_s(w_U \eta(w_0^{M^\prime} t) v) \psi_{N^\#}(v)^{-1} \, dv
\end{equation}
where the right-hand side is absolutely convergent.
\end{lemma}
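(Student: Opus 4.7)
The plan is to unfold the definition of $Y^{\psi, \Upsilon}(W_s, t)$, reduce to evaluating $A_{\#}^{\psi, \Upsilon}$ at the base point $w_{U^\prime}^\prime$ via Lemma~\ref{Lemma 3.1}, apply the explicit formula~\eqref{6.3}, and then reorganize the resulting iterated integral as a single integral over $V_M^\# \backslash N^\#$.

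Decompose $n \in N^\prime$ as $n = n_{M^\prime} u^\prime$ with $n_{M^\prime} \in N_{M^\prime}^\prime$ and $u^\prime \in U^\prime$, so that $p := w_0^{M^\prime} t n = \varrho^\prime(g) u^\prime \in P^\prime$ with $g = w_0^{\mM^\prime} t_{\mM^\prime} n_{\mM^\prime}$. Applying Lemma~\ref{Lemma 3.1} with first argument $W_s$, $v = e$, and $g_0 = w_{U^\prime}^\prime$, and noting that $\Upsilon_{M^\prime}$ and $\alpha_{\psi^{-1}}$ restricted to the unipotent factors contribute trivially while the $w_0^{M^\prime}$-contribution to $\Upsilon_{M^\prime}$ is absorbed by our normalization, yields
\[
A_{\#}^{\psi, \Upsilon}(W_s,\, w_{U^\prime}^\prime w_0^{M^\prime} t n) = \nu^\prime(t)^{-1/2}\, \Upsilon_{M^\prime}(t)^{-1}\, \alpha_{\psi^{-1}}(w_0^{M^\prime} t)\, A_{\#}^{\psi, \Upsilon}\bigl(W_s(\cdot\, \eta(p)),\, w_{U^\prime}^\prime\bigr).
\]
Applying~\eqref{6.3} to the $P$-right-translate $W_s(\cdot\, \eta(p)) \in \mathrm{Ind}(\mathbb{W}^{\psi_{N_M}}(\pi))$ then gives
\[
A_{\#}^{\psi, \Upsilon}\bigl(W_s(\cdot\, \eta(p)),\, w_{U^\prime}^\prime\bigr) = \alpha_{\psi^{-1}}(w_{U^\prime}^\prime) \int_{V_M^\# \backslash V_-} W_s(w_U v\, \eta(p))\, \psi_{V_-}(v)^{-1}\, dv.
\]

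Substituting into the definition of $Y^{\psi,\Upsilon}$ and invoking Lemma~\ref{lemma4.6} to secure absolute convergence for $\mathrm{Re}(s) \gg_\pi 1$, the stable integration over $N^\prime$ collapses to an ordinary integral that may be merged with the $V_M^\# \backslash V_-$ integral. Writing $\eta(p) = \eta(w_0^{M^\prime} t)\, \eta(n)$ and substituting $v^\prime := \eta(w_0^{M^\prime} t)^{-1} v\, \eta(w_0^{M^\prime} t) \in V_-$ (well-defined since $M$ normalizes $V_-$), the integrand becomes $W_s\bigl(w_U \eta(w_0^{M^\prime} t)\, v^\prime\, \eta(n)\bigr)$ and produces a Jacobian $|\det \mathrm{Ad}\bigl(\eta(w_0^{M^\prime} t)\bigr)|_{V_-}|$, which by a direct root-space computation equals $\nu^\prime(t)^n$. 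Since $N^\# = V_- \rtimes \eta(N^\prime)$ with product Haar measure and $\psi_{N^\#}(v^\prime \eta(n)) = \psi_{V_-}(v^\prime)\, \psi_{N^\prime}(n)$, the combined $(v^\prime, n)$-integration assembles into $\int_{V_M^\# \backslash N^\#}$ against $\psi_{N^\#}^{-1}$, and gathering all pre-factors yields the claimed identity.

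The main obstacle is the bookkeeping in the final step: one must (i) verify via a root-space decomposition of $V_- = V_M^\# \ltimes V_U$ that the conjugation modulus equals exactly $\nu^\prime(t)^n$, so that together with the $\nu^\prime(t)^{-1/2}$ from Lemma~\ref{Lemma 3.1} it produces the correct $\nu^\prime(t)^{n - 1/2}$, and (ii) check that the $\eta(w_0^{M^\prime} t)$-conjugation preserves the character $\psi_{V_-}$ so that no spurious factors intervene. Both reduce to routine unwinding of the definitions of $\eta, \varrho, V_-$, and $\psi_{V_-}$, entirely parallel to the metaplectic computation in~\cite[Lemma~6.6]{LMb}.
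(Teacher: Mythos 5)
Your outline is essentially the paper's approach (which is deferred to \cite[Lemma~6.6]{LMb}): decompose $w_0^{M'}t\,n$ into its $M'$- and $U'$-parts, use the equivariance in Lemma~\ref{Lemma 3.1} to move everything onto $A_\#^{\psi,\Upsilon}$ evaluated at $w_{U'}'$, substitute~\eqref{6.3}, and then reassemble the $N'$ and $V_M^\#\backslash V_-$ integrations into a single integral over $V_M^\#\backslash N^\#$ by conjugating $V_-$ by $\eta(w_0^{M'}t)$. Two points deserve correction.

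First, the convergence input is misattributed. Lemma~\ref{lemma4.6} bounds the descent function $A^{\psi,\Upsilon}(W,\Phi,\varrho'(t)k',s)$ along the torus of $G'$; it is not what is needed to collapse the stable integral over $N'$ and to justify Fubini for the iterated integral over $V_M^\#\backslash N^\#$. The paper inserts a separate convergence statement (the one labelled ``cf.\ Lemma~6.8'') precisely for this: the finiteness of $\int_{N_\mM^t\cap\mathcal{P}}\int_U |W_s(\varrho(n)w_U u g)|\,du\,dn$ and its $\mathcal{P}^\ast$-variant for $\mathrm{Re}\,s\gg_\pi 1$. That is the result which renders the right-hand side of~\eqref{6.5} absolutely convergent and allows the stable integral to become an ordinary one. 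You should invoke that lemma here rather than Lemma~\ref{lemma4.6}.

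Second, in step~(ii) you assert that the $\eta(w_0^{M'}t)$-conjugation preserves $\psi_{V_-}$, and you also wave away the factor $\Upsilon_{M'}(w_0^{M'})$ coming out of Lemma~\ref{Lemma 3.1}. Neither is obviously an identity: conjugation by a torus element $t$ generically scales the root coordinates and hence twists additive characters, and $\Upsilon_{M'}(w_0^{M'})=\Upsilon(\det w_n)$ need not be $1$. These must both be verified to cancel exactly against the change of measure and the $\alpha_{\psi^{-1}}$-bookkeeping; they are delicate precisely because they involve the Weyl element $w_0^{M'}$ (not just unipotents). You flag them as ``routine'', which is fair given the parallel with \cite[Lemma~6.6]{LMb}, but in the unitary setting these are the steps where the $\Upsilon$- and $\alpha_{\psi^{-1}}$-cocycles actually enter, so they cannot be passed over without an explicit root-space computation.
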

We first need the following convergence result.
\begin{lemma}[cf. Lemma~6.8]
Let $\pi \in \mathrm{Irr}_{\rm gen} M$.
Then for $\mathrm{Re}\,s \gg_{\pi} 1$ we have
\[
\int_{N_\mM^{t} \cap \mathcal{P}} \int_{U} |W_s(\varrho(n) w_U u g)| \, du \, dn < \infty
\]
and 
\[
\int_{N_\mM^{t} \cap \mathcal{P}^\ast} \int_{U} |W_s(\varrho(n) w_U u g)| \, du \, dn < \infty
\]
for any $W \in \mathrm{Ind}(\mathbb{W}^{\psi_{N_M}}(\pi))$, $g \in G$.
\end{lemma}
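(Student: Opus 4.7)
This convergence statement is the unitary-group analogue of \cite[Lemma~6.8]{LMb}, and my plan is to carry out that argument in the present setting. The starting observation is the conjugation identity $\varrho(n) w_U = w_U \varrho(n^{\ast})$, valid for all $n \in \mM$, which holds because $w_U$ represents the longest $M$-reduced Weyl element and its conjugation action on $M = \varrho(\mM)$ implements the twisted transpose involution $g \mapsto g^{\ast}$. Using this, the integrand can be rewritten as $|W_s(w_U \varrho(n^{\ast}) u g)|$.

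With this reformulation I would interchange the order of integration (justified at the end by the resulting absolute convergence) and analyze the inner $U$-integral for fixed $n$ and $g$. Changing variables in $u$ to absorb the unipotent factor $\varrho(n^{\ast})$ (whose $U$-normalizer Jacobian is trivial since $n^{\ast}$ is unipotent), this inner integral becomes $\int_U |W_s(w_U u g')| \, du$ for a translated argument $g' = \varrho(n^{\ast}) g$, which up to a trivial weight is the standard intertwining integral. Its absolute convergence for $\mathrm{Re}(s) \gg_\pi 1$, locally uniformly in $g'$, is the well-known convergence result for intertwining operators attached to induced Whittaker sections.

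The outer integration over $n \in N_{\mM}^{t} \cap \mathcal{P}$ is then a Rankin--Selberg type integral. Writing the Iwasawa decomposition of $w_U u g'$ and applying the defining property of $W_s \in \mathrm{Ind}(\mathbb{W}^{\psi_{N_M}}(\pi))$ together with left $U$-invariance, one extracts a factor of the form $\delta_P^{1/2} \nu^{s}$ times a Whittaker function of $\pi$ on $\mM$ whose $\mM$-argument depends on $n$. Feeding in the Jacquet--Shalika gauge estimate
\[
\delta_{B_{\mM}}^{-1/2}(a)\,|W^{M}(a)| \ll_{\pi} \Xi(a),
\]
valid for any $W^{M} \in \mathbb{W}^{\psi_{N_{\mM}}}(\pi)$ with $\Xi$ a gauge controlled by $e(\pi)$, the remaining integral over $N_{\mM}^{t} \cap \mathcal{P}$ reduces to the classical convergent mirabolic integral of a Whittaker gauge, provided the decay furnished by $\nu^{\mathrm{Re}(s)}$ dominates the exponent of the gauge; this is exactly the condition $\mathrm{Re}(s) \gg_{\pi} 1$.

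The main obstacle is the careful bookkeeping of the Iwasawa decomposition so that the $n$-dependence is cleanly extracted and bounded using the gauge. None of this is specific to the unitary setting, but verifying that the measure-theoretic identities and gauge exponents match those in \cite{LMb} requires some care. The companion bound with $\mathcal{P}^{\ast}$ in place of $\mathcal{P}$ follows from the first by the symmetry $g \mapsto g^{\ast}$, which swaps the two mirabolics while preserving the Whittaker character and the rest of the structure involved in the estimate.
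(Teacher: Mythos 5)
Your plan reproduces what the paper does: the paper gives no argument of its own and simply declares the proof to be identical to that of \cite[Lemma~6.8]{LMb}, so the right move is precisely to port that argument, and the steps you list (the conjugation $\varrho(n)w_U = w_U\varrho(n^\ast)$, the Jacobian-free change of variable absorbing the unipotent factor, reduction to intertwining-operator convergence, and control of the Whittaker function via a gauge estimate) are exactly the ingredients. One small caution as you flesh this out: after the change of variable the $U$- and $n$-integrations remain genuinely entangled through the Iwasawa decomposition of $w_U u\,\varrho(n^\ast)g$, and since $\det n^\ast = 1$ the factor $\nu^{\mathrm{Re}(s)}$ alone does not decay in $n$ — the decay in the $n$-direction must come from the gauge on the $\mM$-part, with $\mathrm{Re}(s)\gg_\pi 1$ controlling the coupled integral rather than just the inner $U$-integral; this is a matter of phrasing rather than a gap in the method, but it is worth keeping straight when you write out the estimate.
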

\begin{proof}
The proof of this lemma is identical to the proof of \cite[Lemma~6.8]{LMb}.
\end{proof}
%
%
%
%
%
%
%
%
%
%
%
%
%
%
\subsection{}
We now go back to the bilinear form $B$.
The following lemma is proved in the same argument as the proof of \cite[Lemma~6.10]{LMb}.
\begin{lemma}
\label{Lemma 6.10}
For $W \in \mathrm{Ind}(\mathbb{W}^{\psi_{N_M}}(\pi))_{\#}^\circ$
the integrand on the right-hand side of \eqref{6.5} is compactly supported in $t, v$ uniformly in $s$
(i. e. the support in $(t, v)$ is contained in a compact set which is independent of $s$).
In particular, the identity \eqref{6.5} holds for all $s \in \mC$.
\end{lemma}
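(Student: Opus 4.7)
The plan is to first establish the compact support assertion by a direct analysis of the integrand when $W$ is a generator of the subspace $\mathrm{Ind}(\mathbb{W}^{\psi_{N_M}}(\pi))^\circ_\#$, and then promote the identity \eqref{6.5} from the region $\mathrm{Re}\, s \gg_\pi 1$ of absolute convergence to all $s \in \mC$ by analytic continuation. By linearity it suffices to treat $W$ of the form \eqref{(5.2)}, namely $W(u' m w_U u) = \delta_P(m)^{1/2} W^M(m) \phi(u)$ with $\phi \in C_c^\infty(U)$ and $W^M \in \mathbb{W}^{\psi_{N_M}}(\pi)$ such that $W^M \circ \eta_M$ is supported in the big cell $B_{\mM'}' w_0^{\mM'} N_{\mM'}'$ and has compact support modulo $N_{\mM'}'$. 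Since $W_s(g) = W(g)\nu(g)^s$, the zero locus of $W_s$ is independent of $s$, so it is enough to analyze the support of the untwisted function.

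Next I would parse the argument of $W_s$ using the decompositions $N^\# = V_- \rtimes \eta(N^\prime)$ and $V_- = V_M^\# \ltimes V_U$. Choosing representatives of $V_M^\# \backslash N^\#$ inside $V_U \cdot \eta(N^\prime)$, write $v = v_U \eta(n^\prime)$ with $v_U \in V_U \subset U$ and $n^\prime \in N^\prime$, and put the element $w_U \eta(w_0^{M^\prime} t) v_U \eta(n^\prime)$ into Iwasawa form relative to the Siegel parabolic $P = MU$. For $W$ to be non-zero, the resulting decomposition must land in the big cell $Pw_U U$, and the $M$-component extracted from this decomposition, when pushed through $\eta_M$ to the $\mM^\prime$-side, must lie in the big cell $B_{\mM^\prime}^\prime w_0^{\mM^\prime} N_{\mM^\prime}^\prime$ and be compact modulo $N_{\mM^\prime}^\prime$. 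Tracing how the $T^\prime$-coordinate $t$ and the factors $v_U, n^\prime$ appear in that $M$-component shows that the support hypothesis on $W^M \circ \eta_M$ forces $t$ to stay in a fixed compact subset of $T^\prime$, while the condition $\phi \in C_c^\infty(U)$ together with the control on the $U$-part of the Iwasawa decomposition confines both $v_U \in V_U$ and $\eta(n^\prime)$ to compact subsets.

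The main technical obstacle is carrying out this Iwasawa/Bruhat decomposition cleanly enough to read off simultaneously (i) the compactness of $t \in T^\prime$ from the compact support of $W^M \circ \eta_M$ modulo $N_{\mM^\prime}^\prime$, and (ii) the compactness of $v \in V_M^\# \backslash N^\#$ from $\phi \in C_c^\infty(U)$ and the support inside $P w_U U$. This is achieved in the metaplectic setting in \cite[Lemma~6.10]{LMb} by an explicit matrix computation; here the unitary analogue uses the embeddings $\varrho, \eta, \eta_M$ of Section~\ref{notation 1} and the definitions of $V_\gamma, V_-, V_U$. Since the metaplectic and unitary computations differ only by bookkeeping related to the twisted transpose $g \mapsto g^\ast$ and the auxiliary character $\Upsilon$, the same structural argument goes through with only notational changes.

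Once the uniform compactness of support is in hand, the right-hand side of \eqref{6.5} is literally the integral of a continuous function over a fixed compact set, whose integrand depends holomorphically (in fact, monomially in $\nu(\cdot)^s$) on $s$; hence it defines an entire function of $s$. The left-hand side $Y^{\psi, \Upsilon}(W_s, t)$ is entire in $s$ by Proposition~\ref{stable Y def}, and Lemma~\ref{Lemma 6.6} supplies the identity for $\mathrm{Re}\, s \gg_\pi 1$. Analytic continuation therefore propagates the identity \eqref{6.5} to all $s \in \mC$, completing the proof.
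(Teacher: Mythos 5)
Your proposal is correct and takes essentially the same approach as the paper, which simply invokes the argument of \cite[Lemma~6.10]{LMb}: reduce to generators of $\mathrm{Ind}(\mathbb{W}^{\psi_{N_M}}(\pi))^\circ_\#$, split $v$ via $V_M^\#\backslash N^\# \cong V_U \times \eta(N')$, push $w_U\eta(w_0^{M'}t)v$ through the big-cell decomposition $P w_U U$, and read off uniform compactness of the support in $(t,v)$ from the conditions on $W^M\circ\eta_M$ and $\phi$, then conclude by analytic continuation using Proposition~\ref{stable Y def} and Lemma~\ref{Lemma 6.6}. (One small terminological slip: you call the $P w_U U$ decomposition an ``Iwasawa form,'' but it is the Bruhat big-cell decomposition relative to the Siegel parabolic, as you in fact use in the subsequent sentences.)
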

%
%
%
%
\begin{lemma}[cf. Lemma~6.11]
Let $\pi \in \mathrm{Irr}_{\rm gen} M$.
Then for $- \mathrm{Re}\, s \gg 1$ we have
\[
B(W, W^\vee, s)=
\int_{U} \int_{V_M^{\#} \backslash N^{\#}} \int_{N_{\mM^\prime} \backslash \mM^\prime}
W_s(\eta_M(g) w_U v) W_{-s}^\vee(\eta_M(g) w_U u) \delta_P(\eta_M(g))^{-1} |\det g|^{1-n} \psi_{N^\#}(v)^{-1}
\psi_U(u) \, dg \, dv \, du
\]
for any $W \in \mathrm{Ind}(\mathbb{W}^{\psi_{N_M}}(\pi))_{\#}^\circ$, $W^\vee \in \mathrm{Ind}(\mathbb{W}^{\psi_{N_M}^{-1}}(\pi^\vee))$
with the integral being absolutely convergent.
\end{lemma}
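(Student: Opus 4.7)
\emph{Proof plan.} My plan is to combine the explicit formula in Lemma~\ref{Lemma 6.6} with the bilinear-form presentation \eqref{5.3} for $B(W,W^\vee,s)$, and then to convert the resulting torus integral into one over $N_{\mM^\prime}^\prime\backslash\mM^\prime$ via the Bruhat decomposition of $\mM^\prime$. For $W\in\mathrm{Ind}(\mathbb{W}^{\psi_{N_M}}(\pi))_\#^\circ$, identity \eqref{6.5} holds identically in $s\in\mC$ by Lemma~\ref{Lemma 6.10}; applied with $(\psi,\Upsilon)$ replaced by $(\psi^{-1},\Upsilon^{-1})$ and $W_s$ by $W^\vee_{-s}$, Lemma~\ref{Lemma 6.6} yields the analogous expansion for $Y^{\psi^{-1},\Upsilon^{-1}}(W^\vee_{-s},t)$, valid precisely in the range $-\mathrm{Re}\,s\gg_\pi 1$ where the target identity is asserted. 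Using $\alpha_{\psi^{-1}}(\cdot)=\alpha_\psi(\cdot)^{-1}$, together with the fact that $\Upsilon_{M^\prime}(t)$ enters with opposite signs in the two expansions, the $\alpha$- and $\Upsilon$-factors cancel, and the two $\nu^\prime(t)^{n-1/2}$ terms combine with $\delta_{B^\prime}(t)$ from \eqref{5.3} into the weight $\nu^\prime(t)^{2n-1}\delta_{B^\prime}(t)$ multiplying a double $V_M^\#\backslash N^\#$-integral.

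The key step is then a change of variables. Setting $t=\varrho^\prime(a)$ with $a\in T_{\mM^\prime}^\prime$, an elementary matrix identity in $G$ lets me rewrite $w_U\eta(w_0^{M^\prime}\varrho^\prime(a))$ as $\eta_M(\cdot)\,w_U$ modulo an $N^\#$-element, which is absorbed into the $V_M^\#\backslash N^\#$-variable. I then apply the Bruhat decomposition of $\mM^\prime$ along its big cell $N_{\mM^\prime}^\prime T_{\mM^\prime}^\prime w_0^{\mM^\prime}N_{\mM^\prime}^\prime$ (with Jacobian $\delta_{B_{\mM^\prime}^\prime}(a)^{-1}$) to combine $\int_{T^\prime}$ with the $\eta(N_{\mM^\prime}^\prime)$-factor inside one copy of $V_M^\#\backslash N^\# = V_U\cdot\eta(N^\prime)$, producing the desired $\int_{N_{\mM^\prime}^\prime\backslash\mM^\prime}\,dg$. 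A direct computation shows $\nu^\prime(a)^{2n-1}\delta_{B^\prime}(\varrho^\prime(a))\delta_{B_{\mM^\prime}^\prime}(a)^{-1}=\delta_P(\eta_M(g))^{-1}|\det g|^{1-n}$, using $\delta_P(\varrho(m))=|\det m|_E^{2n}$ on $M$. On the $W_s$-side the $V_M^\#\backslash N^\#$-integral of the target survives; on the $W^\vee_{-s}$-side, the residual $V_U$-integral enlarges to the full $\int_U\,du$ with character $\psi_U=\psi_{N^\#}|_{V_U}$, using the additional left-equivariance of the Whittaker induced function $W^\vee_{-s}$ in the directions of $U$ not contained in $V_U$.

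Absolute convergence in the range $-\mathrm{Re}\,s\gg 1$ is controlled by Lemma~\ref{lemma4.6} applied to $W^\vee_{-s}$: when $\mathrm{Re}(-s)$ is large, the rapid decay of $W^\vee_{-s}$ on the diagonal torus of $\mM^\prime$ dominates the polynomial factor $|\det g|^{1-n}$ and secures integrability in $g\in N_{\mM^\prime}^\prime\backslash\mM^\prime$, while the inner $U$- and $V_M^\#\backslash N^\#$-integrals converge absolutely thanks to the support condition of Lemma~\ref{Lemma 3.2} and the compactness built into $\mathrm{Ind}(\mathbb{W}^{\psi_{N_M}}(\pi))_\#^\circ$ via Lemma~\ref{Lemma 6.10}. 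The overall argument follows the proof of Lemma~6.11 in \cite{LMb} closely. The principal obstacle will be the bookkeeping: one must track every modulus character, every eighth root of unity $\alpha_\psi(\cdot)$, and every value of $\Upsilon_{M^\prime}$ through the various changes of variables and verify they conspire to produce exactly $\delta_P(\eta_M(g))^{-1}|\det g|^{1-n}$ with no residual factors---slightly more delicate than in the metaplectic setting of \cite{LMb} because of the extra character $\Upsilon$. A secondary subtlety is the enlargement of the $V_U$-integral on the $W^\vee_{-s}$-side to the full $U$-integral, which must be justified by the decay estimate uniformly dominating the additional directions of $U$.
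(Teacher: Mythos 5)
Your plan follows the paper's proof in essentially the same way: start from \eqref{5.3} combined with Lemma~\ref{Lemma 6.6} (valid for all $s$ on the $W$-side by Lemma~\ref{Lemma 6.10}, and for $-\mathrm{Re}\,s\gg 1$ on the $W^\vee$-side), observe the cancellation of the $\alpha_\psi$- and $\Upsilon_{M^\prime}$-factors, move $w_U\eta(w_0^{M^\prime}t)$ to the form $\eta_M(\cdot)w_U$ via the $\ast$-conjugation, decompose one copy of $V_M^\#\backslash N^\#$ as $\eta(N_{M^\prime}^\prime)\ltimes U$, and merge the $\eta(N_{\mM^\prime}^\prime)$-factor with the torus through the Bruhat cell of $\mM^\prime$, tracking $\delta_{B^\prime}(t)=\delta_{B^\prime_{M^\prime}}(t)\nu^\prime(t)^n$ and $\delta_P(\eta_M(g))=|\det g|^{2n}$ to produce $\delta_P(\eta_M(g))^{-1}|\det g|^{1-n}$. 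The convergence justification you give via Lemma~\ref{lemma4.6} is a slightly different emphasis than the paper's (which relies on the absolute convergence asserted in Lemma~\ref{Lemma 6.6} for $\mathrm{Re}\,s\gg 1$ applied to $W^\vee_{-s}$, together with the uniform compact support of Lemma~\ref{Lemma 6.10}), but both routes are sound; the only thing I would tighten is the phrasing around ``enlarging the $V_U$-integral to $U$'' — what actually happens is that the parametrization $V_M^\#\backslash N^\#\cong\eta(N_{M^\prime}^\prime)\ltimes U$ already yields a full $U$-integral once the $\eta(N_{M^\prime}^\prime)$-component is pushed into the $\eta_M(\cdot)$-argument, so no equivariance ``in directions of $U$ not in $V_U$'' is invoked.
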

\begin{proof}
This is proved in a similar argument as the proof of \cite[Lemma~6.11]{LMb}
except for some minor differences.
For the sake of completeness, we shall repeat their argument in our setting.

Suppose that $-\mathrm{Re}\, s \gg 1$.
Then by \eqref{5.3}, Lemma~\ref{Lemma 6.6} and \ref{Lemma 6.10}
and the fact 
\[
\delta_{B^\prime}(t) = \delta_{B^\prime_{M^\prime}}(t) \nu^\prime(t)^n, \quad t \in T^\prime. 
\]
$B(W, W^\vee, s)$ is equal to 
\[
\int_{T_{\mM^\prime}^\prime} \int_{\eta(N_{M^\prime}^\prime) \ltimes U}  \int_{\eta(N_{M^\prime}^\prime) \ltimes U} 
W_s (\eta_M((w_0^{\mM^\prime} t)^\ast) w_U v_1)W_{-s} (\eta_M((w_0^{\mM^\prime} t)^\ast) w_U v_2)
|\det t|^{3n-1} \delta_{B_{\mM^\prime}^\prime}(t) \psi_{N^\#}(v_1)^{-1} \psi_{N^\#}(v_2) \, dv_1 \, dv_2 \, dt
\]
where the integral is absolutely convergent.
By the change of variable, $B(W, W^\vee, s)$ is equal to 
\[
\int_{T_{\mM^\prime}^\prime} \int_{N_{\mM^\prime}^\prime} \int_{U}  \int_{\eta(N_{M^\prime}^\prime) \ltimes U} 
W_s (\eta_M((w_0^{\mM^\prime} t n)^\ast) w_U u v_1)W_{-s} (\eta_M((w_0^{\mM^\prime} t n)^\ast) w_U u)
|\det t|^{3n-1} \delta_{B_{\mM^\prime}^\prime}(t) \psi_{N^\#}(v_1)^{-1}  \, dv_1 \, du \, dn  \, dt.
\]
Finally, by the Bruhat decomposition and the fact
\[
\delta_{P}(\eta_M(g)) = |\det g|^{2n}, \quad \text{for any $g \in \mM^\prime$}
\]
the lemma readily follows.
\end{proof}

Define when convergent
\[
\{ W, W^\vee \} :=
\int_{N_{\mM^\prime}^\prime \backslash \mM^\prime}
W(\eta_M(g)) W^\vee(\eta_M(g)) \delta_P(\eta_M(g))^{-1} |\det g|^{1-n} \, dg,
\]
which converges for any $(W, W^\vee) \in \mathrm{Ind}(\mathbb{W}^{\psi_{N_M}}(\pi)) \times \mathrm{Ind}(\mathbb{W}^{\psi_{N_M}^{-1}}(\pi^\vee))$ 
when $\pi$ is unitarizable by \cite[Lemma~1.2]{LMd}.
Then we get for any $W \in \mathrm{Ind}(\mathbb{W}^{\psi_{N_M}}(\pi))_{\#}^\circ$, $W^\vee \in \mathrm{Ind}(\mathbb{W}^{\psi_{N_M}^{-1}}(\pi^\vee))$
and when $- \mathrm{Re}\, s \gg 1$
\begin{equation}
\label{6.11}
B(W, W^\vee, s) = 
\int_{U} \int_{V_M^\# \backslash N^\#} \{ W_s(\cdot w_U v), W_{-s}^\vee(\cdot w_U u) \} \psi_{N^\#}(v)^{-1} \psi_U(v) \, dv \, du.
\end{equation}
%
%
%
%
%
%
%
%
%
%
%
%
%
%
\subsection{}
In the same argument as the proof of \cite[Lemma~6.13]{LMb} using
Theorem~\ref{Theorem A.1}, Proposition~\ref{stable Y def} and Lemma~\ref{Lemma 6.10}
instead of \cite[Theorem A.1, Corollary~5.3, Lemma~6.10]{LMb}, 
we may prove the following lemma.
\begin{lemma}
\label{Lemma 6.13}
Assume that $\pi \in \mathrm{Irr}_{\rm gen, ut} M$ and $\sigma = \mathcal{D}^{\Upsilon^{-1}}_{\psi^{-1}}(\pi)$ is irreducible and tempered.
Then the bilinear form $B(W, M(\frac{1}{2})W^\wedge, \frac{1}{2})$ does not identically zero on 
$W \in \mathrm{Ind}(\mathbb{W}^{\psi_{N_M}}(\pi))_{\#}^\circ \times \mathrm{Ind}(\mathbb{W}^{\psi_{N_M}^{-1}}(\mc(\pi)))$
\end{lemma}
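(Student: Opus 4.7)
The plan is to reduce the non-vanishing of $B(W, M(1/2) W^\wedge, 1/2)$ on the specified subspace to the non-vanishing of a Bessel-type integral over $T^\prime$. First I specialize the identity \eqref{5.3} at $s = 1/2$ with $W^\vee = M(1/2) W^\wedge$, and apply Proposition~\ref{stable Y def} to the second factor to obtain
\[
B\!\left(W, M\!\left(\tfrac{1}{2}\right) W^\wedge, \tfrac{1}{2}\right) = A_{\#}^{\psi^{-1},\Upsilon^{-1}}(M^\ast W^\wedge, e) \int_{T^\prime} Y^{\psi,\Upsilon}(W_{1/2}, t)\, \mathbb{B}_{\pi_\circ}^{\psi_{N^\prime}^{-1}}(w_{U^\prime}^\prime w_0^{M^\prime} t)\, \delta_{B^\prime}(t)\, dt,
\]
where $\pi_\circ = \mathcal{D}_{\psi^{-1}}^{\Upsilon^{-1}}(\pi)$. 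Since $\mathcal{D}_\psi^\Upsilon(\pi) \ne 0$ (as recorded right after its definition), I fix $W^\wedge$ so that $A_{\#}^{\psi^{-1},\Upsilon^{-1}}(M^\ast W^\wedge, e) \ne 0$. What remains is to exhibit $W \in \mathrm{Ind}(\mathbb{W}^{\psi_{N_M}}(\pi))^\circ_\#$ for which the integral above is non-zero.

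For this, I use Lemma~\ref{Lemma 5.6} (which provides compact support of $A_{\#}^{\psi,\Upsilon}(W_s, w_{U^\prime}^\prime w_0^{M^\prime} t n)$ in $(t,n)$) together with Lemma~\ref{Lemma 6.10} to conclude that for $W$ in this subspace the function $t \mapsto Y^{\psi,\Upsilon}(W_{1/2}, t)$ is smooth and compactly supported on $T^\prime$. The key step, mimicking \cite[Lemma~6.13]{LMb}, is to show that as $W$ varies over $\mathrm{Ind}(\mathbb{W}^{\psi_{N_M}}(\pi))^\circ_\#$, these functions span a sufficiently rich subspace of $C_c^\infty(T^\prime)$ to be able to produce a prescribed bump on an arbitrary small open subset of $T^\prime$. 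Concretely I would use the form \eqref{(5.2)} for elements of $\mathrm{Ind}(\mathbb{W}^{\psi_{N_M}}(\pi))^\circ_\#$, varying both $\phi \in C_c^\infty(U)$ and the Whittaker function $W^M$ restricted (via $\eta_M$) to the big Bruhat cell of $\mM^\prime$, and combine this with the equivariance properties of $A_{\#}^{\psi,\Upsilon}$ given by Lemma~\ref{Lemma 3.1} to translate and rescale the resulting function on $T^\prime$.

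For the other factor, Theorem~\ref{Theorem A.1} shows that the Bessel integral $\mathbb{B}_{\pi_\circ}^{\psi_{N^\prime}^{-1}}$ converges as a stable integral (using the temperedness and irreducibility of $\pi_\circ$) and defines a non-zero locally constant function on a neighborhood of the identity; in particular $t \mapsto \mathbb{B}_{\pi_\circ}^{\psi_{N^\prime}^{-1}}(w_{U^\prime}^\prime w_0^{M^\prime} t)$ is not identically zero near $t = e$. Combined with the previous paragraph I can choose $W$ for which the integral is non-zero, which completes the argument.

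The main obstacle is precisely the density/surjectivity step of the second paragraph: one must unwind the definition of $A^{\psi,\Upsilon}$ as a twisted Weil-representation integral over $V_\gamma \backslash V$ and track how the compact support of $W^M$ on the big cell of $\mM^\prime$ under $\eta_M$ transfers under $A^{\psi,\Upsilon}$ to sufficiently localized and tunable behaviour of $t \mapsto Y^{\psi,\Upsilon}(W_{1/2}, t)$ on $T^\prime$. This is the content of the corresponding step in \cite[Lemma~6.13]{LMb} in the metaplectic setting; since all the supporting local results (Lemma~\ref{Lemma 3.1}, Lemma~\ref{Lemma 5.6}, Lemma~\ref{Lemma 6.10}, Proposition~\ref{stable Y def}, and Theorem~\ref{Theorem A.1}) have been established in the form needed, the computations should carry over with only notational modifications to the even unitary setting.
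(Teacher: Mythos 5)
Your proposal reconstructs precisely the argument the paper invokes from \cite[Lemma~6.13]{LMb}: specialize \eqref{5.3} at $s=\tfrac12$ with $W^\vee=M(\tfrac12)W^\wedge$, use the last part of Proposition~\ref{stable Y def} to replace $Y^{\psi^{-1},\Upsilon^{-1}}(M^\ast W^\wedge,\cdot)$ by the scalar $A_{\#}^{\psi^{-1},\Upsilon^{-1}}(M^\ast W^\wedge,e)$ times the Bessel function of $\sigma$, then choose $W^\wedge$ so that the scalar is non-zero and choose $W$ so that $Y^{\psi,\Upsilon}(W_{1/2},\cdot)$ is a bump localized where the Bessel function does not vanish, invoking Theorem~\ref{Theorem A.1} and Lemmas~\ref{Lemma 6.6}, \ref{Lemma 6.10} in place of the corresponding metaplectic results. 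This is exactly the paper's intended proof.

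One small imprecision worth fixing: Theorem~\ref{Theorem A.1} asserts only that $\mathbb{B}_\sigma$ is not identically zero on the big cell $G^\circ=B_0w_0B_0$; it does not give non-vanishing of $t\mapsto\mathbb{B}_\sigma(w_{U'}'w_0^{M'}t)$ near $t=e$ as you state. This is harmless for your argument since the density step you describe produces a bump on an arbitrarily prescribed small open subset of $T'$, so you simply place it near whatever $t_0\in T'$ the Bessel function is non-zero at (such a $t_0$ exists by the equivariance of $\mathbb{B}_\sigma$ under $N'\times N'$ together with Theorem~\ref{Theorem A.1}).
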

By \eqref{5.4}, Lemma~\ref{Lemma 6.1} and Lemma~\ref{Lemma 6.13}, we conclude
\begin{corollary}
\label{cor6.14}
Suppose that $\pi \in \mathrm{Irr}_{\rm ut, temp} M$ is good and $\sigma =\mathcal{D}_{\psi^{-1}}^{\Upsilon^{-1}}(\pi)$
is tempered. Then 
\[
B\left(W, M(\frac{1}{2})W^\wedge, \frac{1}{2} \right) = c_\pi A_{e}^{\psi}(M^\ast W) A_e^{\psi^{-1}}(M^\ast W^\wedge)
\]
for all $W \in \mathrm{Ind}(\mathbb{W}^{\psi_{N_M}}(\pi))_{\#}^\circ$ and $W^\vee \in \mathrm{Ind}(\mathbb{W}^{\psi_{N_M}^{-1}}(\mc(\pi)))$
Moreover, the linear form $A_e^\psi(M^\ast W)$ does not vanish identically on $\mathrm{Ind}(\mathbb{W}^{\psi_{N_M}}(\pi))_{\#}^\circ$.
\end{corollary}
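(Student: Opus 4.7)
The plan is to assemble the identity from three ingredients already in place: the reformulation \eqref{5.4} of the left-hand side of the Main Identity (MI) as $B(W, M(\tfrac{1}{2})W^\wedge, \tfrac{1}{2})$, the pointwise evaluation from Lemma~\ref{Lemma 6.1}, and the defining relation \eqref{ILM 3.7} of the constant $c_\pi$ attached to the good representation $\pi$ via Definition~\ref{def inner sigma}(iii).

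First I would specialize \eqref{ILM 3.7} to the pair
$W^\prime = A_{\#}^{\psi^{-1}, \Upsilon^{-1}}(M^\ast W^\wedge, \cdot) \in \mathcal{D}_{\psi^{-1}}^{\Upsilon^{-1}}(\mc(\pi))$
and
$(W^\prime)^\vee = A_{\#}^{\psi, \Upsilon}(M^\ast W, \cdot) \in \mathcal{D}_\psi^\Upsilon(\pi)$,
and rewrite the $\sigma$-invariant pairing through Definition~\ref{def inner sigma}(iii) as a value of the zeta integral $J(\cdot, \cdot, \tfrac{1}{2})$. This converts the stable integral on the left of \eqref{ILM 3.7} into precisely the iterated integral appearing in (MI) at $s=\tfrac{1}{2}$, which by \eqref{5.4} is $B(W, M(\tfrac{1}{2})W^\wedge, \tfrac{1}{2})$. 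The right-hand side of \eqref{ILM 3.7} is $c_\pi A_{\#}^{\psi^{-1}, \Upsilon^{-1}}(M^\ast W^\wedge, e)\, A_{\#}^{\psi, \Upsilon}(M^\ast W, e)$, and applying Lemma~\ref{Lemma 6.1} to both $M^\ast W$ and $M^\ast W^\wedge$ replaces these boundary values by $A_e^\psi(M^\ast W)$ and $A_e^{\psi^{-1}}(M^\ast W^\wedge)$ respectively, yielding the displayed formula.

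For the non-vanishing of $W \mapsto A_e^\psi(M^\ast W)$, I would invoke Lemma~\ref{Lemma 6.13}, which guarantees that the bilinear form $B(W, M(\tfrac{1}{2})W^\wedge, \tfrac{1}{2})$ does not vanish identically on $\mathrm{Ind}(\mathbb{W}^{\psi_{N_M}}(\pi))_{\#}^\circ \times \mathrm{Ind}(\mathbb{W}^{\psi_{N_M}^{-1}}(\mc(\pi)))$. Through the identity just derived, this forces at least one choice of $W$, $W^\wedge$ to make the product $c_\pi A_e^\psi(M^\ast W) A_e^{\psi^{-1}}(M^\ast W^\wedge)$ nonzero; in particular $c_\pi \neq 0$ and $A_e^\psi(M^\ast W)$ cannot vanish identically on $\mathrm{Ind}(\mathbb{W}^{\psi_{N_M}}(\pi))_{\#}^\circ$.

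No genuinely hard step remains at this stage: the real work was carried out in Sections~5--6, most notably in Lemma~\ref{Lemma 6.13}, whose non-vanishing relies on the Bessel-function stability of Theorem~\ref{Theorem A.1} together with the explicit formula for $Y^{\psi,\Upsilon}$ from Proposition~\ref{stable Y def}. Once these tools are granted, Corollary~\ref{cor6.14} is a matter of matching the left-hand side of (MI) with the good-representation identity \eqref{ILM 3.7} through \eqref{5.4} and Lemma~\ref{Lemma 6.1}.
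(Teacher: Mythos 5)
Your proof is correct and follows essentially the same route as the paper: it combines the reformulation \eqref{5.4} of the left-hand side of (MI) as $B(W, M(\tfrac12)W^\wedge, \tfrac12)$, the defining relation \eqref{ILM 3.7} of $c_\pi$ for a good representation, the translation of boundary values $A_\#^{\psi,\Upsilon}(\cdot,e) = A_e^\psi(\cdot)$ from Lemma~\ref{Lemma 6.1}, and the non-vanishing of $B$ from Lemma~\ref{Lemma 6.13}. The paper's proof is exactly this one-line chain of citations, so the two approaches coincide.
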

In other words, because of Proposition~\ref{first reduction prp}, Theorem~\ref{main theorem} is reduced to the following statement.
\begin{proposition}
\label{Proposition 6.15}
Assume that $\pi \in \mathrm{Irr}_{\rm ut, temp} M$ is good and $\sigma =\mathcal{D}_{\psi^{-1}}^{\Upsilon^{-1}}(\pi)$
is tempered.
Then for any $W \in \mathrm{Ind}(\mathbb{W}^{\psi_{N_M}}(\pi))_{\#}^\circ$ and $W^\wedge \in \mathrm{Ind}(\mathbb{W}^{\psi_{N_M}^{-1}}(\mc(\pi)))$
we have
\begin{equation}
\label{6-12}
B\left(W, M(\frac{1}{2})W^\wedge, \frac{1}{2} \right) = \omega_\pi(\tau) A_{e}^{\psi}(M^\ast W) A_e^{\psi^{-1}}(M^\ast W^\wedge).
\end{equation}
\end{proposition}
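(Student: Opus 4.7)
The plan is to exploit the factorization \eqref{5.3} together with the Bessel function identity at the end of Proposition~\ref{stable Y def}. Substituting $W^\vee = M(1/2)W^\wedge$ and $s = 1/2$, and identifying $A_{\#}^{\psi^{-1},\Upsilon^{-1}}(M^\ast W^\wedge, e) = A_e^{\psi^{-1}}(M^\ast W^\wedge)$ via Lemma~\ref{Lemma 6.1}, one obtains
\[
B\bigl(W, M(1/2)W^\wedge, 1/2\bigr) = A_e^{\psi^{-1}}(M^\ast W^\wedge) \int_{T^\prime} Y^{\psi,\Upsilon}(W_{1/2}, t) \, \mathbb{B}_{\pi_\circ}^{\psi_{N_\circ}^{-1}}(w_{U^\prime}^\prime w_0^{M^\prime}t) \, \delta_{B^\prime}(t) \, dt,
\]
where $\pi_\circ \simeq \sigma$ is the irreducible tempered descent of $\mc(\pi)$. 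The proposition is thereby reduced to showing that this $T^\prime$-integral equals $\omega_\pi(\tau) A_e^\psi(M^\ast W)$.

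For the remaining integral, I would establish a companion Bessel identity for $Y^{\psi,\Upsilon}(W_{1/2}, t)$. Since $W_{1/2}$ is not manifestly of the form $M^\ast$ of a section, the key step is to formally invert the intertwining operator: writing $W = M(\pi^\vee, -1/2) \widetilde{W}$ for a suitable $\widetilde{W} \in \mathrm{Ind}(\mathbb{W}^{\psi_{N_M}}(\pi^\vee))$ in an enlarged model, Proposition~\ref{stable Y def} applied to $\widetilde{W}$ yields
\[
Y^{\psi,\Upsilon}(W_{1/2}, t) = c_{\mathrm{pl}}(\pi) \, \mathbb{B}_{\pi_\circ}^{\psi_{N_\circ}}(w_{U^\prime}^\prime w_0^{M^\prime}t) \, A_e^\psi(M^\ast W),
\]
where $c_{\mathrm{pl}}(\pi)$ is the Plancherel-type normalization arising from the self-composition $M(\pi,1/2) \circ M(\pi^\vee, -1/2)$ at $s = 1/2$. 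The crucial claim is $c_{\mathrm{pl}}(\pi) = \omega_\pi(\tau)$, which should follow from Shahidi's formula for the normalizing factor of the intertwining operator in terms of the Asai gamma factor, together with the functional equation $L(s, \pi, \mathrm{As}^-) = L(s, \pi \otimes \Upsilon, \mathrm{As}^+)$ and the relation $\mc(\tau) = -\tau$.

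Once both Bessel identities are in place, the remaining $T^\prime$-integral becomes a product of two Bessel functions of the tempered representation $\sigma = \pi_\circ$, one with character $\psi_{N_\circ}$ and one with $\psi_{N_\circ}^{-1}$, integrated against $\delta_{B^\prime}$. This is precisely the local analog of the Plancherel inner product of matrix coefficients of $\sigma$ and $\sigma^\vee$, and temperedness of $\sigma$ (which is exactly why Proposition~\ref{first reduction prp} was used to reduce to this case) ensures both convergence and evaluation to a standard normalization. Combined with $c_{\mathrm{pl}}(\pi) = \omega_\pi(\tau)$, this yields the desired identity.

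The main obstacle is the identification $c_{\mathrm{pl}}(\pi) = \omega_\pi(\tau)$. This demands careful tracking of the Weil constants $\alpha_\psi(w_0^{M^\prime} t)$ and $\alpha_{\psi^{-1}}(w_{U^\prime}^\prime)$ from Lemma~\ref{Lemma 6.6}, of the twist by $\Upsilon_{M^\prime}$, and of the $|\tau|^{1/2}$ factor arising from the measure comparison \eqref{measure E to FF}. Following the metaplectic template of \cite{LMb}, these pieces should assemble into the Asai gamma factor of $\pi$ at $s = 0$; the final step uses that $\mc(\tau) = -\tau$ governs the discrepancy between $\omega_\sigma(-1)$ and $\omega_\pi(\tau)$ at the level of central characters, as already noted in the remark after Theorem~\ref{main theorem}.
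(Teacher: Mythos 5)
Your opening move — substituting $W^\vee = M(1/2)W^\wedge$ into \eqref{5.3} and applying the Bessel identity from Proposition~\ref{stable Y def} to the factor $Y^{\psi^{-1},\Upsilon^{-1}}(M^\ast W^\wedge, t)$ — is legitimate and matches one ingredient of the paper. But the proposed \emph{companion Bessel identity} for the other factor $Y^{\psi,\Upsilon}(W_{1/2}, t)$ is where the argument breaks down. The Bessel identity in Proposition~\ref{stable Y def} is available only for sections of the form $M^\ast W$, because only then does $A_{\#}^{\psi,\Upsilon}(M^\ast W,\cdot)$ land inside the irreducible descent $\mathcal{D}_\psi^\Upsilon(\pi)$, which is what forces the stable $N'$-integral to be a scalar multiple of the Bessel function. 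For a generic $W_{1/2}$, the function $A_{\#}^{\psi,\Upsilon}(W_{1/2},\cdot)$ lies in a strictly larger space, and no such identity holds. Your fix — ``formally inverting'' the intertwining operator by writing $W = M(\pi^\vee,-1/2)\widetilde W$ — fails for a structural reason: the composition $M(\pi,1/2)\circ M(\pi^\vee,-1/2)$ is proportional to the Plancherel factor, which vanishes precisely when $L(s,\pi,\mathrm{As}^-)$ has a pole at $s=0$, i.e.\ exactly in the regime where the descent $\sigma$ is nontrivial. So the inversion is not available where you need it, and $c_{\mathrm{pl}}(\pi)$ is either $0$ or $\infty$, not $\omega_\pi(\tau)$.

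Moreover, even if both Bessel identities held, you would still need the claim that $\int_{T'}\delta_{B'}(t)\,\mathbb{B}_{\pi_\circ}^{\psi_{N_\circ}}(w_{U'}'w_0^{M'}t)\,\mathbb{B}_{\pi_\circ}^{\psi_{N_\circ}^{-1}}(w_{U'}'w_0^{M'}t)\,dt$ evaluates to a ``standard normalization,'' which you assert from temperedness but do not establish; that is a nontrivial local Plancherel-type statement, not a consequence of the reduction in Proposition~\ref{first reduction prp}. In effect your route, pushed through, would only reproduce the existence of the proportionality constant $c_\pi$ (which is Corollary~\ref{cor6.14}), not pin down its value. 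The paper determines $c_\pi = \omega_\pi(\tau)$ by a genuinely different mechanism: the functional equation $\underline{B}(W,M(s)W^\wedge,s) = \underline{B}(M(s)W,W^\wedge,-s)$ of Section~7, the re-expression of $\underline{B}$ in terms of the degenerate Whittaker integrals $E^\psi$, and above all Theorem~\ref{Theorem 8.1} from \cite{Mo2}, whose proof relies on realizing the Langlands quotient of $\mathrm{Ind}(\mathbb{W}^{\psi_{N_M}}(\pi),1/2)$ inside $C^{\mathrm{sm}}(\mathrm{Sp}_{4n}(F)\backslash G)$ — the distinguished-model (unitary-period) realization. That model transition is the actual source of the factor $\omega_\pi(\tau)$, and it has no analogue in your outline.
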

%
%
%
%
%
%
%
%
%
%
%
%
%
%
\section{Application of functional equations}
We define $\underline{B}(W, W^\vee, s)$ to be the right hand side of \eqref{6.11}
whenever the integral defining $\{ \cdot, \cdot \}$ and the double integrals over $V_M^{\#} \backslash N^{\#}$
and $U$ are absolutely convergent.
Clearly for $g \in \mM^\prime$, with $|\det g| =1$
\[
\{ W(\cdot \eta_M(g)), W\vee(\cdot \eta_M(g)) \} = \{ W, W\vee \}.
\]
Then as in \cite[(7.1)]{LMb}, $\underline{B}(W, W^\vee, s)$ is equal to 
\begin{multline}
\label{7.1}
\int_{N_{\mM^\prime}^\prime} \int_U \int_U \{ W_s(\cdot \eta_M(n)w_U v), W_{-s}^\vee(\cdot w_U u) \} \psi_U(v)^{-1} \psi_U(u) 
\psi_{N_{\mM^\prime}^\prime}(n) \, dv \, du \, dn
\, dv \, du \, dn \\
=
\int_{N_{\mM^\prime}^\prime} \int_U \int_U \{ W_s(\cdot  w_U v), W_{-s}^\vee(\cdot \eta_M(n)w_U u) \} \psi_U(v)^{-1} \psi_U(u) 
\psi_{N_{\mM^\prime}^\prime}(n)^{-1} \, dv \, du \, dn
\, dv \, du \, dn \\
=
\int_{V_M^{\#} \backslash N^{\#}} \int_U \{ W_s(\cdot w_U v_1), W_{-s}^\vee(\cdot w_U v_2) \} \psi_U(v_1)^{-1} \psi_{N^\#}(v_2)
\, dv_1 \, dv_2.
\end{multline}
By \eqref{6.11}, for any $\pi \in \mathrm{Irr}_{\rm gen} M$, $W \in \mathrm{Ind}(\mathbb{W}^{\psi_{N_M}}(\pi))_{\#}^\circ$, 
$W^\vee \in \mathrm{Ind}(\mathbb{W}^{\psi_{N_M}^{-1}}(\mc(\pi)))$ and $-\mathrm{Re} \, s \gg 1$ we have
\[
\underline{B}(W, W^\vee, s) = B(W, W^\vee, s).
\]
The following proposition is proved in a similar way as the argument in \cite[Appendix~B]{LMd}, practically word for word.
Hence, we omit its proof.
\begin{proposition}
\label{Proposition 7.1}
Let $\pi \in \mathrm{Irr}_{\rm temp} M$. Then
\begin{enumerate}
\item For $\mathrm{Re}\, s \gg 1$, $\underline{B}(W, W^\vee, s)$ is well-defined for any $W \in  \mathrm{Ind}(\mathbb{W}^{\psi_{N_M}}(\pi))$
$W^\vee \in \mathrm{Ind}(\mathbb{W}^{\psi_{N_M}^{-1}}(\pi^\vee))^\circ$
\item For $-\mathrm{Re}\, s \gg 1$, $\underline{B}(W, W^\vee, s)$ is well-defined for any $W \in  \mathrm{Ind}(\mathbb{W}^{\psi_{N_M}}(\pi))^\circ$
$W^\vee \in \mathrm{Ind}(\mathbb{W}^{\psi_{N_M}^{-1}}(\pi^\vee))$
\item For $-\mathrm{Re}\, s \gg 1$, we have
\[
\underline{B}(W, M(s)W^\wedge, s) = \underline{B}(M(s)W, W^\wedge, -s)
\]
for any $W \in  \mathrm{Ind}(\mathbb{W}^{\psi_{N_M}}(\pi))^\circ$
$W^\wedge \in \mathrm{Ind}(\mathbb{W}^{\psi_{N_M}^{-1}}(\pi))^\circ$.
\end{enumerate}
Recall the definition of the space $W^\vee \in \mathrm{Ind}(\mathbb{W}^{\psi_{N_M}^{-1}}(\pi))^\circ$ in Section~\ref{def of whittaker space}.
\end{proposition}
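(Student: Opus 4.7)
The plan is to follow \cite[Appendix~B]{LMd} practically verbatim, replacing the combinatorial and representation-theoretic data with their even unitary analogues. The three claims would be established in sequence.

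For parts (1) and (2) the strategy is a direct absolute-convergence analysis of the iterated integral in \eqref{7.1}. Expanding the inner pairing $\{\cdot,\cdot\}$ gives a triple integral over $(V_M^\# \backslash N^\#) \times U \times (N_{\mM^\prime}^\prime \backslash \mM^\prime)$. The innermost integral converges for fixed $v_1,v_2$ since $\pi$ is tempered (by the Whittaker-pairing estimate \cite[Lemma~1.2]{LMd}). For part (1), the support of $W^\vee$ in the big cell $Pw_UP$ forces $v_2$, after an Iwasawa/Bruhat decomposition, into a compact subset of $V_M^\# \backslash N^\#$, leaving only a single integral on that side. Applying the Whittaker asymptotics of Lemma~\ref{lemma4.6} together with the decay supplied by $\nu(\cdot)^s$ for $\mathrm{Re}\,s \gg 1$ (sharp because $e(\pi)=0$ by temperedness) gives the required bound. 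Part (2) is symmetric, exchanging the roles of $W$ and $W^\vee$ and of $s$ and $-s$.

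For part (3), I would substitute the intertwining integral
\[
M(s)W^\wedge(g) = \int_U W^\wedge_s(\varrho(\mathfrak{t}) w_U u g)\, du,
\]
which is absolutely convergent a priori for $\mathrm{Re}\,s \gg 1$ and is meromorphically continued elsewhere, into $\underline{B}(W, M(s)W^\wedge, s)$. This produces a quadruple integral over two copies of $U$ together with $V_M^\# \backslash N^\#$ and $N_{\mM^\prime}^\prime \backslash \mM^\prime$. The circle-space support of $W$ renders this absolutely convergent for $-\mathrm{Re}\,s \gg 1$. Fubini then permits swapping the two $U$-integrations; after the change of variables $u \mapsto \varrho(\mathfrak{t})^{-1} u \varrho(\mathfrak{t})$ (which preserves the Haar measure and the character $\psi_U$) and use of the $\mM^\prime$-invariance $\{W(\cdot\,\eta_M(g)), W^\vee(\cdot\,\eta_M(g))\} = \{W, W^\vee\}$ for $|\det g|=1$ recalled just above \eqref{7.1}, the inner $U$-integral is recognised as the intertwining operator acting in the first slot of $\underline{B}$, and the whole expression becomes $\underline{B}(M(s)W, W^\wedge, -s)$. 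Since both sides are meromorphic in $s$, the identity persists throughout the region $-\mathrm{Re}\,s \gg 1$.

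The main obstacle is the Fubini step in part (3): it requires a uniform absolute bound on the combined four-variable integrand, specifically a dominating estimate for expressions of the form $|W^\wedge_s(\varrho(\mathfrak{t}) w_U u \cdot w_U v_2)|$ jointly in $u$ and $v_2$, valid on a nonempty open range of $s$. The idea is to combine the circle-space support on $W$ (which restricts the $v_1$-integration to a compact set) with Lemma~\ref{lemma4.6} applied to the twisted argument, so that the decay from both $\nu(\cdot)^s$ and $\nu(\cdot)^{-s}$ is available simultaneously; the temperedness of $\pi$ makes the rates sharp enough. This estimate is the even unitary analogue of the key technical bound driving \cite[Appendix~B]{LMd}, and once it is in place the functional equation follows by the routine manipulations sketched above.
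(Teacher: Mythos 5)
Your plan follows the same route the paper indicates (transport \cite[Appendix~B]{LMd} to the even-unitary setting), and your sketches of parts (1) and (2) — expand $\{\cdot,\cdot\}$ to a triple integral, use the circle-space support to compactify the variable attached to the circle function, and use Lemma~\ref{lemma4.6} with the tempered bound $e(\pi)=0$ to absorb the remaining integration — are structurally sound. The one place where I would push back is part (3). You assert that after formally substituting the intertwining integral $M_s^\ast W^\wedge(\cdot) = \int_U W^\wedge_s(\varrho(\mathfrak{t})w_Uu'\cdot)\,du'$, the resulting quadruple integral is absolutely convergent for $-\mathrm{Re}\,s \gg 1$ because of the circle-space support of $W$. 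That support compactifies the $U$-variable coming from $\underline{B}$, but it does not by itself compactify the new $u'$-variable, nor the $\eta(N')$-factor of $V_M^\# \backslash N^\#$ (recall $V_M^\# \backslash N^\# \cong V_U \rtimes \eta(N^\prime)$, and $\eta(N^\prime)$ is not contained in $U$; its Levi part lands in $M$ after conjugation by $w_U$, so the big-cell support condition does not translate straightforwardly into compactness in that direction). What does help is that $W^\wedge$ is also a circle-space function, so that $M_s^\ast W^\wedge$ evaluated along the big cell is a compactly supported $u'$-integral for each fixed outer datum; but the size of that support grows with the $\mM^\prime$-variable, so one still needs a quantitative estimate (not just pointwise finiteness) to run Fubini. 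You flag this as the main obstacle, which is right, but the resolution you sketch ("decay from both $\nu^s$ and $\nu^{-s}$ simultaneously") is not self-evident: the two exponents give decay in opposite regimes of $\nu$, and it is precisely the interplay between the compact $u'$-support (from $W^\wedge \in {}^\circ$), the compact $U$-support (from $W \in {}^\circ$), and the tempered estimate on the $\mM^\prime$-integral that makes the dominating function integrable. Until that estimate is actually written out, part (3) is only a plausible outline, not a proof.
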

Combined with the above we get
\begin{corollary}
\label{Corollary 7.2}
For $-\mathrm{Re}\, s \gg 1$ we have
\[
\underline{B}(W, M(s)W^\wedge, s) = \underline{B}(M(s)W, W^\wedge, -s)
\]
for any $W \in  \mathrm{Ind}(\mathbb{W}^{\psi_{N_M}}(\pi))^\circ_\#$
$W^\wedge \in \mathrm{Ind}(\mathbb{W}^{\psi_{N_M}^{-1}}(\pi))^\circ$.
\end{corollary}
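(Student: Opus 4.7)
The plan is to observe that Corollary~\ref{Corollary 7.2} is essentially just a specialization of Proposition~\ref{Proposition 7.1}(3) to a smaller source space, so the only thing to verify is the containment $\mathrm{Ind}(\mathbb{W}^{\psi_{N_M}}(\pi))^\circ_\# \subset \mathrm{Ind}(\mathbb{W}^{\psi_{N_M}}(\pi))^\circ$ and the well-definedness of the two sides in the stated range of $s$.

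First, from the definition in Section~\ref{def of whittaker space}, the subspace $\mathrm{Ind}(\mathbb{W}^{\psi_{N_M}}(\pi))^\circ_\#$ is generated by sections of the precise form \eqref{(5.2)} with the additional compact-support constraint on $(t,n)\mapsto W^M(\eta_M(tw_0^{\mM^\prime}n))$. In particular every generator, and hence every element, already lies in $\mathrm{Ind}(\mathbb{W}^{\psi_{N_M}}(\pi))^\circ$, so $W\in\mathrm{Ind}(\mathbb{W}^{\psi_{N_M}}(\pi))^\circ_\#$ automatically qualifies as an element of $\mathrm{Ind}(\mathbb{W}^{\psi_{N_M}}(\pi))^\circ$.

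Next I would check that both integrals appearing in the claimed identity are well-defined for $-\mathrm{Re}\,s\gg 1$. On the left, $W\in\mathrm{Ind}(\mathbb{W}^{\psi_{N_M}}(\pi))^\circ$ and $M(s)W^\wedge\in\mathrm{Ind}(\mathbb{W}^{\psi_{N_M}^{-1}}(\pi^\vee),-s)$, so Proposition~\ref{Proposition 7.1}(2) applies. On the right, $M(s)W\in\mathrm{Ind}(\mathbb{W}^{\psi_{N_M}}(\pi^\vee),-s)$ and $W^\wedge\in\mathrm{Ind}(\mathbb{W}^{\psi_{N_M}^{-1}}(\pi))^\circ$, so Proposition~\ref{Proposition 7.1}(1), taken at the spectral parameter $-s$ (which satisfies $\mathrm{Re}(-s)\gg1$), guarantees convergence. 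Thus neither side suffers any analytic issue beyond those already handled in Proposition~\ref{Proposition 7.1}.

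Finally, combining the inclusion above with Proposition~\ref{Proposition 7.1}(3)—applied to the same pair $(W,W^\wedge)$ regarded as elements of $\mathrm{Ind}(\mathbb{W}^{\psi_{N_M}}(\pi))^\circ\times\mathrm{Ind}(\mathbb{W}^{\psi_{N_M}^{-1}}(\pi))^\circ$—immediately yields the functional equation $\underline{B}(W,M(s)W^\wedge,s)=\underline{B}(M(s)W,W^\wedge,-s)$ in the region $-\mathrm{Re}\,s\gg 1$. There is no real obstacle here: the content of the corollary is purely the observation that the identity is valid on the smaller class $\mathrm{Ind}(\mathbb{W}^{\psi_{N_M}}(\pi))^\circ_\#$ which, by Lemma~\ref{Lemma 5.6} and the definition in Section~\ref{def of whittaker space}, is the class on which we have the finer support control needed for the subsequent reduction to the main identity in \eqref{6-12}.
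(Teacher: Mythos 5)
Your proof is correct and is essentially the paper's (implicit) argument: the paper states Corollary~\ref{Corollary 7.2} with only the phrase ``combined with the above,'' and its content is exactly that $\mathrm{Ind}(\mathbb{W}^{\psi_{N_M}}(\pi))^\circ_\#\subset\mathrm{Ind}(\mathbb{W}^{\psi_{N_M}}(\pi))^\circ$, so Proposition~\ref{Proposition 7.1}(3) applies verbatim. Your convergence check via parts (1) and (2) is harmless but redundant, since part (3) already asserts the functional equation (and hence the well-definedness of both sides) on the larger space.
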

%
%
%
%
%
%
%
%
%
%
%
%
%
%
\subsection{}
Put $\varepsilon_2 = \ell_\mM(e_{1,1}+J)$
where $e_{1,1}$ is the matrix in $\mathrm{Mat}_n$ with $1$ in the upper left corner and zero elsewhere
$\varepsilon_3 = w_{2n, n}^\prime \varepsilon_2$ and $\varepsilon_4$ an arbitrary element of $N_\mM$.
As in \cite[Section~9]{Mo2}, define 
\[
\Delta(t) := |t_1|^{-n} \delta_B^{\frac{1}{2}}(\varrho(t)), \quad t = \mathrm{diag}(t_1, \dots, t_{2n}) \in T_\mM. 
\]
In particular, when $t \in \eta_\mM^\vee(t^\prime)$ with $t^\prime \in T_{\mM^\prime}^\prime$, we have $\Delta(t) = \delta_{B^\prime}^{\frac{1}{2}}(\varrho^\prime(t^\prime))$.

Let $T^{\prime \prime} = \eta_{\mM}^\vee(T_{\mM^\prime}^\prime) \times Z_\mM$.
For any $W \in C^{\rm sm}(N \backslash G, \psi_N)$ and $t \in T^{\prime \prime}$, define
\begin{align}
\label{7.7}
E^\psi(W,t) &:= \Delta(t)^{-1} \int_{\eta_{M}(N_{\mM^\prime}^\prime) \backslash N^{\#}} W(\varrho(t \varepsilon_4 \varepsilon_3) w_U v) \psi_{N^{\#}}(v)^{-1} \, dv\\
\notag &=   \Delta(t)^{-1} \psi_{N_\mM}(t \varepsilon_4 t^{-1}) \int_{\eta_{M}(N_{\mM^\prime}^\prime) \backslash N^{\#}} W(\varrho(t \varepsilon_3) w_U v) \psi_{N^{\#}}(v)^{-1} \, dv
\end{align}
\begin{definition}
Recall the definition of a certain subspace of $\mathbb{W}^{\psi_{N_\mM}^{-1}}(\pi)$ in \cite[Definition~7.5]{LMb}.
\label{Definition 7.5}
Let $\mathbb{W}^{\psi_{N_\mM}^{-1}}(\pi)_{\natural}$ be the subspace of $\mathbb{W}^{\psi_{N_\mM}^{-1}}(\pi)$
consisting of $W$ such that 
\[
W(\cdot \varepsilon_3) |_{\mathcal{P}^\ast} \in C_c^\infty(N_\mM \backslash \mathcal{P}^\ast, \psi_{N_\mM}^{-1})
\text{ and } W(\cdot \varepsilon_3)|_{\eta_{\mM}^\vee(T_{\mM^\prime}^\prime) \ltimes \bar{R}}
\in C_c^\infty(\eta_{\mM}^\vee(T_{\mM^\prime}^\prime) \ltimes \bar{R}).
\]
\end{definition}
%
%
%
%
%
%
As remarked in \cite[p.33]{LMb}, this space is non-zero, and thus the following space is also non-zero.
\begin{definition}
\label{Definition 7.8}
Let $\mathrm{Ind}(\mathbb{W}^{\psi_{N_M}^{-1}}(\pi))_{\natural}^\circ$ be the linear subspace of 
$\mathrm{Ind}(\mathbb{W}^{\psi_{N_M}^{-1}}(\pi))^\circ$ spanned by the functions which vanish outside $P w_U N$ and 
on the big cell are given by
\[
W(u^\prime m w_U u) = \delta_P^{\frac{1}{2}}(m) W^M(m) \phi(u), \quad m \in M, u, u^\prime \in U
\]
with $\phi \in C_c^\infty(U)$ and $W^M \circ \varrho \in \mathbb{W}^{\psi_{N_\mM}^{-1}}(\pi)_{\natural}$.
\end{definition}
%
%
%
%
%
%
In the same argument as the proof of \cite[Lemma~7.9]{LMb}, the following lemma is proved.
\begin{lemma}
\label{Lemma 7.9}
Let $\pi \in \mathrm{Irr}_{\rm gen} M$.
For $\mathrm{Re}\, s \gg 1$ the integral \eqref{7.7} defining $E^\psi(W_s, t)$ converges for any $W \in \mathrm{Ind}(\mathbb{W}^{\psi_{N_M}} (\pi))$
and $t \in S$ uniformly for $(s, t)$ in a compact set.
Hence, $E^\psi(W_s, t)$ is holomorphic for $\mathrm{Re}\, s \gg 1$ and continuous in $t$.
If $W^\wedge \in  \mathrm{Ind}(\mathbb{W}^{\psi_{N_M}^{-1}} (\pi))^\circ$
then $E^{\psi^{-1}}(W_s^\wedge, t)$ is entire in $s$ and locally constant in $t$, uniformly in $s$.
If moreover $W^\wedge \in  \mathrm{Ind}(\mathbb{W}^{\psi_{N_M}^{-1}} (\pi))^\circ_\natural$ then
$E^{\psi^{-1}}(W_s^\wedge, t)$  is compactly supported in $t \in S$, uniformly in $s$.
\end{lemma}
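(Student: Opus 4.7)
The plan is to transcribe the argument of \cite[Lemma~7.9]{LMb} into the even unitary setting, since the only substantive inputs are the convergence lemma of Section~6 (the analogue of \cite[Lemma~6.8]{LMb}) together with the structural descriptions of $N^{\#}$ and of the ``big cell'' subspaces introduced in Definitions~\ref{Definition 7.5} and \ref{Definition 7.8}.

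First I would handle parts (1) and (2). Using the left $(N,\psi_N)$-equivariance of $W$ and the fact that $\varrho(\varepsilon_4)\in N_M$, one rewrites
\[
W(\varrho(t\varepsilon_4\varepsilon_3)w_U v) \;=\; \psi_{N_\mM}(t\varepsilon_4 t^{-1})\,W(\varrho(t\varepsilon_3) w_U v),
\]
which matches the second line of \eqref{7.7}. The question thus reduces to bounding
\[
\int_{\eta_M(N_{\mM^\prime}^\prime)\backslash N^{\#}} |W_s(\varrho(t\varepsilon_3) w_U v)|\, dv
\]
uniformly in $(s,t)$ on compacta. Decomposing $N^{\#}=V_-\rtimes \eta(N^\prime)$ and identifying $\eta_M(N_{\mM^\prime}^\prime)\backslash \eta(N^\prime)$ with $\eta(U^\prime)$ via the block structure of $N^\prime$, one rewrites the integral as an iterated integral. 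Conjugating $\varepsilon_3=w_{2n,n}^\prime\varepsilon_2$ through $w_U$ puts $\varrho(\varepsilon_3)w_U$ into the form (element of $\varrho(N_\mM^t\cap\mathcal{P})$)$\cdot w_U\cdot$(element of $U$), so the bound falls within the scope of the convergence lemma of Section~6. Holomorphy in $s$ for $\mathrm{Re}\,s\gg 1$ and continuity in $t$ then follow from dominated convergence applied to the uniform bounds.

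For (3), take $W^\wedge\in \mathrm{Ind}(\mathbb{W}^{\psi_{N_M}^{-1}}(\pi))^\circ$, which vanishes off $P w_U U$ and on the big cell has the form $W^\wedge(u^\prime m w_U u)=\delta_P^{1/2}(m) W^M(m)\phi(u)$ with $\phi\in C_c^\infty(U)$. Writing $\varrho(t\varepsilon_4\varepsilon_3) w_U v$ in Bruhat form $u^\prime m w_U u$ with respect to $Pw_U U$, the $\phi$-factor forces the $u$-coordinate into a fixed compact set, and hence $v$ itself is confined to a compact subset of $\eta_M(N_{\mM^\prime}^\prime)\backslash N^{\#}$ independent of $s$. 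The resulting integral is therefore absolutely convergent for every $s\in\mC$ and defines an entire function of $s$, locally constant in $t$ and uniformly so in $s$.

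Finally, for (4), the extra $\natural$-condition of Definition~\ref{Definition 7.5} requires that $W(\cdot\varepsilon_3)|_{\mathcal{P}^\ast}$ be compactly supported modulo $N_\mM$ and, crucially, that $W(\cdot\varepsilon_3)|_{\eta_\mM^\vee(T_{\mM^\prime}^\prime)\ltimes \bar{R}}$ be compactly supported. The Bruhat-form computation of part~(3) shows that the Levi factor $m$ arising from $\varrho(t\varepsilon_4\varepsilon_3)w_U v$ lies, up to the compact $v$-support, in $\varrho(\eta_\mM^\vee(T_{\mM^\prime}^\prime)\ltimes \bar{R}) \cdot Z_M$ with the $\eta_\mM^\vee(T_{\mM^\prime}^\prime)$-component controlled by $t$. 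Thus the compactness of $W(\cdot\varepsilon_3)$ on $\eta_\mM^\vee(T_{\mM^\prime}^\prime)\ltimes \bar{R}$ translates directly into compact support in $t\in T^{\prime\prime}=\eta_\mM^\vee(T_{\mM^\prime}^\prime)\times Z_\mM$ (the $Z_\mM$-direction being harmless since $W$ transforms by the central character), uniformly in $s$. The main obstacle is this last bookkeeping step: one must verify that the conditions in Definition~\ref{Definition 7.5}, phrased via restrictions to $\mathcal{P}^\ast$ and $\eta_\mM^\vee(T_{\mM^\prime}^\prime)\ltimes \bar{R}$, align exactly with the dependence on $(t,v)$ produced by the Bruhat decomposition of $\varrho(t\varepsilon_4\varepsilon_3)w_U v$. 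Once this matching is in place, the rest of the proof proceeds as in \cite[Lemma~7.9]{LMb}.
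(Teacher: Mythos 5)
Your overall strategy is the one the paper itself endorses: reduce parts (1)--(2) to the convergence lemma of Section~6 (the analogue of \cite[Lemma~6.8]{LMb}), get entirety in (3) from the big-cell support of $W^\wedge$, and get compact $t$-support in (4) from the $\natural$-condition. That matches what the paper does (it simply defers to \cite[Lemma~7.9]{LMb} with no further detail), so the plan is right.

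There is, however, a concrete slip at the outset that would propagate through the argument. You write that one should decompose $N^{\#}=V_-\rtimes\eta(N^\prime)$ and then identify $\eta_M(N_{\mM^\prime}^\prime)\backslash\eta(N^\prime)$ with $\eta(U^\prime)$. But $\eta_M=\varrho\circ\eta_\mM$ with $\eta_\mM(g)=\mathrm{diag}(g,I_n)$, so $\eta_M(N_{\mM^\prime}^\prime)$ sits inside $V_M^{\#}\subset V_-$, not inside $\eta(N^\prime)$; the subgroup of $\eta(N^\prime)$ you apparently have in mind is the \emph{other} embedding $\eta(N_{M^\prime}^\prime)=\eta_M^{\vee}(N_{\mM^\prime}^\prime)$ (via $\eta_\mM^\vee(g)=\mathrm{diag}(I_n,g)$). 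Consequently the quotient $\eta_M(N_{\mM^\prime}^\prime)\backslash\eta(N^\prime)$ is not defined, and the correct coordinatization is $\eta_M(N_{\mM^\prime}^\prime)\backslash N^{\#}\simeq\bigl(\eta_M(N_{\mM^\prime}^\prime)\backslash V_-\bigr)\times\eta(N^\prime)$, i.e.\ the quotient must be absorbed into the $V_-$-factor. This distinction is exactly what makes the Section~8 rewriting of \eqref{7.7} (over $\bar R\times\bar U$ after conjugating through $w_U$ and $\varepsilon_3$) the natural coordinate system, and you need it in place before invoking the convergence lemma or the big-cell support. Relatedly, the assertion that conjugating $\varepsilon_3=w_{2n,n}^\prime\varepsilon_2$ through $w_U$ ``puts $\varrho(\varepsilon_3)w_U$ into the form (element of $\varrho(N_\mM^t\cap\mathcal P))\cdot w_U\cdot(\text{element of }U)$'' is asserted rather than verified; $\varepsilon_3$ is neither upper nor lower triangular, and the reduction to the hypotheses of Section~6 requires an explicit Bruhat-type manipulation (as in the passage from the first to the second line of the Section~8 formula \eqref{7.7}) rather than a one-line conjugation. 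Once the quotient is placed correctly in $V_-$ and the reduction to $\bar R\times\bar U$ is carried out, parts (3) and (4) do follow the way you indicate, with $\phi\in C_c^\infty(U)$ controlling the $\bar U$-variable and the $\natural$-condition on $W(\cdot\varepsilon_3)|_{\eta_\mM^\vee(T_{\mM^\prime}^\prime)\ltimes\bar R}$ controlling $(t,r)$.
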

%
%
%
%
%
%
\begin{proposition}
\label{Proposition 7.10}
Let $\pi \in \mathrm{Irr}_{\rm temp} M$.
Then for $\mathrm{Re}\, s \gg 1$ we have
\begin{equation}
\label{7.9}
\underline{B}(W, W^\vee, s) = \int_{\eta_\mM^\vee(T_{\mM^\prime}^\prime)}
E^\psi(W_s, t) E^{\psi^{-1}}(W_{-s}^\vee, t) \frac{dt}{|\det t|}
\end{equation}
for any $W \in  \mathrm{Ind}(\mathbb{W}^{\psi_{N_M}}(\pi))$
$W^\vee \in \mathrm{Ind}(\mathbb{W}^{\psi_{N_M}^{-1}}(\pi))^\circ_\natural$
where the integrand on the right-hand side is continuous and compactly supported.
\end{proposition}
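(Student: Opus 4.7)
The plan is to mimic the proof of \cite[Proposition~7.10]{LMb} with the modifications appropriate for the even unitary group. The starting point is the last line of the chain of equalities in \eqref{7.1}, namely
\[
\underline{B}(W, W^\vee, s) = \int_{V_M^{\#} \backslash N^{\#}} \int_U \{ W_s(\cdot w_U v_1), W_{-s}^\vee(\cdot w_U v_2) \} \psi_U(v_1)^{-1} \psi_{N^\#}(v_2) \, dv_1 \, dv_2,
\]
combined with the defining integral representation
\[
\{ W_s(\cdot w_U v_1), W_{-s}^\vee(\cdot w_U v_2) \} = \int_{N_{\mM^\prime}^\prime \backslash \mM^\prime}
W_s(\eta_M(g) w_U v_1) W_{-s}^\vee(\eta_M(g) w_U v_2) \delta_P(\eta_M(g))^{-1} |\det g|^{1-n} \, dg.
\]
For $\mathrm{Re}\,s \gg 1$ the whole triple integral converges absolutely by the analogue of the convergence statement behind \eqref{6.11}, so Fubini applies and the integration order can be rearranged freely.

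The next step is to carry out a Bruhat decomposition on the $\mM^\prime$-integration. Concretely, writing $g = b\,\varepsilon_3 n$ with $b \in B_{\mM^\prime}^\prime$ modulo $N_{\mM^\prime}^\prime$ and $n$ in the unipotent complement, and using the Iwasawa coordinate $b = t \cdot (\text{compact})$ with $t \in \eta_\mM^\vee(T_{\mM^\prime}^\prime)$, one obtains an outer integration over $t$ together with unipotent integrations against the shift by $\varepsilon_3$. At this stage the $\natural$-condition on $W^\vee$ is crucial: by Definition~\ref{Definition 7.8} the function $W^M \circ \varrho$ lies in $\mathbb{W}^{\psi_{N_\mM}^{-1}}(\pi)_\natural$, and Definition~\ref{Definition 7.5} implies that $W_{-s}^\vee(\cdot\, \varepsilon_3)$ is compactly supported along the pieces that complicate the smaller cells; this is what legitimates replacing the full Bruhat sum by the contribution of the big cell, exactly as in \cite[p.~33]{LMb}. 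The factor $\Delta(t)$ appearing in the definition \eqref{7.7} of $E^\psi$ is then produced by collecting the Jacobian $\delta_P(\eta_M(g))^{-1}|\det g|^{1-n}$ with the modulus factor $\delta_{B_{\mM^\prime}^\prime}$ of the Iwasawa decomposition, together with the normalization $|t_1|^{-n}$ built into $\Delta$.

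Finally, one identifies the remaining unipotent integrations with those in \eqref{7.7}. The $v_1$-integration over $V_M^\# \backslash N^\#$ together with an intertwining through $\varrho(t \varepsilon_3)$ yields the integral $\int_{\eta_M(N_{\mM^\prime}^\prime) \backslash N^\#} W_s(\varrho(t \varepsilon_3) w_U v)\, \psi_{N^\#}(v)^{-1}\, dv$ defining $E^\psi(W_s, t)$, up to the auxiliary factor $\psi_{N_\mM}(t \varepsilon_4 t^{-1})$ which is absorbed into the twist by $\varepsilon_4$ in \eqref{7.7}; the $v_2$-integration against $W^\vee$ produces the companion factor $E^{\psi^{-1}}(W_{-s}^\vee, t)$. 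Lemma~\ref{Lemma 7.9} guarantees the continuity and compact support in $t$ of the integrand on the right-hand side, so the identity \eqref{7.9} holds in the claimed sense. The main technical obstacle will be a careful bookkeeping of characters and Haar measure factors through the Bruhat/Iwasawa changes of variables, since the conventions for the unitary case differ from the metaplectic case at several points (notably the appearance of $\Upsilon$ and the normalization constants $\alpha_\psi(\cdot)$); verifying that all these collapse to exactly the factor $\Delta(t)^{-1}$ and the measure $|\det t|^{-1}\, dt$ in \eqref{7.9} is the crux.
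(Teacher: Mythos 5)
Your proposal follows essentially the same route as the paper, which simply invokes the argument of \cite[Proposition~7.10]{LMb} with the chain \eqref{7.1} and Lemma~\ref{Lemma 7.9} substituted for their counterparts, and you correctly identify all the key ingredients (the third line of \eqref{7.1}, the integral form of $\{\cdot,\cdot\}$, the Bruhat decomposition with the $\natural$-condition killing the smaller cells, the Jacobian bookkeeping producing $\Delta(t)$, and Lemma~\ref{Lemma 7.9} for the continuity and compact support of the integrand). One small imprecision: the parametrization of the big cell of $N_{\mM'}'\backslash\mM'$ by $\eta_\mM^\vee(T_{\mM'}')$ and unipotents after the shift by $\varepsilon_3$ is a Bruhat-cell coordinate rather than an Iwasawa decomposition, and there is no compact factor to extract; also the absolute-convergence claim for $\mathrm{Re}\,s\gg 1$ is covered by Proposition~\ref{Proposition 7.1}(1) rather than by the mechanism behind \eqref{6.11}, which is tailored to $-\mathrm{Re}\,s\gg 1$.
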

\begin{proof}
This is proved in the same argument as the proof of \cite[Proposition~7.10]{LMb} using \eqref{7.1} and Lemma~\ref{Lemma 7.9}
instead of (7.1) and Lemma~7.9 in \cite{LMb}, respectively.
\end{proof}
Combining Proposition~\ref{Proposition 7.10} with Corollary~\ref{Corollary 7.2} we get
\begin{proposition}
\label{Proposition 7.11}
Let $\pi \in \mathrm{Irr}_{\rm temp} M$.
Then for $-\mathrm{Re} \, s \gg 1$ and any$W \in \mathrm{Ind}(\mathbb{W}^{\psi_{N_M}}(\pi))^\circ_\#$
$W^\vee \in \mathrm{Ind}(\mathbb{W}^{\psi_{N_M}^{-1}}(\pi))^\circ_\natural$, we have
\[
B(W, M(s)W^\vee, s) = \int_{S} E^\psi(M_s^\ast W, t)E^{\psi^{-1}}(W_s^\vee, t) \frac{dt}{|\det t|} 
\]
where the integrand is continuous and compactly supported.
Here, for the simplicity, we denote $S = \eta_\mM^\vee(T_{\mM^\prime}^\prime)$.
\end{proposition}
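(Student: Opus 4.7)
The plan is to chain together three ingredients already in hand: the identity $\underline{B} = B$ recorded right after \eqref{7.1}, the functional equation in Corollary~\ref{Corollary 7.2}, and the integral representation in Proposition~\ref{Proposition 7.10}.

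First, I would observe that for $-\mathrm{Re}\, s \gg 1$ and $W \in \mathrm{Ind}(\mathbb{W}^{\psi_{N_M}}(\pi))^\circ_\#$, $W^\vee \in \mathrm{Ind}(\mathbb{W}^{\psi_{N_M}^{-1}}(\pi))^\circ_\natural$, the pair $(W, M(s)W^\vee)$ lies in $\mathrm{Ind}(\mathbb{W}^{\psi_{N_M}}(\pi))^\circ_\# \times \mathrm{Ind}(\mathbb{W}^{\psi_{N_M}^{-1}}(\pi^\vee))$, so the remark following \eqref{6.11} gives $B(W, M(s)W^\vee, s) = \underline{B}(W, M(s)W^\vee, s)$. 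Next, Corollary~\ref{Corollary 7.2} applies to this pair (since $\mathrm{Ind}(\mathbb{W}^{\psi_{N_M}^{-1}}(\pi))^\circ_\natural \subset \mathrm{Ind}(\mathbb{W}^{\psi_{N_M}^{-1}}(\pi))^\circ$) and yields
\[
\underline{B}(W, M(s)W^\vee, s) = \underline{B}(M(s)W, W^\vee, -s).
\]

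Second, since $-\mathrm{Re}\, s \gg 1$ is exactly $\mathrm{Re}(-s) \gg 1$, I would apply Proposition~\ref{Proposition 7.10} to the right-hand side with $s$ replaced by $-s$. The pair $(M(s)W, W^\vee)$ now lies in $\mathrm{Ind}(\mathbb{W}^{\psi_{N_M}}(\pi^\vee)) \times \mathrm{Ind}(\mathbb{W}^{\psi_{N_M}^{-1}}(\pi^\vee))^\circ_\natural$, which is precisely the space on which Proposition~\ref{Proposition 7.10} operates. Recalling that by definition $(M(s)W)_{-s} = M_s^\ast W$, Proposition~\ref{Proposition 7.10} yields
\[
\underline{B}(M(s)W, W^\vee, -s) = \int_{S} E^\psi(M_s^\ast W, t)\, E^{\psi^{-1}}(W_s^\vee, t)\, \frac{dt}{|\det t|},
\]
with the integrand continuous and compactly supported in $t$. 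The continuity and compact support carry over from the conclusion of Proposition~\ref{Proposition 7.10}, ultimately from Lemma~\ref{Lemma 7.9} applied to $M(s)W$ and $W^\vee$. Concatenating the three identities gives the desired formula.

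The argument is almost entirely bookkeeping; the only delicate point, and hence the main obstacle, is to check that the intertwining operator $M(s)$ carries the subspaces and Whittaker models correctly so that both Corollary~\ref{Corollary 7.2} and Proposition~\ref{Proposition 7.10} can legitimately be invoked at each step. Concretely, one must verify that $M(s)W$, a priori in $\mathrm{Ind}(\mathbb{W}^{\psi_{N_M}}(\pi^\vee))$, is an admissible input for the integral representation formula and that the regions of absolute convergence for $\underline{B}$ on the $(W, M(s)W^\vee)$ side and on the $(M(s)W, W^\vee)$ side are compatible for $-\mathrm{Re}\, s \gg 1$; both facts are built into Proposition~\ref{Proposition 7.1}.
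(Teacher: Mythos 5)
Your argument is correct and is essentially the paper's own: the author simply writes that Proposition~\ref{Proposition 7.11} follows by combining Proposition~\ref{Proposition 7.10} with Corollary~\ref{Corollary 7.2}, exactly the chain you carry out (together with the identity $B = \underline{B}$ for $-\mathrm{Re}\,s\gg 1$ from the remark after \eqref{6.11}). One minor slip: since $M(s)$ is not applied to $W^\vee$, that function stays in $\mathrm{Ind}(\mathbb{W}^{\psi_{N_M}^{-1}}(\pi))^\circ_\natural$ rather than the $\pi^\vee$-space you wrote, but this does not affect the chain of identities.
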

%
%
%
%
%
%
%
%
%
%
%
%
%
%
\section{Proof of Proposition~\ref{Proposition 6.15}}
\subsection{}
Let $\mathfrak{d} = \mathrm{diag}(1, -1, \dots, (-1)^{n-1}) \in \mathrm{Mat}_n$. We now fix
\[
\varepsilon_4 = \ell_{\mM}(-\frac{1}{2} \mathfrak{d} w_0^{\mM^\prime}) \in N_\mM.
\]
This element is denoted by $\varepsilon^\prime$ in the beginning of \cite[Section~8]{Mo2} with the parameter
$\mathfrak{a} = -\frac{1}{2}$ in the notation of that paper.
We also fix $\varepsilon_2 = \ell_\mM(\mathfrak{d})$
(and correspondingly $\varepsilon_3 = w_{2n, n}^\prime \varepsilon_2$).
Then we have
\[
\psi_{\bar{U}}(\bar{v}) = \psi_U ((\varrho(\varepsilon_4 \varepsilon_3 ) w_U)^{-1} \bar{v} \varrho(\varepsilon_4 \varepsilon_3 ) w_U)^{-1}
= \psi(\bar{v}_{2n+1, 1}), \quad \bar{v} \in \bar{U}.
\]
We note that in \cite[4.1]{Mo2}, the character $\psi_{\bar{U}}$ is denoted by $\psi_{\bar{U}, F}$.
As in \cite[8.1]{LMb}, we may rewrite (for $\mathrm{Re}\, s \gg 1$)
\begin{multline}
\label{7.7}
E^\psi(W_s, t) = \Delta(t)^{-1} \int_{\eta_{\mM}^\vee(N_{\mM^\prime}^\prime) \backslash N_\mM^\flat} \int_{\bar{U}}
W_s(\varrho(t) \bar{v} \varrho(\varepsilon_4 \varepsilon_3 r) w_U) \psi_{\bar{U}}(\bar{v}) \psi_{N_\mM^\flat}(r)^{-1} \, d\bar{v} \, dr\\
= \Delta(t)^{-1} 
\int_{\bar{R}} \int_{\bar{U}}
W_s(\varrho(t) \bar{v} \varrho(\varepsilon_4 r \varepsilon_3 ) w_U) \psi_{\bar{U}}(\bar{v}) \psi_{\bar{R}}(r) \, d\bar{v} \, dr
\end{multline}
Here, recall $N_{\mM}^\flat = (N_\mM^\#)^\ast$ and $\psi_{N_\mM^\flat}(m) = \psi_{N_\mM^\#}(m^\ast)$,
and define $\psi_{\bar{R}}(r) = \psi_{N_\mM^\flat}(\varepsilon_3^{-1} r \varepsilon_3)^{-1}$,
and 
\[
\bar{R} = \varepsilon_3 (\eta_{\mM}(N_{\mM^\prime}^\prime) \ltimes \ell_\mM(J)) \varepsilon_3^{-1}
= w_{2n, n}^\prime (\eta_{\mM}(N_{\mM^\prime}^\prime) \ltimes \ell_{\mM}(J)) w_{2n, n}^{\prime-1}
= \left\{ \begin{pmatrix} I_n&\\ x&{}^{t} n \end{pmatrix} : x \in J, n \in N_{\mM^\prime}^\prime \right\} \subset {}^{t}N_{\mM} \cap \mathcal{P}^\ast.
\]
We now quote the following pertinent result from \cite{Mo2}.
Let 
\[
T_i := \left\{ \mathrm{diag}(1_{i-1}, z, 1_{2n-i}) : z \in E^\times \right\}
\]
Then we have $S = \prod_{i=1}^n T_i$.
For any $f \in C_c^\infty(S)$ and $g \in C(S)$, we write $f \ast g(\cdot) = \int_Sf(t)g(\cdot t) \, dt$.
\begin{theorem}
\label{Theorem 8.1}
Let $K_i$ be a compact open subgroup of $E^\times$ and $f_{K_i}$ be the characteristic function of $K_i$.
Regard $f_{K_i}$ as a function on $T_i$, and put $f = f_{K_1} \otimes \cdots \otimes f_{K_n} \in \otimes_i C_c^\infty(T_i) = C_c^\infty(S)$.

For any $W \in C^{\rm sm}(N \backslash G, \psi_N)$ which is left invariant under a compact open subgroup of $Z_M$,
the function $f \ast E^\psi(W_s, t)$ extends to an entire function in $s$ which is locally constant in $t$, uniformly in $s$.
Moreover, if $\pi \in \mathrm{Irr}_{\rm ut, temp} \mM$ then
\[
f \ast E^\psi(W_s, t)|_{s=\frac{1}{2}} =
\left\{
\begin{array}{ll}
 \omega_{\pi}(\tau)^n A_e^\psi(M^\ast W) \int_{S_F} f(t^\prime) \, dt^\prime & \text{ if $t_i \in F^\times K_i$}\\
 &\\
 0 & \text{otherwise}
 \end{array}
 \right.
\]
for $t = \mathrm{diag}(1, \dots 1, t_1, \dots, t_{n})$. Here, $S_F = S \cap \mathrm{GL}_{4n}(F)$.
\end{theorem}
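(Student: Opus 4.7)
The plan is to adapt the strategy of \cite[Theorem~8.1]{LMb} and the companion analysis in \cite{Mo2} (Sections~8--9) to the unitary setting, keeping careful track of the phase factor $\omega_\pi(\tau)^n$ which replaces the metaplectic phase of the symplectic case. Starting from the explicit formula \eqref{7.7} for $E^\psi(W_s,t)$, I would first insert the convolution by $f$ and pull the torus translation through the inner integrals over $\bar v \in \bar U$ and $r \in \bar R$. This rewrites $f \ast E^\psi(W_s, t)$ as a single torus integral over the compact set $\prod K_i$, with $\Delta(t)^{-1}$ absorbing the Jacobian and the characters $\psi_{\bar U}$, $\psi_{\bar R}$ transformed equivariantly. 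The left-invariance of $W$ under a compact open subgroup of $Z_M$ ensures the central direction is also effectively compact.

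Proceeding coordinate-by-coordinate on $S = \prod T_i$, in each factor $T_i \simeq E^\times$ one analyzes the convolution with $f_{K_i}$ using \eqref{measure E/F}. Writing $E = F \oplus \tau F$, the convolution against the oscillatory integrand vanishes identically in the $\tau F$-direction unless the dual variable is pinned to $F$, while in the $F$-direction it yields a compactly supported contribution. This simultaneously establishes (i) the entirety in $s$, since after the convolution only a bounded portion of the integrand survives uniformly in $s$; (ii) the vanishing of the result outside $t_i \in F^\times K_i$; and (iii) that the surviving integrand, evaluated at $s = 1/2$, reduces through Lemma~\ref{Lemma 6.1} and \eqref{6.3} to the integral defining $A_e^\psi(M^\ast W)$.

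The main obstacle is the extraction of the scalar $\omega_\pi(\tau)^n$. Each of the $n$ torus coordinates contributes a single factor of $\omega_\pi(\tau)$, coming from the combination of the $|\tau|^{\pm 1/2}$ factors in \eqref{measure E/F}, the central character of $\pi$ on the $\tau$-direction absorbed via left-invariance under $Z_M$, and the Weil-representation phase $\alpha_{\psi^{-1}}$ carried through $M^\ast$ (cf.~\eqref{2.3c}). Making this bookkeeping rigorous, and justifying that the stable integral defining $A_e^\psi$ is correctly recovered as the $K_i$ shrink, is essentially the content of the final induction in \cite[Section~9]{Mo2}, which should carry over to our setting using Proposition~\ref{stable Y def} together with an inductive reduction to the rank-one case. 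A secondary but nontrivial point will be verifying that the temperedness of $\pi$ and the unitary-type hypothesis are exactly what is needed to make the intertwining operator $M(\tfrac{1}{2})$ holomorphic and the residual integrand absolutely integrable at the critical value $s = 1/2$, matching the conclusion on the right-hand side.
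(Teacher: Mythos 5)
Your proposal mirrors the \emph{shape} of what actually happens — the paper's proof of Theorem~\ref{Theorem 8.1} is itself just a notational translation into results of~\cite{Mo2} — but the substance of your sketch diverges from the real argument in two places that I don't think are repairable as written.

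First, the paper does not obtain the theorem by a coordinate-by-coordinate analysis on $S = \prod T_i$. The inner integrals over $\bar v \in \bar U$ and $r \in \bar R$ do not factor through the decomposition $S = \prod T_i$, because $\varrho(t)$ acts by conjugation on $\bar U$ and $\bar R$ and this action mixes all the coordinates of $t$; there is no tensor-product structure to which the one-variable identity~\eqref{measure E/F} can be applied slot by slot. The paper instead identifies $(f\Delta) \ast E^\psi(W_s,t)$ with the expression~\cite[(10.6)]{Mo2} evaluated at $\varrho(\varepsilon_3)w_U$, whose analytic continuation is given by the (finite-sum) expression~\cite[(10.5)]{Mo2}, and then matches $A^\psi_e(W)$ with $\mathcal{T}'(W)(\gamma\varepsilon_1)$ of~\cite[10.2]{Mo2}. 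The value at $s=\tfrac12$ is exactly the first assertion of~\cite[Corollary~10.8]{Mo2}, once $\varepsilon_1 = (\hat w \gamma)^{-1}\varrho(\varepsilon_3)w_U = \ell_M((-1)^n\mathfrak{d})$ is fed in. So the route is through Section~10 of~\cite{Mo2}, not the ``final induction in Section~9'' you cite, and not through a rank-one reduction.

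Second, and more seriously, your proposed mechanism for extracting the scalar $\omega_\pi(\tau)^n$ --- combining the $|\tau|^{\pm 1/2}$ measure factors of~\eqref{measure E/F}, the central character on the $\tau$-direction, and the Weil-representation phase $\alpha_{\psi^{-1}}$ through $M^\ast$ --- does not track. The functional $E^\psi$ in~\eqref{7.7} is defined purely from $W_s$ and degenerate unipotent characters; no Weil representation is present. The measure factors $|\tau|^{x(n)}$, $|\tau|^{y(n)}$ do appear in the subsequent Corollary~\ref{cor8.5}, but they cancel there (see the last displayed identity of its proof and~\cite[Remark~10.1]{Mo2}); they are not the source of $\omega_\pi(\tau)^n$. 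The actual origin of the sign is the distinguishedness structure: the key input, emphasized explicitly in the paper, is that for $\pi \in \mathrm{Irr}_{\rm ut,temp}\mathbb{M}$ the Langlands quotient of $\mathrm{Ind}(\mathbb{W}^{\psi_{N_M}}(\pi),\tfrac12)$ admits a model inside $C^{\rm sm}(H\backslash G)$ with $H = G_{2n}\cap\mathrm{GL}_{4n}(F) = \mathrm{Sp}_{4n}(F)$; the factor $\omega_\pi(\tau)^n$ arises from conjugating by $\tau^\ast$ inside this $H$-invariant model (compare part~(1) of the theorem quoted before Corollary~\ref{cor8.5}, where $\mathfrak{P}^{H_\mathbb{M}}$ involves translation by $\tau^\ast$). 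Without this ingredient there is no way to produce $\omega_\pi(\tau)^n$ from the convolution alone, and the claim that the $s=\tfrac12$ value coincides with $A_e^\psi(M^\ast W)$ has no justification: $M(\tfrac12)W$ lands in a quotient representation, and it is precisely the $H\backslash G$-model of that quotient that makes the two functionals comparable.

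In short, the overall intention --- reduce to~\cite{Mo2} and match up functionals --- is right, but the replacement of the distinguishedness/symmetric-space input by a coordinate-wise Fourier-analysis argument is a gap that I do not see how to fill, and the source of $\omega_\pi(\tau)^n$ is misidentified.
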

Let us explain that this theorem follows from result in \cite{Mo2}.
As in \cite[4.1]{Mo2}, let $\mathcal{Z}$ be the unipotent subgroup of $\mM$ given by
\[
\mathcal{Z} = \{ m \in \mM : m_{i, i} = 1 \, \forall i, m_{i, j} = 0 \text{ if either ($j>i$ and $i+j > 2n$) or ($i>j$ and $i+j \leq 2n+1$)} \}
\]
and let $\psi_{\mathcal{Z}, F}$ be its character 
\[
\psi_{\mathcal{Z}, F}(m) = \psi(m_{1,2} + \cdots + m_{n-1, n} - m_{n+2} - \cdots - m_{2n ,2n-1}).
\]
The group $\varrho(\mathcal{Z})$ stabilizes the character $\psi_{\bar{U}}$.
Let $\mathfrak{E} = \varrho(\mathcal{Z}) \ltimes \bar{U}$.
Also. let $\mathcal{Z}^+ = \mathcal{Z} \cap N_\mM$, $V_\Delta = \mathcal{Z} \cap {}^{t}N_\mM$ and 
\[
N_{\mM, \Delta} = \{ {}^{t}\ell_\mM(X) : {}^{t}X \in J, X_{i, j} = 0 \text{ if $i+j > n+1$} \}.
\]
We have $\mathcal{Z} = \mathcal{Z}^+ \cdot V_\Delta$ and $\bar{R} = V_\Delta \cdot N_{\mM, \Delta}$.

The character $\psi_{\bar{R}}$ is given by $\psi_{\bar{R}}(vn) = \psi_{V_\Delta, F}(v)$, $v \in V_\Delta$,
$n \in N_{\mM, \Delta}$ where $\psi_{v_\Delta, F}(v)$ is defined in \cite[4.1]{Mo2}.
Then the expression in \cite[(10.6)]{Mo2} evaluated at $\varrho(\varepsilon_3) w_U$ is $(f \Delta) \ast E^{\psi}(W_s, t)$,
and its analytic continuation is given by the expression in \cite[(10.5)]{Mo2} which amounts to a finite sum.

Meanwhile, from the definition of $A_e^\psi$ in \eqref{6.1} we have
\[
A_e^\psi(W) =\int_{\varrho(V_\mM^\ast) \backslash \gamma V_- \gamma^{-1}} W(v \gamma \varepsilon_1) 
\psi_{V_-}((\gamma \varepsilon_1) v \gamma \varepsilon_1)^{-1} \, dv.
\]
We can integrate over the group $\bar{U}^{\neg}$ in \cite[(10.2)]{Mo2}, consisting of elements $\bar{u} \in \bar{U}$
such that $\bar{u}_{2n+i, j} = $ whenever $i \geq n$ and $j \leq n$.
Then the character $\bar{u} \mapsto \psi_{V_-}((\gamma \varepsilon_1)^{-1} \bar{u} \gamma \varepsilon_1)$
on $\bar{U}^\neg$ is the character denoted by $\hat{\psi}_{\bar{U}^\Delta}$ in the definition of $\mathcal{T}^\prime$ in \cite[10.2]{Mo2}.
Thus, $A_e^\psi(W)$ is $\mathcal{T}^\prime(W)(\gamma \varepsilon_1)$ in the notation of  \cite[10.2]{Mo2}.
The second part amounts to the first statement of \cite[Corollary~10.8]{Mo2} upon taking
\[
\varepsilon_1 = (\hat{w}\gamma)^{-1} \varrho(\varepsilon_3) w_U = \ell_M((-1)^n \mathfrak{d})
\]
where
\[
\hat{w} := \eta(w_{U^\prime}^\prime) \varrho(w_{2n, n}^\prime) = \begin{pmatrix} &I_n&&\\ &&&w_n\\ -w_n&&&\\ &&I_n&\end{pmatrix}
\]
is as in \cite[4.1]{Mo2}.
This theorem is crucial step, which is a unitary analogue of \cite[Theorem~8.1]{LMb}
and proved in \cite{LMIMRN}.
As in  \cite{LMIMRN}, the main input is that the Langlands quotient of $\mathrm{Ind}(\mathbb{W}^{\psi_{N_M}}(\pi), \frac{1}{2})$
admits a realization in $C^{\rm sm}(H \backslash G)$
where $H = G_{2n} \cap \mathrm{GL}_{4n}(F) = \mathrm{Sp}_{4n}(F)$.
We refer to \cite{Mo2} for more details.
\begin{corollary}
\label{cor8.2}
Suppose that $\pi \in \mathrm{Irr}_{\rm ut, temp}\, \mM$.
Then for any $W \in \mathrm{Ind}(\mathbb{W}^{\psi_{N_M}}(\pi))_{\#}^\circ$, $W^\wedge \in \mathrm{Ind}(\mathbb{W}^{\psi_{N_M}^{-1}}(\pi))_{\natural}^\circ$ 
we have
\begin{equation}
\label{8.4}
B(W, M(\frac{1}{2})W^\wedge, \frac{1}{2}) =\omega_{\pi}(\tau)^n A_e^\psi(M^\ast W) 
\int_{S_F}E^{\psi^{-1}}(W_{\frac{1}{2}}^\wedge, t) \frac{dt}{|\det t|}
\end{equation}
where recall $S_F = S \cap \mathrm{GL}_{4n}(F)$.
\end{corollary}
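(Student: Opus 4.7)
The plan is to combine Proposition~\ref{Proposition 7.11} with Theorem~\ref{Theorem 8.1} via a Fourier-analytic decomposition of the factor $E^{\psi^{-1}}(W^\wedge_{1/2},\cdot)$. Since $\pi$ is tempered of unitary type, $M(s)$ is holomorphic at $s=1/2$ by \cite[Proposition~2.1]{LMa}, so the left-hand side $B(W,M(s)W^\wedge,s)$ is holomorphic there; the right-hand side of Proposition~\ref{Proposition 7.11} extends holomorphically as well, using the uniform compact support furnished by Lemmas~\ref{Lemma 6.10} and \ref{Lemma 7.9}. Evaluating Proposition~\ref{Proposition 7.11} at $s=1/2$ therefore gives
\[
B\!\left(W,M(1/2)W^\wedge,1/2\right)=\int_S E^\psi(M^\ast W,t)\,E^{\psi^{-1}}(W^\wedge_{1/2},t)\,\frac{dt}{|\det t|}.
\]

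Next, by Lemma~\ref{Lemma 7.9}, $t\mapsto E^{\psi^{-1}}(W^\wedge_{1/2},t)$ is compactly supported and locally constant on $S$. I would choose a compact open subgroup $K=K_1\times\cdots\times K_n\subset S$ (each $K_i\subset\mathcal O_E^\times$, so that $|\det u|=1$ for $u\in K$) under which this function is right-invariant, together with finitely many representatives $\{s_j\}\subset S$, and write
\[
E^{\psi^{-1}}(W^\wedge_{1/2},t)=\sum_j c_j\,\mathbf{1}_{s_jK}(t), \qquad c_j=E^{\psi^{-1}}(W^\wedge_{1/2},s_j).
\]
Substituting and changing variables, the right-hand side of the integral above reduces, with $f=\mathbf{1}_K\in C_c^\infty(S)$, to the finite sum
\[
\sum_j \frac{c_j}{|\det s_j|}\,\bigl(f\ast E^\psi(M_s^\ast W,\cdot)\bigr)(s_j)\Big|_{s=1/2}.
\]

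The crux is then to invoke Theorem~\ref{Theorem 8.1} to evaluate the inner convolution: the theorem yields $\omega_\pi(\tau)^n A_e^\psi(M^\ast W)\,\mathrm{vol}(K\cap S_F)$ when $s_j\in S_F\cdot K$ and vanishes otherwise. Collecting the surviving terms reassembles $\mathrm{vol}(K\cap S_F)\sum_{s_j\in S_FK} c_j/|\det s_j|$, which (since $|\det t|=|\det s_j|$ on $s_jK\cap S_F$ and $S_F$ is a subgroup) is exactly $\int_{S_F}E^{\psi^{-1}}(W^\wedge_{1/2},t)\,dt/|\det t|$, yielding \eqref{8.4}.

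The main obstacle is bridging the appearance of $E^\psi(M_s^\ast W,\cdot)$ in Proposition~\ref{Proposition 7.11} with the statement of Theorem~\ref{Theorem 8.1}, which is phrased in terms of $E^\psi(W_s,\cdot)$; the reconciliation requires unfolding the definition of $M_s^\ast$ as an integral over $U$ in the region $-\mathrm{Re}(s)\gg 1$ of absolute convergence and then controlling the analytic continuation to $s=1/2$. This is precisely the substantive content of Theorem~\ref{Theorem 8.1}, whose proof in \cite{Mo2} uses the realization of the Langlands quotient of $\mathrm{Ind}(\mathbb{W}^{\psi_{N_M}}(\pi),1/2)$ inside $C^{\rm sm}(H\backslash G)$ for $H=\mathrm{Sp}_{4n}(F)$.
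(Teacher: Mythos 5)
Your overall plan---feed Proposition~\ref{Proposition 7.11} into Theorem~\ref{Theorem 8.1} through a convolution with $f=\mathbf{1}_{K_0}$, exploiting the uniform compact support and local constancy of $E^{\psi^{-1}}(W^\wedge_s,\cdot)$---is the same as the paper's, and the closing bookkeeping (passing from the finite sum $\sum_j c_j/|\det s_j|$ back to $\int_{S_F}$) is correct. The genuine issue is the \emph{order} in which you specialize $s=\frac12$ and introduce $f$, and the justification you give for the first step.

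You begin by "evaluating Proposition~\ref{Proposition 7.11} at $s=\frac12$", asserting that its right-hand side extends holomorphically by Lemmas~\ref{Lemma 6.10} and \ref{Lemma 7.9}. But neither lemma controls $E^\psi(M_s^\ast W,t)$: Lemma~\ref{Lemma 6.10} concerns the integrand of $Y^{\psi,\Upsilon}(W_s,t)$ (equation~\eqref{6.5}), not $E^\psi$, and Lemma~\ref{Lemma 7.9} gives entirety/compact support of $E^{\psi^{-1}}(W^\wedge_s,\cdot)$ only for $W^\wedge\in\mathrm{Ind}(\mathbb{W}^{\psi_{N_M}^{-1}}(\pi))^\circ_\natural$; the intertwining image $M_s^\ast W=(M(s)W)_{-s}$ is not in that class. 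In fact, for fixed $t$ the defining integral of $E^\psi(M_s^\ast W,t)$ converges only for $-\mathrm{Re}\,s\gg1$, and its analytic continuation to $s=\frac12$ is exactly what the \emph{first part} of Theorem~\ref{Theorem 8.1} supplies---but only \emph{after} convolving with $f$. Your intermediate identity $B(W,M(\frac12)W^\wedge,\frac12)=\int_S E^\psi(M^\ast W,t)\,E^{\psi^{-1}}(W^\wedge_{1/2},t)\,\tfrac{dt}{|\det t|}$ therefore treats a pointwise quantity that is not known to be well defined (and in any case is not given by the naive plugged-in integral). You acknowledge this as "the main obstacle" at the end, but you have already used the conclusion in the first step.

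The paper's proof avoids this by never separating the two ingredients: it multiplies Proposition~\ref{Proposition 7.11} by $\int_S f$ while still in the convergent range $-\mathrm{Re}\,s\gg1$, transfers the convolution from $E^{\psi^{-1}}$ onto $E^\psi$ to obtain identity~\eqref{8.5}, and only then invokes the first part of Theorem~\ref{Theorem 8.1} (together with the entirety of $B(W,\cdot,s)$ from~\eqref{5.3} and Lemma~\ref{Lemma 5.6}) to continue both sides to $s=\frac12$, where the second part of Theorem~\ref{Theorem 8.1} gives the explicit value. Your indicator-function decomposition is equivalent to the paper's choice of $f=f_{K_1}\otimes\cdots\otimes f_{K_n}$, so once the specialization to $s=\frac12$ is postponed until after the convolution has been introduced, your argument becomes the paper's. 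As written, though, the first step rests on lemmas that do not apply to $E^\psi(M_s^\ast W,t)$.
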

\begin{proof}
By Lemma~\ref{Lemma 7.9}, for any $W^\wedge \in \mathrm{Ind}(\mathbb{W}^{\psi_{N_M}^{-1}}(\pi))_{\natural}^\circ$
there exist $K_i \in \mathcal{CSGR}(E^\times)$ ($1 \leq i \leq n$) such that $E^{\psi^{-1}}(W_s^\wedge, \cdot) \in C(S)^{K_0}$
for all $s$ and $E^{\psi^{-1}}(W_s^\wedge, \cdot)$ is compactly supported on $S$ uniformly in $s$.
Here, we regard $K_i$ as a subgroup of $S$ and we put $K_0 = \prod_{i=1}^n K_i$.
Suppose $f := f_{K_1} \otimes \cdots f_{K_n} \in C_c^\infty(S)$ and let $f^\vee(t) := f(t^{-1})$.
By Proposition~\ref{Proposition 7.11} for $-\mathrm{Re}\, s \gg 1$ and $W \in \mathrm{Ind}(\mathbb{W}^{\psi_{N_M}}(\pi))_{\#}^\circ$
we have
\begin{equation}
\label{8.5}
B(W, M(s)W^\wedge, s) \int_{S}f(t) \, dt
= \int_{S} E^{\psi}(M_s^\ast W, t) f^\vee \ast E^{\psi^{-1}}(W_{s}^\wedge, t)\frac{dt}{|\det t|}
=\int_{S} f \ast E^{\psi}(M_s^\ast W, t) E^{\psi^{-1}}(W_{s}^\wedge, t)\frac{dt}{|\det t|}.
\end{equation}
Since $B(W, W^\wedge, s)$ is entire function of $s$ for $W \in \mathrm{Ind}(\mathbb{W}^{\psi_{N_M}}(\pi))_{\#}^\circ$, 
$W^\vee \in \mathrm{Ind}(\mathbb{W}^{\psi_{N_M}^{-1}}(\pi))$,
the first part of Theorem~\ref{Theorem 8.1} implies that both sides of \eqref{8.5} are 
meromorphic functions and the identity holds whenever $M(s)$ is holomorphic.
Then by \cite[Proposition~2.1]{LMa}, we may specialize $s = \frac{1}{2}$.
Using the second part of Theorem~\ref{Theorem 8.1}, we find that the right-hand side of \eqref{8.5} is equal to
\begin{multline*}
\omega_{\pi}(\tau)^n A_e^\psi(M^\ast W) 
\left( \int_{S_F} f(t) \, dt \right) \cdot 
\int_{S \cap K_0^\prime} E^{\psi^{-1}}(W_{\frac{1}{2}}^\wedge, t) \frac{dt}{|\det t|}
\\
=
 \omega_{\pi}(\tau)^n A_e^\psi(M^\ast W) 
\left( \int_{S_F} f(t) \, dt \right) \cdot \left( \int_{K_0 \cap \mathrm{GL}_{4n}(F) \backslash K_0}  \, dt \right) 
\int_{S_F} E^{\psi^{-1}}(W_{\frac{1}{2}}^\wedge, t) \frac{dt}{|\det t|}
\end{multline*}
where $K_0^\prime = \prod_{i=1}^n (F^\times K_i)$.
The required formula readily follows since 
\[
\left( \int_{S_F} f(t) \, dt \right) \cdot \left( \int_{K_0 \cap \mathrm{GL}_{4n}(F) \backslash K_0}  \, dt \right) 
= \int_{S} f(t) \, dt.
\]
\end{proof}
\subsection{}
It remains to compute the integral on the right-hand side of \eqref{8.4}.
Again this is essentially done in \cite{Mo2} and it relies heavily on the fact that $\pi \in \mathrm{Irr}_{\rm ut} \mM$.

Let 
\[
\mathbb{W}^{\psi_{N_\mM}}(\pi)_{\natural \natural}
= \{ W \in \mathbb{W}^{\psi_{N_\mM}}(\pi) : W |_{\mathcal{P}^\ast} \in C_{c}^\infty(N_\mM \backslash \mathcal{P}^\ast, \psi_{N_\mM})
\text{ and } W|_{\eta_{\mM}^\vee(T_{\mM^\prime}^\prime) \ltimes \mathcal{Z}} \in 
C_c^\infty(\mathcal{Z}^+ \backslash \eta_{\mM}^\vee(T_{\mM^\prime}^\prime) \ltimes \mathcal{Z}, \psi_{\mathcal{Z}}).
\]
\begin{theorem}
Let $\pi \in \mathrm{Irr}_{\rm ut, temp} \mM$. Then
\begin{enumerate}
\item (see \cite[Corollary~4.1]{OO} and \cite[Lemma~3.5]{Mo2}.) The integral 
\[
\mathfrak{P}^{H_\mM}(W) := \int_{(H_\mM \cap N_\mM) \backslash H_\mM \cap \mathcal{P}} W(\tau^\ast p) \, dp
\]
converges and defines a non-zero $H_\mM$-invariant functional on $\mathbb{W}^{\psi_{N_\mM}}(\pi)$.
Here, we define $H_\mM = \mathrm{GL}_{2n}(F)$ and 
$\tau^\ast = \mathrm{diag}(\tau, 1, \tau, 1, \dots ,\tau, 1) \in \mM$.
%
%
%
\item (\cite[Corollary~10.4]{Mo2})
Put
\[
x(n) = \left\{
 \begin{array}{ll}
n & \text{ if $n$ is even,}\\
&\\
0 & \text{ if $n$ is odd,}
\end{array}
\right.
\]
which is $b^{\prime \prime}(n)$ in the notation of \cite{Mo2}.
Then for any $W \in \mathbb{W}^{\psi_{N_\mM}}(\pi)_{\natural \natural}$ we have
\[
\int_{\mathcal{Z}^+ \backslash \mathcal{Z}} \int_{S_F}
\Delta(t)^{-1} |\det t|^{n-\frac{1}{2}} W(tr) \psi_{\mathcal{Z}, F}(r)^{-1} \, dt \, dr
=\omega_{\pi}(\tau)^n |\tau|^{x(n)} \int_{\mathcal{Z} \cap H_\mM \backslash \mathcal{Z}} \mathfrak{P}^{H_\mM}(\pi(n\tau_\circ ) W) \psi_{\mathcal{Z}}(n)^{-1} \, dn.
\]
where $\tau_\circ \in \mM$ is defined by $\varrho(\tau_\circ) = \hat{w} \varrho(\tau^\ast)^{-1} \hat{w}^{-1}$
and we define $\psi_{\mathcal{Z}}(m) = \psi_{\mathcal{Z}, F}(\tau_\circ^{-1} m \tau_\circ)$.
%
%
%
\item (\cite[Lemma~4.2]{Mo2})
The integral 
\[
L_W(g):= \int_{(P \cap H) \backslash H} \int_{(H_\mM \cap N_\mM) \backslash H_\mM \cap \mathcal{P}} W(\varrho(\tau^\ast p) hg) |\det p|^{-(n+\frac{1}{2})} \, dp
= \int_{H \cap \bar{U}} \mathfrak{P}^{H_\mM}( (\delta_P^{-\frac{1}{2}}I(\frac{1}{2}, \bar{u}g)W) \circ \varrho) d \bar{u}
\]
converges for any $W \in \mathrm{Ind}(\mathbb{W}^{\psi_{N_M}}(\pi), \frac{1}{2})$ and defines an intertwining map
\[
\mathrm{Ind}(\mathbb{W}^{\psi_{N_M}}(\pi), \frac{1}{2})
\rightarrow C^{\rm sm}(H \backslash G).
\]
%
%
%
\item (second statement of \cite[Corollary~10.8]{Mo2}) 
Put
\[
y(n) =  \left\{
 \begin{array}{ll}
n & \text{ if $n$ is even,}\\
&\\
n-\frac{1}{2} & \text{ if $n$ is odd,}
\end{array}
\right.
\]
which is $a(n)+n^2$ in the notation of \cite{Mo2}.
Then we have
\begin{equation}
\label{8.6}
A_e^\psi(M^\ast W)
= \omega_{\pi}(\tau) |\tau|^{y(n)} ( \int_{N_{\mM, \Delta}} \int_{H \cap \mathfrak{E} \backslash \mathfrak{E}} L_W(v  \varrho(\tau_\circ \varepsilon_4 u \varepsilon_3) w_U) \psi_{\mathfrak{E}}^{-1}(v) \, dv) \, du
\end{equation}
where we define
\[
\psi_{\mathfrak{E}}(\varrho(m) \bar{u}) = \psi_{\mathcal{Z}, F}(\tau_\circ^{-1} m \tau_\circ) \psi_{\bar{U}}^{-1}(\tau_\circ^{-1} \bar{u} \tau_\circ)
=\psi_{\mathcal{Z}}(m) \psi_{\bar{U}}^{-1}(\tau_\circ^{-1} \bar{u} \tau_\circ).
\]
\end{enumerate}
\end{theorem}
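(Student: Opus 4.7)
The final theorem assembles four results that, combined with Corollary~\ref{cor8.2}, will close the proof of Proposition~\ref{Proposition 6.15}. Each item is essentially imported from \cite{OO} or \cite{Mo2}, rather than being new to this paper. My plan is to justify each part by matching the normalizations used here with those in the source papers, and then to indicate how the combination yields the Main Identity.

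Part (1) concerns the Flicker period $\mathfrak{P}^{H_\mM}$ on $\pi$, which is the canonical unitary-type period on $\mathrm{GL}_{2n}(E)$. Convergence of the integral over $(H_\mM \cap N_\mM) \backslash H_\mM \cap \mathcal{P}$ is a consequence of the standard asymptotic estimates for Whittaker functions restricted to the mirabolic. Non-vanishing together with $H_\mM$-invariance then follows from Flicker's theorem that a unitary-type representation of $\mathrm{GL}_{2n}(E)$ admits a one-dimensional space of $H_\mM$-invariant linear forms, realized by this very integral. These two facts are exactly Corollary~4.1 of \cite{OO} and Lemma~3.5 of \cite{Mo2}, and the role of the hypothesis $L(0,\pi,\mathrm{As})=\infty$ (i.e.\ of $\pi$ being of unitary type) enters precisely at this point.

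Part (3) is a convergence and intertwining statement for the $H$-period on $\mathrm{Ind}(\mathbb{W}^{\psi_{N_M}}(\pi), \tfrac{1}{2})$. Convergence of the inner integral reduces to part~(1), while convergence of the outer integral over $(P \cap H) \backslash H$ is obtained from standard polynomial bounds on induced Whittaker functions combined with the critical decay built into $|\det p|^{-(n+1/2)}$. The intertwining property is then a change of variables, and the alternative expression as an integral over $H \cap \bar{U}$ follows from the Iwasawa-type decomposition $H = (P \cap H)(H \cap \bar{U})$ modulo a measure correction.

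Parts (2) and (4) are the computational core and the ones that demand real work in \cite{Mo2}. Both are proved there by iteratively unfolding integrals over the subgroups $\mathcal{Z}, \bar{R}, N_{\mM,\Delta}$ using Bruhat-type decompositions and Fourier inversion on the unipotent pieces, arriving in the end at a Flicker-type period paired with a measure on $S_F$. The principal obstacle, and the one that consumes most of the labor in \cite{Mo2}, is the precise bookkeeping of root-of-unity and absolute-value factors: one must identify the Weil index $\alpha_{\psi}$, the measure factors $|\tau|^{x(n)}$ and $|\tau|^{y(n)}$ arising from~\eqref{measure E/F}, and the central-character factor $\omega_\pi(\tau)^n$ produced by conjugation by $\tau^\ast$ or $\tau_\circ$. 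Once parts~(1)--(4) are in hand, the proof of Proposition~\ref{Proposition 6.15} should be completed by substituting (2), applied to $E^{\psi^{-1}}(W_{1/2}^\wedge, t)$, into the right-hand side of Corollary~\ref{cor8.2}, and then applying (3)--(4) to recognize the resulting expression as $A_e^{\psi^{-1}}(M^\ast W^\wedge)$; the powers of $\omega_\pi(\tau)$ arising from Corollary~\ref{cor8.2} and from (4) collapse to a single copy, yielding the factor $\omega_\pi(\tau)$ predicted by~\eqref{6-12}.
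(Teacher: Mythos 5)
Your proposal is correct and takes essentially the same approach as the paper: this theorem is not proved in the body of the paper but is a catalogue of results imported from \cite{OO} and \cite{Mo2} (with matched normalizations), and your paragraph-by-paragraph identification of what each cited result says, together with the observation that (2)--(4) are then fed into Corollary~\ref{cor8.2} to close Proposition~\ref{Proposition 6.15} with the powers of $\omega_\pi(\tau)$ collapsing via $\omega_\pi(\tau)^2 = \omega_\pi(\tau^2) = 1$, is precisely how the paper proceeds in Corollary~\ref{cor8.5} and the lines following it.
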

%
%
%
\begin{corollary}
\label{cor8.5}
Let $\pi \in \mathrm{Irr}_{\rm ut, temp} \mM$.
Then for any $W \in \mathrm{Ind}(\mathbb{W}^{\psi_{N_M}}(\pi))_\natural^\circ$ we have
\begin{equation}
\label{8.7}
\int_{S_F} E^{\psi}(W_{\frac{1}{2}}, t) \, \frac{dt}{|\det t|} = \omega_{\pi}(\tau)^{n+1} A_e^\psi(M^\ast W).
\end{equation}
\end{corollary}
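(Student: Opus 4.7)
The proof of Corollary~\ref{cor8.5} is essentially a bookkeeping exercise that combines parts (1)--(4) of the preceding theorem: one unfolds $E^\psi(W_{\frac{1}{2}},t)$ explicitly via \eqref{7.7}, uses part (2) to trade the $S_F$-integration for an integration against the $H_\mM$-invariant functional $\mathfrak{P}^{H_\mM}$, then uses parts (3) and (4) to fold the resulting expression back into $A_e^\psi(M^\ast W)$.

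More concretely, my plan is as follows. First I would substitute the formula
\[
E^\psi(W_{\frac12},t) = \Delta(t)^{-1} \int_{\bar R}\int_{\bar U} W_{\frac12}\bigl(\varrho(t)\bar v\,\varrho(\varepsilon_4 r\varepsilon_3)w_U\bigr)\psi_{\bar U}(\bar v)\psi_{\bar R}(r)\,d\bar v\,dr
\]
into the left-hand side of \eqref{8.7} and rewrite the resulting quadruple integral using the factorization $\bar R = V_\Delta\cdot N_{\mM,\Delta}$ and $\mathcal{Z}=\mathcal{Z}^+\cdot V_\Delta$; this reorganizes the integrand so that the $S_F$-integral together with the $V_\Delta$-integral becomes exactly the object studied in part (2) of the preceding theorem, namely an integral of $\Delta(t)^{-1}|\det t|^{n-\frac12}W(t r)\psi_{\mathcal{Z},F}(r)^{-1}$ against the inner variables that do not involve $S_F$.

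Second, I would apply part (2) to replace this piece by the integral of $\mathfrak{P}^{H_\mM}(\pi(n\tau_\circ)W)\psi_{\mathcal{Z}}(n)^{-1}$ over $\mathcal{Z}\cap H_\mM\backslash \mathcal{Z}$, with the explicit constant $\omega_\pi(\tau)^n|\tau|^{x(n)}$. At this point the identity reads, schematically,
\[
\int_{S_F} E^\psi(W_{\frac12},t)\,\frac{dt}{|\det t|} = \omega_\pi(\tau)^n|\tau|^{x(n)} \int_{N_{\mM,\Delta}}\int_{\bar U}\int_{\mathcal{Z}\cap H_\mM\backslash\mathcal{Z}} \mathfrak{P}^{H_\mM}\bigl(\pi(n\tau_\circ)W(\cdot\, \bar v\,\varrho(\varepsilon_4 u\varepsilon_3)w_U)\bigr)\psi_{\mathcal{Z}}(n)^{-1}\psi_{\bar U}(\bar v)\,dn\,d\bar v\,du,
\]
after the appropriate change of variable absorbing $\tau_\circ$ (recall $\varrho(\tau_\circ)=\hat w\varrho(\tau^\ast)^{-1}\hat w^{-1}$) and the identification of $\psi_\mathfrak{E}$ via $\psi_\mathcal{Z}\cdot\psi_{\bar U}^{-1}(\tau_\circ^{-1}\cdot\tau_\circ)$.

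Third, I would use part (3) of the theorem, which expresses $L_W$ as $\int_{H\cap\bar U}\mathfrak{P}^{H_\mM}(\cdots)\,d\bar u$, to collapse the $\bar U$-integration combined with the $\mathcal{Z}\cap H_\mM\backslash\mathcal{Z}$-integration into an integration over $H\cap\mathfrak{E}\backslash \mathfrak{E}$ of $L_W$ evaluated at $v\,\varrho(\tau_\circ\varepsilon_4 u\varepsilon_3)w_U$ against $\psi_\mathfrak{E}^{-1}(v)$. Finally, I would compare the resulting expression with \eqref{8.6} of part (4), which gives $A_e^\psi(M^\ast W)$ in precisely this form up to the scalar $\omega_\pi(\tau)|\tau|^{y(n)}$. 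Dividing the constants and using $x(n)=y(n)+n\cdot\epsilon_n$ with the remaining $|\tau|$-powers (and central-character factors arising from the shift by $\tau_\circ$, which contribute an extra $\omega_\pi(\tau)^2$ through the $n$-many $\tau$-entries in $\tau_\circ$) produces exactly $\omega_\pi(\tau)^{n+1}$.

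The main obstacle is not the qualitative picture but the analytic and combinatorial bookkeeping: most of the inner integrals are not absolutely convergent, so the interchange of orders of integration in Step 1 must be justified by performing the manipulations at the level of holomorphic sections $W_s$ for $\mathrm{Re}\,s\gg 1$ (where Lemma~\ref{Lemma 7.9} and the convergence results in Section~6 apply) and only afterwards specializing to $s=\tfrac12$, using the holomorphy of $M(s)$ at $s=\tfrac12$ (\cite[Proposition~2.1]{LMa}). The second delicate point is tracking the $|\tau|$-powers $x(n)$ and $y(n)$ together with the Weil constants $\alpha_\psi(w_{U'}^\prime)$ that enter $A_{\#}^{\psi,\Upsilon}$ and the character twists produced by the conjugations by $\tau^\ast$ and $\tau_\circ$, so that the final scalar lands on the nose at $\omega_\pi(\tau)^{n+1}$.
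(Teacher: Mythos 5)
Your high-level plan --- unfold $E^\psi$ via (7.7) and push the result through parts (2), (3), (4) of the preceding theorem --- is exactly the route the paper takes, but two places in your sketch would derail the execution. First, part (2) applies to a Whittaker function in $\mathbb{W}^{\psi_{N_\mM}}(\pi)_{\natural\natural}$, not to a section in $\mathrm{Ind}(\mathbb{W}^{\psi_{N_M}}(\pi))_\natural^\circ$, so one cannot simply substitute the $\bar R \times \bar U$ expression for $E^\psi$ into the $S_F$-integral and ``reorganize.'' The paper first reduces, without loss of generality, to a section $W_{\frac12}$ of product form
\[
W_{\frac12}(u'\varrho(m)w_Uu)=W^{\mM}(m)\,|\det m|^{1/2}\,\delta_P(\varrho(m))^{1/2}\,\phi(u)
\]
with $W^{\mM}\in\mathbb{W}^{\psi_{N_M}}(\pi)_\natural$ and $\phi\in C_c^\infty(U)$; this peels off the $U$-integral as an overall scalar $\int_U\phi(u)\psi_U(u)^{-1}\,du$ and isolates an inner integral $I'$ in $W^{\mM}$ alone, to which part (2) can then be applied (after a right translation by $\varepsilon_4 u\varepsilon_3$). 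Your sketch skips this reduction, and without it the claim that the rearranged quantity ``becomes exactly the object studied in part (2)'' does not hold: you are conflating the induced section with a Whittaker function of $\pi$. Relatedly, the analytic detour through $\mathrm{Re}\,s\gg1$ is unnecessary: the $\natural^\circ$-hypothesis forces the relevant integrands to be compactly supported (Lemma~\ref{Lemma 7.9}), so the paper works directly at $s=\tfrac12$.

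Second, your accounting for the scalar $\omega_\pi(\tau)^{n+1}$ is incorrect. Part (2) contributes $\omega_\pi(\tau)^n|\tau|^{x(n)}$, and one then divides by the prefactor $\omega_\pi(\tau)|\tau|^{y(n)}$ in \eqref{8.6}; this yields $\omega_\pi(\tau)^{n-1}$, which equals $\omega_\pi(\tau)^{n+1}$ because $\omega_\pi(\tau)^2=\omega_\pi(\tau^2)=1$ (since $\tau^2\in F^\times$ and $\omega_\pi|_{F^\times}$ is trivial for $\pi$ of unitary type). The residual $|\tau|$-powers and the normalization factor $\delta_P^{1/2}(\varrho(\tau_\circ))|\det\tau_\circ|^{-1/2}$ then cancel by the identity recorded in \cite[Remark~10.1]{Mo2}. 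Your claimed ``extra $\omega_\pi(\tau)^2$ from the $n$-many $\tau$-entries of $\tau_\circ$'' double-counts the contribution already absorbed into part (2)'s factor $\omega_\pi(\tau)^n$, and the relation $x(n)=y(n)+n\cdot\epsilon_n$ is not the one actually used (indeed $x(n)-y(n)$ is $0$ for $n$ even and $\tfrac12-n$ for $n$ odd, which is not of that shape).
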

\begin{proof}
We may assume without loss of generality that 
\begin{equation}
\label{8.8}
W_{\frac{1}{2}}(u^\prime \varrho(m) w_U u) = W^\mM(m) |\det m|^{\frac{1}{2}} \delta_P(\varrho(m))^{\frac{1}{2}} \phi(u),
\quad m \in \mM, \, u, u^\prime \in U
\end{equation}
with $W^\mM \in \mathbb{W}^{\psi_{N_M}}(\pi)_{\natural}$ and $\phi \in C_c^\infty(U)$.
We evaluate the left-hand side $I$ of \eqref{8.7} using the last expression in \eqref{7.7}
(where we recall that the integrand is compactly supported by Lemma~\ref{Lemma 7.9}).
Thus,
\[
I = I^\prime \int_{U}\phi(u) \psi_{U}^{-1}(v) \, dv
\]
where 
\[
I^\prime = \int_{S_F} \int_{\bar{R}} \Delta(t)^{-1} |\det t|^{n-\frac{1}{2}} W^\mM(t \varepsilon_4 r \varepsilon_3) \psi_{\bar{R}}(r) \, dr \, dt.
\]
The integrand in $I^\prime$ is compactly supported because $W^\mM \in \mathbb{W}^{\psi_{N_\mM}}(\pi)_{\natural}$.
Note for $r \in V_\Delta = \mathcal{Z} \cap \bar{R}$, $\psi_{\bar{R}}(r)^{-1} = \psi_{\mathcal{Z}, F}(r)$.
We write 
\begin{multline*}
I^\prime = \int_{N_{\mM, \Delta}} ( \int_{V_\Delta} \int_{S_F} \Delta(t)^{-1} |\det t|^{n-\frac{1}{2}} W^\mM(t \varepsilon_4 ru \varepsilon_3) \psi_{\mathcal{Z}, F}(r)^{-1}
\, dt \, dr) \, du\\
= 
\int_{N_{\mM, \Delta}} ( \int_{\mathcal{Z}^+ \backslash \mathcal{Z}} \int_{S_F} \Delta(t)^{-1} |\det t|^{n-\frac{1}{2}} W^\mM(t \varepsilon_4 ru \varepsilon_3) \psi_{\mathcal{Z}, F}(r)^{-1}
\, dt \, dr) \, du\\
=
\int_{N_{\mM, \Delta}} ( \int_{\mathcal{Z}^+ \backslash \mathcal{Z}} \int_{S_F} \Delta(t)^{-1} |\det t|^{n-\frac{1}{2}} W^\mM(tr \varepsilon_4 u \varepsilon_3) \psi_{\mathcal{Z}, F}(r)^{-1}
\, dt \, dr) \, du
\end{multline*}
since $\varepsilon_4$ stabilizes $\psi_{\mathcal{Z}, F}$.
For the double integral in the brackets we apply part 2 of the theorem above to $\pi(\varepsilon_4 u \varepsilon_3) W^\mM$ 
(which is applicable since $W^\mM \in \mathbb{W}^{\psi_{N_\mM}}(\pi)_{\natural}$).
We get
\[
I^\prime = \omega_{\pi}(\tau)^n |\tau|^{x(n)} \int_{N_{\mM, \Delta}}
( \int_{\mathcal{Z} \cap H_\mM \backslash \mathcal{Z}} 
\mathfrak{P}^{H_\mM}(\pi(n\tau_\circ  \varepsilon_4 u \varepsilon_3 ) W^\mM) \psi_{\mathcal{Z}}(n)^{-1} \, dn )\, du
\]
Thus,
\[
I = 
 \omega_{\pi}(\tau)^n |\tau|^{x(n)} \int_{N_{\mM, \Delta}}
( \int_{\mathcal{Z} \cap H_\mM \backslash \mathcal{Z}} \int_U
\mathfrak{P}^{H_\mM}(\pi(n\tau_\circ  \varepsilon_4 u \varepsilon_3 ) W^\mM) \psi_{\mathcal{Z}}(n)^{-1} \phi(v) \psi_{U}(v)^{-1}\, dv \, dn )\, du
\]
From \eqref{8.8},
\[
(\delta_P^{-\frac{1}{2}} I(\frac{1}{2}, \varrho(m) w_U v) W) \circ \varrho
= \phi(v) \delta_P^{\frac{1}{2}}(\varrho(m)) |\det m|^{\frac{1}{2}} \pi (m) W^\mM
\]
for any $v \in U$ and $m \in \mM$.
Thus, $I$ equals
\[
 \omega_{\pi}(\tau)^n |\tau|^{x(n)}  \delta_P^{-\frac{1}{2}}(\varrho(\tau_\circ)) |\det \tau_\circ|^{-\frac{1}{2}}
 \int_{N_{\mM, \Delta}}
( \int_{\mathcal{Z} \cap H_\mM \backslash \mathcal{Z}} \int_U
\mathfrak{P}^{H_\mM}(\delta_P^{-\frac{1}{2}} I(\pi, \varrho(n\tau_\circ  \varepsilon_4 u \varepsilon_3) w_U v) W) \psi_{\mathcal{Z}}(n)^{-1} \psi_{U}(v)^{-1}\, dv \, dn )\, du
\]
Since $\varepsilon_3^{-1} N_{\mM, \Delta} \varepsilon_3 \subset N_\mM^\flat$, the group $\varrho(\varepsilon_3^{-1} N_{\mM, \Delta} \varepsilon_3)$
stabilizes the character $\psi_U(w_U^{-1} \cdot w_U)$ on $\bar{U}$.
Making a change of variable 
\[
v \mapsto (\varrho (n\tau_\circ  \varepsilon_4 u \varepsilon_3) w_U )^{-1} \bar{v} \varrho (n \tau_\circ  \varepsilon_4 u \varepsilon_3) w_U
\]
on $U$ we obtain
\begin{multline*}
I=
 \omega_{\pi}(\tau)^n |\tau|^{x(n)}  \delta_P^{\frac{1}{2}}(\varrho(\tau_\circ)) |\det \tau_\circ|^{-\frac{1}{2}}
 \int_{N_{\mM, \Delta}}
( \int_{\varrho(\mathcal{Z} \cap H_\mM) \backslash \mathfrak{E}}
\mathfrak{P}^{H_\mM}(\delta_P^{-\frac{1}{2}} I(\pi, v \varrho(\tau_\circ  \varepsilon_4 u \varepsilon_3) w_U ) W) \psi_{\mathfrak{E}}(n)^{-1} \, dv  \, du
\\
=
 \omega_{\pi}(\tau)^n |\tau|^{x(n)}  \delta_P^{\frac{1}{2}}(\varrho(\tau_\circ)) |\det \tau_\circ|^{-\frac{1}{2}}
 \int_{N_{\mM, \Delta}}
( \int_{\mathfrak{E} \cap H) \backslash \mathfrak{E}}
\int_{H \cap \bar{U}}
\mathfrak{P}^{H_\mM}(\delta_P^{-\frac{1}{2}} I(\pi, x v \varrho(\tau_\circ  \varepsilon_4 u \varepsilon_3) w_U ) W) \psi_{\mathfrak{E}}(n)^{-1} \,dx  \,dv  \, du
\end{multline*}
From part 3 of the theorem, we get
\[
I = 
 \omega_{\pi}(\tau)^n |\tau|^{x(n)}  \delta_P^{\frac{1}{2}}(\varrho(\tau_\circ)) |\det \tau_\circ|^{-\frac{1}{2}}
 \int_{N_{\mM, \Delta}}
( \int_{\mathfrak{E} \cap H) \backslash \mathfrak{E}}
L_W(v \varrho(\tau_\circ  \varepsilon_4 u \varepsilon_3) w_U )\psi_{\mathfrak{E}}(n)^{-1} \,dx  \,dv  \, du.
\]
From the last part of the theorem, this is equal to
\[
 \omega_{\pi}(\tau)^{n+1}  |\tau|^{x(n)-y(n)}  \delta_P^{\frac{1}{2}}(\varrho(\tau_\circ)) |\det \tau_\circ|^{-\frac{1}{2}}
 A_e^\psi (M^\ast W)
 =  \omega_{\pi}(\tau)^{n+1} A_e^\psi (M^\ast W),
\]
where we used the fact $ |\tau|^{x(n)-y(n)}  \delta_P^{\frac{1}{2}}(\varrho(\tau_\circ)) |\det \tau_\circ|^{-\frac{1}{2}} =1$
(see \cite[Remark~10.1]{Mo2}).
\end{proof}
Let us complete a proof of Proposition~\ref{Proposition 6.15}.
From Corollaries~\ref{cor8.2} and \ref{cor8.5}, we find that \eqref{6-12} holds for any $W \in \mathrm{Ind}(\mathbb{W}^{\psi_{N_M}}(\pi))_{\#}^\circ$ and 
$W^\wedge \in \mathrm{Ind}(\mathbb{W}^{\psi_{N_M}}(\pi))_\natural^\circ$, namely
\[
B\left(W, M(\frac{1}{2})W^\wedge, \frac{1}{2} \right) = \omega_\pi(\tau) A_{e}^{\psi}(M^\ast W) A_e^{\psi^{-1}}(M^\ast W^\wedge).
\]
On the other hand, as in \cite[Section~8C]{LMb}, in a similar argument of the proof of \cite[Lemma~6.13]{LMb} with \eqref{7.7},
we see that the linear map  $\mathrm{Ind}(\mathbb{W}^{\psi_{N_M}}(\pi))_\natural^\circ \rightarrow C_c^\infty(T_{\mM^\prime}^\prime)$
given by $W^\wedge \mapsto E^{\psi}(W_{\frac{1}{2}}^\wedge, \cdot) $ is onto.
Therefore by Corollary~\ref{cor8.5}, the linear form $ A_e^{\psi^{-1}}(M^\ast W^\wedge)$ does not vanish on $\mathrm{Ind}(\mathbb{W}^{\psi_{N_M}}(\pi))_\natural^\circ$.
From Corollary~\ref{cor6.14}, we conclude that  \eqref{6-12} holds for all $W^\wedge$, which complete a proof of Proposition~\ref{Proposition 6.15}.
%
%
%
%
%
%
%
%
%
%
%
%
%
%
\section{Refined formal degree conjecture and Conjecture~\ref{LM local conjecture}}
\subsection{Non-archimedean case}
In this section, we show an equivalence between a refined formal degree conjecture
and Theorem~\ref{main thm} for discrete series representations of $G^\prime = \mathrm{U}_{2n}$. 
In particular, we obtain a proof of refined formal degree conjecture in our case.
Suppose that a base field $F$ is a non-archimedean local field of characteristic zero.

Let us recall refined formal degree conjecture by Gross-Reeder~\cite{GR} and Gan--Ichino~\cite[14.5]{GI}.
For an irreducible discrete series representation of $G^\prime$, the formal degree $d_\sigma$ for $\sigma$ is the measure 
on $G^\prime$ satisfying 
\[
\int_{G^\prime}(\sigma(g)v_1, v_1^\prime)(\sigma(g^{-1})v_2, v_2^\prime) d_\sigma g = (v_1, v_2^\prime)(v_2, v_1^\prime)
\]
holds for any $v_1, v_1^\prime, v_2, v_2^\prime$ where $(\cdot, \cdot)$ is a $G^\prime$-invariant inner product
on $V_\sigma$.
On the other hand, we denote the measure on $G^\prime$ defined in Section~\ref{measures} by $d_\psi$ in order to clarify the difference of these measures. 

Hiraga, Ichino and Ikeda~\cite{HII} formulated a conjecture on a relationship between these measures $d_\sigma$ and $d_\psi$ in terms of absolute values of special 
values of adjoint gamma factors. Recently, Gan--Ichino~\cite{GI} computed the sign of this special value for Seinberg representation of classical groups
and and using an important observation by Gross--Reeder~\cite{GR} on the sign, Gan--Ichino~\cite[Section~14.5]{GI} conjectured
a refinement of  \cite[Conjecture~1.4]{HII}. Indeed, we prove this refined version.
\begin{theorem}
\label{formal deg thm}
Let $\pi$ be an irreducible representation of $\mathrm{GL}_{2n}(E)$ of the form 
$\pi = \tau_1 \times \cdots \times \tau_k$ where $\tau_i$ are mutually inequivalent irreducible discrete series representations
of $\mathrm{GL}_{n_i}(E)$ such that $n=n_1+\cdots + n_k$ and $L(s, \tau_i, \mathrm{As}^+)$ has a pole at $s=0$.
Write $\sigma = \mathcal{D}_{\psi^{-1}}^{\Upsilon^{-1}}(\mc(\pi))$, which is an irreducible generic discrete series representation of $G^\prime$ 
(See Theorem~\ref{descent BC local}).
Then the formal degree conjecture
\begin{equation}
\label{formal degree identity}
d_\psi =2^k
\lambda(E \slash F, \psi)^n \omega_{\sigma}(-1) \gamma(1, \mathfrak{c}(\pi), \mathrm{As}^-, \psi) d_\sigma.
\end{equation}
holds for in this case.
Here, $\lambda(E \slash F, \psi)$ is the Langlands' $\lambda$-function and the $\gamma$-factor is defined by Langlands-Shahidi method~\cite{Sh90b}.
\end{theorem}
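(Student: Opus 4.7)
The plan is to follow the strategy of Ichino--Lapid--Mao for the metaplectic setting, using Theorem~\ref{main theorem} and Corollary~\ref{Cor3.6} as the key input, and to convert the local identity $c_\pi = \omega_\sigma(-1)$ into the formal degree identity by computing both sides of that local identity against Harish-Chandra's Plancherel formula. Under the hypothesis that each $\tau_i$ is discrete series with $L(s,\tau_i,\mathrm{As}^+)$ having a pole at $s=0$, Corollary~\ref{Cor3.6} tells us that $\sigma = \mathcal{D}_{\psi^{-1}}^{\Upsilon^{-1}}(\mathfrak{c}(\pi))$ is an irreducible generic discrete series representation, so Plancherel applies and the stable integral $\int_{N'}^{st}[\sigma(n)W', (W')^\vee]_\sigma\psi_{N'}(n)\,dn$ can be rewritten as $d_\sigma^{-1}$ times Whittaker functionals, the measure $d_\sigma$ being defined by the matrix coefficient identity in Section~8.

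First, I would interpret the constant $c_\pi$ in equation \eqref{ILM 3.7} as a comparison between two natural pairings on $\sigma$: one coming from the inner product underlying the formal degree $d_\sigma$, and one coming from the bilinear form $[\cdot,\cdot]_\sigma$ defined by the integral $J$ at $s=\tfrac{1}{2}$. Expanding $J$ via its definition and using the explicit descent realization of $\sigma$ inside $\mathrm{Ind}(\mathbb{W}^{\psi_{N_M}}(\pi),\tfrac{1}{2})$, this comparison becomes a ratio involving (i) the leading term of the intertwining operator $M(s)$ at $s=\tfrac{1}{2}$, (ii) the Weil representation splittings $\alpha_{\psi^{-1}}$ appearing in Lemma~\ref{Lemma 3.1}, and (iii) the measure $d_\psi$ from Section~\ref{measures}. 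The first of these, by the Langlands--Shahidi local coefficient computation, contributes the Asai gamma factor $\gamma(1,\mathfrak{c}(\pi),\mathrm{As}^-,\psi)$ because the residue of $M(s)$ at $s=\tfrac{1}{2}$ is governed precisely by $L(s,\pi,\mathrm{As}^-)$, which has the expected pole under our hypothesis. The second collapses into $\lambda(E/F,\psi)^n$ via the standard identity for the $n$-fold product of Weil constants in the $\Upsilon$-twisted splitting. The factor $2^k$ arises from the same source as the $2^{1-k}$ in Conjecture~\ref{main conj}, namely the number of inequivalent irreducible summands of $\pi$, which controls the size of the $L$-packet containing $\sigma$ and the corresponding pairing normalization.

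Combining Theorem~\ref{main theorem}, Corollary~\ref{cor8.5} (which evaluates $A_e^\psi(M^\ast W)$ in terms of the $H_\mM$-period $\mathfrak{P}^{H_\mM}$), and the Plancherel formula for $\sigma$, one obtains an equality of the shape
\[
d_\psi \;=\; 2^k\,\lambda(E/F,\psi)^n\,c_\pi^{-1}\,\gamma(1,\mathfrak{c}(\pi),\mathrm{As}^-,\psi)\,d_\sigma,
\]
and Theorem~\ref{main theorem} substitutes $c_\pi^{-1} = \omega_\pi(\tau)^{-1} = \omega_\sigma(-1)$ (using the Remark after Theorem~\ref{main theorem}), giving exactly \eqref{formal degree identity}.

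The main obstacle will be the bookkeeping of three independent normalization conventions: the $\Upsilon$-dependent Weil representation splitting in the definition of $\omega_\psi^\Upsilon$, the Langlands--Shahidi normalization of $M(s)$, and the gauge-form measure $d_\psi$ defined in Section~\ref{measures} via the lattice $\mathfrak{q}\cap\mathfrak{M}_\mathcal{O}$. Each of these is essentially routine on its own, but matching them simultaneously against the Plancherel-side normalization $d_\sigma$ is where the bulk of the computation lies; this is precisely where \cite{ILM} devote most of their effort in the metaplectic case, and the unitary adaptation should proceed in parallel, with the symplectic local coefficient and $\mathrm{Sym}^2$ factors replaced throughout by their $\mathrm{As}^\pm$ counterparts and the genuine character $\gamma_\psi$ replaced by the $\Upsilon$-twisted Weil constant.
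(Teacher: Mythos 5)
Your high-level plan — use Theorem~\ref{main theorem}/Corollary~\ref{Cor3.6} to plug in $c_\pi = \omega_\sigma(-1)$ and then match the bilinear form $[\cdot,\cdot]_\sigma$ against the inner product defining $d_\sigma$ via a Plancherel-type computation — is the right one and matches the paper's strategy in broad outline. However, the specific mechanism you propose for producing the factor $2^k\lambda(E/F,\psi)^n\gamma(1,\mathfrak{c}(\pi),\mathrm{As}^-,\psi)$ misidentifies the key ingredient.

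The paper does \emph{not} extract this factor from "the leading term of $M(s)$ at $s=\tfrac12$" nor from a direct count of Weil constants. The essential input is the local functional equation for $J$ proved in \cite{Mo3}, equation \eqref{FQ}:
\begin{equation*}
J(W, M(s)W', -s) \;=\; \lambda(E/F,\psi)^{n}\,
\frac{\gamma(s+\tfrac12,\sigma\times(\pi\otimes\Upsilon^{-1}),\psi)}{\gamma(2s,\mathfrak{c}(\pi),\mathrm{As}^+,\psi)}\,
J(W, W', s).
\end{equation*}
Applying Theorem~\ref{descent BC local} to identify $\gamma(s+\tfrac12,\sigma\times(\pi\otimes\Upsilon^{-1}),\psi) = \gamma(s+\tfrac12,\pi\otimes\mathfrak{c}(\pi),\psi)$ and then factoring the Rankin--Selberg gamma into Asai$^+$ and Asai$^-$ pieces, the limit $s\to\tfrac12$ cancels one Asai$^+$ factor against the denominator and — because each $L(s,\tau_i,\mathrm{As}^+)$ has a pole at $s=0$ — produces precisely $2^k\gamma(1,\mathfrak{c}(\pi),\mathrm{As}^-,\psi)$, which is \eqref{ILM 3.6}. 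The $\lambda(E/F,\psi)^n$ is already present in \eqref{FQ} as part of the normalization of the intertwining operator (following Anandavardhanan), not from collapsing $\alpha_{\psi^{-1}}$-constants as you suggest. Thus your attribution of these constants to (i) an intertwining-operator residue and (ii) a product of Weil constants is not how they actually arise, and without the functional equation \eqref{FQ} and the Asai factorization there is no clean route to the $2^k$.

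One more minor point: Corollary~\ref{cor8.5} is an internal step in the proof of Theorem~\ref{main theorem} (via Proposition~\ref{Proposition 6.15}) and is not invoked in the argument for Theorem~\ref{formal deg thm}. What is actually used is the double-integral computation $\int_{G'}[\sigma(g)W_1,W_1']_\sigma[\sigma(g^{-1})W_2,W_2']_\sigma\,d_\psi(g)$, unfolded using \eqref{ILM 3.7} and Corollary~\ref{Cor3.6}, combined with \eqref{ILM 3.8} to compare $(\cdot,\cdot)_\sigma$ (which carries $d_\psi$) with $[\cdot,\cdot]_\sigma$ (which defines $d_\sigma$). You should replace the appeal to Corollary~\ref{cor8.5} and to "$M(s)$-residue" heuristics by these precise inputs.
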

We shall prove this theorem in a very similar argument as the proof of refined formal degree conjecture for metaplectic group by Ichino--Lapid--Mao~\cite{ILM}.

Recall the following functional equation.
Let $W \in \mathbb{W}^{\psi_{N^\prime}^{-1}}(\sigma)$ and $W^\prime \in \mathrm{Ind}(\mathbb{W}^{\psi_{N_M}}(\pi))$.
Let $\gamma(s, \pi, \mathrm{As}^{\pm}, \psi)$ be Asai gamma factors defined by  Langlands--Shahidi method~\cite{Sh90b}.
Then in \cite{Mo3}, we proved that 
\begin{equation}
\label{FQ}
J(W, M(s)W^\prime, -s)
=\lambda(E \slash F, \psi)^{n} \frac{\gamma(s+\frac{1}{2}, \sigma \times (\pi \otimes \Upsilon^{-1}), \psi)}{\gamma(2s, \mc(\pi), \mathrm{As}^+, \psi)} 
J(W, W^\prime,  s)
\end{equation}
and that the the gamma factor $\gamma(s+\frac{1}{2}, \sigma \times (\pi \otimes \Upsilon^{-1}), \psi)$ coincides with the gamma factor 
defined by Langlands--Shahidi method~\cite{Sh90b}. 
We note that the factor $\lambda(E \slash F, \psi)$ comes from the normalization of an intertwining operator (for example, see Anandavardhanan~\cite[(3)]{Ana}).

By Theorem~\ref{descent BC local}, we have
\[
\gamma \left(s+\frac{1}{2}, \sigma \times (\pi \otimes \Upsilon^{-1}), \psi \right)
= \gamma \left(s+\frac{1}{2}, \pi \otimes \mc(\pi), \psi \right).
\]
Moreover, we have
\[
\gamma \left(s+\frac{1}{2}, \pi \otimes \mc(\pi), \psi \right)
= \gamma \left(s+\frac{1}{2}, \mc(\pi), \mathrm{As}^+,\psi \right)\gamma \left(s+\frac{1}{2}, \mc(\pi), \mathrm{As}^-,\psi \right).
\]
Hence, we get
\begin{equation}
\label{ILM 3.6}
\lim_{s \rightarrow \frac{1}{2}} \frac{\gamma(s+\frac{1}{2}, \sigma \times (\pi \otimes \Upsilon^{-1}), \psi)}{\gamma(2s, \mc(\pi), \mathrm{As}^+, \psi)} 
=2^k \gamma  \left(1, \mc(\pi), \mathrm{As}^-,\psi \right)
\end{equation}
We define a non-degenerate $G^\prime$-invariant bilinear form $(\cdot, \cdot)_\sigma$ on 
$\mathbb{W}^{\psi_{N^\prime}^{-1}}(\sigma) \times \mathbb{W}^{\psi_{N^\prime}}(\sigma^\vee)$ by
\[
(W, W^\prime)_\sigma = \int_{N \backslash G} W(g) W^\prime(g) d_\psi(g)
\]
which converges absolutely by \cite[Proposition~3.2]{Del}.
On the other hand, recall that we defined a $G^\prime$-invariant bilinear form $[\cdot, \cdot]_\sigma$ in Definition~\ref{def inner sigma}.
Indeed, it is defined so that
\[
J \left(W, W^\prime, \frac{1}{2} \right) = \left[W, A_{\#}^{\psi, \Upsilon}(W^\prime, \cdot ) \right]_\sigma
\]
for $W \in \mathbb{W}^{\psi_{N^\prime}^{-1}}(\sigma)$ and $W^\prime \in \mathrm{Ind}(\mathbb{W}^{\psi_{N_M}}(\pi))$.
Then by the functional equation \eqref{FQ}, we obtain
\begin{align*}
(W, A_{\#}^{\psi, \Upsilon}(M^\ast W^\prime, \cdot ))_\sigma &= J \left(W, M^\ast W^\prime, -\frac{1}{2} \right)\\
&= 
\lambda(E \slash F, \psi)^{n} \lim_{s \rightarrow \frac{1}{2}}
 \frac{\gamma(s+\frac{1}{2}, \sigma \times (\pi \otimes \Upsilon^{-1}), \psi)}{\gamma(2s, \mc(\pi), \mathrm{As}^+, \psi)} J(W, W^\prime,  \frac{1}{2})
\\
&=
\lambda(E \slash F, \psi)^{n} \lim_{s \rightarrow \frac{1}{2}}
 \frac{\gamma(s+\frac{1}{2}, \pi \otimes \mc(\pi), \psi)}{\gamma(2s, \mc(\pi), \mathrm{As}^+, \psi)} 
[W, A_{\#}^{\psi, \Upsilon}(M^\ast W^\prime, \cdot )]_\sigma
\end{align*}
Hence, by \eqref{ILM 3.6},
\begin{equation}
\label{ILM 3.8}
(W, A_{\#}^{\psi, \Upsilon}(M^\ast W^\prime, \cdot ))_\sigma
=
\lambda(E \slash F, \psi)^{n} 2^k 
\gamma(1, \mc(\pi), \mathrm{As}^-, \psi)
[W, A_{\#}^{\psi, \Upsilon}(M^\ast W^\prime, \cdot )]_\sigma
\end{equation}
Recall that the formal degree $d_\sigma$ is defined so that 
\[
\int_{G^\prime} [\sigma(g)W_1, W_1^\prime]_\sigma [\sigma(g^{-1})W_2, W_2^\prime]_\sigma d_\sigma
= [W_1, W_2^\prime]_\sigma [W_2, W_1^\prime] 
\]
for $W_1, W_2 \in \mathbb{W}^{\psi_{N^\prime}^{-1}}(\sigma)$
and $W_1^\prime, W_2^\prime \in \mathbb{W}^{\psi_{N^\prime}}(\mc(\sigma))$.
Assume that $W_1^\prime = A_{\#}^{\psi, \Upsilon}(M^\ast W^\prime, \cdot )$.
Then by the definition,
\begin{equation}
\label{double int}
\int_{G^\prime} [\sigma(g)W_1, W_1^\prime]_\sigma [\sigma(g^{-1})W_2, W_2^\prime] d_\psi(g)
=\int_{G^\prime} \int_{N^\prime \backslash G^\prime} W_1(xg) A_{\#}^{\psi, \Upsilon}(M^\ast W^\prime, x)
 [\sigma(g^{-1})W_2, W_2^\prime]_\sigma d_\psi(x) d_\psi(g).
\end{equation}
In the same argument as \cite[p.1316--1317]{ILM}, we see that this double integral converges absolutely by Lemma~\ref{lemma4.6}.
Then we can change the order of the integration, and changing the variable $g \mapsto x^{-1}g$, we get
\begin{multline*}
 \int_{N^\prime \backslash G^\prime} \int_{G^\prime} W_1(g) A_{\#}^{\psi, \Upsilon}(M^\ast W^\prime, x)
 [\sigma(g^{-1}x)W_2, W_2^\prime]_\sigma d_\psi(x) d_\psi(g)\\
 =
 \int_{N^\prime \backslash G^\prime} \int_{N^\prime \backslash G^\prime} \int_{N^\prime} 
\psi_{N^\prime}(u)^{-1} W_1(g) A_{\#}^{\psi, \Upsilon}(M^\ast W^\prime, x)
 [\sigma(g^{-1}u^{-1}x)W_2, W_2^\prime]_\sigma d_\psi(x) \, d_\psi(u) \, d_\psi(g)
 \\
=
 \int_{N^\prime \backslash G^\prime} \int_{N^\prime \backslash G^\prime} \int_{N^\prime} 
\psi_{N^\prime}(u) W_1(g) A_{\#}^{\psi, \Upsilon}(M^\ast W^\prime, x)
 [\sigma(ux)W_2, \sigma^\vee(g) W_2^\prime]_\sigma d_\psi(x) \, d_\psi(u) \, d_\psi(g)
\end{multline*}
By \eqref{ILM 3.7} and Corollary~\ref{Cor3.6}, this is equal to
\[
\omega_\sigma(-1)
 \int_{N^\prime \backslash G^\prime} \int_{N^\prime \backslash G^\prime}
W_1(g) A_{\#}^{\psi, \Upsilon}(M^\ast W^\prime, x)
W_2(x) W_2^\prime(g) d_\psi(x)  \, d_\psi(g)
=\omega_\sigma(-1) \cdot (W_1, W_2^\prime)_\sigma [W_2, W_1^\prime]_\sigma
\]
Therefore, Theorem~\ref{formal deg thm} follows from the above computation and \eqref{ILM 3.8}.
\subsubsection{General case}
Let $\mathrm{U}_{2n}^-$ be the even unitary group over $F$ whose discriminant is different from that of $\mathrm{U}_{2n}$
and dimension is $2n$. For the convenience, we write $\mathrm{U}_{2n}^{+} = \mathrm{U}_{2n}$.
In this section, we prove \eqref{formal degree identity} for any discrete series representations of $\mathrm{U}_{2n}^{\pm}$
assuming local Langlands conjecture for these groups.
Indeed, the local Langlands conjecture was established by Mok~\cite{Mok} for $\mathrm{U}_{2n}^+$ 
and Kaletha--Minguez--Shin--White~\cite{KMSW} for $\mathrm{U}_{2n}^{-}$ with the stabilization of the twisted trace 
formula established by Moeglin--Waldspurge~\cite{WMW1,WMW2}
assuming the weighted fundamental lemma for quasi-split groups,
which is proved in Chaudouard--Laumon~\cite{CL}  only in the split case.

Let us briefly recall the local Langlands conjectures.
Fix a splitting such that it gives Whittaker data $(B, \psi_{N^\prime})$
and we denote the $L$-group of $\mathrm{U}_{2n}^{\pm}$ by ${}^{L} \mathrm{U}_{2n} = \mathrm{GL}_{2n}(\mC) \rtimes \mathrm{Gal}(E \slash F)$
with the action $\theta \in \mathrm{Gal}(E \slash F)$ on $\mathrm{GL}_{2n}(\mC)$ defined by $\theta(g) = w_0^{\prime} {}^{t}g^{-1} (w_0^\prime)^{-1}$.
We note that this action preserves our splitting.
We also denote the connected component of ${}^{L} \mathrm{U}_{2n} $ by $\widehat{\mathrm{U}}_{2n}$.
The local Langlands conjecture for $\mathrm{U}_{2n}^{+}$ asserts that there exists a partition
\[
\mathrm{Irr}_{\rm sqr} \mathrm{U}_{2n}^{+} = \coprod \Pi_{\phi}
\]
into $L$-packets, where the disjoint union on the right-hand side runs over conjugacy classes of square integrable $L$-parameters
$\phi : W\!D_F \rightarrow {}^{L}\mathrm{U}_{2n}$.
Here, we say that a continuous homomorphism $\phi :W\!D_F \rightarrow {}^{L}\mathrm{U}_{2n}$ is  an $L$-parameter if $\phi$
is semisimple and $\phi|_{\mathrm{SL}(2, \mC)}$ is algebraic, and that $\phi$ is square-integrable if the centralizer $S_\phi$ of the image of $\phi$
in $\widehat{\mathrm{U}}_{2n}$ is finite, in which $S_\phi$ is an elementary abelian $2$-group.
Moreover, denoting by $\widehat{S}_\phi$ the group of characters of $S_\phi$, there exists an injection 
\[
\Phi_\phi \rightarrow \widehat{S}_\phi, \quad \sigma \mapsto \langle \cdot, \sigma \rangle,
\]
whose image consists of the characters trivial on $\pm I_{2n}$ and which satisfies the suitable character identity.
Furthermore, the Langlands conjecture for even unitary groups asserts that 
 there exists a partition
\[
\mathrm{Irr}_{\rm sqr} \mathrm{U}_{2n}^+ \amalg \mathrm{Irr}_{\rm sqr} \mathrm{U}_{2n}^- = \coprod \Pi_{\phi}
\]
indexed by equivalence classes of square-integrable $L$-parameters $\phi : W\!D_F \rightarrow {}^{L}\mathrm{U}_{2n}$
under the conjugation by $\widehat{\mathrm{U}}_{2n}$, and for each such $\phi$ a bijection 
\[
\Phi_\phi \rightarrow \widehat{S}_\phi, \quad \sigma \mapsto \langle \cdot, \sigma \rangle,
\]
such that 
\[
\Pi_{\phi}^{\pm} := \{ \sigma \in \Pi_\phi : \langle -I_{2n}, \sigma \rangle = \pm 1 \} = \Pi_\phi \cap \mathrm{Irr}_{\rm sqr} \mathrm{U}_{2n}^{\pm}
\]
and the endoscopic character relations hold.
\begin{corollary}
Assume that the local Langlands conjecture holds for $\mathrm{U}_{2n}^{\pm}$. Then
\begin{equation}
\label{formal degree corollary}
d_\psi^{\tilde{G}} =|\mathcal{S}_\phi|
\lambda(E \slash F, \psi)^n \omega_{\sigma}(-1) \gamma(1, \sigma, \mathrm{As}^-, \psi) d_\sigma
\end{equation}
holds for any square-interable $L$-parameter $\phi : WD_F \rightarrow {}^{L} \mathrm{U}_{2n}$
and any $\sigma \in \Pi_\phi$.
\end{corollary}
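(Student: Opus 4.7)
The plan is to reduce the general case to Theorem~\ref{formal deg thm} via two inputs: the identification, via local Langlands, of the generic member of $\Pi_\phi^+$ with a descent of an isobaric sum; and the constancy of the formal degree on the $L$-packet $\Pi_\phi = \Pi_\phi^+ \sqcup \Pi_\phi^-$.

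Fix a square-integrable $L$-parameter $\phi$ and write $\phi = \phi_1 \oplus \cdots \oplus \phi_k$ as a multiplicity-free sum of conjugate-self-dual parameters, each $\phi_i$ corresponding to a discrete series $\tau_i$ of $\mathrm{GL}_{n_i}(E)$ such that $L(s, \tau_i, \mathrm{As}^+)$ has a pole at $s = 0$; thus $|\mathcal{S}_\phi| = 2^k$. By the generic packet conjecture, established for $\mathrm{U}_{2n}^+$ in \cite{Mok}, the packet $\Pi_\phi^+$ contains a unique member $\sigma^{\mathrm{gen}}$ which is generic with respect to $(B, \psi_{N^\prime})$, and the compatibility of Mok's classification with the Kim--Krishnamurthy base change (already used in Theorem~\ref{descent BC local}) identifies $\sigma^{\mathrm{gen}} = \mathcal{D}_{\psi^{-1}}^{\Upsilon^{-1}}(\mathfrak{c}(\pi))$ with $\pi = \tau_1 \boxplus \cdots \boxplus \tau_k$. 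Theorem~\ref{formal deg thm} then yields \eqref{formal degree corollary} in the special case $\sigma = \sigma^{\mathrm{gen}}$, with the factor $2^k$ now read as $|\mathcal{S}_\phi|$.

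To extend the formula to an arbitrary $\sigma \in \Pi_\phi^{\pm}$, I would observe that every factor on the right-hand side of \eqref{formal degree corollary} other than $d_\sigma$ depends only on the parameter $\phi$: the central character $\omega_\sigma$ is determined by composing $\phi$ with the central cocharacter, and $\gamma(1, \sigma, \mathrm{As}^-, \psi)$ coincides with $\gamma(1, \mathfrak{c}(\pi), \mathrm{As}^-, \psi)$. Hence it suffices to verify $d_\sigma = d_{\sigma^{\mathrm{gen}}}$ for every member of $\Pi_\phi$. Since $S_\phi$ is an elementary abelian $2$-group, every irreducible character of $S_\phi$ is one-dimensional, and under this hypothesis the Hiraga--Ichino--Ikeda formula predicts a constant formal degree across $\Pi_\phi$, which in the unitary setting is a theorem of Beuzart-Plessis~\cite{BP}. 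Alternatively, the equality $d_\sigma = d_{\sigma^{\mathrm{gen}}}$ can be extracted directly from the endoscopic character relations by Plancherel inversion and the orthogonality of characters of $S_\phi$.

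The main obstacle is bookkeeping rather than substance: one must match the integer $2^k$ of Theorem~\ref{formal deg thm} with $|\mathcal{S}_\phi|$, identify $\sigma^{\mathrm{gen}}$ with the output of the descent construction, and check that $\omega_\sigma(-1)$ and the Asai $\gamma$-factor computed on the unitary side agree with the corresponding quantities computed from $\pi$ on the $\mathrm{GL}$ side. All of these are standard consequences of \cite{Mok,KMSW}, but they require careful treatment of Whittaker data and of the sign characters that distinguish $\Pi_\phi^+$ from $\Pi_\phi^-$ in the endoscopic setup.
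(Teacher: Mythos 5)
Your proposal follows the same high-level strategy as the paper: reduce to the generic member of $\Pi_\phi$ via constancy of formal degrees, then identify that member with an explicit descent and invoke Theorem~\ref{formal deg thm}. However, there are two places where you assert compatibility rather than proving it, and these are precisely where the paper does nontrivial work.

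First, you write that ``the compatibility of Mok's classification with the Kim--Krishnamurthy base change (already used in Theorem~\ref{descent BC local}) identifies $\sigma^{\mathrm{gen}}$ with $\mathcal{D}_{\psi^{-1}}^{\Upsilon^{-1}}(\mathfrak{c}(\pi))$.'' This compatibility is not established in Theorem~\ref{descent BC local} (which is phrased purely in terms of gamma factors and the explicit KK base change, with no reference to Mok's parametrization), and it is not automatic: Mok's local Langlands correspondence and the Kim--Krishnamurthy lift are constructed by entirely different methods, and one must match them. The paper proves this by a globalization: construct a quadratic extension $k'/k$ of number fields with a place $v_0$ localizing to $E/F$, a globally generic cuspidal $\Sigma$ on $\mathrm{U}_{2n}(\mathbb{A}_k)$ with $\Sigma_{v_0} = \sigma^{\mathrm{gen}}$ whose local parameter at $v_0$ is $\iota\circ\phi$ (via an argument à la \cite[Corollary~9.2.4]{Mok} or \cite[Proposition~8.3.2]{Art}), and then use the fact that the KK base change is a strong lift to force $\mathrm{BC}(\sigma^{\mathrm{gen}}) = \Pi_{v_0}$ to correspond to $\iota\circ\phi$. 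Without this step your identification is circular.

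Second, the reduction from arbitrary $\sigma\in\Pi_\phi$ to $\sigma^{\mathrm{gen}}$ is not a single ``constancy of formal degrees'' statement, because $\Pi_\phi^+$ and $\Pi_\phi^-$ are packets on \emph{different groups} ($\mathrm{U}_{2n}^+$ and $\mathrm{U}_{2n}^-$), so both $d_\psi^{\tilde G}$ and $d_\sigma$ change simultaneously when you cross from one to the other; the content is that the \emph{ratio} $d_\psi/d_\sigma$ is the same. Within $\Pi_\phi^+$ the paper uses the endoscopic character identity as in \cite[Corollary~5.1]{ILM}, which is fine and close to your ``orthogonality'' remark. But to pass from $\mathrm{U}_{2n}^+$ to $\mathrm{U}_{2n}^-$ one needs an additional normalizing input: the paper uses the explicit Steinberg-representation computation from \cite{HII} together with \cite[Corollary~9.10]{Sh90b} (Shahidi's Plancherel measure comparison) to pin down the measure ratio between the two inner forms. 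Your appeal to Beuzart-Plessis's theorem would supply the absolute value of the adjoint $\gamma$-factor, and hence constancy of formal degrees within each form, but it does not by itself compare the two Haar measure normalizations; moreover, relying on \cite{BP} here undercuts the point of giving an independent proof of the refined conjecture.
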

\begin{proof}
First, we note that in the same argument as \cite[p.1325]{ILM}, we can reduce \eqref{formal degree corollary} in the case of $\mathrm{U}_{2n}^-$
to the case of $\mathrm{U}_{2n}^+$ using the endoscopic relations, \eqref{formal degree corollary} in the case of Steinberg representations in \cite{HII} 
and the proof of \cite[Corollary~9.10]{Sh90b}.
Moreover, in the case of $\mathrm{U}_{2n}^+$, as in the proof of \cite[Corollary~5.1]{ILM}, 
we can show all representation in $\Pi_\phi$ have the same formal degree using the character identity.
Hence, we may assume that  $\langle \cdot, \sigma \rangle$ is trivial, and thus it is generic by \cite[Corollary~9.2.4]{Mok} (See Remark~\ref{atobe}). 
In this case, in a similar way as the proof of \cite[Corollary~9.2.4]{Mok} or \cite[Proposition~8.3.2]{Art},
we can find quadratic extension of number fields $k^\prime \slash k$, a place $v_0$ of $k$, and automorphic representation $\Pi$ of $\mathrm{GL}_{2n}(\mA_{k^\prime})$
and an irreducible globally generic cuspidal automoprhic representation $\Sigma$ of $\mathrm{U}_{2n}(\mA_k)$ such that 
\begin{itemize}
\item $k_{v_0} = F, k^\prime \otimes k_{v_0} = E$
\item $\Pi_{v_0}$ corresponds to the $L$-parameter $\iota \circ \phi$, where $\iota : {}^{L}\mathrm{U}_{2n}$
is the stable base change lift
\item $\Sigma$ weakly lifts to $\Pi$,
\item $\Sigma_{v_0} \in \Pi_\phi$ and $\langle \cdot, \Sigma_{v_0} \rangle$ is trivial, i.e., $\Sigma_{v_0} = \sigma$
\end{itemize}
On the other hand, since the base change lift is strong by \cite{KK}, we obtain
\[
\Pi_{v_0} = \mathrm{BC}(\Sigma_{v_0}).
\]
Hence, by Theorem~\ref{descent BC local}, $\mathrm{BC}(\sigma)$ should correspond to $\iota \circ \phi$.
Then our corollary follows from Theorem~\ref{formal deg thm}.
\end{proof}
\begin{Remark}
\label{atobe}
Atobe~\cite[Theorem~3.1]{At} give a precise proof that $\sigma$ is generic if  $\langle \cdot, \sigma \rangle$ is trivial
using  expected desideratum on the local Langlands conjecture. 
\end{Remark}
\subsection{Archimedean case}
In this section, we prove Conjecture~\ref{LM local conjecture} for discrete series representations of 
$\mathrm{U}_{2n}(\mR)$ as a consequence of the formal degree conjecture.
\begin{lemma}
Let $\phi$ be a square-integrable $L$-parameter of $\mathrm{U}_{2n}(\mR)$.
Then we have
\[
d_\psi =|\mathcal{S}_\phi|
\lambda(E \slash F, \psi)^n \omega_{\sigma}(-1) \gamma(1, \sigma, \mathrm{As}^-, \psi) d_\sigma.
\]
\end{lemma}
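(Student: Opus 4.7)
The strategy is to specialise the archimedean formal degree computations of Hiraga--Ichino--Ikeda~\cite{HII} to discrete series of $\mathrm{U}_{2n}(\mathbb{R})$. Since the unrefined conjecture was established in \cite{HII} for discrete series of real reductive groups, the task here is not to recompute $d_\sigma$ from scratch but rather to pin down the sign that distinguishes the refined identity from the absolute-value version, and to reorganise the adjoint gamma factor into the Asai factors that appear in \eqref{formal degree identity}.

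First I would recall the structure of a square-integrable $L$-parameter $\phi : W_\mathbb{R} \to {}^{L}\mathrm{U}_{2n}$: it is a direct sum of $n$ pairwise distinct two-dimensional irreducible representations $\rho_{a_i}$ of $W_\mathbb{R}$ indexed by half-integers $a_i$, and the component group $\mathcal{S}_\phi$ is elementary abelian of order $2^n$. Under the stable base change embedding $\iota : {}^{L}\mathrm{U}_{2n} \hookrightarrow {}^{L}\mathrm{Res}_{\mathbb{C}/\mathbb{R}}\mathrm{GL}_{2n}$, the adjoint representation decomposes as $\mathrm{Ad} = \mathrm{As}^+ \oplus \mathrm{As}^-$, hence
\[
\gamma(s, \phi, \mathrm{Ad}, \psi) = \gamma(s, \phi, \mathrm{As}^+, \psi)\, \gamma(s, \phi, \mathrm{As}^-, \psi).
\]
The HII identity for $d_\psi/d_\sigma$ thus immediately rewrites in terms of the two Asai gamma factors evaluated at the edge of the critical strip, and the functional equation $\gamma(s)\gamma(1-s) = \pm 1$ allows one to pass between $\gamma(0, \cdot)$ and $\gamma(1, \cdot)$ while tracking signs.

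Second, I would compute each ingredient explicitly. The archimedean Asai gamma factors for the quadratic extension $\mathbb{C}/\mathbb{R}$ are classical Godement--Jacquet/Langlands--Shahidi quantities for the $\rho_{a_i}$; both $|\gamma(1, \phi, \mathrm{As}^-, \psi)|$ and the sign of $\gamma(1, \phi, \mathrm{As}^-, \psi)$ can be written in closed form in terms of the $a_i$. On the other side, the Harish-Chandra explicit formula gives $d_\sigma$ as a product over positive roots of the reductive dual pair, and $\omega_\sigma(-1)$ is read off directly from the central character of the Langlands parameter as a product of $(-1)^{a_i+1/2}$ (or the appropriate normalisation). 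The Langlands constant $\lambda(E/F,\psi)^n = \lambda(\mathbb{C}/\mathbb{R},\psi)^n$ enters as the standard normalisation of the intertwining operator used in \eqref{FQ}, carrying an explicit root of unity.

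The main obstacle is the uniform sign tracking. In the $p$-adic case the sign was forced by the Lapid--Mao local identity (Theorem~\ref{main theorem}) and fell out of the $J$--functional equation; over $\mathbb{R}$ one instead must verify by hand that the sign emerging from HII's Harish-Chandra/gamma comparison is exactly $\omega_\sigma(-1)\cdot \mathrm{sgn}\,\gamma(1,\phi,\mathrm{As}^-,\psi)$, requiring a careful alignment of the normalisation of the $\rho_{a_i}$ used in \cite{HII} with the normalisation implicit in our definition of $\omega_\sigma(-1)$ and of the Asai gamma factor. Once the identity is verified for a single member of a given $L$-packet (say the generic one, for which globalisation and Theorem~\ref{formal deg thm} are unavailable at real places but the archimedean HII computation is), the endoscopic character relations of Mok~\cite{Mok} force the formal degree to be constant on the packet, propagating the refined identity to all $\sigma \in \Pi_\phi$.
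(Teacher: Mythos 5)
Your high-level strategy matches the paper's: invoke Hiraga--Ichino--Ikeda to obtain the absolute-value identity $d_\psi = |\mathcal{S}_\phi|\,|\gamma(1,\sigma,\mathrm{As}^-,\psi)|\,d_\sigma$ and then verify by a direct computation (as in Gan--Ichino's Lemma~14.2) that the product $\lambda(E/F,\psi)^n\,\omega_\sigma(-1)\,\gamma(1,\sigma,\mathrm{As}^-,\psi)$ is a positive real number. But several of the details you state are wrong in a way that would derail the computation.

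The most serious error is the claim that under base change $\mathrm{Ad} = \mathrm{As}^+ \oplus \mathrm{As}^-$. The adjoint representation of ${}^L\mathrm{U}_{2n}$ on $\widehat{\mathfrak{g}} \cong \mathfrak{gl}_{2n}(\mC)$ is $(2n)^2$-dimensional, and each Asai representation of ${}^L\mathrm{Res}_{E/F}\mathrm{GL}_{2n}$ is likewise $(2n)^2$-dimensional. The correct statement is that the adjoint corresponds, after base change, to a \emph{single} Asai representation (namely $\mathrm{As}^-$ in the convention of this paper), not the direct sum. The sum $\mathrm{As}^+ \oplus \mathrm{As}^-$ is $2(2n)^2$-dimensional and is the Rankin--Selberg factor $\pi \otimes \mc(\pi)$, as the paper itself uses in the chain of identities leading to \eqref{ILM 3.6}. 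Plugging $\mathrm{As}^+ \oplus \mathrm{As}^-$ into the HII formula would produce a $\gamma$-factor of the wrong size and would not reproduce the desired identity.

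You also misdescribe the archimedean parameter. Since $E_v = \mC$, a square-integrable $L$-parameter of $\mathrm{U}_{2n}(\mR)$ factors through (or is equivalent to) a direct sum of $2n$ pairwise distinct characters of $W_\mC = \mC^\times$ of the correct conjugate-duality parity -- this is exactly how the paper writes $\phi = \oplus\chi_i$ with $\chi_i : W\!D_E \to \mC^\times$ -- not of $n$ two-dimensional irreducibles of $W_\mR$. Consequently $|\mathcal{S}_\phi|$ is not $2^n$; for $2n$ distinct one-dimensional summands the centralizer in $\widehat{\mathrm{U}}_{2n}=\mathrm{GL}_{2n}(\mC)$ has the size coming from $2n$ sign choices. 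Finally, the appeal to Mok's endoscopic character relations to propagate the formula across the packet is superfluous here: HII's archimedean result (\cite[Proposition~2.1]{HII}) already gives the formal degree for every discrete series of a real reductive group, and in any case $d_\sigma$ depends only on the infinitesimal character (hence only on $\phi$), so constancy across $\Pi_\phi$ is automatic. The genuine work in the lemma is not extending from one member of the packet to the rest, but the explicit sign computation showing $\lambda(E/F,\psi)^n\,\omega_\sigma(-1)\,\gamma(1,\sigma,\mathrm{As}^-,\psi) > 0$ in terms of the characters $\chi_i$.
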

\begin{proof}
By \cite[Proposition~2.1]{HII}, we have 
\[
d_\psi =|\mathcal{S}_\phi|
| \gamma(1, \sigma, \mathrm{As}^-, \psi) |d_\sigma.
\]
In this case, we may write $\phi= \oplus \chi_i$ with $\chi_i :W\!D_E \rightarrow \mC^\times$.
Then by a direct computation (or in a similar computation as \cite[Lemma~14.2]{GI}), we see that 
\[
\lambda(E \slash F, \psi)^n \omega_{\sigma}(-1) \gamma(1, \sigma, \mathrm{As}^-, \psi) 
\]
is positive real number with the absolute value $\left| \gamma(1, \sigma, \mathrm{As}^-, \psi) \right|$. Thus, our required identity holds.
\end{proof}
\begin{theorem}
Conjecture~\ref{LM local conjecture} for any generic discrete series representations of $\mathrm{U}_{2n}(\mR)$.
\end{theorem}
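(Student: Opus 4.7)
The plan is to invert the equivalence worked out in the non-archimedean case (Section 8.1) and read off Conjecture~\ref{LM local conjecture} as a consequence of the refined formal degree identity just established in the preceding lemma. Over $\mR$ the equality
\[
d_\psi = |\mathcal{S}_\phi|\,\lambda(E/F,\psi)^n\,\omega_\sigma(-1)\,\gamma(1,\sigma,\mathrm{As}^{-},\psi)\,d_\sigma
\]
is already in hand, so the game is to show that the same two-step derivation of $c_\pi$ from $d_\psi/d_\sigma$ carries over verbatim.

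First I would collect the archimedean analytic inputs. Lemma~\ref{4.11 arch} and Remark~\ref{remark arch} give the convergence of $J(W,W^\prime,s)$ on a half-plane and its holomorphy at $s=\tfrac12$ when $\sigma$ is tempered. The functional equation \eqref{FQ}, together with its interpretation via Langlands--Shahidi $\gamma$-factors, is available at archimedean places by Shahidi's theory and gives the identity
\[
J(W,M(s)W^\prime,-s)=\lambda(E/F,\psi)^{n}\,\frac{\gamma(s+\tfrac12,\sigma\times(\pi\otimes\Upsilon^{-1}),\psi)}{\gamma(2s,\mathfrak{c}(\pi),\mathrm{As}^{+},\psi)}\,J(W,W^\prime,s).
\]
Taking the limit $s\to\tfrac12$ as in \eqref{ILM 3.6} yields exactly the archimedean counterpart of \eqref{ILM 3.8}:
\[
(W,\,A_{\#}^{\psi,\Upsilon}(M^{\ast}W^\prime,\cdot))_\sigma = \lambda(E/F,\psi)^{n}\,2^{k}\,\gamma(1,\mathfrak{c}(\pi),\mathrm{As}^{-},\psi)\,[W,\,A_{\#}^{\psi,\Upsilon}(M^{\ast}W^\prime,\cdot)]_\sigma.
\]
For a generic discrete series $\sigma$ the parameter $\phi$ is a multiplicity-free sum of characters of $W\!D_E$, so $|\mathcal{S}_\phi|=2^{k}$ with $k$ the number of constituents of the base change $\pi=\mathrm{BC}(\sigma)$.

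Second, I would run the matrix-coefficient computation of Section~8.1 unchanged. Using the defining property of $d_\sigma$ applied to $W_1^\prime=A_{\#}^{\psi,\Upsilon}(M^{\ast}W^\prime,\cdot)$, the double integral in \eqref{double int} unfolds (the absolute convergence for tempered $\sigma$ is again supplied by Lemma~\ref{4.11 arch}) and, after invoking the stable-integral identity of Corollary~\ref{Cor3.6} (whose archimedean form reduces to an identity of Whittaker functionals for $\sigma$ and which for discrete series is the content of the formal-degree definition together with \eqref{ILM 3.7}), one obtains
\[
d_\psi = c_\pi\cdot\lambda(E/F,\psi)^{n}\,2^{k}\,\gamma(1,\mathfrak{c}(\pi),\mathrm{As}^{-},\psi)\,d_\sigma.
\]
Comparing this with the formal-degree identity from the preceding lemma forces $c_\pi=\omega_\sigma(-1)=\omega_\pi(\tau)$, which is precisely Conjecture~\ref{LM local conjecture} for generic discrete series of $\mathrm{U}_{2n}(\mR)$.

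The main obstacle I foresee is the archimedean justification of Corollary~\ref{Cor3.6}, since its p-adic proof passes through the reduction Proposition~\ref{first reduction prp} and the stability results of Section~4--7 which are formulated in the non-archimedean setting. For discrete series over $\mR$ one cannot appeal to the deformation argument of \eqref{constant ft}, so I would instead argue directly: for a discrete series $\sigma$ the Whittaker pairing $[\cdot,\cdot]_\sigma$ is proportional to $(\cdot,\cdot)_\sigma$ (by uniqueness of the invariant pairing), the proportionality constant is computed by \eqref{ILM 3.8} above, and both defining relations of $c_\pi$ and of $d_\sigma$ reduce to a single identity for matrix coefficients against Whittaker functions. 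A secondary technical point is the archimedean verification that $J(W,W^\prime,\tfrac12)$ is non-zero for some choice, which ensures the constant $c_\pi$ is well-defined; this follows from Remark~\ref{remark arch} together with the archimedean non-vanishing results of Ben-Artzi--Soudry transported to the real place.
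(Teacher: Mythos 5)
Your plan is right in shape — invert the non-archimedean equivalence, feed in the archimedean formal-degree identity proved in the preceding lemma, and read off $c_\pi=\omega_\sigma(-1)$ — but it is missing the single essential ingredient of the paper's proof: a \emph{globalization} argument showing that $\sigma$ (equivalently $\pi=\mathrm{BC}(\sigma)$) is \emph{good} in the sense of Definition~\ref{def inner sigma}. Without goodness the constant $c_\pi$ in \eqref{ILM 3.7} is not even defined, since \eqref{ILM 3.7} presupposes the existence of the bilinear form $[\cdot,\cdot]_\sigma$ with the precise property $J(W',W,\tfrac12)=[W',A_{\#}^{\psi,\Upsilon}(M^{\ast}W,\cdot)]_\sigma$ (item~3 of Definition~\ref{def inner sigma}), together with irreducibility of the descents (item~1). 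The paper gets all of this by globalizing: over $\mR$ the base change of a generic discrete series is $\chi_1\times\cdots\times\chi_{2n}$ with $\chi_i$ distinct unitary characters of $\mC^\times$, one chooses global $\Pi_i$ with $\Pi_{i,\infty}=\chi_i$, sets $\Pi=\Pi_1\times\cdots\times\Pi_{2n}$, and then $\Pi_\infty=\mathrm{BC}(\sigma)$ is good by Theorem~5.5 of \cite{LMa}. This is the archimedean analogue of the deformation step you say you cannot use — but the point is that you \emph{can} globalize, and that is how the paper proceeds.

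Your proposed substitute — asserting $[\cdot,\cdot]_\sigma$ is proportional to $(\cdot,\cdot)_\sigma$ by uniqueness of invariant pairings and then ``computing the constant by \eqref{ILM 3.8}'' — is circular: \eqref{ILM 3.8} is derived \emph{from} the defining relation of $[\cdot,\cdot]_\sigma$ via the functional equation, so it cannot also be used to construct $[\cdot,\cdot]_\sigma$. Once goodness is in hand, the identity $[\cdot,\cdot]_\sigma=\text{(const)}\cdot(\cdot,\cdot)_\sigma$ is a consequence, not an input. Two smaller issues: the ``archimedean non-vanishing results of Ben-Artzi--Soudry'' you invoke are $p$-adic; the non-vanishing you need is already part of the goodness package. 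And you omit the square-integrability of Whittaker functions of $\sigma$ over $N'\backslash G'$, which is the Wallach input (item~2 of the paper's enumerated list) needed to make the unfolding of \eqref{double int} legitimate over $\mR$, where the convergence is not automatic as in the tempered $p$-adic case.
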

\begin{proof}
Suppose $\sigma \in \mathrm{Irr}_{\rm sqr, gen}$.
Then $\mathrm{BC}(\sigma) = \chi_1 \times \cdots \chi_{2n}$ where $\chi_i$ are mutually different unitary characters of $\mC^\times$.
Then we can find irreducible automorphic representations $\Pi_i$ of $\mA_{\mQ(i)}$ such that $\Pi_{i, \infty} = \chi_i$.
Set $\Pi = \Pi_1 \times \cdots \Pi_{2n}$. We see that $\sigma$ is good in the sense of \cite[5.3]{LMa} since $\Pi_{\infty} = \mathrm{BC}(\sigma)$.
In particular, there exists $c_\sigma$ such that
\[
\int_{N^\prime}^{st} J(W, W^\prime, \frac{1}{2}) \psi_{N^\prime}(n) \, dn
=\omega_\sigma(-1) W(e)   A_{\#}^{\psi, \Upsilon}(M^\ast W^\prime, e)
\]
for any $W \in \mathbb{W}^{\psi_{N^\prime}^{-1}}(\sigma)$ and $W^\prime \in \mathrm{Ind}(\mathbb{W}^{\psi_{N_M}}(\pi))$.
Then the same argument as the proof of Theorem~\ref{formal deg thm} gives
\[
c_\pi = \omega_{\sigma}(-1) \Longleftrightarrow
d_\psi =2^k
\lambda(E \slash F, \psi)^n \omega_{\sigma}(-1) \gamma(1, \mathfrak{c}(\pi), \mathrm{As}^-, \psi) d_\sigma,
\]
where we have used the following in the real case.
\begin{enumerate}
\item The integral $J$ converges absolutely uniformly near $s = -\frac{1}{2}$ (cf. Remark~\ref{remark arch}).
\item Any Whittaker function $W \in \mathbb{W}^{\psi_{N^\prime}^{-1}}(\sigma)$ is square-integrable over $N^\prime \backslash G^\prime$
(cf. \cite[Theorem~15.3.4]{Wa2}).
\item Theorem~\ref{backward lifting} $\mathrm{U}_{2n}(\mR)$ readily follows from the Langlands correspondence.
\item We can prove the absolute convergence of the double integral \eqref{double int} in the same argument as \cite[p.1316--1317]{ILM}
using \cite[Theorem~7.2.1]{Wa1}, \cite[Theorem~15.2.4]{Wa2} and Lemma~\ref{4.11 arch}.
\end{enumerate}
\end{proof}
%
%
%
%
%
%
%
%
%
%
%
%
%
%
\appendix
\section{On the image of local base change lifts for generic representations}
\label{JS analogue}
In this appendix, using a similar argument as Jiang and Soudy~\cite{JS03} and Liu~\cite{Li11},
we shall determine the image of local base change lifts for generic discrete series representations of even unitary groups $\mathrm{U}_{2n}$.
We note that as in the above two papers, we may write down Langlands parameter corresponding to 
these representations. 
We will consider this problem for not only even unitary groups but also odd unitary groups
in our future work.

In this appendix, we will use the same notation in the main body of this paper.
Further, for simplicity, we say that a representation of $\mathrm{U}_{2n}$ is generic if it is $\psi_N$-generic.
For a given representations $\tau_i$ of $\mathrm{GL}_{k_i}(E)$, $i=1, \cdots, a$ and $\rho$ of $\mathrm{U}_{2m}$, we 
denote by 
\[
\tau_1\times \cdots \times \tau_{k_a} \quad
\text{(resp. $\tau_1\times \cdots \times \tau_{k_a} \times \rho$)}
\]
the parabolic induction of $\mathrm{GL}_{k_1+\cdots +k_a}$ (resp. $\mathrm{U}_{2(k_1+\cdots +k_a+m)}$) for the parabolic subgroup
with the Levi part $\mathrm{GL}_{k_1}(E) \times \cdots \times \mathrm{GL}_{k_1}(E)$
(resp. $\mathrm{GL}_{k_1}(E) \times \cdots \times \mathrm{GL}_{k_1}(E) \times \mathrm{U}_{2m}$).

We would like to start our observation with the case of supercuspidal representations.
Let $\Pi^{(sg)}(\mathrm{U}_{2n})$ be the set of all equivalence classes of irreducible supercuspidal 
generic representations of $\mathrm{U}_{2n}$.
Let $\Pi^{(sg)}(\mathrm{GL}_{2n}(E))$ be the set of all equivalence classes of irreducible tempered representations of $\mathrm{GL}_{2n}(E)$
of the form
\[
\tau_1 \times \cdots \times \tau_r 
\]
where $\tau_i$ is an irreducible supercuspidal representation of $\mathrm{GL}_{n_i}(E)$
such that $L(s, \tau_i, \mathrm{As}^+)$ has a pole at $s=0$ and for $i \ne j$,
$\tau_i \not \simeq \tau_j$.
In \cite{KK}, Kim and Krishnamurthy constructed local base change lifts of  generic representations of $\mathrm{U}_{2n}$ explicitly.
Indeed, we have the following result.
\begin{theorem}[Proposition~8.4 in \cite{KK}]
\label{local bc lift}
There is a map $l$ from $\Pi^{(sg)}(\mathrm{U}_{2n})$ to $\Pi^{(sg)}(\mathrm{GL}_{2n}(E))$
which preserves local $\gamma$-factors with $\mathrm{GL}$-twist, namely
\[
\gamma^{Sh}(s, \pi \times \sigma, \psi)=
\lambda(E \slash F, \psi)^{2nk}
\gamma^{RS}(s, l(\pi) \times \sigma, \psi)
\]
for any $\pi \in \Pi^{(sg)}(\mathrm{U}_{2n})$ and any irreducible generic representation $\sigma$ of $\mathrm{GL}_k(E)$ with $k \in \mN$.
Here, the $\gamma$-factor on the right-hand (res. left-hand) side is defined by the Rankin-Selberg method \cite{JPSS}
(resp. Langlands-Shahidi method~\cite{Sh90a} and \cite{Sh90b}).
\end{theorem}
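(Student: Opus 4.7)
The plan is to follow the globalization/converse-theorem approach of Kim--Krishnamurthy. Given $\pi \in \Pi^{(sg)}(\mathrm{U}_{2n})$, I would first choose a quadratic extension of number fields $K/k$ and a finite place $v_0$ of $k$ with $k_{v_0} = F$ and $K \otimes_k k_{v_0} = E$, and produce an irreducible globally generic cuspidal automorphic representation $\Pi$ of $\mathrm{U}_{2n}(\mA_k)$ with $\Pi_{v_0} \simeq \pi$, while arranging that $\Pi_v$ is unramified (or at least easily described) at all other non-archimedean places and is a nice discrete series at archimedean places. A globalization of this shape can be obtained by Poincaré series / Shahidi-type arguments or by the Arthur--Shelstad-style simple trace formula, where supercuspidality at $v_0$ plus a Steinberg condition at one auxiliary place forces cuspidality of $\Pi$.

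Next, I would feed $\Pi$ into the weak base change construction: using $\gamma$-factor identities coming from Langlands--Shahidi and the Cogdell--Piatetski-Shapiro converse theorem applied to all twists by cuspidal automorphic representations of $\mathrm{GL}_k(\mA_K)$ for $1 \leq k \leq 2n-1$, one produces an automorphic representation $\Pi'$ of $\mathrm{GL}_{2n}(\mA_K)$ such that $\Pi'_v \simeq \mathrm{BC}(\Pi_v)$ at almost all places. By Jacquet--Shalika, $\Pi' = \tau_1 \boxplus \cdots \boxplus \tau_r$ with pairwise inequivalent cuspidal $\tau_i$ on $\mathrm{GL}_{n_i}(\mA_K)$. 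The analysis of poles of the global partial Asai $L$-function (together with the theory of the Langlands--Shahidi normalizing factors for the Siegel parabolic of $\mathrm{U}_{2n}$) forces each $\tau_i$ to be conjugate self-dual with the sign corresponding to a pole of $L^S(s, \tau_i, \mathrm{As}^-)$ at $s = 1$; equivalently, each local component $\tau_{i,v}$ has $L(s, \tau_{i,v}, \mathrm{As}^+)$ with a pole at $s=0$.

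One then sets $l(\pi) := \Pi'_{v_0}$ and must verify that this lies in $\Pi^{(sg)}(\mathrm{GL}_{2n}(E))$ and satisfies the desired $\gamma$-factor identity. The gamma factor identity itself is a consequence of the local--global principle: writing the global identity $\gamma^{Sh}(s, \Pi \times \sigma, \psi) = \lambda^{\ldots} \gamma^{RS}(s, \Pi' \times \sigma, \psi)$ (valid up to the $\lambda$-factor arising from the comparison of Shahidi's and Rankin--Selberg normalizations of intertwining operators) for a global twist $\sigma$ chosen to match any prescribed local representation at $v_0$ and to be unramified elsewhere, the known matching at all $v \ne v_0$ pins down the identity at $v_0$. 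The factor $\lambda(E/F,\psi)^{2nk}$ appears precisely because at each non-split local place the two gamma factors differ by such a $\lambda$, raised to the power equal to the rank of the Siegel Levi block, while at split places the $\lambda$-factor is $1$.

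The main obstacle is showing that $\Pi'_{v_0}$ has the precise shape required to land in $\Pi^{(sg)}(\mathrm{GL}_{2n}(E))$, namely that it is a product of \emph{pairwise inequivalent} irreducible supercuspidals of $\mathrm{GL}_{n_i}(E)$ each with an Asai pole. Temperedness at $v_0$ follows from the Jacquet--Shalika bounds combined with the supercuspidality assumption at $v_0$, which via the multiplicativity of $\gamma$-factors and stability of $\gamma^{Sh}$ under highly ramified twists rules out any proper Langlands quotient appearing; the inequivalence of the $\tau_{i,v_0}$ is a consequence of the inequivalence of the global $\tau_i$ together with strong multiplicity one. The Asai pole condition at $v_0$ is extracted from the global pole via a careful choice of globalization in which no other local $L$-factor contributes a pole at $s=1$, which is where the flexibility built into the globalization step pays off.
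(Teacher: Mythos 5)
The paper cites this result verbatim as Proposition~8.4 of Kim--Krishnamurthy and gives no proof of its own, so there is no internal argument to compare against; what you have written is a reconstruction of the expected strategy. The overall architecture---globalize $\pi$ to a cuspidal $\Pi$ on $\mathrm{U}_{2n}(\mA_k)$, build a weak base change $\Pi'$ on $\mathrm{GL}_{2n}(\mA_K)$ via the converse theorem, decompose $\Pi'$ as an isobaric sum of conjugate self-dual cuspidals, set $l(\pi):=\Pi'_{v_0}$, and deduce the local $\gamma$-factor identity by local--global matching---is the right one.

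Two points nonetheless need tightening. First, you assert that a pole of the global $L^S(s,\tau_i,\mathrm{As}^-)$ at $s=1$ is \emph{equivalent} to a pole of the local $L(s,\tau_{i,v_0},\mathrm{As}^+)$ at $s=0$. These carry opposite Asai signs, and the translation passes through the twist by $\Upsilon$ (with $\Upsilon|_{F^\times}=\eta_{E/F}$) built into the definition of the \emph{stable} base change; this twist must be tracked explicitly rather than absorbed into an ``equivalently.'' Second, and more serious: the step that forces $\Pi'_{v_0}$ to lie in $\Pi^{(sg)}(\mathrm{GL}_{2n}(E))$---a product of pairwise inequivalent \emph{supercuspidals} with the prescribed Asai pole condition---is the real content of the statement, and your treatment is too coarse. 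Jacquet--Shalika bounds together with ``ruling out a proper Langlands quotient'' would at best give temperedness, but a tempered generic representation of $\mathrm{GL}_{2n}(E)$ can perfectly well be a product of non-supercuspidal discrete series (generalized Steinberg factors). To obtain genuine supercuspidality of each $\tau_{i,v_0}$ one must first establish the $\gamma$-factor identity, then transport the fact that $L(s,\pi\times\sigma)$ is a unit for all smaller supercuspidal $\sigma$ (because $\pi$ is supercuspidal) across that identity to constrain the poles of the Rankin--Selberg $L$-factors of $\Pi'_{v_0}$, and finally invoke multiplicativity and stability to pin the cuspidal support down to supercuspidals with the required distinction property. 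You name these tools but do not assemble them in an order that actually yields the conclusion, which is precisely the nontrivial work that [KK, Prop.~8.4] carries out.
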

Recently, the author proved the following result.
\begin{theorem}[Corollary~9.2 in \cite{Mo1}]
\label{uniqueness map l}
The map $l$ in Theorem~\ref{local bc lift} is bijective and unique.
\end{theorem}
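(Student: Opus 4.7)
The plan is to deduce the three assertions from three different local results: uniqueness from the local converse theorem for $\mathrm{GL}_{2n}(E)$, injectivity from a local converse theorem for $\mathrm{U}_{2n}$, and surjectivity from a local descent construction. For uniqueness, suppose that $l$ and $l'$ are two maps satisfying the property of Theorem~\ref{local bc lift}. Then for every $\pi \in \Pi^{(sg)}(\mathrm{U}_{2n})$ and every irreducible generic representation $\sigma$ of $\mathrm{GL}_k(E)$ with $1 \le k \le 2n-1$, the Rankin--Selberg gamma factors $\gamma^{RS}(s, l(\pi) \times \sigma, \psi)$ and $\gamma^{RS}(s, l'(\pi) \times \sigma, \psi)$ coincide, and the local converse theorem for $\mathrm{GL}_{2n}(E)$ (due to Henniart, Chen and Jacquet--Liu) then forces $l(\pi) \simeq l'(\pi)$.

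For injectivity of $l$, the decisive input is a local converse theorem for generic supercuspidal representations of $\mathrm{U}_{2n}$: if $\pi_1, \pi_2 \in \Pi^{(sg)}(\mathrm{U}_{2n})$ satisfy $\gamma^{Sh}(s, \pi_1 \times \sigma, \psi) = \gamma^{Sh}(s, \pi_2 \times \sigma, \psi)$ for every irreducible generic representation $\sigma$ of $\mathrm{GL}_k(E)$ with $1 \le k \le 2n-1$, then $\pi_1 \simeq \pi_2$. Combined with the defining property of $l$ from Theorem~\ref{local bc lift}, this yields injectivity immediately.

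For surjectivity, given $\Pi = \tau_1 \times \cdots \times \tau_r \in \Pi^{(sg)}(\mathrm{GL}_{2n}(E))$, I would construct a preimage via a local descent of Ginzburg--Rallis--Soudry type, taking $\sigma = \mathcal{D}_\psi^\Upsilon(\Pi)$. The pole condition that $L(s, \tau_i, \mathrm{As}^+)$ has a pole at $s=0$ for every $i$, together with the mutual inequivalence of the $\tau_i$, ensures that this descent is non-zero. One then must verify: (i) $\sigma$ is irreducible and $\psi_N$-generic; (ii) $\sigma$ is supercuspidal; (iii) $l(\sigma) = \Pi$, i.e.\ the Shahidi gamma factors of $\sigma$ against every $\mathrm{GL}_k(E)$-twist match those of $\Pi$ up to the prescribed power of $\lambda(E/F, \psi)$.

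The main obstacle will be part (ii), the supercuspidality of $\sigma$. This requires computing every Jacquet module of $\sigma$ along a standard parabolic subgroup of $\mathrm{U}_{2n}$ and showing that it vanishes. The natural route is to apply Bernstein--Zelevinsky style geometric lemmas to rewrite each such Jacquet module in terms of residues of local zeta integrals built from $\Pi$; the vanishing then follows from the supercuspidality of each $\tau_i$, their mutual inequivalence, and the precise pole structure of $L(s, \tau_i, \mathrm{As}^\pm)$, which rules out the poles that could correspond to non-cuspidal constituents. Step (iii) can then be handled by a global-to-local argument: globalizing $\Pi$ to an automorphic representation via a Shahidi-type Poincar\'e series and invoking the strong base change of Kim--Krishnamurthy~\cite{KK}, together with multiplicativity of gamma factors along the cuspidal support, transports the matching from the global setting back to the local place.
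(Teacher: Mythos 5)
Your treatment of uniqueness (Henniart's local converse theorem for $\mathrm{GL}_{2n}(E)$) and injectivity (a local converse theorem for $\mathrm{U}_{2n}$) agrees exactly with the paper's approach in the discrete-series extension, Theorem~\ref{backward lifting}, which is the closest result in the paper with a supplied proof since the present statement is simply cited from \cite{Mo1}. The genuine gap is in surjectivity, where your emphasis is inverted. You call supercuspidality of $\sigma=\mathcal{D}_\psi^\Upsilon(\Pi)$ the ``main obstacle'' and propose Jacquet-module computations for it, while treating irreducibility of $\sigma$ in a single clause. But the hard and central point---the one \cite{Mo1} is organized around, as its very title indicates---is precisely the \emph{irreducibility} of the descent: a priori $\mathcal{D}_\psi^\Upsilon(\Pi)$ is only a span of Whittaker functions on $G'$, and non-vanishing (which is all your pole-condition remark actually gives) is far weaker than irreducibility. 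Neither this paper nor the cited local-descent literature for unitary groups establishes that irreducibility by a direct Jacquet-module argument; the route taken in the Appendix proof of Theorem~\ref{descent BC local} is to globalize, appeal to the irreducibility of the \emph{global} descent proved in \cite{Mo1}, and then localize. Your proposal never indicates how irreducibility of the local object would actually be obtained.

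There is also a delicacy in your step~(iii) that you would have to repair even granting (i) and (ii). Globalizing each $\tau_i$ to a cuspidal automorphic representation $\tilde{\tau}_i$ by a Poincar\'e-series argument does not control the global Asai $L$-function, so it does not by itself ensure that $L^S(s,\tilde{\tau}_i,\mathrm{As}^-)$ has a pole at $s=1$; yet that condition is precisely what is needed for the isobaric sum to lie in the image of global base change and for the global descent machinery to apply. This is why the argument in Theorem~\ref{descent BC local} globalizes on the $\mathrm{U}_{2n}$ side (via \cite[Corollary~A.6]{ILM}) and then takes base change, rather than globalizing on the $\mathrm{GL}$ side as you propose. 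As written, step~(iii) needs either this reorganization or a separate argument guaranteeing the Asai pole under your chosen globalization.
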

Let us extend the above result to the case of generic discrete series representations.
Let us recall the construction of generic discrete series representations by Moeglin and Tadi\'{c}~\cite{MT02}.
To explain their construction, let us define some representations of general linear groups.
Let $\rho$ be an irreducible supecuspidal representation of $\mathrm{GL}_k(E)$.
Then for integers $l \geq m > 0$ with the same parity, we write
by $D(l, m, \rho)$ the unique irreducible subrepersentation of 
\[
\mathrm{Ind} (|\det|^{(l-1) \slash 2}\rho \otimes |\det|^{(l-1) \slash 2-1} \rho \otimes \cdots 
\otimes  |\det|^{-(m-1) \slash 2} \rho )
\]
and by $D(l, \rho)$ the unique subrepresentation of 
\[
\mathrm{Ind} (|\det|^{(l-1) \slash 2}\rho \otimes |\det|^{(l-1) \slash 2-1} \rho \otimes \cdots 
\otimes  |\det|^{(l+1) \slash 2- [l \slash 2]} \rho ).
\]
Then $D(l, m, \rho)$ (resp. $D(l, \rho)$) is essentially square integrable representation
of $\mathrm{GL}_{\frac{k(l+m)}{2}}(E)$ (resp. $\mathrm{GL}_{[\frac{l}{2}] k}(E)$).
Further, we write
\[
St(\rho, l) = D(l, l, \rho). 
\]
Then we consider a parabolic induction 
\begin{equation}
\label{explicit const of ds}
D(l_1, m_1, \rho_1) \times \cdots \times D(l_r, m_r, \rho_r) \times 
D(l_{r+1}, \rho_{r+1}) \times \cdots  \times D(l_{t}, \rho_{t}) \times \tau_0
\end{equation}
where $l_i> m_i >0$, $\tau_0$ is an irreducible supercuspidal representation of a smaller even unitary group
and $\rho_i$ are irreducible supercuspidal representation of $\mathrm{GL}_{k_i}(E)$
such that $\rho_i$ is conjugate self-dual.
Further, we shall suppose that 
\begin{center}
$L(s, \rho_i, \mathrm{As}^+)$ has a pole at $s=0$ if and only if $l_i$ is odd.
\end{center}
Since $L(s, \rho_i, \mathrm{As}^+)$ has a pole at $s=0$ if and only if 
$\rho_i$ is $\mathrm{GL}_{k_i}(F)$-distinguished by 
Anandavardhanan-Kable-Tandon~\cite[Corollary~1.5]{AKT04},
the above condition is equivalent to that
\begin{equation}
\label{cond equiv dist}
\text{$\rho_i$ is $\mathrm{GL}_{k_i}(F)$-distinguished if and only if $l_i$ is odd.}
\end{equation}
We know that $\rho_i$ should be $(\mathrm{GL}_{k_i}(F), \eta_{E \slash F})$-distinguished or $\mathrm{GL}_{k_i}(F)$-distinguished
by Kable~\cite[Theorem]{Ka04}.
Moreover if $\rho_i$ is $(\mathrm{GL}_{k_i}(F), \eta_{E \slash F})$-distinguished,
$\rho_i$ is not $\mathrm{GL}_{k_i}(F)$-distinguished, and vice versa by \cite[Corollary~1.6]{AKT04}.
When the above condition holds, the unique generic constituent of 
the above induced representation is discrete series representation.
When it is a representation of $\mathrm{U}_{2n}$, every discrete series representation is obtained in this way.
Let $\Pi^{(dg)}(\mathrm{U}_{2n})$ be the set of irreducible generic discrete series representations of $\mathrm{U}_{2n}$,
namely the set of all irreducible representation obtained in the above way.

Let $\Pi^{(dg)}(\mathrm{GL}_{2n}(E))$ be the set of irreducible representations of $\mathrm{GL}_{2n}(E)$
of the form $\pi = \tau_1 \times \cdots \times \tau_r$
where $\tau_i$ is an irreducible discrete series representation of $\mathrm{GL}_{n_i}(E)$ such that $L(s, \tau_i, \mathrm{As}^+)$
has a pole at $s=0$.
We note that $\tau_i \not \simeq \tau_j$ if $i \ne j$ because of the irreducibility of $\pi$.
Further, we know that $\tau_i$ is conjugate self-dual, in particular it is unitary.
Then by Bernstein-Zelevinsky~\cite{BZ77}, we may write 
\[
\tau_i = \mathrm{St}(\rho_i, a_i)
\]
where $a_i \in \mZ_{\geq 0}$ and $\rho_i$ is an irreducible supercuspidal representation of $\mathrm{GL}_{m_i}(E)$
such that $n_i = a_i m_i$.
We have a necessary and sufficient condition for $\mathrm{St}(\rho_i, a_i)$ to be $\mathrm{GL}_{n_i}(F)$-distinguished.
\begin{theorem}[Corollary~4.2 in \cite{Ma09}]
\label{mat cor4.2}
Let $\rho$ be an irreducible supercuspidal representation of $\mathrm{GL}_k(E)$
and $a \in \mN$.
Then generalized Seinberg representation $\mathrm{St}(\rho, a)$ of $\mathrm{GL}_{ak}(E)$ is $\mathrm{GL}_{ak}(F)$-distinguished 
if and only if $\rho$ is $(\mathrm{GL}_{k}(F), \eta_{E \slash F}^{a-1})$-distinguished.
\end{theorem}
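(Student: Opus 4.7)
The plan is to follow Matringe's approach via the Bernstein--Zelevinsky geometric lemma applied to the pair $(\mathrm{GL}_{ak}(E), \mathrm{GL}_{ak}(F))$, combined with an analysis of which $\mathrm{GL}_{ak}(F)$-invariant functionals on the standard module descend to the unique irreducible subrepresentation $\mathrm{St}(\rho, a)$. First I would realize $\mathrm{St}(\rho, a)$ as the unique irreducible subrepresentation of
\[
\Sigma := \mathrm{Ind}_{P}^{\mathrm{GL}_{ak}(E)}\bigl( \rho|\det|^{(a-1)/2} \otimes \rho|\det|^{(a-3)/2} \otimes \cdots \otimes \rho|\det|^{-(a-1)/2} \bigr),
\]
where $P$ is the standard parabolic with Levi $\mathrm{GL}_k(E)^a$, and observe that $\mathrm{Hom}_{\mathrm{GL}_{ak}(F)}(\Sigma, \mathbf{1})$ carries a filtration whose subquotients are indexed by the double cosets $P \backslash \mathrm{GL}_{ak}(E) / \mathrm{GL}_{ak}(F)$. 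Via the Galois-twisted map $g \mapsto g \mc(g)^{-1}$, these double cosets correspond to $\mathrm{GL}_{ak}(F)$-conjugacy classes of Hermitian matrices compatible with the block structure of $P$, and can be enumerated explicitly along with their stabilizers and modular characters.

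The central step is to show that the open orbit contributes a nonzero functional precisely when $\rho$ is $(\mathrm{GL}_k(F), \eta_{E/F}^{a-1})$-distinguished. This amounts to tracking, through the geometric lemma, how the characters $|\det|^{(a-2j-1)/2}$ for $j=0,\dots,a-1$ restrict to $\mathrm{GL}_k(F)$: the alternating product of the resulting sign characters under $N_{E/F}$ yields precisely $\eta_{E/F}^{a-1}$. One must then verify that this open-orbit functional restricts nontrivially to the socle $\mathrm{St}(\rho, a) \subset \Sigma$; this would be done by evaluating it on a well-chosen vector, for example a section arising from Whittaker data and essentially supported on the open cell, on which the open-orbit integral converges and is manifestly nonzero.

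The main obstacle is the descent argument. A priori, intermediate-orbit contributions could interact with the short exact sequence $0 \to \mathrm{St}(\rho, a) \to \Sigma \to \Sigma/\mathrm{St}(\rho, a) \to 0$ to produce either spurious distinguished functionals on $\mathrm{St}(\rho,a)$ or unexpected cancellations of the open-orbit form. The cleanest way to handle this is by induction on $a$ via Zelevinsky derivatives along the mirabolic subgroup, reducing the distinguishedness of $\mathrm{St}(\rho, a)$ to that of $\mathrm{St}(\rho, a-2)$ twisted by a well-understood $\mathrm{GL}_k$-character; crucially, the parity shift $a \mapsto a-2$ preserves $\eta_{E/F}^{a-1}$, so the induction closes. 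The base cases $a=1,2$ follow from classical results of Flicker and Kable on distinguished supercuspidals. The hardest part is the exact bookkeeping of the intermediate orbits and the verification that their contributions vanish on the socle $\mathrm{St}(\rho, a)$; this is precisely where the parity condition on $a$ enters decisively, and it accounts for the bulk of the combinatorial labor in Matringe's original argument.
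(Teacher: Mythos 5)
The paper does not prove this statement: it is quoted verbatim as Corollary~4.2 of Matringe~\cite{Ma09} and invoked in Appendix~A as a black-box input, so there is no proof here to compare against. The relevant comparison is with Matringe's original argument, which takes a genuinely different route from yours. Matringe computes the local Asai $L$-factor of $\mathrm{St}(\rho,a)$ via the Flicker--Rankin--Selberg integral, obtaining a factorization in terms of shifted and twisted Asai $L$-factors of $\rho$, and then invokes the criterion of Anandavardhanan--Kable--Tandon~\cite{AKT04} (a conjugate self-dual discrete series $\pi$ of $\mathrm{GL}_n(E)$ is $\mathrm{GL}_n(F)$-distinguished if and only if $L(s,\pi,\mathrm{As}^+)$ has a pole at $s=0$), which the present paper also quotes just above the theorem. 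The parity $(-1)^{a-1}$ then falls out of which factor in the product can contribute a pole at $s=0$ for unitary $\rho$; it is not extracted from a Mackey filtration.

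Your geometric-lemma strategy is in the spirit of Matringe's companion work~\cite{Ma10} on distinguished generic representations, but as sketched it stalls at precisely the point you flag, and this is not a minor gap. Analyzing $\mathrm{Hom}_{\mathrm{GL}_{ak}(F)}(\Sigma,\mathbf{1})$ via orbits gives no direct control on $\mathrm{Hom}_{\mathrm{GL}_{ak}(F)}(\mathrm{St}(\rho,a),\mathbf{1})$: a functional on the socle $\mathrm{St}(\rho,a)\subset\Sigma$ need not extend to $\Sigma$, and an open-orbit functional on $\Sigma$ need not restrict nontrivially to the socle. Your proposed remedy --- reduction to $\mathrm{St}(\rho,a-2)$ via derivatives, with the open orbit surviving and intermediate orbits vanishing on the socle --- is the entire content of the theorem and is asserted rather than proved. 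The $L$-function route sidesteps all of this by applying the pole criterion directly to the irreducible representation $\mathrm{St}(\rho,a)$ itself, never working with invariant functionals on reducible induced modules.
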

From this theorem, we see that $\rho_i$ is $\eta_{E \slash F}^{a_i-1}$-distinguished for any $i$.
In particular, $\rho_i$ is conjugate self-dual.
Then we prove the following generalization of Theorem~\ref{local bc lift}, \ref{uniqueness map l}
to discrete series representations.
\begin{theorem}
\label{backward lifting}
There is a bijective map $l$ from $\Pi^{(dg)}(\mathrm{U}_{2n})$ to $\Pi^{(dg)}(\mathrm{GL}_{2n}(E))$
satisfying the condition
\[
\gamma^{Sh}(s, \pi \times \sigma, \psi)=
\lambda(E \slash F, \psi)^{2nk}
\gamma^{RS}(s, l(\pi) \times \sigma, \psi)
\]
for any $\pi \in \Pi^{(dg)}(\mathrm{U}_{2n})$ and any irreducible generic representation $\sigma$ of $\mathrm{GL}_k(E)$ with $k \in \mN$.
Further, the above map is unique.
\end{theorem}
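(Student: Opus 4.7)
The plan is to adapt the strategy of Jiang--Soudry~\cite{JS03} and Liu~\cite{Li11} (worked out for quasi-split odd special orthogonal groups) to the even unitary case, using the Moeglin--Tadi\'c classification~\eqref{explicit const of ds} as the geometric input and the supercuspidal base change (Theorems~\ref{local bc lift} and~\ref{uniqueness map l}) as the ``anchor'' on which the map $l$ will be bootstrapped.

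First, given $\pi \in \Pi^{(dg)}(\mathrm{U}_{2n})$, I would write $\pi$ as the unique generic subquotient of~\eqref{explicit const of ds}, with data $(\rho_i,l_i,m_i)_{1\le i\le r}$, $(\rho_j,l_j)_{r< j\le t}$, and a supercuspidal $\tau_0\in\Pi^{(sg)}(\mathrm{U}_{2n_0})$, and define
\[
l(\pi):=\underset{1\le i\le r}{\boxplus}\bigl(\mathrm{St}(\rho_i,l_i)\boxplus\mathrm{St}(\rho_i,m_i)\bigr)\;\boxplus\;\underset{r<j\le t}{\boxplus}\mathrm{St}(\rho_j,l_j)\;\boxplus\;l(\tau_0),
\]
which is the shape predicted by the stable base change of the $L$-parameter of $\pi$ on the unitary side. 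To see that $l(\pi)$ lies in $\Pi^{(dg)}(\mathrm{GL}_{2n}(E))$, each $\mathrm{St}(\rho,a)$ is discrete series and its Asai $L$-function $L(s,\mathrm{St}(\rho,a),\mathrm{As}^+)$ has a pole at $s=0$ if and only if $\mathrm{St}(\rho,a)$ is $\mathrm{GL}(F)$-distinguished by the Anandavardhanan--Kable--Tandon criterion recalled above; by Matringe's Theorem~\ref{mat cor4.2} this is equivalent to $(\mathrm{GL}(F),\eta_{E/F}^{a-1})$-distinction of $\rho$, which matches exactly the parity prescription~\eqref{cond equiv dist} on the Moeglin--Tadi\'c data (since $l_i\equiv m_i\pmod 2$). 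Pairwise inequivalence of the summands follows from the discreteness constraints built into the Moeglin--Tadi\'c classification together with the pairwise inequivalence already present in $l(\tau_0)$.

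Next I would verify the $\gamma$-factor identity by exploiting multiplicativity of $\gamma^{Sh}$ and $\gamma^{RS}$ under parabolic induction on the two sides. Both factorize through the segments that make up~\eqref{explicit const of ds}, and the task reduces to checking, for any GL-twist $\sigma$, the identities
\[
\gamma^{Sh}\bigl(s,D(l,m,\rho)\times\sigma,\psi\bigr)\overset{?}{=}\lambda(E/F,\psi)^{(l+m)k\deg\sigma}\gamma^{RS}\bigl(s,\mathrm{St}(\rho,l)\times\sigma,\psi\bigr)\gamma^{RS}\bigl(s,\mathrm{St}(\rho,m)\times\sigma,\psi\bigr),
\]
the analogous identity for $D(l,\rho)$ with a single Steinberg on the right, and the supercuspidal identity for $\tau_0$, which is already Theorem~\ref{local bc lift}. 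The Steinberg identities are standard computations with essentially Steinberg segments on the Langlands--Shahidi and Rankin--Selberg sides; the overall $\lambda$-factor matches because the exponent $2nk$ in the statement adds up over all constituents. Surjectivity of $l$ is shown by reversing the recipe: given $\pi'=\boxplus_i\mathrm{St}(\rho_i,a_i)\in\Pi^{(dg)}(\mathrm{GL}_{2n}(E))$, pair summands sharing the same $\rho_i$ (with matching parity of $a_i$) into $D(l_i,m_i,\rho_i)$ blocks, treat the unpaired ones as $D(l_j,\rho_j)$ blocks, and use the surjectivity in Theorem~\ref{uniqueness map l} to produce $\tau_0$ from the residual supercuspidal piece; the unique generic subquotient of the resulting parabolic induction is a preimage of $\pi'$. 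Injectivity follows by reading off the Moeglin--Tadi\'c data from the isobaric decomposition of $l(\pi)$, combined with the supercuspidal uniqueness. Finally, uniqueness of $l$ is immediate from the local converse theorem for $\mathrm{GL}_{2n}(E)$: any map with the same twisted $\gamma$-factors must produce the same element of $\Pi^{(dg)}(\mathrm{GL}_{2n}(E))$.

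The main obstacle I expect is Step three: keeping the bookkeeping straight between the parity condition~\eqref{cond equiv dist}, the Matringe criterion, and the multiplicativity of $\gamma^{Sh}$ at the essentially Steinberg segments $D(l,m,\rho)$, so that the factorization really matches the Rankin--Selberg side block by block (not merely up to some unaccounted sign or $\lambda$-factor ambiguity). This is exactly the technical heart of the arguments of Jiang--Soudry and Liu in their settings, and it should transfer, but care must be taken to track the contribution of $\lambda(E/F,\psi)$ arising from intertwining-operator normalizations specific to the unitary setting.
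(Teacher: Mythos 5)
Your construction of $l$, the verification that the image lands in $\Pi^{(dg)}(\mathrm{GL}_{2n}(E))$ via the Anandavardhanan--Kable--Tandon pole criterion and Matringe's Theorem, and the appeal to Henniart's local converse theorem for uniqueness all coincide with the paper's proof. The one place you genuinely diverge is injectivity: the paper invokes its own local converse theorem for $\mathrm{U}_{2n}$ (\cite[Theorem~9.4]{Mo1}) --- once the $\gamma$-factor identity and surjectivity are in hand, $l(\pi_1)=l(\pi_2)$ immediately forces all twisted $\gamma$-factors of $\pi_1$ and $\pi_2$ to agree, hence $\pi_1\simeq\pi_2$. You instead propose to read off the Moeglin--Tadi\'c data from the isobaric decomposition of $l(\pi)$. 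That can be made to work, but it requires arguing that the Jordan set (i.e.\ the multiset of Steinberg summands of $l(\pi)$) together with the genericity constraint uniquely determines the discrete series $\pi$; in particular one must rule out two distinct pairings of the same segments into $D(l_i,m_i,\rho_i)$-blocks producing two distinct generic discrete series. This is true in the Moeglin--Tadi\'c picture because a discrete series is determined by its (admissible) Jordan set together with the $\epsilon$-function, and genericity pins down the $\epsilon$-function, but it is exactly the sort of bookkeeping the local converse theorem lets one bypass. The paper's route is therefore cleaner and less exposed to subtleties in the classification; your route is more hands-on and would require a sentence or two invoking that uniqueness in the Moeglin--Tadi\'c classification explicitly. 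Apart from this, your sketch of the $\gamma$-factor identity via multiplicativity of $\gamma^{Sh}$ and $\gamma^{RS}$ and reduction to supercuspidal (Theorem~\ref{local bc lift}) matches what the paper leaves implicit, and the caveat you raise about tracking the $\lambda(E/F,\psi)$-exponent block by block is precisely the point where care is needed.
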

\begin{proof}
First, we note that the uniqueness follows from a local converse theorem for generic representations of $\mathrm{GL}_{2n}(E)$
by Henniart~\cite{He93}.
Following Kim-Krishnamurthy~\cite{KK}, we define a map from $\Pi^{(dg)}(\mathrm{U}_{2n})$ to $\Pi^{(dg)}(\mathrm{GL}_{2n}(E))$.
Let $\pi$ be an element of $\Pi^{(dg)}(\mathrm{U}_{2n})$, and write it as \eqref{explicit const of ds}.
Then we define $l(\pi)$ by
\[
\mathrm{St}(\rho_1, l_1) \times \mathrm{St}(\rho_1, m_1) \times \cdots \times \mathrm{St}(\rho_r, l_r) \times \mathrm{St}(\rho_r, m_r)
\times \mathrm{St}(\rho_{r+1}, l_{r+1}) \times \cdots \times \mathrm{St}(\rho_{t}, l_{t}) \times l(\tau_0),
\]
which is irreducible by Bernstein-Zelevinsky~\cite{BZ77}.
First, let us check that this representation is in $\Pi^{(dg)}(\mathrm{GL}_{2n}(E))$.
From Theorem~\ref{local bc lift}, $l(\tau_0)$ should be of the form
$\Pi_{1} \times \cdots \times \Pi_u$ where $\Pi_i$ are mutually distinct irreducible supercuspidal representations
such that $L(s, \Pi_i, \mathrm{As}^+)$ has a pole at $s=0$.
Thus, it suffices to check that $\mathrm{St}(\rho_i, l_i)$ is $\mathrm{GL}(F)$-distinguished.
Indeed, $\mathrm{St}(\rho_i, l_i)$ is $\mathrm{GL}_{l_i k_i}(F)$-distinguished if and only if 
$\rho_i$ is $(\mathrm{GL}(F), \omega_{E \slash F}^{l_i-1})$-distinguished by Theorem~\ref{mat cor4.2}.
From the condition \eqref{cond equiv dist}, 
if $l_i$ is odd (resp. even), $\rho_i$ is $\mathrm{GL}_{k_i}(F)$-distinguished
(resp. $(\mathrm{GL}_{k_i}(F), \omega_{E \slash F}^{l_i-1})$-distinguished).
Thus, $\mathrm{St}(\rho_i, l_i)$ is $\mathrm{GL}_{l_i k_i}(F)$-distinguished.

From the definition of $\Pi^{(dg)}(\mathrm{GL}_{2n}(E))$, the surjectivity of the map $l$ is clear.
Further, from the local converse theorem \cite[Theorem~9.4]{Mo1} for $\mathrm{U}_{2n}$, its injectivity follows.
\end{proof}
Finally, we shall realize the above map by the local descent.
\begin{theorem}
\label{descent BC local}
The map $\pi \mapsto \mathcal{D}_{\psi}^{\Upsilon}(\pi)$ defines a bijection
\[
\mathcal{D}_{\psi}^{\Upsilon} : \Pi^{(dg)}(\mathrm{GL}_{2n}(E)) \rightarrow \Pi^{(dg)}(\mathrm{U}_{2n})
\]
Moreover, if $\pi \in \Pi^{(dg)}(\mathrm{GL}_{2n}(E))$ and $\tilde{\pi} = \mathcal{D}_{\psi}^{\Upsilon}(\pi \otimes \Upsilon)$ then
\[
\gamma(s, \tilde{\pi} \times \tau, \psi) = \lambda(E \slash F)^{2nk} 
 \gamma(s, \pi \times \tau, \psi).
\] 
for any irreducible generic representation $\tau$ of $\mathrm{GL}_{k}(E)$.
\end{theorem}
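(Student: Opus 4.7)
The plan is to identify $\mathcal{D}_\psi^\Upsilon$ with the local inverse of the base-change map $l$ from Theorem~\ref{backward lifting}, through a globalization argument invoking the strong base change of Kim--Krishnamurthy together with the global automorphic descent.

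First I would globalize $\pi$. Given $\pi = \tau_1 \times \cdots \times \tau_r \in \Pi^{(dg)}(\mathrm{GL}_{2n}(E))$, I pick a quadratic extension of number fields $L/K$ and a place $v_0$ of $K$ with $K_{v_0}=F$ and $L \otimes_K K_{v_0} = E$, and construct pairwise inequivalent conjugate self-dual cuspidal automorphic representations $\Pi_i$ of $\mathrm{GL}_{n_i}(\mA_L)$ with $(\Pi_i)_{v_0} = \tau_i$ and $L^S(s,\Pi_i,\mathrm{As}^+)$ having a simple pole at $s=1$. This is routine via Shahidi-type globalization controlled at two auxiliary places, with central characters chosen so that the Asai pole is preserved. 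The isobaric sum $\Pi = \Pi_1 \boxplus \cdots \boxplus \Pi_r$ then lies in $\mathrm{UCusp}\,\mM$ with $\Pi_{v_0} = \pi$.

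Next I apply the global automorphic descent of Ginzburg--Rallis--Soudry (in the form used in \cite{LMa}) with a global unitary character restricting to $\Upsilon$ at $v_0$, to produce an irreducible globally $\psi_N$-generic cuspidal automorphic representation $\widetilde\Sigma$ on $\mathrm{U}_{2n}(\mA_K)$ whose weak base change is $\Pi$. Irreducibility of the descent (previously a working assumption) is established by \cite{Mo1}; non-vanishing of the descent at every place is forced by the Asai pole via \cite[Proposition~2.1]{LMa}. Unfolding the global descent integral on the big Bruhat cell identifies the local component at $v_0$ as $\widetilde\Sigma_{v_0} \cong \mathcal{D}_\psi^\Upsilon(\pi \otimes \Upsilon)$, the extra $\Upsilon$-twist arising because the global descent is naturally constructed from $\mc(\Pi)\otimes\Upsilon^{-1}$, consistent with the splitting of the metaplectic cover over $\mathrm{U}(Y)$. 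Kim--Krishnamurthy's strong base change then gives $l(\widetilde\Sigma_{v_0}) = \Pi_{v_0} = \pi$. Since $l$ is a bijection by Theorem~\ref{backward lifting}, this exhibits $\pi \mapsto \mathcal{D}_\psi^\Upsilon(\pi \otimes \Upsilon)$ as its two-sided inverse, which after reparametrizing the source gives the bijectivity of $\mathcal{D}_\psi^\Upsilon$ as stated. The gamma-factor identity is then Theorem~\ref{local bc lift} applied to $\tilde\pi$ with $l(\tilde\pi)=\pi$.

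The hardest step will be the local-global identification $\widetilde\Sigma_{v_0} \cong \mathcal{D}_\psi^\Upsilon(\pi\otimes\Upsilon)$: one must unfold the global descent integral at $v_0$ and recognize the resulting local object as the Whittaker space generated by the explicit integral defining $\mathcal{D}_\psi^\Upsilon$. The analogous identification in the symplectic/metaplectic case is due to Soudry, and its adaptation to $\mathrm{U}_{2n}$ requires both the irreducibility theorem \cite{Mo1} and the analytic properties of the local explicit descent integrals assembled in Sections~3--4. Careful bookkeeping with the $\Upsilon$-twists (a consequence of the splitting used in the Weil representation) is the remaining source of friction but presents no essential difficulty.
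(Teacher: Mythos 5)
Your outline is sound but takes a genuinely different route from the paper, and the main difference is also where you gloss over the hardest point. The paper does \emph{not} globalize $\pi$ directly: instead, using surjectivity of $l$ from Theorem~\ref{backward lifting}, it first finds $\sigma\in\Pi^{(dg)}(\mathrm{U}_{2n})$ with $l(\sigma)=\pi$, globalizes $\sigma$ to a globally $\psi_N$-generic cuspidal $\Sigma$ on $\mathrm{U}_{2n}(\mA_K)$ via \cite[Corollary~A.6]{ILM}, and then takes $\Pi:=\mathrm{BC}(\Sigma)$ as the automatic globalization of $\pi$. With this in hand the argument splits cleanly: irreducibility of $\mathcal{D}_\psi^\Upsilon(\pi)$ comes from \cite{LMa}; the relation $\mathrm{BC}(\mathcal{D}_\psi^\Upsilon(\pi))=\pi\otimes\Upsilon^{-1}$ comes from the unramified computation and strong multiplicity one; membership $\mathcal{D}_\psi^\Upsilon(\pi)\in\Pi^{(dg)}(\mathrm{U}_{2n})$ is deduced by matching $\gamma$-factors and invoking the local converse theorem \cite[Theorem~9.4]{Mo1}; and the $\gamma$-identity follows from strong base change and the uniqueness in Theorem~\ref{backward lifting}.

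Your proposal instead globalizes $\pi$ component-by-component, asserting one can find conjugate self-dual cuspidal $\Pi_i$ with $(\Pi_i)_{v_0}=\tau_i$ and a pole for $L^S(s,\Pi_i,\mathrm{As}^+)$ at $s=1$, ``routinely via Shahidi-type globalization controlled at two auxiliary places, with central characters chosen so that the Asai pole is preserved.'' This is the weak link. First, Shahidi/Poincaré-series globalization naturally handles \emph{supercuspidal} local components, whereas the $\tau_i$ here are generalized Steinberg discrete series $\mathrm{St}(\rho_i,a_i)$; getting these as local components of a cuspidal representation is delicate. Second, and more seriously, the Asai parity of a conjugate self-dual cuspidal representation is \emph{not} governed by the central character alone. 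One has $L(s,\Pi_i,\mathrm{As}^-)=L(s,\Pi_i\otimes\Upsilon,\mathrm{As}^+)$, and when $n_i$ is even, twisting by $\Upsilon$ changes the Asai parity while leaving the central character unchanged, so no central-character adjustment can steer the pole. The condition ``$L^S(s,\Pi_i,\mathrm{As}^+)$ has a pole at $s=1$'' is equivalent to $\Pi_i$ lying in the image of (weak) base change from a unitary group, which means your globalization step is essentially equivalent to carrying out the paper's maneuver (globalize on the unitary-group side, then base-change) in disguise. If you make that explicit, you recover the paper's argument; if you insist on globalizing $\pi$ directly you need a substitute for \cite[Corollary~A.6]{ILM} on the $\mathrm{GL}_n$ side, and the proposal does not supply one. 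The remaining steps (identifying $\widetilde\Sigma_{v_0}$ with $\mathcal{D}_\psi^\Upsilon(\pi\otimes\Upsilon)$ via \cite{LMa}, concluding via strong base change and the bijectivity of $l$, then reading off the $\gamma$-identity) agree with the paper, though to conclude $\widetilde\Sigma_{v_0}$ is actually a \emph{discrete series} you still need the local converse theorem, which your write-up omits.
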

\begin{proof}
First, we shall prove 
that $\mathcal{D}_\psi^\Upsilon(\pi)$ is irreducible.
From Theorem~\ref{backward lifting}, there is an irreducible discrete series representation $\sigma$
of $G^\prime$ such that $l(\sigma) = \pi$ where $l$ is the map constructed in that theorem.
Indeed, from its definition and explicit description of local base change lift by Kim-Krishnamurthy~\cite{KK},
$l(\sigma) = \mathrm{BC}(\sigma)$.
Let us take number fields $L \slash K$ such that for some place $v_0$ of $K$, $L_{v_0} \simeq E$ and $K_{v_0} \simeq F$.
If necessary, replacing $\psi$ by $\psi^a$ with some $a \in (F^\times)^2$, we may suppose that 
$\psi$ is $v_0$-component of an additive character $\psi_{\mA_K}$ of $\mA_K$.
Then by  \cite[Corollary~A.6]{ILM}, there is $\psi_{\mA_K}$-generic irreducible cuspidal automorphic representation $\Sigma$
of $\mathrm{U}_{2n}(\mA_K)$ such that $\Sigma_{v_0} = \sigma$.
From the explicit construction of base change lifts, we have $\mathrm{BC}(\Sigma)_{v_0} = \mathrm{BC}(\sigma) = \pi$.
Let us take a character $\eta$ of $\mA_L^\times$ such that its restriction to $\mA_E^\times$
is the quadratic character $\omega_{L \slash K}$ corresponding to $L \slash K$
and $\eta_{v_0} = \Upsilon$.
Then by \cite{LMa}, $\mathcal{D}_{\psi_{\mA_K}}^{\eta}(\mathrm{BC}(\Sigma))_{v_0} = \mathcal{D}_{\psi}^{\Upsilon}(\pi)$
is irreducible.

Second of all, we shall prove $\mathrm{BC}(\mathcal{D}_{\psi}^{\Upsilon}(\pi)) \in \Pi^{(dg)}(\mathrm{GL}_{2n})$.
From the above argument, we have a globalization $\Pi := \mathrm{BC}(\Sigma)$ of $\pi$.
Further, from the unramified computation and strong multiplicity one theorem for $\mathrm{GL}_{2n}$, we get
$\mathrm{BC}(\mathcal{D}_{\psi_{\mA_K}}^\eta(\Pi)) = \Pi \otimes \eta^{-1}$
and thus $\mathrm{BC}(\mathcal{D}_{\psi_{\mA_K}}^{\eta}(\Pi))_{v_0} = \pi \otimes \Upsilon^{-1}$.
From the construction of base change lifts, we find that 
$\mathrm{BC}(\mathcal{D}_{\psi_{\mA_K}}^{\eta}(\Pi))_{v_0} = \mathrm{BC}((\mathcal{D}_{\psi_{\mA_K}}^{\eta}(\Pi))_{v_0})$.
From the definition of explicit local descent, we get $(\mathcal{D}_{\psi_{\mA_K}}^{\eta}(\Pi))_{v_0} =\mathcal{D}_{\psi}^{\Upsilon}(\Pi_{v_0})$.
Therefore, 
\begin{equation}
\label{claim2 proof}
\mathrm{BC}(\mathcal{D}_{\psi}^{\Upsilon}(\pi)) = \pi \otimes \Upsilon^{-1} \in \Pi^{(dg)}(\mathrm{GL}_{2n}).
\end{equation}

Third of all, we prove $\mathcal{D}_{\psi}^{\Upsilon}(\pi) \in \Pi^{(dg)}(G^\prime)$.
From Theorem~\ref{backward lifting} and the previous claim, there is $\sigma^\prime \in \Pi^{(dg)}(G^\prime)$ 
such that $l(\sigma^\prime) = \mathrm{BC}(\mathcal{D}_{\psi}^{\Upsilon}(\pi))$.
The base change lift is strong, so that 
$\gamma(s, \mathrm{BC}(\mathcal{D}_{\psi}^{\Upsilon}(\pi)) \times \tau, \psi)
=\lambda(E \slash F)^{n} \gamma(s, \mathcal{D}_{\psi}^{\Upsilon}(\pi) \times \tau, \psi)
$
for any irreducible generic representation $\tau$ of $\mathrm{GL}_i$ with $1 \leq i \leq 2n$.
Again, by Theorem~\ref{backward lifting} 
$
\gamma(s, l(\sigma^\prime) \times \tau, \psi)
=\lambda(E \slash F)^{n} \gamma(s, \sigma^\prime \times \tau, \psi).
$
Hence,
\[
 \gamma(s, \mathcal{D}_{\psi}^{\Upsilon}(\pi) \times \tau, \psi) =  \gamma(s, \sigma^\prime \times \tau, \psi)
\]
and the local converse theorem \cite[Theorem~9.4]{Mo1} implies that 
\[
\mathcal{D}_{\psi}^{\Upsilon}(\pi)  = \sigma \in \Pi^{(dg)}(\mathrm{U}_{2n}).
\]
Further we note that if $\mathcal{D}_{\psi}^{\Upsilon}(\pi_1) = \mathcal{D}_{\psi}^{\Upsilon}(\pi_2)$, then $\pi_1 \simeq \pi_2$
by \eqref{claim2 proof}.

Finally, we shall prove
$l(\pi) = \mathcal{D}_{\psi}^{\Upsilon}(\pi)$.
Since the base change is strong, we have
$\lambda(E \slash F)^{2nk} \gamma(s, \mathrm{BC}(\mathcal{D}_{\psi}^{\Upsilon}(\pi)) \times \tau, \psi)
=\gamma(s, \mathcal{D}_{\psi}^{\Upsilon}(\pi) \times \tau, \psi).$
On the other hand,  we have
$\gamma(s, \mathrm{BC}(\mathcal{D}_{\psi}^{\Upsilon}(\pi)) \times \tau, \psi)
 = \gamma(s, \pi \otimes \Upsilon^{-1} \times \tau, \psi)$ by \eqref{claim2 proof}.
Hence, we get
\[
\lambda(E \slash F)^{2nk}  
\gamma(s, \pi \times \tau, \psi)
=\gamma(s, \mathcal{D}_{\psi}^{\Upsilon}(\pi \otimes \Upsilon) \times \tau, \psi).
\]
Then our assertion follows from the uniqueness in Theorem~\ref{backward lifting}.
\end{proof}

%
%
%
%
%
%
%
%
%
%
%
%
%
%
\section{Stability of certain oscillatory integrals for quasi-split reductive groups and non-vanishing of Bessel functions}
In this section, we shall prove the stability of certain oscillatory integrals for 
a quasi-split reductive group $\mathbf{G}$ over a non-archimedean local field $F$ of characteristic zero.
This is a generalization of \cite{LM13} to quasi-split reductive groups.
\subsection{main results}
Denote $G = \mathbf{G}(F)$.
Fix a Borel subgroup $B$ of $G$,
and we denote its unipotent radical by $N$. 
Let $\psi_N$ be a nondegenerate character of $N$. 
Consider the space $\Omega(N \backslash G, \psi_N)$ of smooth functions on $G$ such that
$f(ng) = \psi_N(n)f(g)$ for all $n \in N, g \in G$. 
Denote the regular representation of $G$ on $\Omega(N \backslash G, \psi_N)$ by $R$. 
For any compact open subgroup $K$ of $G$ let $\Omega(N \backslash G, \psi_N)^K$ denote the subspace of right $K$-invariant functions. 
Let $\overline{w_0} \in G$ be a rerresentative of the longest Weyl element $w_0$. 
Fix a Haar measure on $N$. For any compact open subgroup $N^\prime$ of $N$ and $W \in \Omega(N \backslash G, \psi_N)$
let
\[
R_{N^\prime, \psi_N} W := \frac{1}{\mathrm{vol}(N^\prime)} \int_{N^\prime} (R(n^\prime) W) \psi_N(n^\prime)^{-1} \, dn^\prime
\in \Omega(N \backslash G, \psi_N).
\]
%
%
%
%
%
%
%
\begin{theorem}
\label{stability thm1}
For any open subgroup $K$ of $G$, 
there exists an open compact subgroup $N^\prime$ of $N$ such that, 
for any $W \in \Omega(N \backslash G, \psi_N)^K$, $(R_{N^\prime ,\psi_N} W)(\overline{w_0} \cdot)$
is compactly supported on $N$.  

As a consequence. for any $W \in \Omega(N \backslash G, \psi_N)$ and a compact open subgroup $N^\prime$ of $N$ satisfying the above condition, we can define the stable integral
\[
\int_{N}^{st} W(\overline{w_0}n) \psi_{N}(n)^{-1} \, dn:=
\int_{N} (R_{N^\prime, \psi_N} W)(\overline{w_0} n) \psi_N(n)^{-1} \, dn.
\]
\end{theorem}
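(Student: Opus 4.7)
The plan is to follow the strategy of Lapid and Mao in \cite{LM13}, where the same result was established for split reductive groups, and to indicate the modifications required to accommodate the quasi-split setting. The core idea is an oscillation-by-averaging argument built on the relative root structure of $\mathbf{G}$ and the Chevalley--Steinberg commutation relations.

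First I would fix a maximal $F$-split torus $S$ of $\mathbf{G}$, contained in a minimal parabolic whose unipotent radical is $N$, and enumerate the positive relative $F$-roots $\alpha_1, \ldots, \alpha_r$ (with each root counted with multiplicity equal to the dimension of its $F$-root space) in a height-compatible order. This yields a filtration $N = N^{(1)} \supset N^{(2)} \supset \cdots \supset N^{(r+1)} = \{e\}$ by normal subgroups with successive quotients isomorphic to the root groups $\mathbf{U}_{\alpha_i}(F)$. The nondegenerate character $\psi_N$ factors through $N/[N,N]$, and its restriction to $\mathbf{U}_\alpha(F)$ is nontrivial precisely when $\alpha$ is simple.

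Using the substitution $m' = nm$ in the defining integral, the statement of the theorem is equivalent to showing that the function $n \mapsto \int_{nN'} W(\overline{w_0} m) \psi_N(m)^{-1} \, dm$ is compactly supported in $n$ once $N'$ is chosen large enough depending only on $K$. Working one root subgroup at a time, starting from the simple roots and moving up in height, I would apply the following oscillation lemma at each step: for a simple relative root $\alpha$ and $u \in \mathbf{U}_\alpha(F) \cap N'$, the Chevalley--Steinberg commutator formula yields $nun^{-1} = u \cdot c_\alpha(u,n)$ with $c_\alpha(u,n) \in N^{(i+1)}$ depending polynomially on $u$ and on the coordinates of $n$ (with coefficients expressed through the norm and trace maps of the splitting field extension). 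Averaging the resulting character shift $\psi_N(c_\alpha(u,n))$ over $u$ in a sufficiently small open subgroup of $\mathbf{U}_\alpha(F) \cap N'$ annihilates the integrand whenever the relevant coordinate of $n$ exceeds an explicit bound depending only on $N'$ and the level $K$. A descending induction on the height filtration then produces the compact support on all of $N$.

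The main obstacle, compared to the split case treated in \cite{LM13}, is that the root subgroups $\mathbf{U}_\alpha(F)$ are no longer one-parameter $F$-vector groups: they are Weil restrictions of vector groups from the splitting field, and in the presence of a nonreduced relative root system (as occurs for quasi-split groups of type ${}^{2}A_{2n}$) certain root subgroups are two-step nilpotent. Consequently, the commutator polynomials $c_\alpha(u,n)$ have to be expanded carefully over the larger field, and the nondegeneracy of the resulting character shift must be verified using standard properties of trace and norm maps. Once this bookkeeping is in place, the oscillation argument of \cite{LM13} transfers with only notational changes, and one deduces both the compact support for $N'$ large enough and the existence of the stable integral, from which the well-definedness of $\int_{N}^{st} W(\overline{w_0} n) \psi_N(n)^{-1} \, dn$ stated in the second part of the theorem follows at once.
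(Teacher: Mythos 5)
Your proposal describes an oscillation-by-averaging argument organized around a height filtration of $N$ and the Chevalley--Steinberg commutator relation $nun^{-1}=u\,c_\alpha(u,n)$. This is not the argument of \cite{LM13}, which the appendix follows, and it has a genuine gap. To exploit the left $(N,\psi_N)$-equivariance of $W$ you must move the averaging variable $u$ (or $nun^{-1}$) across $\overline{w_0}$; but $\overline{w_0}N\overline{w_0}^{-1}=\bar N$ is the \emph{opposite} unipotent radical, so this conjugation lands outside $N$, where the equivariance is of no use, no character shift is produced, and the proposed oscillation step does not annihilate the integrand. There is also a confusion of scales: you speak of averaging over a ``sufficiently small'' subgroup of $\mathbf{U}_\alpha(F)\cap N'$, whereas $R_{N',\psi_N}$ averages over all of a \emph{large} $N'$, and it is the largeness of $N'$ that drives the result.

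The actual proof is organized around the Bruhat stratification rather than a height filtration of $N$. One introduces the spaces $\Omega^\circ(N\backslash G,\psi_N)$ and $\Omega^{\#}(N\backslash G,\psi_N)$, defined by support conditions on each Bruhat cell $BwB$. Proposition~\ref{prp1} gives $\Omega^\circ\subset\Omega^{\#}$; Proposition~\ref{prp2} shows that $R_{N',\psi_N}$ carries $\Omega(N\backslash G,\psi_N)^K$ into $\Omega^\circ$ once $N'$ is large enough (depending only on $K$); and Lemma~\ref{lmm1} converts membership in $\Omega^{\#}$ into compact support of $(t,n)\mapsto W(t\bar wn)$ on $B^\varepsilon(w)\times N_w^-$, which for $w=w_0$ yields the theorem. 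Both propositions are proved by induction over the Bruhat order. The commutator-type cancellation you are after does occur, in Lemma~\ref{lmm10}, but only on a cell $BwB$ for which one can choose a simple root $\beta$ with $w\beta$ a \emph{positive nonsimple} relative root; right translation by $U_\beta\cap N'$, conjugated through $t\bar w$, then lands in a positive root group on which $\psi_N$ is trivial and left equivariance \emph{does} apply. The big cell $w_0$ is handled only indirectly, through the induction and Lemma~\ref{lmm1}. You have correctly identified where the quasi-split case requires new input --- the Weil-restricted and, for type (II), two-step-nilpotent root groups $\mathrm{SU}(3)_\alpha$ --- but these modifications are made inside the Bruhat-cell induction (Lemmas~\ref{lemma3}, \ref{lmm5}, \ref{lemma8}), not inside a height filtration of $N$, and your proposal does not supply the $\Omega^\circ$, $\Omega^{\#}$ framework that makes the argument close.
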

%
%
%
%
%
%
%
More generally, we say that $g \in G$ is relevant (with respect to $\psi_N$), 
if $\psi_N(gng^{-1})= \psi_N(n)$ for all $n \in  N \cap g^{-1}Ng$. 
For any $g \in G_{\rm rel}$ which is the set of relevant elements, we may use the above procedure to regularize the integral.
%
%
%
%
%
%
%
\begin{theorem}
\label{stability}
For any open subgroup $K$ of $G$ and $g \in G$, 
there exists an open compact subgroup $N^\prime$ of 
$N$ sucht that, for any $W \in \Omega(N \backslash G, \psi_N)^K$,$(R_{N^\prime, \psi_N}W)(g \cdot)$
is compactly supported on $g^{-1} N g \cap N \backslash N$.
In particular, when $g \in G_{\rm rel}$, we can define
\[
\int_{g^{-1} Ng \cap N \backslash N} W(gn) \psi_N(n)^{-1} \, dn:=
\int_{g^{-1} Ng \cap N \backslash N} (R_{N^\prime, \psi_N} W)(gn) \psi_N(n)^{-1} \, dn
\]
where $N^\prime$ is a compact open subgroup of $N$ given in Theorem~\ref{stability thm1}.
\end{theorem}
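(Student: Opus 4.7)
The plan is to reduce Theorem B.2 for arbitrary $g \in G$ to the special case $g = \overline{w_0}$ (which is Theorem B.1) and then establish Theorem B.1 by a root-by-root vanishing argument following \cite{LM13}. For the reduction step, I would decompose $g$ via the Bruhat decomposition as $g = b_1 \overline{w} b_2$ with $b_1, b_2 \in B$ and $\overline{w}$ a representative of a Weyl element $w$; simple manipulations using the $(N, \psi_N)$-equivariance of $W$ on the left, together with a change of variable inside the averaging over $N'$ (replacing $N'$ by $b_2^{-1} N' b_2 \cap N$, still compact open), let one replace $g$ by $\overline{w}$ at the cost of replacing $W$ by a left translate. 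This reduces the theorem to the case where $g$ is a Weyl-element representative.

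Next, for each Weyl element $w$, compact support of $(R_{N',\psi_N} W)(\overline{w}\cdot)$ on $\overline{w}^{-1} N \overline{w} \cap N \backslash N$ would be deduced by a decreasing induction on $\ell(w)$. The base case $w = w_0$ is Theorem B.1 itself, since $\overline{w_0}^{-1} N \overline{w_0} \cap N = \{e\}$. For the inductive step, take a simple root $\alpha$ with $\ell(w s_\alpha) = \ell(w) - 1$ and factor $\overline{w s_\alpha} = \overline{w}\,\overline{s_\alpha}$; an $\mathrm{SL}_2$-type decomposition of $\overline{s_\alpha} n$ for $n$ in the relevant subgroup lets one rewrite the averaging integral and deduce the claim for $w s_\alpha$ from that for $w$.

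The heart of the matter is Theorem B.1, whose proof follows the approach of \cite{LM13}. Enumerate the positive roots $\alpha_1, \ldots, \alpha_r$ so that each $N_{\geq k} := \prod_{j \geq k} N_{\alpha_j}$ is normal in $N$, and factor $n \in N$ as $n = n_{\alpha_1}(x_1) \cdots n_{\alpha_r}(x_r)$. The goal is to exhibit a compact subset $\prod_k C_k \subset N$, depending only on $K$, such that if $x_k \notin C_k$ for some $k$, then $\int_{N'} W(\overline{w_0} n u) \psi_N(u)^{-1} \, du = 0$. The mechanism is that for each $k$, an $\mathrm{SL}_2$-type Iwasawa-like decomposition of $\overline{w_0}\, n_{\alpha_k}(x_k)$, combined with the change of variable $u \mapsto n^{-1} u n$ in the averaging, converts $\psi_N|_{N' \cap N_{\beta_k}}$ for some auxiliary positive root $\beta_k$ into a character twisted by an amount depending on $x_k$. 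Once $|x_k|$ exceeds a threshold depending on $K$ and $N'$, the twisted character is non-trivial on $N' \cap N_{\beta_k}$ and orthogonality of characters annihilates the integral.

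The main obstacle will be executing this argument uniformly for quasi-split $G$, whose restricted root subgroups $N_\alpha$ may be non-abelian (of Heisenberg type) or higher-dimensional over $F$, as for the short roots of the quasi-split unitary groups treated in this paper. The scalar $\mathrm{SL}_2$-identities of \cite{LM13} must be replaced by their $\mathrm{SL}_2(E)$- or $\mathrm{SU}(2,1)$-analogues, and the Chevalley--Steinberg commutator relations produce more intricate character twists. Nevertheless, since $\psi_N$ is non-degenerate on every simple root subgroup, a non-trivial twist survives for every choice of $(\alpha_k, \beta_k)$, and the structural shape of the proof of \cite{LM13} carries over with additional but routine bookkeeping.
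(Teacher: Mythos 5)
Your high-level plan inverts the paper's inductive structure in a way that creates a genuine circularity. The paper does not deduce the general-$g$ case from $g = \overline{w_0}$; both follow simultaneously from Propositions~\ref{prp1}, \ref{prp2} and Lemma~\ref{lmm1}, and the key inductions there (proving \eqref{17} and \eqref{20}) run in the \emph{increasing} direction on $\ell(w)$, starting from the trivial base $w = 1$ and ending at $w_0$. Your proposed proof of the $w_0$ case is precisely where this matters: once some coordinate $x_k$ exceeds a threshold, the rank-one decomposition \eqref{(6) type I}/\eqref{(6) type II} of $\overline{w_0}\,n_{\alpha_k}(x_k)$ rewrites the argument as a torus element times a representative of a \emph{strictly smaller} Weyl element times a nilpotent — i.e.\ it lands you in a smaller Bruhat cell $Bw'B$ with $w' < w_0$. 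To conclude, you then need support control on $Bw'B$, but in your scheme those cells are supposed to be deduced \emph{from} the $w_0$ case by decreasing induction. So the big-cell proof cannot be completed in isolation; it inherently requires the simultaneous statement across all cells, which is exactly what the auxiliary function classes $\Omega^\circ$ (lower bounds on $|\alpha_0|$ on $\mathrm{supp}_{BwB}$) and $\Omega^\#$ (boundedness modulo $N$ on $NA^\varepsilon(w)wN$) are designed to carry through the induction.

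A second, smaller issue is the inductive step itself: writing $\overline{ws_\alpha}n = \overline{w}\,n'\,\overline{s_\alpha}$ and then attempting a change of variable to pass $\overline{s_\alpha}$ through the averaging over $N'$ does not work as stated, because $\overline{s_\alpha}N'\overline{s_\alpha}^{-1}$ is not contained in $N$ (it acquires a $N_{-\alpha}$ component). The paper avoids this by never trying to move the averaging past a Weyl representative; instead it shows once and for all that $W_m = R_{U_m,\psi_N}W$ lies in $\Omega^\circ$ for $m$ large (Proposition~\ref{prp2}), then $\Omega^\circ\subset\Omega^\#$ (Proposition~\ref{prp1}), and Lemma~\ref{lmm1} converts membership in $\Omega^\#$ into the compact-support statement on $B^\varepsilon(w)\times N_w^-$ for \emph{every} $w$ at once, which yields both Theorems~\ref{stability thm1} and \ref{stability}. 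The root-by-root vanishing mechanism you describe does appear — it is the content of Lemma~\ref{lmm10} and the induction step in the proof of Proposition~\ref{prp2} — but it is embedded inside that richer bookkeeping, not applied directly to the big cell.
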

\begin{Remark}
\label{stability rmk}
Let $\pi$ be a $\psi_N$-generic irreducible admissible representation of $G$.
Then by the uniqueness of Whittaker functionals, there is a function $\mathbb{B}_\pi^{\psi_N}$ on $G$
such that 
\[
\int_{g^{-1} Ng \cap N \backslash N} W(gn) \psi_N(n)^{-1} \, dn
= \mathbb{B}_\pi^{\psi_N}(g) W(e)
\]
for $g \in G_{\rm rel}$ and $ \mathbb{B}_\pi^{\psi_N}(g) = 0$ for $g \in G \setminus G_{\rm rel}$.
Then function $ \mathbb{B}_\pi^{\psi_N}$ is called the Bessel function attached to $\pi$.
\end{Remark}
%
%
%
%
%
%
%
\subsection{Notation}
Suppose that $\mathbf{G}$ is a semisimple quasi-split group defined over $F$ of rank $r$
since main results are easily reduced to the semisimple case.
Write $G = \mathbf{G}(F)$.
Let $B$ be a Borel subgroup of $G$, $T$ a maximal torus contained in $B$,
$A \subset T$ a maximal $F$-split torus of $G$ and $N$ be the unipotent radical of $B$
so that $B = T N$.
Let $\overline{B} = T \overline{N}$ be the opposite Borel subgroup with respect to $T$.
Let $\mathbf{K}$ be a hyper special maximal compact open subgroup of $G$
in good position with respect to $B$.
Then we have the Iwasawa decomposition $G = T N \mathbf{K}$.
Let $X^\ast(Y)$ (resp. $X_\ast(Y)$) be the lattice of rational characters (resp. cocharacters) of a group $Y$.

\subsubsection{Relative roots and weights}
Let $\Phi \subset X^\ast(T)$ be the set of root of $T$ in $\mathfrak{g}: =\mathrm{Lie}(G)$.
Further, let $\Phi_{\rm rel} \subset X^\ast(A)$ be the set of root of $A$ in $\mathfrak{g}$,
$\Phi_{{\rm rel}, +}$ be the subset of positive roots in $\Phi_{\rm rel}$ and $\triangle_{0}$ be the subset of simple roots with respect to $(B, A)$ in $\Phi_{\rm rel}$.
Similarly, let $\triangle_{0}^\vee \subset \Phi_{{\rm rel}, +}^\vee \subset \Phi_{\rm rel}^\vee \subset X_\ast(A)$
be the sets of (simple, or positive) co-roots.
For $\alpha \in X^\ast(A)$, we denote by $\mathfrak{g}_\alpha$ the $\alpha$-eigenspace in $\mathfrak{g}$.
We denote by $\alpha \leftrightarrow \alpha^\vee$ the canonical bijection
between $\Phi_{\rm rel}$ and $\Phi_{\rm rel}^\vee$ (resp. $\Phi_{{\rm rel}, +} \leftrightarrow \Phi_{{\rm rel}, +}^\vee$).
Denote $\mathfrak{a}_T: = X_\ast(T) \otimes \mR$.

For $\alpha \in \Phi_{\rm rel}$, we denote by $N_\alpha$ the unipotent subgroup whose Lie algebra is $\mathfrak{g}_\alpha + \mathfrak{g}_{2 \alpha}$.
For $\alpha \in \triangle_0$, let $P_\alpha$ be the standard parabolic subgroup with respect to $\alpha$.
Then its Levi subgroup $M_\alpha$ has rank 1,
so does the simply connected covering $\tilde{M}_\alpha$.
Then, $\tilde{M}_\alpha$ is $\mathrm{SL}_2(F_\alpha)$ or the quasi-split special unitary group 
$\mathrm{SU}(3)_\alpha$ with respect to a quadratic extension $E_\alpha$ of $F_\alpha$ where $F_\alpha$ is a finite extension of $F$.
We say that $\alpha \in \triangle_0$ is of type (I) (resp. type (II)) if 
$\tilde{M}_\alpha$ is isomorphic to $\mathrm{SL}_2(F_\alpha)$ (resp. $\mathrm{SU}(3)_\alpha$).

Suppose that $\alpha \in \triangle_0$ is of type (II).
Then we may realize $\mathrm{SU}(3)_\alpha$ by
\[
\mathrm{SU}(3)_\alpha = \left\{ g \in \mathrm{SL}(3, E_\alpha) : {}^{t} \bar{g} \begin{pmatrix} &&1\\ &1&\\ 1&&\end{pmatrix} g = \begin{pmatrix} &&1\\ &1&\\ 1&&\end{pmatrix}  \right\}.
\]
Further, we may take the following group as a Borel subgroup of $\mathrm{SU}(3)_\alpha$:
\[
\tilde{B}_\alpha = \tilde{T}_\alpha \tilde{N}_\alpha
\]
where
\[
\tilde{T}_\alpha = \left\{t_\alpha(y):= \begin{pmatrix}
y&&\\
&y \slash \bar{y}&\\
&&\bar{y}^{-1}
\end{pmatrix}
:
 y \in E_\alpha^\times \right\}
\]
and
\[
 \tilde{N}_\alpha = \left\{ x_\alpha(r, m) := 
\begin{pmatrix}
1&r&m\\
&1&-\bar{r}\\
&&1
\end{pmatrix}
:
\text{$x, y \in E_\alpha$ s.t. $N_{E_\alpha \slash F_\alpha}(r) = -\mathrm{Tr}_{E_\alpha \slash F_\alpha}(m)$}
\right\}
\]
Here, $N_{E_\alpha \slash F_\alpha}$ (resp. $\mathrm{Tr}_{E_\alpha \slash F_\alpha}$) is the norm (resp. trace) map  from $E_\alpha$ to $F_\alpha$.
Note that
\[
x_\alpha(r, m)^{-1} = x_\alpha(-r, \bar{m})
\]
Hereafter, we fix a covering map of $M_\alpha$, namely 
\[
1 \rightarrow \pi_1(M_\alpha) \rightarrow \tilde{M}_\alpha \overset{f_{\alpha}}{\longrightarrow} M_\alpha \rightarrow 1.
\]
so  that $f_\alpha(\tilde{T}_\alpha) = T_\alpha:= T \cap M_\alpha$ and $f_\alpha(\{x_{\alpha}(r, m): r, m \in \mO_{E_\alpha}\}) = N_\alpha \cap \mathbf{K}$
where $\mO_{E_\alpha}$ denotes the ring of integers of $E_\alpha$.
Further, we denote the image of $t_\alpha(y)$ and $x_\alpha(r, m)$ by the same symbol.
Then $N_\alpha$ is the unipotent subgroup of $G$ generated by $\{ x_{\alpha}(r, m) : r, m \in E_\alpha \text{ such that } N_{E_\alpha \slash F_\alpha}(r)
 = -\mathrm{Tr}_{E_\alpha \slash F_\alpha}(m) \}$.
For $\alpha \in \triangle_0$, write $N^\alpha$ for the unipotent radical of the parabolic subgroup $B \cup B s_\alpha B$.
Then we may write 
\[
N = N_\alpha \ltimes N^\alpha
\]
Further, for $r, m \in E_\alpha$ such that $N_{E_\alpha \slash F_\alpha}(r) = -\mathrm{Tr}_{E_\alpha \slash F_\alpha}(m)$, we define 
\[
x_{-\alpha}(r, m):= 
\begin{pmatrix}
1&&\\
-\bar{r}&1&\\
m&r&1
\end{pmatrix}
\]
Also, we write the image of this element by the same symbol.
From the definition, we can take the co-character $\alpha^\vee$ so that
\[
\alpha^\vee(x) = 
 \begin{pmatrix}
x&&\\
&1&\\
&&x^{-1}
\end{pmatrix}
,\quad 
x \in F^\times
\]
Further, we extend $\alpha^\vee$ to $E_\alpha^\times \rightarrow T_\alpha$ by
\[
\alpha^\vee(x)=
 \begin{pmatrix}
x&&\\
&\bar{x} \slash x&\\
&&\bar{x}^{-1}
\end{pmatrix},
\quad
x \in E_\alpha^\times.
\]
More generally, for $\beta = w\alpha$ with $w \in \mathbf{W}$ and $\alpha \in \triangle_0$ of type (II),
we define
\[
x_\beta(r, m) := w x_\alpha(r, m) w^{-1}
\quad
\text{and}
\quad
\beta^\vee := w \cdot (\alpha^\vee) 
\]
Suppose that $\alpha \in \triangle_0$ is of type (I).
Then we choose a parametrization $n_{\alpha} : F_\alpha \rightarrow N_\alpha$ such that 
$n_{\alpha}(\mathcal{O}_{F_\alpha}) = N_\alpha \cap \mathbf{K}$.
Here, $\mathcal{O}_{F_\alpha}$ denotes the ring of integers of $F_\alpha$.
Further, we define
\[
\alpha^\vee(x) = \begin{pmatrix} x&\\ &x^{-1}\end{pmatrix}, \quad x \in F^\times
\]
and we also extend $\alpha^\vee$ to $F_\alpha^\times \rightarrow T_\alpha$ by 
\[
\alpha^\vee(x) = \begin{pmatrix} x&\\ &x^{-1}\end{pmatrix}, \quad x \in F_\alpha^\times.
\]
Similarly as above, for $\beta = w\alpha$ with $w \in \mathbf{W}$ and $\alpha \in \triangle_0$ of type (I),
we define
\[
n_\beta(x) := w x_\alpha(x) w^{-1}
\quad
\text{and}
\quad
\beta^\vee := w \cdot (\alpha^\vee). 
\]
On the other hand, let $\lambda = \sum_{\alpha \in \triangle_0} r_\alpha \alpha \in \mathfrak{a}_0^\ast$, and define a
character a character $|\lambda| : A \rightarrow \mR_+$ by
\[
\lambda(t) = \prod_{\alpha \in \triangle_0} |\alpha(t)|^{r_\alpha}
\]
For $\alpha \in \triangle_0$, we may consider $\alpha$ as a character of $\alpha^\vee(F_\alpha^\times)$ in the trivial way.
Further, when $\alpha$ is of type (II), we may extend a character $|\alpha|$ to a character $|\alpha_0| : T_{\alpha_i} \rightarrow \mR_+$,
and naturally a character of $T$.
Indeed, we define
\[
|\alpha_0| ( \alpha^\vee(x)) = |\alpha| (\alpha^\vee(N_{E_\alpha \slash F_\alpha}(x)))^{1 \slash 2}.
\]
Therefore, $\lambda = \sum_{\alpha \in \triangle_0} r_\alpha \alpha$, we can define $|\lambda_0| : T \rightarrow \mR_+$ by
$|\lambda_0| := \prod_{\alpha \in \triangle_0} |\alpha_0|^{r_\alpha}$.

By the Iwasawa decomposition, we extend $|\lambda_0|$ to a left-$N$ and right-$\mathbf{K}$ invariant function
on $G$. For any compact subset $C \subset G$, there exists a constant $\kappa_C$ (depending also on $\lambda$)
such that 
\begin{equation}
\label{4}
\kappa_C^{-1} |\lambda_0|(g) \leq |\lambda_0|(gh) \leq \kappa_C |\lambda_0|(g)
\end{equation}
for all $g \in G$ and $h \in C$.

Let $H : T \rightarrow \mathfrak{a}_T$ be the Harish-Chandra map given by
\[
\mathrm{exp} \langle \chi, H(t) \rangle = |\chi(t)|_F, \quad {}^{\forall} \chi \in X^\ast(T)_F.
\]
(Note that $X^\ast(T) \otimes \mR \simeq X^\ast(A) \otimes \mR$.)
%
%
%
%
%
%
\subsubsection{Weyl group, Bruhat order and Bruhat decomposition}
Let $\mathbf{W} = N_G(A) \slash Z_G(A) = N_G(A) \slash T$ be the relative Weyl group of $G$.
We fix once and for all representatives $\bar{w} \in \mathbf{K}$.
Then for any $w_1, w_2 \in \mathbf{W}$, we have $\overline{w_1 w_2} = t \overline{w_1} \, \overline{w_2}$
where $t \in T \cap \mathbf{K}$.
As in \cite{LM13}, we shall denote unspecified element on $T \cap \mathbf{K}$ by $\ast$.
Thus
\[
\overline{w_1 w_2} = \ast \overline{w_1} \, \overline{w_2}
\]
where $\ast$ depends on $w_1$ and $w_2$ and also the choice of $ \overline{w_1}$ and $\overline{w_2}$.
We have the Bruhat decomposition
\[
G = \bigcup_{w \in \mathbf{W}} BwB
\]
Then for any $g \in G$, we may write $g = b \bar{w} n^\prime$
where $b \in B$, $w \in \mathbf{W}$ and $n^\prime \in N_w^- := N \cap {\overline{w}}^{-1} \overline{N}\overline{w}$.
In general, a choice of $\bar{w}$ is not unique.
When we have $g \in N t \bar{w} N$ with $t \in T$ and $w \in \mathbf{W}$,
then we write $\mathbf{w}(g) = w$ and denote by $a(g)= t$.

Let 
\[
\mathcal{C}_+^\ast
= \left\{ \sum_{\alpha \in \Delta_0} c_\alpha \alpha : c_\alpha \in \mZ_{ \geq 0} \right\}.
\]
We write $\leq$ for the Bruhat order on $\mathbf{W}$.
If $w_1 \leq w_2 \in \mathbf{W}$, then for any dominant $\lambda \in \mathfrak{a}_0^\ast$, we have
\[
w_1 \lambda - w_2 \lambda \in \mathcal{C}_+^\ast.
\]
For any $w, w^\prime \in \mathbf{W}$, we have
\[
BwBw^\prime B \subset \bigcup_{w^{\prime \prime} \leq w^\prime} Bw w^{\prime \prime} B 
\]
(cf. \cite[(12)]{LM13}).

Let $S(w)$ be the set of simple roots which appear in a reduced decomposition of $w$. 
(This set does not depend on the reduced decomposition). 
It is the smallest set $S \subset \triangle_\circ$ such that $w$ belongs to the group generated by $\{s_\alpha : \alpha \in S\}$. 
Alternatively, 
\[
S(w) =  \{ \alpha \in \triangle_\circ : w\alpha^\ast \ne \alpha^\ast \}
\]
If $w_1 \leq w_2$, then $S(w_1) \subset S(w_2)$. 
Let $S^\circ(w) = S(w w_0)$.
Thus $S^\circ(w) = \empty$ if and only if $w=w_0$ while $S^\circ(1) = \triangle_0$.
Also, $S^\circ(w_1) \supset S^\circ(w_2)$ if $w_1 \leq w_2$.

For any $S \subset \triangle_0$, let $\Phi_{\rm rel}(S)$ be the set of roots in $\Phi_{\rm rel}$
which are linear combinations of roots in $S$.
This is a root subsystem of $\Phi_{\rm rel}$ which corresponds to the standard Levi subgroup determined by $S$.
We have
\[
\Phi_{\rm rel}(S) = \{ \beta \in \Phi : \langle \alpha^\ast, \beta^\vee \rangle = 0 \text{ for all } \alpha \not \in S \}.
\]
Note that for any $w \in \mathbf{W}$, $w_0 S(w_0 w) = - S^\circ(w)$ and hence
\[
w_0 \Phi_{\rm rel}(S(w_0 w)) = \Phi_{\rm rel}(S^\circ(w)).
\]
For $\beta \in \Phi_{\rm rel}$, we denote by $s_\beta \in \mathbf{W}$ the corresponding reflection.
%
%
%
%
%
%
\subsubsection{Spaces of Whittaker functions}
Let $\Omega(N \backslash G, \psi_N)$ denote the space of Whittaker functions on $G$.
It is well-known that for any normal open subgroup $K$ of $\mathbf{K}$, we have
\[
\mathrm{sup} |\alpha_0| (\mathrm{supp}(W)) \ll_K 1,
\] 
for all $W \in \Omega(N \backslash G, \psi_N)^K$
and $\alpha \in \triangle_0$ with an extension to $\alpha_0$,
that is the image of $|\alpha_0|$ on $\mathrm{supp} W$ is bounded above in terms of $K$ only.
As in \cite{LM13}, following \cite[Definition~5.1]{Ba05}, we consider the space 
$\Omega^\circ(N \backslash G, \psi_N)$ consisting of those $W \in \Omega(N \backslash G, \psi_N)$
such that, for any $w \in \mathbb{W}$ and $\alpha \in S^\circ(w)$, we have 
$\mathrm{inf} |\alpha_0| (\mathrm{supp}_{BwB} W) > 0$.
Here, $\alpha_0$ is an extension of $\alpha$ to $T$.

For $\alpha \in \Phi_{\rm rel}$, we denote by $\Phi_\alpha$ the subset of $\Phi$ consisting of roots
whose restriction to $A$ is $\alpha$.
Note that for any $\alpha \in \Phi_{\rm rel}$, there is an extension $\beta \in \Phi$,
however this is not unique in general.
 
For any $w \in \mathbf{W}$ and $\varepsilon > 0$, let
\[
A^\varepsilon(w) = \left\{ t \in T  : 
|\beta^\ast|(t) \geq \varepsilon \text{ for all $\beta \in \Phi_\alpha$ with } \alpha \not \in S^\circ(w) \right\}.
\]
Also, let
\[
 B^\varepsilon(w) = \left\{ t \in T  : |\beta^\ast|(t) \leq 1 \slash \varepsilon \text{ for all $\beta \in \Phi_\alpha$ with } \alpha \in S(w), \,
|\beta^\ast|(t) \geq \varepsilon \text{ for all $\beta \in \Phi_\alpha$ with } \alpha \not \in S^\circ(w) \right\}.
\]
Further, we set $A_s^\varepsilon(w) = A^\varepsilon(w)  \cap A$ and $B_s^\varepsilon(w) = B^\varepsilon(w)  \cap A$.

We say that a set $C \subset G$ is bounded modulo $N$ if there exists a compact set $D \subset G$
such that $C \subset ND$.
Denote by $\Omega^\#(N \backslash G, \psi_N)$ the subspace of those $W \in \Omega(N \backslash G, \psi_N)$
such that for any $w \in \mathbf{W}$ and $\varepsilon > 0$,
$\mathrm{supp}_{N A^\varepsilon(w) w N} W$ is bounded modulo $N$.
%
%
%
%
%
%
\subsubsection{The main technical statements}
Theorem~\ref{stability thm1}, \ref{stability} readily follows from the following technical results.
\begin{proposition}[cf. Proposition~1 in \cite{LM13} and Theorem~9.5 in \cite{Ba05}]
\label{prp1}
We have $\Omega^\circ(N \backslash G, \psi_N) \subset \Omega^{\#}(N \backslash G, \psi_N)$.
\end{proposition}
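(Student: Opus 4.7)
The plan is to follow the proof of \cite[Proposition~1]{LM13} in the split case, using induction on $w \in \mathbf{W}$ in reverse Bruhat order, and then to carefully adapt the one-dimensional Fourier-cancellation argument at simple roots to accommodate simple roots of type (II), where the attached rank-one group is $\mathrm{SU}_3$ rather than $\mathrm{SL}_2$. Fix $W \in \Omega^\circ(N \backslash G, \psi_N)$ and a compact open subgroup $K$ of $\mathbf{K}$ with $W \in \Omega(N \backslash G, \psi_N)^K$. I first treat the base case $w=w_0$: since $S^\circ(w_0)=\emptyset$, one has $A^\varepsilon(w_0)=T$, and boundedness modulo $N$ of $\mathrm{supp} W \cap N T \bar{w_0} N$ comes from the standard fact that parametrising the open cell as $T \bar{w_0} \bar{N}$ and exploiting $\psi_N$-equivariance together with the right-$K$-invariance forces $W(t\bar{w_0})=0$ whenever any $|\beta^\ast|(t)$ is too large for some $\beta \in \Phi$.

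For the inductive step I fix $w < w_0$, assume the conclusion for all $w' > w$, and fix $\varepsilon > 0$. The defining support condition of $\Omega^\circ$ already forces $|\alpha_0|(t) \geq \delta$ for some $\delta > 0$ on $\mathrm{supp}_{BwB} W$ and every $\alpha \in S^\circ(w)$; combined with $t \in A^\varepsilon(w)$, this leaves only the possibility that the support is unbounded along directions in $\Phi_\alpha$ for $\alpha \in S^\circ(w)$ where $|\alpha^\ast|(t) \to \infty$. Fixing such $\alpha$, I set $w' := s_\alpha w$, check that $w < w'$ (so $\alpha \notin S^\circ(w')$) and that $BwB \cdot U_\alpha \subseteq BwB \cup Bw'B$ for a suitable unipotent neighbourhood $U_\alpha$, and use this to transfer the problem to the cell $Bw'B$ where the inductive hypothesis applies. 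The heart of the transfer is the standard oscillatory-integral cancellation: using right-$K$-invariance, I produce a compact open subgroup of the one-parameter-family subgroup attached to $\alpha$ on which $W$ is invariant; conjugating by $t\bar w$ with $|\alpha^\ast|(t)$ large dilates this subgroup inside $N_\alpha$, and averaging against $\psi_N|_{N_\alpha}$ forces $W(t\bar w) = 0$ unless the support already lies in the inductively-controlled region.

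The main obstacle will be the type (II) case, where $N_\alpha$ is a Heisenberg-type group parametrised by $x_\alpha(r,m)$ with the relation $N_{E_\alpha/F_\alpha}(r) = -\mathrm{Tr}_{E_\alpha/F_\alpha}(m)$, and where $\psi_N$ restricts nontrivially only through the abelianisation. Here I plan to integrate in two stages: first over the centre, parametrised by purely imaginary $m$ with $r = 0$, against the nontrivial additive character $m \mapsto \psi_N(x_\alpha(0,m))$ of the trace-zero subspace of $E_\alpha$; and then over the quotient, parametrised by $r$, in the spirit of the type (I) argument. The geometric input I will need is that conjugation by $t\bar w$ with $|\alpha^\ast|(t)$ large scales both layers of $N_\alpha$ compatibly, so that a fixed compact-open subgroup is dilated to exhaust $N_\alpha$ in both directions; this follows from the explicit formula for $\alpha^\vee$ on $E_\alpha^\times$ together with the observation that the centre of $N_\alpha$ corresponds to the root $2\alpha$, which is scaled by $|\alpha^\ast|^2$.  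Once this geometric compatibility is in place, the oscillatory cancellation in both layers reduces the problem on $BwB$ to the corresponding problem on $Bw'B$, and the induction closes; the remaining verifications (compatibility of the decomposition $BwB \cdot s_\alpha \subset BwB \cup Bw'B$ with the $A^\varepsilon(\cdot)$-conditions, and translation of ``bounded modulo $N$'' between cells) are formal and identical to the split case.
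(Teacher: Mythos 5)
Your plan correctly identifies the shape of the argument — a double induction over Weyl group elements and along reduced words, with a Fourier-cancellation step at each simple root, adapted at type (II) roots by treating the two layers of the Heisenberg-type group $N_\alpha$ separately — and the observation that the centre of $N_\alpha$ scales by $|\alpha^\ast|^2$ is the right geometric input. However, the proposal contains a foundational misreading of the definitions that inverts the whole inductive structure, and the base case you propose is not only misidentified but, even on its own terms, its claimed proof addresses the wrong direction.

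First, the claim ``since $S^\circ(w_0)=\emptyset$, one has $A^\varepsilon(w_0)=T$'' is false and is backwards. The set $A^\varepsilon(w)$ imposes the lower bound $|\beta^\ast|(t)\geq\varepsilon$ precisely for the $\alpha$ \emph{not} in $S^\circ(w)$; since $S^\circ(w_0)=\emptyset$, the constraint applies to \emph{every} simple root, so $A^\varepsilon(w_0)$ is the most restricted of these sets, not $T$. It is $S^\circ(1)=\triangle_0$ and hence $A^\varepsilon(1)=T$. Because of this, the natural base case is $w=1$, where the assertion is exactly the compactness of the torus support and follows from Lemma~\ref{lmm7}. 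The paper's induction then runs \emph{upward} in $\ell(w)$, using (17) at $w'<w$ as the outer hypothesis. Your ``reverse Bruhat'' induction starting at $w_0$ takes the hardest cell as the base case; the base-case argument you sketch only controls the torus direction via $K$-invariance (an upper bound), says nothing about the $\Omega^\circ$-lower bound that is the actual hypothesis being exploited, and — crucially — does not address the $N_{w_0}^- = N$ direction at all, which is the entire content of the proposition on the big cell. For $SL_2$ already, the support of $W(t\bar{w_0}n)$ in $n$ is not bounded merely because $W$ is right $K$-invariant; the whole point is that the $\Omega^\circ$ condition, via the cancellation mechanism, forces compactness of the support in $n$.

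Second, the transfer step in the inductive argument goes the wrong way. The mechanism in Lemma~\ref{lmm5} and Lemma~\ref{lmm5 I} (the analogues of \cite[Lemma~5]{LM13}) is that when a coordinate $x_i$ or $m_i$ in the cell $B w B$ is large, right multiplication by a \emph{small} element moves the point into a cell $B w' B$ with $w' < w$, and $H(a(gk))$ is pushed into the cone $\mathcal{C}(S^\circ(\mw(gk)))_{\geq -X}$. So unboundedness in $N_w^-$ at a given cell $w$ is absorbed into \emph{smaller} cells, and the natural direction of the outer induction is increasing length — use (17) for $w'<w$. Your step proposes to ``transfer the problem to the cell $Bw'B$ where the inductive hypothesis applies'' with $w'=s_\alpha w > w$; there is no lemma in this setting that pushes a large parameter at $BwB$ into a \emph{larger} cell, and the identity $\overline{s_\alpha}^{-1}x_\alpha(r,m)\overline{s_\alpha}$ in~\eqref{(7) type II} (resp.\ \eqref{(7) type I}) gives exactly the opposite. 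Because of this, your reverse induction has no closing mechanism. To salvage the plan you would need to restore the direction and the double (outer on $\ell(w)$, inner along a reduced word of $w$) structure of the paper's proof, and replace your base case at $w_0$ by the base case $w=1$ supplied by Lemma~\ref{lmm7}; the type (II) adaptation you describe is then precisely what Lemma~\ref{lmm5} encodes via \eqref{(6) type II}--\eqref{(7) type II}.
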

\begin{lemma}[cf. Lemma~1 in \cite{LM13}]
\label{lmm1}
If $W \in \Omega^{\#}(N \backslash G, \psi_N)$, then for any $w \in \mathbb{W}$ and $\varepsilon > 0$,
the function $(t, n) \mapsto W(t\bar{w}n)$, $(t, n) \in B^\varepsilon(w) \times N_w^-$ is compactly supported.
\end{lemma}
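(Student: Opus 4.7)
The plan is to combine the support bound provided by $W \in \Omega^\#$ with the uniqueness of the Bruhat decomposition, and then to exploit the defining constraints of $B^\varepsilon(w)$ to keep the relevant Bruhat coordinates away from the boundary of the big cell.

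Set $E = \{(t,n) \in B^\varepsilon(w) \times N_w^- : W(t\bar{w}n) \ne 0\}$. Since $B^\varepsilon(w) \subset A^\varepsilon(w)$, every element $t\bar{w}n$ with $(t,n) \in E$ lies in $NA^\varepsilon(w)wN$, and the hypothesis $W \in \Omega^\#(N \backslash G, \psi_N)$ supplies a compact $D = D(W,w,\varepsilon) \subset G$ with $t\bar{w}n \in N \cdot D$ for all $(t,n) \in E$. I would then invoke the Bruhat homeomorphism $\phi \colon N \times T \times N_w^- \xrightarrow{\sim} BwB$, $(n_1,t',n') \mapsto n_1 t' \bar{w} n'$. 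Writing $t\bar{w}n = n_0 d$ with $n_0 \in N$ and $d \in D \cap BwB$, and expanding $d = n_1(d)\, t(d)\, \bar{w}\, n'(d)$, uniqueness of $\phi$ forces $t = t(d)$ and $n = n'(d)$. Hence $E$ is contained in the image, under the continuous map $d \mapsto (t(d), n'(d))$, of the set $D_E := \{d \in D \cap BwB : t(d) \in B^\varepsilon(w)\}$, and the lemma reduces to showing that the closure of $D_E$ in $G$ remains in $BwB$.

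The main obstacle is this last reduction: one must verify that the constraints cut out by $B^\varepsilon(w)$ prevent a sequence $d_k \in D_E$ from converging to a point of the Bruhat boundary $\overline{BwB} \setminus BwB = \bigsqcup_{w' < w} Bw'B$. Were such a degeneration $d_k \to d^\ast \in Bw'B$ with $w' < w$ to occur, then by the standard Schubert-cell degeneration of the Bruhat coordinates some relative simple root $\alpha \in S(w) \setminus S(w')$ would force $|\beta^\ast(t(d_k))| \to \infty$ for a lift $\beta \in \Phi_\alpha$, contradicting the upper bound $|\beta^\ast(t)| \le \varepsilon^{-1}$ for $\alpha \in S(w)$ that is built into $B^\varepsilon(w)$; the opposite type of degeneration is ruled out by the lower bound $|\beta^\ast(t)| \ge \varepsilon$ for $\alpha \notin S^\circ(w)$ that $B^\varepsilon(w)$ inherits from $A^\varepsilon(w)$. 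I plan to make this precise by an induction on the length of $w$, reducing the analysis to the rank-one blocks $\widetilde{M}_\alpha$ and handling each one via an explicit computation inside $\mathrm{SL}_2(F_\alpha)$ or $\mathrm{SU}(3)_\alpha$ in the coordinates introduced in Appendix~B, which directly generalises the $\mathrm{GL}_2$ calculation in the proof of \cite[Lemma~1]{LM13}. Once $\overline{D_E} \subset BwB$ is established, the continuity of $(t(\cdot), n'(\cdot))$ on $BwB$ immediately yields that the image of $D_E$ is relatively compact in $T \times N_w^-$, which is what is needed.
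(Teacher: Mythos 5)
Your reduction to showing that the Bruhat coordinate map $d \mapsto (t(d), n'(d))$ has relatively compact image on $D_E$, and hence to proving $\overline{D_E} \subset BwB$, is sound. But the mechanism you invoke at the crux has a genuine gap. You claim that a degeneration $d_k \to d^\ast \in Bw'B$, $w' < w$, forces $|\beta^\ast(t(d_k))| \to \infty$ for some $\alpha \in S(w)\setminus S(w')$ (which would contradict the $\le 1/\varepsilon$ constraint in $B^\varepsilon(w)$), and that ``the opposite type of degeneration'' (i.e. $|\beta^\ast(t(d_k))| \to 0$) is ruled out by the lower bound $|\beta^\ast(t)| \ge \varepsilon$ that $B^\varepsilon(w)$ inherits from $A^\varepsilon(w)$. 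That lower bound, however, is only imposed for $\alpha \notin S^\circ(w)$, while the degeneration root $\alpha$ lies in $S(w)\setminus S(w')$, which can sit entirely inside $S^\circ(w)$: take $G = \mathrm{SL}_3$, $w = s_1 s_2$, $w' = s_1$; then $S(w)\setminus S(w') = \{\alpha_2\}$ and $S^\circ(w) = S(w w_0) = \{\alpha_2\}$, so the $A^\varepsilon(w)$ constraint says nothing about $\alpha_2$. Thus nothing in your argument excludes $|\alpha_2^\ast|(t(d_k)) \to 0$, and the contradiction with the upper bound never materializes.

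The correct source of the lower bound on $\langle \alpha^\ast, H(t(d_k))\rangle$ is not the $A^\varepsilon(w)$ constraint at all, but the combination of (i) $d_k \in D$ compact, so $H(d_k)$ is bounded, and (ii) Lemma~\ref{lmm6}, which says $H(\bar{w}n'(d_k)) = \sum_{\alpha \in \Phi_w} c_\alpha \alpha^\vee$ with all $c_\alpha \le 0$, i.e. $\langle\alpha^\ast, H(\bar{w}n'(d_k))\rangle \le 0$ for every $\alpha \in \triangle_0$ and $=0$ for $\alpha \notin S(w)$. Writing $H(t(d_k)) = H(d_k) - H(\bar{w}n'(d_k))$ then gives $\langle\alpha^\ast, H(t(d_k))\rangle$ bounded below for all simple $\alpha$, and bounded (not just below) for $\alpha \notin S(w)$. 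This is exactly the input your sketch is missing; the rank-one $\mathrm{SL}_2 / \mathrm{SU}(3)$ computation you defer to is precisely what proves Lemma~\ref{lmm6} (see the relation $H(\overline{s_\alpha} x_\alpha(r,m)) = \min(0,-v(m))\alpha^\vee$ in its proof), so you cannot treat it as a black box labelled ``standard Schubert-cell degeneration.'' Note also that the paper's route, following \cite[Lemma~1]{LM13}, is more direct: one does not need to control $\overline{D_E}$. One simply pairs $H(t\bar{w}n) = H(t) + H(\bar{w}n)$ against $\alpha^\ast$: the cone information from Lemma~\ref{lmm6} and the boundedness of $H(D)$ give a lower bound on $\langle\alpha^\ast, H(t)\rangle$ for all $\alpha$, the $B^\varepsilon(w)$ constraint gives the upper bound for $\alpha \in S(w)$, and Lemma~\ref{lmm6} gives both bounds for $\alpha \notin S(w)$; hence $H(t)$ is bounded, hence $t$ is bounded, hence $H(\bar{w}n)$ is bounded, and the properness statement in Lemma~\ref{lmm6} then bounds $n$.
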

\begin{proposition}[cf. Proposition~2 in \cite{LM13} and Theorem~7.3 in \cite{Ba05}]
\label{prp2}
For any normal open subgroup $K$ of $\mathbf{K}$, there exists a compact open subgroup $N^\prime$
of $N$ such that $R_{N^\prime, \psi_N} W \in \Omega^\circ(N \backslash G, \psi_N)$ for any $W \in \Omega(N \backslash G, \psi_N)^K$
\end{proposition}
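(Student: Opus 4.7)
My plan is to adapt the strategy of Lapid--Mao \cite[Proposition~2]{LM13}, which itself builds on Baruch \cite[Theorem~7.3]{Ba05}, handling the new feature that for quasi-split $\mathbf{G}$ some simple relative roots are of type (II), so the associated root subgroup is a Heisenberg-type group rather than a one-parameter subgroup.

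First I would reduce the statement to a finite local check. Since $\mathbf{W}$ and $\triangle_0$ are finite, it suffices to produce, for every pair $(w,\alpha)$ with $\alpha \in S^\circ(w)$, a compact open $N'_{w,\alpha} \subset N$ and a constant $\varepsilon_{w,\alpha} > 0$ such that for all $W \in \Omega(N \backslash G, \psi_N)^K$ the function $R_{N'_{w,\alpha},\psi_N} W$ vanishes on $\{g \in BwB : |\alpha_0|(a(g)) < \varepsilon_{w,\alpha}\}$; any $N'$ containing all such $N'_{w,\alpha}$ then works. Using the Bruhat decomposition $g = t\overline{w} u$ with $u \in N_w^-$, the left $(N,\psi_N)$-equivariance, and the right $K''$-invariance of $R_{N',\psi_N}W$ for a suitable open $K'' \subset \mathbf{K}$ that depends only on $N'$ and $K$ (so that the parameter $u$ can be absorbed up to a compact subset), I further reduce to $g$ of the form $t\overline{w}$ with $t \in T$.

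The core computation fixes $\alpha \in S^\circ(w)$, i.e.\ $s_\alpha$ appears in a reduced expression for $ww_0$, and uses this hypothesis to locate a positive root $\beta$ whose root subgroup $N_\beta \subset N$ is sent by $\mathrm{Ad}(\overline{w})$ into a root subgroup on which $\psi_N$ is non-trivially twisted by a character controlled by $|\alpha_0|(t)$. Decomposing $N' = N'_\beta \cdot (N')^\beta$ via the root-subgroup factorization of $N$ and pushing the inner integration through $\overline{w}$ by the left equivariance of $W$, the integral
\[
\int_{N'_\beta} W(t\,\overline{w}\, n)\, \psi_N(n)^{-1}\, dn
\]
equals $W(t\overline{w})$ times the integral of a character $\chi_{t,\alpha}$ on $N'_\beta$, and this character becomes non-trivial on $N'_\beta$ as soon as $|\alpha_0|(t)$ is small enough relative to the level of $N'_\beta$, forcing the integral, and hence $R_{N',\psi_N}W(t\overline{w})$, to vanish.

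The principal obstacle will be the type (II) case, where $N_\alpha = \{x_\alpha(r,m)\}$ is a non-abelian Heisenberg group over $E_\alpha$ and $\psi_N|_{N_\alpha}$ sees only the $r$-coordinate. Here I would argue directly in the explicit parametrization $x_\alpha(r,m)$ fixed in Section~2, tracking that the torus element $\alpha^\vee(y)$ scales $r$ by $y\bar{y}^{-1}$ and $m$ by $y\bar{y}$, and then verify that the resulting character $\chi_{t,\alpha}$ on a fixed compact open subgroup of $N_\beta$ still becomes arbitrarily non-trivial as $|\alpha_0|(t) \to 0$; the key point is that $\tilde{M}_\alpha = \mathrm{SU}(3)_\alpha$ is quasi-split of relative rank one, so the rank-one calculation of \cite{LM13} can be carried out here too, with the $F_\alpha$-linear scaling replaced by the appropriate $E_\alpha/F_\alpha$-linear scaling. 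Uniformity of $N'$ in $W$ then follows from the uniform level of local constancy of $W(t\overline{w}\cdot)$ provided by the right $K$-invariance of $W$.
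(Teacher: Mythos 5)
Your proposal correctly identifies the oscillation mechanism (Lemma~\ref{lmm10} of the paper) and correctly isolates the new type~(II) wrinkle, but it contains a genuine gap at the reduction step. You claim that, after using left $(N,\psi_N)$-equivariance and right $K''$-invariance of $R_{N',\psi_N}W$, the parameter $u\in N_w^-$ in the Bruhat factorization $g=t\overline{w}u$ "can be absorbed up to a compact subset," reducing the check to $g=t\overline{w}$. This fails: $N_w^-$ is unbounded, and right $K''$-invariance only identifies values of $R_{N',\psi_N}W$ at $t\overline{w}u$ for $u$ in the \emph{compact} set $\overline{w}^{-1}\overline{w}K''\cap N_w^-$. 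For $u$ outside such a compact set the value is not controlled by the value at $t\overline{w}$. Moreover, for $g=t\overline{w}u$ one has $a(g)=t\cdot a(\overline{w}u)$ (up to a unit) with $|\alpha^\ast|(a(\overline{w}u))\le 1$ by Lemma~\ref{lmm6}, so bounding $|\alpha_0|(a(g))$ from below is a strictly harder statement than bounding $|\alpha_0|(t)$ and cannot be waved away by smoothness.

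What actually handles the unbounded $N_w^-$ factor is a \emph{double} induction that your proposal omits entirely: an outer induction on $\ell(w)$ establishing~\eqref{20}, and an inner induction on $i=1,\dots,k$ over the stratification $\mathcal{B}_w(i,m)$ of the cell $BwB$ by the coordinate decomposition $n={}_in\,n_{\beta_i}n_i$ of $n\in N_w^-$ relative to a fixed reduced word for $w$. For $n$ "deep" in the $\beta_i$-direction one applies $R_{U_m,\psi_N}$, rewrites $nn'$ via Corollaries~\ref{cor1 (II)}/\ref{cor1 (I)}, and then uses Lemmas~\ref{lmm5}/\ref{lmm5 I} (the "long-range" Bruhat-cell-lowering lemmas) to move from the cell $BwB$ to a strictly smaller cell $Bw'B$, where the outer inductive hypothesis applies; the inner induction on $i$ takes care of the remaining strata. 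Your computation for $g=t\overline{w}$ is precisely the base case (the content of Lemma~\ref{lmm10}), and your type~(II) analysis there — the $\mathrm{SU}(3)_\alpha$ Heisenberg parametrization with $\alpha^\vee(y)$ scaling $r$ by $y/\bar y$ and $m$ by $y\bar y$ — is the correct modification, but a complete proof needs the type~(II) analogues of the cell-lowering lemmas and the corollary on $U_m$-stability (which is where the $|q_i|$ vs.\ $|\beta_i(a^{2m})|$ comparison enters), not just the torus-scaling computation at $t\overline{w}$.
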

\subsubsection{A lemma on Bruhat orders}
\label{bruhat order}
Suppose that $\alpha \in \triangle_0$ is of type (I).
We may choose $n_{\pm \alpha}$ and $\overline{s_\alpha}$ and so that 
\begin{equation}
\label{(6) type I}
n_\alpha(x)n_{-\alpha}(-x^{-1})=
\alpha^\vee(x) \overline{s_\alpha} n_\alpha(-x)
\end{equation}
and
\begin{equation}
\label{(7) type I}
\overline{s_\alpha}^{-1} n_\alpha(x)  \overline{s_\alpha} 
=
n_{-\alpha}(-x)
= n_{\alpha}(-x^{-1}) \alpha^{\vee}(x^{-1})  \overline{s_\alpha}  n_{\alpha} (-x^{-1}), \quad x \in F_\alpha^\times.
\end{equation}
On the other hand, suppose that $\alpha$ is of type (II).
Then we may take $\overline{s_\alpha}$ so that 
\begin{equation}
\label{(6) type II}
x_{\alpha}(r, m) x_{-\alpha}(-rm^{-1}, \bar{m}^{-1}) = \alpha^\vee(m)  \overline{s_\alpha}  x_{\alpha}(-(r \bar{m}) \slash m, \bar{m})
\end{equation}
and
\begin{equation}
\label{(7) type II}
\overline{s_\alpha} x_{\alpha}(r, m)  \overline{s_\alpha}^{-1}
= x_{-\alpha}(-r, m) = 
x_\alpha(-r \bar{m}^{-1},m^{-1}) \alpha^\vee(\bar{m}^{-1}) \overline{s_\alpha} x_{\alpha}(-r\bar{m}^{-1}, \bar{m}^{-1}).
\end{equation}
for any $r, m \in E_\alpha$ such that $N_{E_\alpha \slash F_\alpha}(r) = - \mathrm{Tr}_{E_\alpha \slash F_\alpha}(m)$.
%
\begin{lemma}
\label{lemma2}
Let $w, w^\prime \in W$ and $\alpha \in \Phi_{{\rm rel}, +}$.
Assume that $w^\prime s_\alpha \leq w$ and $w \alpha \in \Phi_{{\rm rel}, +}$.
Then $w \alpha \in \Phi(S(w {w^{\prime}}^{-1}))$.
\end{lemma}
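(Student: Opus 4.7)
The plan is to prove this by induction on the length difference $d := \ell(w) - \ell(w' s_\alpha) \geq 0$, exploiting the Lifting Property of the Bruhat order on the relative Weyl group $\mathbf{W}$ (which is a Coxeter group, so the standard combinatorial tools apply).

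The base case $d = 0$ forces $w = w' s_\alpha$, so that $u := w(w')^{-1} = s_{w'\alpha}$ is a reflection. For any reflection $s_\gamma$ with $\gamma$ positive, the support $S(s_\gamma)$ equals $\mathrm{supp}(\gamma)$ as a sum of simple roots; since $w\alpha = -w'\alpha$ has the same support as $w'\alpha$, the conclusion $w\alpha \in \Phi(S(u))$ follows directly.

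For the inductive step I would choose a simple root $\beta$ with $s_\beta w < w$ and apply the Lifting Property to the relation $w' s_\alpha \leq w$. This splits into two cases, depending on whether or not $s_\beta$ is a left descent of $w' s_\alpha$. In the ``easy'' branch one obtains $w' s_\alpha \leq s_\beta w$; provided $w\alpha \neq \beta$ (so that $s_\beta w$ still satisfies the positivity hypothesis $s_\beta(w\alpha) > 0$), the induction hypothesis applied to the triple $(s_\beta w, w', \alpha)$ yields $s_\beta(w\alpha) \in \Phi(S(s_\beta u))$, and combining with the identity $S(u) = \{\beta\} \cup S(s_\beta u)$, valid whenever $s_\beta u < u$, recovers $w\alpha \in \Phi(S(u))$. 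The other branch of the Lifting Property is treated by the analogous argument in which $w'$ is replaced by $s_\beta w'$.

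The main obstacle will be the bookkeeping in the inductive step, in particular the boundary cases $w\alpha = \beta$ and $s_\beta u > u$, where the straightforward comparison of supports breaks down and the positivity hypothesis $w\alpha > 0$ must be exploited in an essential way. To streamline these, I would adopt an equivalent ``dual'' reformulation of the conclusion: $w\alpha \in \Phi(S(u))$ is equivalent to $\langle w\alpha, \varpi_\beta^\vee \rangle = 0$ for every $\beta \in \Delta_0 \setminus S(u)$, where $\varpi_\beta^\vee$ denotes the fundamental coweight dual to $\beta$. Since $u$ fixes $\varpi_\beta^\vee$ precisely for such $\beta$, the $\mathbf{W}$-invariance of the natural pairing gives $\langle w\alpha, \varpi_\beta^\vee \rangle = \langle w'\alpha, \varpi_\beta^\vee \rangle$, so the claim reduces to showing $w'\alpha \in \Phi(S(u))$, or equivalently that the reflection $s_{w'\alpha} = w' s_\alpha (w')^{-1}$ lies in the standard parabolic subgroup generated by $\{s_\gamma : \gamma \in S(u)\}$. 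In this dual form the hypothesis $w' s_\alpha = s_{w'\alpha} w' \leq u w'$, combined with the sign information $u \cdot w'\alpha = w\alpha > 0$, should directly imply $s_{w'\alpha} \leq u$ (and a fortiori $s_{w'\alpha}$ lies in the above parabolic subgroup) by a single application of the Lifting Property.
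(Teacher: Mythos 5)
Your ``dual'' reformulation is correct as far as the reduction goes: for $u:=w(w')^{-1}$ and $\beta\in\triangle_0\setminus S(u)$, $u$ fixes $\varpi_\beta^\vee$, so $\langle w\alpha,\varpi_\beta^\vee\rangle=\langle w'\alpha,\varpi_\beta^\vee\rangle$, and $w\alpha\in\Phi_{\rm rel}(S(u))$ is equivalent to the vanishing of all these pairings, i.e.\ to $s_{w'\alpha}\in W_{S(u)}$. But the last step of your argument --- that the hypotheses ``directly imply $s_{w'\alpha}\le u$ by a single application of the Lifting Property'' --- is false, not merely unproved. In type $A_2$, take $w'=s_1s_2$, $\alpha=\alpha_2$, $w=s_2s_1$. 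Then $\alpha>0$, $w's_\alpha=s_1\le w$, $w\alpha=\alpha_1>0$, and $u=s_1s_2$; but $w'\alpha=-(\alpha_1+\alpha_2)$, so $s_{w'\alpha}=s_1s_2s_1$ has length $3>\ell(u)=2$ and $s_{w'\alpha}\not\le u$. (The lemma's conclusion is of course still true here, since $S(u)=\triangle_0$.) Your first, inductive approach bottoms out at exactly the case $\beta\notin S(u)$, $s_\beta u>u$, which you flag but do not resolve; the dual reformulation was supposed to fix that, and as written it does not.

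What does close the gap, in the same dual spirit, is the standard dominance lemma: if $x\le y$ in a Coxeter group and $\lambda$ is dominant, then $x\lambda-y\lambda$ is a non-negative combination of simple (co)roots. Fix $\beta\in\triangle_0\setminus S(u)$ and set $\mu:=w'^{-1}\varpi_\beta^\vee=w^{-1}\varpi_\beta^\vee$. Taking inverses in $w's_\alpha\le w$ gives $s_\alpha w'^{-1}\le w^{-1}$, hence $s_\alpha\mu-\mu=-\langle\alpha,\mu\rangle\,\alpha^\vee\ge 0$, so $\langle\alpha,\mu\rangle\le 0$. On the other hand $\langle\alpha,\mu\rangle=\langle w\alpha,\varpi_\beta^\vee\rangle\ge 0$ because $w\alpha\in\Phi_{{\rm rel},+}$ and $\varpi_\beta^\vee$ is dominant. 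Hence $\langle w\alpha,\varpi_\beta^\vee\rangle=0$ for every such $\beta$, i.e.\ $w\alpha\in\Phi_{\rm rel}(S(u))$. For what it is worth, the paper itself offers no argument here; it simply records that the statement is \cite[Lemma~2]{LM13} transported to the relative root system, so there is nothing in the paper to compare against beyond the observation that $\mathbf{W}$ is a Coxeter group and $\Phi_{\rm rel}$ a root system.
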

\begin{proof}
This is just a restatement of \cite[Lemma~2]{LM13} for $\Phi_{\rm rel}$.
\end{proof}
\begin{lemma}
\label{lemma3}
Let $K$ be a compact open normal subgroup of $\mathbf{K}$.
Then for any $g \in G$ and $\alpha \in \triangle_0$, either
\begin{enumerate}
\item there exists $k \in K$ such that $\mw (g \overline{s_\alpha} k) =\mw(g) s_\alpha$ and 
$H(a(g \overline{s_\alpha} k)) = H(a(g))$, or
\item  $\mw(g) \alpha \in \Phi_{{\rm rel}, -} , \mw(g \overline{s_\alpha}) = \mw(g)$ and 
$H(a(g \overline{s_\alpha})) - H(a(g)) = x  (\mw(w)\cdot  (\alpha^\vee))$ with  $x \ll_K 1$.
\end{enumerate}
\end{lemma}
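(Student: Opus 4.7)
The plan is to analyze $g\bar{s_\alpha}$ via the Bruhat decomposition $g = ut\bar{w}u'$ with $u \in N$, $t = a(g) \in T$, $w = \mathbf{w}(g)$, and $u' \in N_w^-$. The dichotomy between (1) and (2) will be governed primarily by the sign of $w\alpha$, and in the negative case by whether the $N_\alpha$-component of $u'$ lies in $K$.

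First, if $w\alpha \in \Phi_{\rm rel,+}$, then $N_\alpha$ does not appear among the root subgroups generating $N_w^-$, so $u'$ involves only $N_\beta$ with $\beta > 0$, $w\beta < 0$, $\beta \neq \alpha$. Writing $g\bar{s_\alpha} = ut\bar{w}\bar{s_\alpha}(\bar{s_\alpha}^{-1}u'\bar{s_\alpha})$ and observing that $s_\alpha$ permutes the positive roots other than $\alpha$, the element $\bar{s_\alpha}^{-1}u'\bar{s_\alpha}$ lies in $N$, and in fact in $N_{ws_\alpha}^-$ since $(ws_\alpha)(s_\alpha\beta) = w\beta < 0$. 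Combined with $\bar{w}\bar{s_\alpha} = \ast\cdot\overline{ws_\alpha}$ (for some $\ast \in T\cap\mathbf{K}$), this places $g\bar{s_\alpha}$ in $B\overline{ws_\alpha}B$ with torus part $t\ast$, yielding (1) with $k = e$ since $H(\ast) = 0$.

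Next, if $w\alpha \in \Phi_{\rm rel,-}$, then $N_\alpha \subset N_w^-$ and I factor $u' = u''\cdot n_\alpha^\circ$ with $u''$ involving only $N_\beta$ for $\beta \neq \alpha$ and $n_\alpha^\circ \in N_\alpha$ (i.e., $n_\alpha(x)$ in type (I), or $x_\alpha(r,m)$ in type (II)). If $n_\alpha^\circ \in K$, I apply \eqref{(7) type I} (resp.\ \eqref{(7) type II}) to see that $\bar{s_\alpha}^{-1}n_\alpha^\circ\bar{s_\alpha} \in K \cap N_{-\alpha}$; taking $k$ to be its inverse and running the previous analysis on $u''$ puts $g\bar{s_\alpha}k$ in $B\overline{ws_\alpha}B$ with $H(a) = H(t)$, again yielding (1). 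If $n_\alpha^\circ \notin K$, I instead apply \eqref{(6) type I} (resp.\ \eqref{(6) type II}) to rewrite $n_\alpha^\circ\bar{s_\alpha} = n_{-\alpha}(\cdot)\,\alpha^\vee(\cdot)\cdot\bar{s_\alpha}^{\,2}\cdot n_\alpha(\cdot)$ where the last factor is now sufficiently small to be absorbed by some $k \in K$; commuting $n_{-\alpha}(\cdot)$ through $\bar{w}$ (which sends it into $N$ because $-w\alpha > 0$) and then pushing $\alpha^\vee(\cdot)$ through to produce $(w\cdot\alpha^\vee)(\cdot)$ yields a Bruhat presentation with $\mathbf{w}(g\bar{s_\alpha}) = w$ and torus shift precisely along $w\cdot\alpha^\vee$. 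The scalar $x$ is read off from the valuation of the parameter of $n_\alpha^\circ$, with the bound on $x$ forced by the negation of $n_\alpha^\circ \in K$.

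The main obstacle will be the type (II) case: the Heisenberg-type parametrization $x_\alpha(r,m)$ subject to $N_{E_\alpha/F_\alpha}(r) = -\mathrm{Tr}_{E_\alpha/F_\alpha}(m)$ and the more intricate identities \eqref{(6) type II}, \eqref{(7) type II} require that I correctly identify $m$ as the parameter driving the $\alpha^\vee$-factor (so that the torus shift is indeed a scalar multiple of $w\cdot\alpha^\vee$), while tracking the auxiliary $r$-parameter through the same factorization. A careful check is needed that the condition ``$n_\alpha^\circ \notin K$'' translates into the expected bound on $x$; depending on whether $r$ or $m$ is the violator of $K$-integrality, one or the other relation is the appropriate one to invoke. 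The elements $\bar{s_\alpha}^{\,2} \in T\cap\mathbf{K}$ appearing in both relations are benign since they lie in the kernel of $H$ and can be absorbed into the ambiguity of $\overline{ws_\alpha}$.
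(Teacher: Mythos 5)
Your proof proceeds the same way as the paper's: Bruhat-decompose $g$, branch on the sign of $\mw(g)\alpha$, and (when negative) branch again on whether the $N_\alpha$-component of the unipotent part lies in $K$, using the rank-one relations to extract the torus shift along $\mw(g)\cdot\alpha^\vee$. The only superficial differences are that the paper factors $n = x_\alpha(r,m)n_2$ (root factor on the left) in the ``not in $K$'' subcase, which avoids the extra step of commuting the new $N_{-\alpha}$-factor past your $u''\in N^\alpha$, and that your concern about whether $r$ or $m$ violates $K$-integrality in type (II) is moot: the constraint $N_{E_\alpha/F_\alpha}(r)=-\mathrm{Tr}_{E_\alpha/F_\alpha}(m)$ forces $|r|^2\ll|m|$, so $m$ alone governs both membership in $K$ and the scalar $x$.
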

\begin{proof}
We follow the proof of \cite[Lemma~3]{LM13}.
Indeed, when $\alpha$ is of type (I), our lemma is proved in the same way as their lemma.
Now, suppose that $\alpha$ is of type (II).
Further, we may suppose that $g = \overline{w} n$ where $w = \mw(g) \in \mathbf{W}$ and $n \in N_w^{-}$.
When $w \alpha \in \Phi_{{\rm rel}, +}$, the case (1) holds with $k=1$ since $\overline{s_\alpha}^{-1} n  \overline{s_\alpha} \in N$.
Hence, we may assume that $w \alpha \in \Phi_{{\rm rel}, -}$ and let $w^\prime = w s_\alpha$ so that $w^\prime \alpha \in \Phi_{{\rm rel}, +}$.

Write $n = n_1 x_\alpha(r, m) = x_\alpha(r, m) n_2$ where $n_1, n_2 \in N^\alpha$ and $r, m \in E_\alpha$ such that $N_{E_\alpha \slash F_\alpha}(r) 
= -\mathrm{Tr}_{E_\alpha \slash F_\alpha}(m)$.
Suppose that $|m|$ is so small that $x_\alpha(r, m) \in K$. 
Then $k:= \overline{s_\alpha}^{-1} x_\alpha(-r, \bar{m}) \overline{s_\alpha} \in \mathbf{K}$,
and we have
\[
g \overline{s_\alpha} k =\overline{w} n_1  x_\alpha(r, m) \overline{s_\alpha}  k =\overline{w} n_1 \overline{s_\alpha}
=\ast \overline{w^\prime} \overline{s_\alpha}^{-1} n_1 \overline{s_\alpha} \in \ast \overline{w^\prime} N.
\]
Otherwise, we write 
\[
g \overline{s_\alpha} =  \overline{w} x_\alpha(r, m) n_2 \overline{s_\alpha}
= \overline{w} x_\alpha(r, m) \overline{s_\alpha} n_3
\]
where $n_3 =  \overline{s_\alpha}^{-1 }n_2 \overline{s_\alpha} \in N$.
From \eqref{(7) type II}, we have
\begin{align*}
g \overline{s_\alpha} =
\ast \overline{w^\prime}  \overline{s_\alpha}^{-1} x_\alpha(r, m) \overline{s_\alpha} n_3
&=
\ast \overline{w^\prime} x_\alpha(-r \bar{m}^{-1},m^{-1}) \alpha^\vee(\bar{m}^{-1}) \overline{s_\alpha} x_{\alpha}(-r\bar{m}^{-1}, \bar{m}^{-1}) n_3
\\
&=
\ast (\overline{w^\prime} x_\alpha(-r \bar{m}^{-1},m^{-1}) \overline{w^\prime}^{-1}) 
(\overline{w^\prime}  \alpha^\vee(\bar{m}^{-1}) \overline{w^\prime}^{-1}) \overline{w} x_{\alpha}(-r\bar{m}^{-1}, \bar{m}^{-1}). 
\end{align*}
Since $w^\prime \alpha \in \Phi_{{\rm rel}, +}$, $ (\overline{w^\prime} x_\alpha(-r \bar{m}^{-1},m^{-1}) \overline{w^\prime}^{-1})  \in N$.
Further, we know 
\[
\overline{w^\prime}  \alpha^\vee(\bar{m}^{-1}) \overline{w^\prime}^{-1}
= \overline{w} \cdot( \alpha^\vee(m)) 
\]
Thus, we have
\[
H(a(g \overline{s_\alpha})) =  x  (\mw(w)\cdot  (\alpha^\vee))
\]
with $x \ll_K 1$.
\end{proof}
For any subset $S \subset \triangle_0$ and $X >0$, we define
\[
\mathcal{C}(S)_{\geq -X} = \left\{ \sum_{\alpha \in S} c_\alpha \alpha^\vee : c_\alpha \geq -X \quad {}^{\forall} \alpha \in S \right\}.
\]
For $w \in \mathbf{W}$, we fix its reduced decomposition by $w = s_{\alpha_k} \cdots s_{\alpha_1}$ with $\alpha_i \in \triangle_0$.
We shall use the following notational conventions, which implicitly depend on the reduced 
decomposition we chose (see the beginning of \cite[3.2]{LM13}).

For any $ i= 1, \dots, k$, let $w_i = s_{\alpha_{i-1}} \cdots s_{\alpha_1}$ and ${}_i w = s_{\alpha_k} \cdots s_{\alpha_{i+1}}$
so that $w = {}_i w s_{\alpha_i} w_i$.
We also write $\beta_i = w_i^{-1} \alpha_i$ so that $\{ \beta_1, \dots, \beta_{k} \} = \{ \beta \in \Phi_{{\rm rel}, +} : w \beta \in \Phi_{{\rm rel}, -} \}$.
Note that $w \beta_i = - {}_i w \alpha_i \in \Phi_{\rm rel}(S(w))$ for all $i$.

We may also write $N_w^- = {}_i N N_{\beta_i} N_i$ where $N_i = N_{w_i}^-$
and ${}_i N = w_i^{-1} s_{\alpha_i}^{-1} N_{{}_i w}^- s_{\alpha_i} w_i = w_{i+1}^{-1} N_{{}_i w}^- w_{i+1}$. 
Note that $N_1 = {}_k N = 1$, $N_{i+1} = N_{\beta_i} \ltimes N_i$ and ${}_{i-1} N = {}_i N \rtimes N_{\beta_i}$.
Let $n \in N_w^-$. For any $i$, we can write uniquely as 
\[
n =
\left\{
\begin{array}{ll}
 {}_i n  n_{\beta_i}(x_i) n_i & \text{ if $\alpha_i$ if of type (I)}\\
 &\\
  {}_i n n_{\beta_i}(r_i, m_i) n_i& \text{ if $\alpha_i$ if of type (II)}
\end{array}
\right.
\]
where ${}_i n \in {}_i N$ $n_i \in N_i$, $x_i \in F_{\alpha_i}$ and $r_i, m_i \in E_{\alpha_i}$
such that $N_{E_{\alpha_i} \slash F_{\alpha_i}}(r_i) = -\mathrm{Tr}_{E_{\alpha_i} \slash F_{\alpha_i}}(m_i)$.
We have
\begin{equation}
\label{n i+1}
n_{i+1} =
\left\{
\begin{array}{ll}
n_{\beta_i}(x_i) n_i & \text{ if $\alpha_i$ if of type (I)}\\
 &\\
n_{\beta_i}(r_i, m_i) n_i& \text{ if $\alpha_i$ if of type (II)}
\end{array}
\right.
\end{equation}

%
%
%
%
%
%
%
%
%
%
%
%
%
\begin{lemma}
\label{lemma4}
Let $w = {}_i w s_{\alpha_i} w_i$ be as above and let $g = {}_i w n w_i$ with $n \in N$.
Then $\mw(g) < w$.
Moreover, for any $K$ and any $n \in N$, there exists $k \in K$ such that
\begin{enumerate}
\item $\mw(gk) = {}_i w \tilde{w}_i$ for some $\tilde{w}_i \leq w_i$ (depending on $g$), and
\item $H(a(gk)) \in \mathcal{C}(S^\circ(\mw(gk)))_{\geq -X}$ with $X \ll_K 1$.
\end{enumerate}
\end{lemma}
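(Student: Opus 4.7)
The first assertion $\mw(g) < w$ will follow immediately from the standard Bruhat containment $Bw_1Bw_2B \subseteq \bigcup_{w'' \leq w_2} Bw_1 w'' B$ recalled earlier, applied with $(w_1, w_2) = ({}_iw, w_i)$: since $g = \overline{{}_iw}\, n\, \overline{w_i} \in B\overline{{}_iw}B \cdot B\overline{w_i}B$, one obtains $\mw(g) = {}_iw w''$ for some $w'' \leq w_i$, and the subword characterization of Bruhat order applied to the reduced decomposition $w = {}_iw s_{\alpha_i} w_i$ yields ${}_iw w'' \leq w$, with strict inequality forced by $\ell({}_iw w'') \leq \ell({}_iw) + \ell(w'') \leq \ell({}_iw) + \ell(w_i) < \ell(w)$.

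For the moreover part the plan is to prove by induction on $\ell(w')$ the slightly more flexible statement: \emph{for any $v, w' \in \mathbf{W}$, any $n \in N$, and any compact open normal subgroup $K \lhd \mathbf{K}$, there exists $k \in K$ such that $\mw(\overline{v}\, n\, \overline{w'}\, k) = v \tilde w'$ for some $\tilde w' \leq w'$ and $H(a(\overline{v}\, n\, \overline{w'}\, k)) \in \mathcal{C}(S^\circ(v \tilde w'))_{\geq -X}$ with $X \ll_K 1$.} Setting $(v, w') = ({}_iw, w_i)$ will then recover the lemma. The base case $w' = e$ is handled via the factorization $N = (N \cap v^{-1} N v) \cdot N_v^-$: writing $n = n_+ n_-$ accordingly, $\overline{v}\, n = (v n_+ v^{-1})\, \overline{v}\, n_-$ lies in the big Bruhat cell of $v$ with trivial torus part, so $k = e$ will suffice.

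For the inductive step, pick a reduced expression $w' = s_\gamma w''$, write $\overline{w'} = \ast\, \overline{s_\gamma}\, \overline{w''}$ modulo $T \cap \mathbf{K}$, and apply Lemma~\ref{lemma3} to the pair $(\overline{v}\, n, \gamma)$ (for which $\mw = v$ and $a = e$). In case (1) of Lemma~\ref{lemma3} one obtains $k_1 \in K$ with $\mw(\overline{v}\, n\, \overline{s_\gamma}\, k_1) = v s_\gamma$; writing this element as $n_1 \overline{v s_\gamma}\, n_1'$ with $n_1' \in N_{v s_\gamma}^-$ and using the normality $K \lhd \mathbf{K}$ to commute $k_1$ past $\overline{w''}$, one reduces to applying the induction hypothesis to $\overline{v s_\gamma}\, n_1' \overline{w''}$, whose middle Weyl element has length $\ell(w') - 1$. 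The subword property of Bruhat order then lifts the resulting $\tilde w'' \leq w''$ to $\tilde w' := s_\gamma \tilde w'' \leq w'$. In case (2) one has $v\gamma \in \Phi_{{\rm rel}, -}$ and $\overline{v}\, n\, \overline{s_\gamma} = n_1 \overline{t_1}\, \overline{v}\, n_1'$ with $H(t_1) = x (v\gamma^\vee)$, $|x| \ll_K 1$; the induction hypothesis applied to $\overline{v}\, n_1' \overline{w''}$ yields $\tilde w' = \tilde w'' \leq w''$ together with a torus contribution $H_0 \in \mathcal{C}(S^\circ(v \tilde w''))_{\geq -X'}$, so that the total torus part becomes $x(v\gamma^\vee) + H_0$.

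The main obstacle will be to verify, in case (2), that the extra contribution $x(v\gamma^\vee)$ itself lies in $\mathcal{C}(S^\circ(v \tilde w''))_{\geq -X}$, i.e., that the coroot $v\gamma^\vee$ has support (in the simple coroot basis) contained in $S^\circ(v \tilde w'')$. Since $v\gamma \in \Phi_{{\rm rel}, -}$, this is precisely where Lemma~\ref{lemma2} must enter: applied with suitably chosen arguments---with $-\gamma$ playing the role of $\alpha$ and an auxiliary $w'_0 \in \mathbf{W}$ such that $w'_0 s_\gamma \leq v$ and $S(v\, (w'_0)^{-1}) \subseteq S^\circ(v \tilde w'')$---it identifies the simple roots appearing in $-v\gamma$ as lying in $S^\circ(v \tilde w'')$. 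Boundedness of $x$ then closes the induction. The remaining bookkeeping (tracking how $X$ depends on $K$, and ensuring that all $K$-conjugations used to commute $k$-factors past Weyl representatives land back inside $K$) is routine, afforded by $K \lhd \mathbf{K}$.
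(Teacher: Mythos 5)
The first part (that $\mw(g)<w$) and the overall strategy—work along the reduced word of $w_i$, iterate Lemma~\ref{lemma3} to update $\mw$ and $H(a(\cdot))$, and use $K\lhd\mathbf{K}$ to shuttle the $k$-factors past the Weyl representatives—are correct and are morally what the paper has in mind. However, there are two genuine gaps.

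First, the ``more flexible'' inductive statement is false as stated for arbitrary $(v,w')\in\mathbf{W}^2$. Take $\mathbf{G}=\mathrm{SL}_2$, $(v,w')=(s_\alpha,s_\alpha)$, and $n=n_\alpha(x_0)$ with $|x_0|$ large. Using \eqref{(7) type I}, $\overline{s_\alpha}n\overline{s_\alpha}=\ast\, n_\alpha(-x_0^{-1})\alpha^\vee(x_0^{-1})\overline{s_\alpha}n_\alpha(-x_0^{-1})$, so $\mw=s_\alpha$ and $H(a(\cdot))=-\nu(x_0)\alpha^\vee$, a large positive multiple of $\alpha^\vee$; right multiplication by $k\in K$ cannot change this appreciably. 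Yet $\tilde w'$ is forced to be $e$, $S^\circ(s_\alpha)=S(s_\alpha w_0)=\emptyset$, and $\mathcal{C}(\emptyset)_{\geq-X}=\{0\}$. This $(v,w')$ is exactly the kind that does \emph{not} fit into a reduced decomposition $w=vsw'$, so the constraint coming from the lemma's hypothesis (preserved, but silently, by your inductive step, since $\ell(v)+\ell(w')$ is non-increasing) is essential and must be carried as part of the inductive invariant. Without it, the support-containment $-v\gamma\in\Phi(S^\circ(v\tilde w''))$ that you need in case~(2) simply fails, as the example shows.

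Second, the application of Lemma~\ref{lemma2} is not set up correctly and, more importantly, hides the actual content of the step. Lemma~\ref{lemma2} requires $\alpha\in\Phi_{{\rm rel},+}$, so ``$-\gamma$ playing the role of $\alpha$'' is not admissible; one would have to use $w:=vs_\gamma$, $\alpha:=\gamma$ (giving $w\alpha=-v\gamma>0$) and then exhibit an auxiliary $w'_0$ with $w'_0 s_\gamma\leq vs_\gamma$ and $S(vs_\gamma(w'_0)^{-1})\subseteq S^\circ(v\tilde w'')$. You leave $w'_0$ entirely unspecified, and it is precisely in producing it that the reduced-decomposition constraint must intervene (it cannot exist in the $\mathrm{SL}_2$ example). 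Note also that the paper's pointer invokes only Lemma~\ref{lemma3} in the proof of this lemma and reserves Lemma~\ref{lemma2} for part~(3) of Lemma~\ref{lmm5}; so the route through Lemma~\ref{lemma2} here appears to be an extra detour, and even if it were intended, the present sketch does not supply the verification that closes the induction.
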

\begin{proof}
This lemma is proved similarly as \cite[Lemma~4]{LM13} using Lemma~\ref{lemma3} instead of \cite[Lemma~3]{LM13}
\end{proof}
%
%
%
%
%
%
%
%
%
%
%
%
%
\begin{lemma}
\label{lmm5}
Let $w$ be as before and suppose that $\alpha_i$ is of type (II).
Let $g = \overline{w} nw_i^{-1} x_{\alpha_i}(r, m) w_i$ with $n \in {}_i N$ and $r, m \in E_{\alpha_i}$ such that 
$N_{E_{\alpha_i} \slash F_{\alpha_i}}(r) = - \mathrm{Tr}_{E_{\alpha_i} \slash F_{\alpha_i}}(m)$.

Then $\mw(g w_i^{-1} x_{-\alpha_i}(-rm^{-1}, \bar{m}^{-1}) w_i) < w = \mw(g)$.
Moreover, given $K$, either $|m| \ll_K 1$ or there exists $k \in K$ such that 
\begin{enumerate}
\item $\mw(gk) < w = \mw(g)$
\item $H(a(gk)) - \nu(m) w w_i^{-1} \cdot( \alpha_i^\vee) \in \mathcal{C}(S^\circ(\mw(gk)))_{-X}$ with $X \ll_K 1$, and
\item $w w_i^{-1} \alpha_i \in \Phi_{\rm rel}(S^\circ(\mw(gk)))$.
\end{enumerate}
\end{lemma}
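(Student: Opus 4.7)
The plan is to follow the strategy of \cite[Lemma~5]{LM13}, substituting the type~(II) identity \eqref{(6) type II} for its type~(I) analogue \eqref{(6) type I}. Combining \eqref{(6) type II} with the decomposition $\overline w = \ast\,\overline{{}_i w}\,\overline{s_{\alpha_i}}\,\overline{w_i}$ and the defining identity $\overline{s_{\alpha_i}}\overline{w_i}\,({}_i N)\,\overline{w_i}^{-1}\overline{s_{\alpha_i}}^{-1} = N_{{}_i w}^-$, I would move $n$ through the $\overline{s_{\alpha_i}}\overline{w_i}$-prefix: the two copies of $\overline{s_{\alpha_i}}$ collapse to $\overline{s_{\alpha_i}}^2\in T\cap\mathbf{K}$, the central torus factor $\alpha_i^\vee(m)$ transports through $\overline{{}_i w}$ into $({}_i w\,\alpha_i^\vee)(m^{-1}) = (w\,w_i^{-1}\alpha_i^\vee)(m)$, and the residual $x_{\alpha_i}(-r\bar m/m,\bar m)$ merges with the transported $n$ into a single element $n^\sharp\in N$. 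The outcome is
\[
g\,w_i^{-1}x_{-\alpha_i}(-rm^{-1},\bar m^{-1})\,w_i = \ast\cdot (w\,w_i^{-1}\alpha_i^\vee)(m)\cdot\overline{{}_i w}\cdot n^\sharp\cdot\overline{w_i}.
\]
Since $w = {}_i w\,s_{\alpha_i}\,w_i$ is reduced, $\ell({}_i w\,w_i)<\ell(w)$, so the Weyl component of the right-hand side is at most ${}_i w\,w_i<w$, which proves the first assertion.

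For the second part, the norm constraint $N_{E_{\alpha_i}/F_{\alpha_i}}(r) = -\mathrm{Tr}_{E_{\alpha_i}/F_{\alpha_i}}(m)$ forces $|r|\le|m|^{1/2}$, hence $|rm^{-1}|\le|m|^{-1/2}$ and $|\bar m^{-1}|=|m|^{-1}$. If $|m|$ exceeds a threshold $c_K$ depending only on $K$, then $k_0 := w_i^{-1}x_{-\alpha_i}(-rm^{-1},\bar m^{-1})\,w_i$ lies in $K$; otherwise $|m|\ll_K 1$, which is the first alternative. Next I would apply Lemma~\ref{lemma4} to $\overline{{}_i w}\cdot n^\sharp\cdot\overline{w_i}$ to produce $k_1\in K$ with $\mw\bigl(\overline{{}_i w}\,n^\sharp\,\overline{w_i}\,k_1\bigr) = {}_i w\,\tilde w_i$ for some $\tilde w_i\le w_i$ and $H\bigl(a(\overline{{}_i w}\,n^\sharp\,\overline{w_i}\,k_1)\bigr)\in\mathcal{C}(S^\circ({}_i w\,\tilde w_i))_{\ge -X'}$ with $X'\ll_K 1$. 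Setting $k := k_0 k_1\in K$, assertion~(1) holds with $\mw(gk) = {}_i w\,\tilde w_i<w$; assertion~(2) then follows by combining the dominant term $H\bigl((w\,w_i^{-1}\alpha_i^\vee)(m)\bigr) = \nu(m)\,w\,w_i^{-1}\alpha_i^\vee$ with the cone-bounded remainder furnished by Lemma~\ref{lemma4}; and assertion~(3) comes from Lemma~\ref{lemma2} applied to the pair $(w,\mw(gk))$ with the positive root $\beta_i = w_i^{-1}\alpha_i$, using the chain $\mw(gk)\cdot s_{\beta_i}\le{}_i w\,w_i\cdot s_{\beta_i} = w$ in Bruhat order.

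The main obstacle will be the more delicate bookkeeping required in type~(II): the rank-one subgroup $\tilde M_{\alpha_i}$ is $\mathrm{SU}(3)$ rather than $\mathrm{SL}_2$, so the nontrivial element $\overline{s_{\alpha_i}}^2\in T\cap\mathbf{K}$ and the various $\ast\in T\cap\mathbf{K}$ cocycle factors appearing in the Weyl-lift products must be carefully tracked, and the norm relation between $r$ and $m$ must be used repeatedly to control the residual coordinates entering $n^\sharp$. Once these details are handled in the style of \S\ref{bruhat order}, the argument is a faithful transcription of \cite[Lemma~5]{LM13}.
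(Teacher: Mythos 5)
Your proposal is correct and follows essentially the same route as the paper: set $k_0 = w_i^{-1}x_{-\alpha_i}(-rm^{-1},\bar m^{-1})w_i$, apply the type~(II) identity \eqref{(6) type II} to produce the torus factor $\alpha_i^\vee(m)$ and a residual unipotent, conjugate the ${}_iN$-part through $\overline{s_{\alpha_i}w_i}$ into $N_{{}_iw}^-$, then invoke Lemma~\ref{lemma4} for assertions (1)--(2) and Lemma~\ref{lemma2} for assertion (3). The only differences are cosmetic (you make the $\overline{s_{\alpha_i}}^{\,2}\in T\cap\mathbf{K}$ cocycle factor and the factorization $k=k_0k_1$ explicit, where the paper absorbs them into the $\ast$ notation and phrasing).
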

\begin{proof}
Let $k:= w_i^{-1} x_{-\alpha_i}(-rm^{-1}, \bar{m}^{-1}) w_i$.
If $|m|$ is sufficiently large with respect to $K$, then $k \in K$.
By \eqref{(6) type II}, we have
\[
gk=
\overline{w} n w_i^{-1} \alpha_i^\vee(m)  \overline{s_{\alpha_i}}  x_{\alpha_i}(-(r \bar{m}) \slash m, \bar{m}) w_i,
\]
which we can write as
\[
( \overline{w} w_i^{-1} \alpha_i^\vee(m) w_i  \overline{w}^{-1} ) \overline{w} n^\prime w_i^{-1}  \overline{s_{\alpha_i}}  x_{\alpha_i}(-(r \bar{m}) \slash m, \bar{m}) w_i
\]
where $n^\prime = (w_i^{-1} \alpha_i^\vee(m) w_i)^{-1}  n w_i^{-1} \alpha_i^\vee(m) w_i $.
Changing $n^\prime$ by a conjugate of it by an element of $T \cap \mathbf{K}$, if necessary, we can write this as
\[
\ast( \overline{w} w_i^{-1} \alpha_i^\vee(m) w_i  \overline{w}^{-1} ) \overline{w} n^\prime \overline{s_{\alpha_i} w_i}^{-1}  x_{\alpha_i}(-(r \bar{m}) \slash m, \bar{m}) w_i
=\ast( \overline{w} w_i^{-1} \alpha_i^\vee(m) w_i  \overline{w}^{-1} ) \overline{{}_i w} n^{\prime \prime}  x_{\alpha_i}(-(r \bar{m}) \slash m, \bar{m}) w_i
\]
where $n^{\prime \prime} = \overline{s_{\alpha_i} w_i} n^{\prime} \overline{s_{\alpha_i} w_i}^{-1} \in N_{{}_i w}^{-}$.
Then we conclude that 
\[
gk \in \ast( \overline{w} w_i^{-1} \alpha_i^\vee(m) w_i  \overline{w}^{-1} ) \overline{{}_i w} N \overline{w}_i.
\]
All but the last part directly follows from Lemma~\ref{lemma4}.
Further, the last part follows from Lemma~\ref{lemma2} as in the proof of \cite[Lemma~5]{LM13}.
\end{proof}
Similarly, when $\alpha_i$ is of type (I), we have the following lemma,
which is proved in the same way as the proof of \cite[Lemma~5]{LM13}.
\begin{lemma}
\label{lmm5 I}
Let $w$ be as before and suppose that $\alpha_i$ is of type (I).
Let $g = \overline{w} nw_i^{-1} n_{\alpha_i}(x) w_i$ with $n \in {}_i N$ and $x \in F_{\alpha_i}$.

Then $\mw(g w_i^{-1} x_{-\alpha_i}(-x^{-1}) w_i) < w = \mw(g)$.
Moreover, given $K$, either $|x| \ll_K 1$ or there exists $k \in K$ such that 
\begin{enumerate}
\item $\mw(gk) < w = \mw(g)$
\item $H(a(gk)) - \nu(m) w w_i^{-1} \cdot (\alpha_i^\vee) \in \mathcal{C}(S^\circ(\mw(gk)))_{-X}$ with $X \ll_K 1$, and
\item $w w_i^{-1} \alpha_i \in \Phi_{\rm rel}(S^\circ(\mw(gk)))$.
\end{enumerate}

\end{lemma}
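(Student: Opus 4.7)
The plan is to mimic the proof of Lemma~\ref{lmm5} essentially verbatim, substituting the type-(I) Chevalley identity \eqref{(6) type I} in place of its type-(II) counterpart \eqref{(6) type II} at the single step where explicit computation in the rank-one subgroup enters. No new ideas are required beyond tracking how a single parameter $x \in F_{\alpha_i}^\times$ replaces the two parameters $(r,m)$ of the $\mathrm{SU}(3)_\alpha$-case; the Bruhat-theoretic scaffolding (Lemma~\ref{lemma2} and Lemma~\ref{lemma4}) is identical.

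First I would set $k := w_i^{-1} n_{-\alpha_i}(-x^{-1}) w_i$ and apply \eqref{(6) type I} to rewrite
\[
gk = \overline{w}\, n\, w_i^{-1}\, \alpha_i^\vee(x)\, \overline{s_{\alpha_i}}\, n_{\alpha_i}(-x)\, w_i.
\]
Pushing the torus element $w_i^{-1}\alpha_i^\vee(x) w_i$ past $n \in {}_i N$ (which merely conjugates $n$ within ${}_i N$) and then across $\overline{w}$ produces a leading torus factor $t^\ast \in T$ with $H(t^\ast) = \nu(x) \cdot (w w_i^{-1} \cdot \alpha_i^\vee)$, while the reduced-decomposition identity $\overline{w} = \ast\, \overline{{}_i w}\, \overline{s_{\alpha_i}}\, \overline{w_i}$ absorbs $\overline{s_{\alpha_i}}$ to yield
\[
gk = t^\ast \cdot \overline{{}_i w}\, n^{\prime\prime}\, n_{\alpha_i}(-x)\, w_i, \qquad n^{\prime\prime} \in N_{{}_i w}^-.
\]
This factorisation already establishes the unconditional strict Bruhat inequality $\mw(g\, w_i^{-1} n_{-\alpha_i}(-x^{-1}) w_i) < w$, because the right-hand factor lies in the Bruhat closure of $B\, {}_i w\, w_i B \subset \bigcup_{w^{\prime\prime} \le w_i} B\, {}_i w\, w^{\prime\prime} B$ and $\ell({}_i w\, w^{\prime\prime}) \le \ell({}_i w) + \ell(w_i) = \ell(w) - 1$.

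Next, if $|x| \ll_K 1$ the first alternative of the dichotomy already holds; otherwise $|x| \gg_K 1$ forces $n_{-\alpha_i}(-x^{-1}) \in \mathbf{K}$, hence $k \in K$. Applying Lemma~\ref{lemma4} to the remaining factor $\overline{{}_i w}\, n^{\prime\prime}\, n_{\alpha_i}(-x)\, w_i \in \overline{{}_i w}\, N\, w_i$ refines $\mw(gk)$ to an element of the form ${}_i w\, \tilde{w}_i$ with $\tilde{w}_i \le w_i$ after multiplication by a further element of $K$, and contributes a residual in $\mathcal{C}(S^\circ(\mw(gk)))_{\ge -X}$ with $X \ll_K 1$; combining this with $H(t^\ast) = \nu(x)\cdot (ww_i^{-1}\cdot \alpha_i^\vee)$ gives conclusion (2). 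Conclusion (3), that $w w_i^{-1} \alpha_i \in \Phi_{\rm rel}(S^\circ(\mw(gk)))$, then follows from Lemma~\ref{lemma2}, applied exactly as in the concluding step of the type-(II) proof, with the root $w w_i^{-1} \alpha_i \in \Phi_{\rm rel, +}$ playing the role of $w \alpha_i$ there.

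The main obstacle, such as it is, is purely bookkeeping: tracking the undetermined torus factors $\ast \in T \cap \mathbf{K}$ that arise each time one invokes $\overline{w_1 w_2} = \ast\, \overline{w_1}\, \overline{w_2}$ or commutes a torus element past a unipotent one, and verifying that the conjugate $w_i^{-1} \alpha_i^\vee(x) w_i$ indeed normalises $n \in {}_i N$. These factors are uniformly bounded modulo $\mathbf{K}$ and do not displace $H(a(gk))$ outside the cone $\mathcal{C}(S^\circ(\mw(gk)))_{\ge -X}$, but confirming this requires the same patient accounting as in the type-(II) proof. Beyond this, no conceptually new input is needed.
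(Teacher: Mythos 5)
Your proposal is correct and matches the paper's approach. The paper's own treatment of Lemma~\ref{lmm5 I} is the one-line remark that it is ``proved in the same way as the proof of \cite[Lemma~5]{LM13}'', i.e.\ by running the argument given for the type-(II) Lemma~\ref{lmm5} with the $\mathrm{SL}_2$-identity \eqref{(6) type I} substituted for \eqref{(6) type II}, and that is exactly what you do; the invocation of Lemma~\ref{lemma4} (for the Bruhat-order drop and the cone estimate) and Lemma~\ref{lemma2} (for conclusion (3)) is the same as in the type-(II) case.
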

\begin{lemma}
\label{lmm6}
Let $w \in \mathbf{W}$ and let $\Phi_w = \{ \beta \in \Phi_{{\rm rel}, +} : w^{-1} \beta < 0 \}$.
Then for any $n \in N$ we have $H(\overline{w} n) = \sum_{\alpha \in \Phi_w} c_\alpha \alpha^\vee$ with $c_\alpha \leq 0$, for all $\alpha \in \Phi_w$.
Thus, $|\alpha^\ast|(a(\overline{w}n)) \leq 1$ for all $\alpha \in \triangle_0$
with equality if $\alpha \not \in S(w)$.
Moreover, the map $n \mapsto H(\overline{w} n)$ from $N_w^-$ to $\mathfrak{a}_T$ is proper.
\end{lemma}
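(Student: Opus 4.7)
The plan is to argue by induction on the length $\ell(w)$, reducing each step to the rank-one Iwasawa decomposition for the final simple reflection in a reduced expression. The base case $\ell(w)=0$ is immediate: $N_e^- = \{e\}$, $\Phi_e = \emptyset$, $H(e)=0$, and properness is vacuous.

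For the inductive step, I fix a reduced decomposition $w = s_{\alpha_k}\cdots s_{\alpha_1}$, set ${}_1 w := s_{\alpha_k}\cdots s_{\alpha_2}$, and decompose any $n \in N_w^-$ as $n = {}_1 n \cdot \nu$ with ${}_1 n \in {}_1 N$ and $\nu \in N_{\alpha_1}$ (this is the decomposition $N_w^- = {}_1 N \cdot N_{\beta_1} \cdot N_1$ with $\beta_1 = \alpha_1$ and $N_1 = \{e\}$). Then
\[
\overline{w} n \;=\; \ast \, \overline{{}_1 w} \cdot \tilde n \cdot \overline{s_{\alpha_1}}\nu,
\qquad \tilde n := \overline{s_{\alpha_1}} \, {}_1 n \, \overline{s_{\alpha_1}}^{-1} \in N_{{}_1 w}^-.
\]
I apply the rank-one Iwasawa decomposition to $\overline{s_{\alpha_1}}\nu$ using the rank-one identity of type (I) or type (II) from the preceding discussion according to the type of $\alpha_1$, obtaining $\overline{s_{\alpha_1}}\nu = n^\bullet t^\bullet k^\bullet$ with $t^\bullet \in T_{\alpha_1}$; a direct evaluation (using the definition of $|\alpha_0|$ from Section~2 in the type (II) case) gives $H(t^\bullet) = c\,\alpha_1^\vee$ with $c \leq 0$, vanishing precisely when $\nu \in N_{\alpha_1} \cap \mathbf{K}$ and tending to $-\infty$ as $\nu$ leaves every compact subset of $N_{\alpha_1}$. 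Commuting $t^\bullet$ through $\tilde n$ (which merely replaces it by a $T$-conjugate, still in $N_{{}_1 w}^-$) and using additivity of $H$ under torus translations gives
\[
H(\overline{w} n) \;=\; c\,\bigl({}_1 w \cdot \alpha_1^\vee\bigr) \;+\; H\bigl(\overline{{}_1 w} \, \tilde n'\bigr).
\]

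The first summand lies on $({}_1 w \alpha_1)^\vee = -(w\beta_1)^\vee \in \Phi_w^\vee$ with non-positive coefficient, and the inductive hypothesis expresses the second as a non-positive combination of $\gamma^\vee$, $\gamma \in \Phi_{{}_1 w}$. A brief check confirms $\Phi_{{}_1 w} \subset \Phi_w$: for any positive $\gamma \neq -{}_1 w \alpha_1$ with $({}_1 w)^{-1}\gamma < 0$, the simple reflection $s_{\alpha_1}$ preserves the negativity of $({}_1 w)^{-1}\gamma$, so $w^{-1}\gamma = s_{\alpha_1}({}_1 w)^{-1}\gamma$ is negative; the excluded case $\gamma = -{}_1 w \alpha_1$ is itself negative. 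This establishes the first assertion. The bound $|\alpha^\ast|(a(\overline{w}n)) \leq 1$ with equality for $\alpha \notin S(w)$ then follows from $\Phi_w \subset \Phi_{\rm rel}(S(w))$ and the standard positivity $\langle \alpha^\ast, \beta^\vee\rangle \geq 0$ for $\beta \in \Phi_{{\rm rel},+}$. Properness on $N_w^-$ follows from the same induction together with the rank-one fact that $c \to -\infty$ as $\nu$ leaves every compact set.

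The main obstacle will be the rank-one Iwasawa calculation in the type (II) case. There one has to track how the non-trivial $|\alpha_0|$-function on $T_{\alpha_1}$ arising from the quadratic extension $E_{\alpha_1}/F_{\alpha_1}$ interacts with identity (6) of type (II); in particular, verifying the correct sign of $c$ when $\nu = x_{\alpha_1}(r,m)$ has large $|m|$ requires exploiting the constraint $N_{E_{\alpha_1}/F_{\alpha_1}}(r) = -\mathrm{Tr}_{E_{\alpha_1}/F_{\alpha_1}}(m)$ to control $|rm^{-1}|$ and ensure that $x_{-\alpha_1}(-rm^{-1}, \bar m^{-1}) \in \mathbf{K}$. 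Once the rank-one statement is confirmed, the rest of the induction is routine.
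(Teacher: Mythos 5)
Your inductive strategy is exactly what the paper has in mind: the paper's proof is essentially a citation of \cite[Lemma~6]{LM13}, and your argument is the expected transposition of that proof to the quasi-split setting, with the rank-one Iwasawa computation and the norm--trace constraint in the type (II) case as the new ingredients. Your identification $\Phi_w=\Phi_{{}_1 w}\sqcup\{{}_1 w\,\alpha_1\}$ and the reduction to the rank-one cell are also correct.

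There is, however, a genuine gap in the reduction step. You correctly record the rank-one Iwasawa factorization $\overline{s_{\alpha_1}}\nu=n^\bullet t^\bullet k^\bullet$ with a nontrivial $n^\bullet\in N_{\alpha_1}\cap\mathbf{K}$, but the step ``commuting $t^\bullet$ through $\tilde n$'' then silently drops $n^\bullet$. Since ${}_1 w\,\alpha_1>0$ the group $N_{\alpha_1}$ lies in $N_{{}_1 w}^+$, \emph{not} in $N_{{}_1 w}^-$, so $\tilde n\,n^\bullet\notin N_{{}_1 w}^-$ and $\tilde n$ is not ``merely replaced by a $T$-conjugate''. What is required is an extra decomposition: after pulling $t^\bullet$ across so that
\[
H(\overline w n)
\;=\;H\bigl(({}_1 w\cdot\alpha_1^\vee)(c_0)\bigr)
\;+\;H\bigl(\overline{{}_1 w}\,(t^\bullet)^{-1}\tilde n\, n^\bullet\, t^\bullet\bigr),
\]
one must factor $(t^\bullet)^{-1}\tilde n\, n^\bullet\, t^\bullet\in N$ as $m^+ m^-$ with $m^\pm\in N_{{}_1 w}^\pm$, push $m^+$ through $\overline{{}_1 w}$ (so it lands in $N$ and is killed by $H$), and only then apply the inductive hypothesis to $\overline{{}_1 w}\,m^-$. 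Relatedly, the properness assertion needs more than the rank-one blow-up of $c$: when $\nu$ stays bounded but ${}_1 n\to\infty$ you must still argue that $m^-$ escapes to infinity, which requires the (triangular) structure of the coordinate change $N_{{}_1 w}^-\times N_{{}_1 w}^+\to N_{{}_1 w}^+\times N_{{}_1 w}^-$, or, more robustly, the observation that pairing $H(\overline w n)$ with a strictly dominant $\lambda$ yields a sum of nonpositive terms, so boundedness of the sum forces boundedness of each contribution, and then inductive properness applies. These are fillable gaps, but as written the argument is incomplete precisely at the points where care is needed.
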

\begin{proof}
This lemma is proved in the same way as \cite[Lemma~6]{LM13} using the relation
\[
H(\overline{s_\alpha} x_{\alpha}(r, m)) = \mathrm{min}(0, -v(y)) \alpha^\vee
\]
for any $\alpha \in \triangle_0$ of type (I) (resp. type (II))  because of \eqref{(7) type I} (resp. \eqref{(7) type II}).
\end{proof}
The following lemma is proved in the same way as \cite[Lemma~7]{LM13},
which is a special case of Proposition~\ref{prp1}.
\begin{lemma}
\label{lmm7}
For any $W \in \Omega^\circ(N \backslash G, \psi_N)$, $w \in \mathbf{W}$ and $\varepsilon > 0$, $\mathrm{supp}_{A^\varepsilon(w) w} W$ is compact.
\end{lemma}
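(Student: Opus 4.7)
My plan is to prove Lemma~\ref{lmm7} by combining three boundedness estimates on the torus variable $t$: an upper bound coming from the general finiteness of support of Whittaker functions, a lower bound coming from membership in $\Omega^\circ(N\backslash G,\psi_N)$, and a lower bound coming from membership in $A^\varepsilon(w)$. Together these will give both upper and lower bounds on $|\beta^\ast|(t)$ for enough roots $\beta\in\Phi$ to force $t$ into a compact subset of $T$.

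First I would fix $W\in\Omega^\circ(N\backslash G,\psi_N)^K$ for some normal open subgroup $K\subset\mathbf{K}$ and take any $t\in A^\varepsilon(w)$ with $t\overline{w}\in\mathrm{supp}\,W$. Since $\psi_N$ is nondegenerate, the standard argument (conjugating by elements of $N\cap\mathbf{K}$ and applying right $K$-invariance) yields a constant $C=C(W,K)>0$ such that $|\alpha_0|(t)\leq C$ for every $\alpha\in\triangle_0$. Next, because $t\overline{w}\in BwB$ and $W\in\Omega^\circ$, the definition of $\Omega^\circ(N\backslash G,\psi_N)$ provides $c>0$ with $|\alpha_0|(t)\geq c$ for every $\alpha\in S^\circ(w)$. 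Finally, the condition $t\in A^\varepsilon(w)$ directly gives $|\beta^\ast|(t)\geq\varepsilon$ for all $\beta\in\Phi_\alpha$ with $\alpha\in\triangle_0\setminus S^\circ(w)$; up to a fixed power this is equivalent to a lower bound on $|\alpha_0|(t)$ in those directions.

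Putting these together, $|\alpha_0|(t)$ (and all of $|\beta^\ast|(t)$ for $\beta\in\bigcup_\alpha\Phi_\alpha$) is pinched in an interval $[c',C']$ with $c',C'>0$ depending only on $W$, $K$ and $\varepsilon$, uniformly for every simple root $\alpha\in\triangle_0$. Since $\mathbf{G}$ is semisimple the characters $\{\beta^\ast:\beta\in\Phi\}$ span $X^\ast(T)\otimes\mathbb{R}$, and hence the Harish-Chandra map $H:T\to\mathfrak{a}_T$ realizes $T/(T\cap\mathbf{K})$ as a lattice on which these characters form a coordinate system; two-sided bounds on a spanning collection of characters therefore confine $t$ to a compact subset of $T$. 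This shows that the set of $t\in A^\varepsilon(w)$ with $t\overline{w}\in\mathrm{supp}\,W$ is relatively compact, which is exactly the claim.

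The argument is essentially formal once one has the three input estimates, so I do not expect a serious obstacle; the only mildly delicate point is passing between the various extensions $|\alpha_0|$ (chosen one per simple restricted root $\alpha\in\triangle_0$) and the characters $|\beta^\ast|$ for $\beta\in\Phi_\alpha$ attached to type (II) roots, where the precise normalization given in the preliminaries (e.g.\ $|\alpha_0|(\alpha^\vee(x))=|\alpha|(\alpha^\vee(N_{E_\alpha/F_\alpha}(x)))^{1/2}$) must be used to transfer bounds between $T$-characters. This is where the proof genuinely differs from the split case of \cite[Lemma~7]{LM13}, but it amounts to a bookkeeping check rather than a new idea.
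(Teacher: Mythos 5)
Your high-level strategy is the same one the paper uses (following \cite[Lemma~7]{LM13}): for $t\in A^\varepsilon(w)$ with $W(t\bar{w})\ne0$, combine (a) the upper bound on $|\alpha_0|(t)$ for all $\alpha\in\triangle_0$ coming from the support property of Whittaker functions, (b) the lower bound on $|\alpha_0|(t)$ for $\alpha\in S^\circ(w)$ coming from the definition of $\Omega^\circ$, and (c) the lower bound $|\beta^\ast|(t)\ge\varepsilon$ for $\beta\in\Phi_\alpha$, $\alpha\notin S^\circ(w)$, coming from $t\in A^\varepsilon(w)$, to confine $H(t)$ to a bounded region of $\mathfrak{a}_T$.

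However, the step where you glue these three one-sided estimates together is not justified, and the specific assertion you rely on is false. You claim that $|\beta^\ast|(t)\ge\varepsilon$ is ``up to a fixed power \ldots equivalent to a lower bound on $|\alpha_0|(t)$ in those directions.'' The fundamental weight $\beta^\ast$ (relatively, $\varpi_\alpha$) is \emph{not} a scalar multiple of the simple root $\alpha$ as soon as the relative rank is at least $2$ (e.g.\ $\varpi_1=\tfrac13(2\alpha_1+\alpha_2)$ in type $A_2$), so a one-sided bound on $|\varpi_\alpha(t)|$ does not by itself yield a one-sided bound on $|\alpha(t)|$. Consequently your ``pinched in $[c',C']$'' claim for $|\alpha_0|(t)$ with $\alpha\notin S^\circ(w)$, and for $|\beta^\ast|(t)$, is never actually established. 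The correct passage from (a),(b),(c) to two-sided bounds on a spanning family of characters is short but not formal: writing $y=H(t)=\sum_\alpha a_\alpha\alpha^\vee$ the inputs read $(Ay)_\alpha\le K_1$ for all $\alpha$, $(Ay)_\alpha\ge K_2$ for $\alpha\in S^\circ(w)$, and $a_\alpha\ge K_3$ for $\alpha\notin S^\circ(w)$, where $A$ is the relative Cartan matrix; one must use that $A^{-1}$ has nonnegative entries to turn the first constraint into an upper bound on $a$, and then (using $A_{\alpha\beta}\le0$ off-diagonal together with the other two constraints) obtain a lower bound on $a$. This sign-of-the-inverse-Cartan-matrix input is the crux of the lemma and is exactly what your ``up to a fixed power'' shortcut sweeps away. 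Once that step is made explicit, the rest of what you wrote --- the three input estimates, the use of the $|\alpha_0|$ normalization to transfer bounds in the quasi-split type (II) case, and the final properness of $H$ on $T/(T\cap\mathbf{K})$ --- is correct and matches the intended proof.
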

%
%
%
%
%
%
%
%
%
%
%
%
%
\subsection{Proof of of Proposition~\ref{prp1}}
We follow the argument in \cite[3.4]{LM13} (see also the proof of \cite[Theorem~9.5]{Ba05}).

We prove by induction on $\ell(w)$ that for any $w \in \mathbf{W}$ and $\varepsilon > 0$
\begin{equation}
\label{17}
\text{
for any $W \in \Omega^\circ(N \backslash G, \psi_N)$,
$\mathrm{supp}_{A^\varepsilon(w)wN} W$ is bounded modulo $N$.}
\end{equation}
For $w = 1$, this is a special case of Lemma~\ref{lmm7} above.
Assume now \eqref{17} holds for all $w^\prime$ with $\ell(w^\prime) < \ell(w)$.

Let $w = s_{\alpha_k} \cdots s_{\alpha_1}$ be a reduced decomposition of $w$ and 
let $\beta_1, \dots, \beta_k$, ${}_i w, w_i, N_i, {}_i N$ be as above.
Let 
$n \in N_w^-$ and write 
\[
n =
\left\{
\begin{array}{ll}
 {}_i n  n_{\beta_i}(x) n_i & \text{ if $\alpha_i$ if of type (I)}\\
 &\\
  {}_i n n_{\beta_i}(r, m) n_i& \text{ if $\alpha_i$ if of type (II)}
\end{array}
\right.
\]
To prove \eqref{17}, we will show by induction on $i$ that 
\begin{equation}
\label{18}
\text{for any $W \in \Omega^\circ(N \backslash G, \psi_N)$, $\mathrm{supp}_{NA^{\varepsilon}(w) {}_iN} W$
is bounded modulo $N$.}
\end{equation}
For $i=k$ this follows from Lemma~\ref{lmm7} since ${}_k N = 1$.
For the induction step, assume that \eqref{18} holds for $i$ and we will show it for $i-1$.

Let $W \in \Omega^\circ(N \backslash G, \psi_N)^K$.
Suppose that $\alpha_i$ is of type (I), and 
assume that $W(g) \ne 0$ with $g = t \bar{w} n n_{\beta_i}(x)$, $n \in {}_i N$ and $t \in A^\varepsilon(w)$.
Then we may apply the same argument as the proof of \cite[Proposition~1]{LM13} by Lemma~\ref{lmm5 I},
and thus \eqref{18} holds for $i-1$.
Now, suppose that  $\alpha_i$ is of type (II), and assume that 
 $W(g) \ne 0$ with $g = t \bar{w} n w_i^{-1} n_{\alpha_i}(r, m) w_i$, $n \in {}_i N$  and $t \in A^\varepsilon(w)$.
 
 By Lemma~\ref{lmm5}, either $|m| \ll_W 1$ or there exists $k \in K$
 such that $w^\prime:= \mathbf{w}(gk) < w$, $w \beta_i \in \Phi_{\rm rel}(S^\circ(w^\prime))$ and 
 if $t^\prime = a(gk)$, then $H(t^\prime)-H(t) - \nu(m) w w_i^{-1} \cdot (\alpha_i^\vee) \in \mathcal{C} (S^\circ(w^\prime))_{-T}$
 with $T \ll_K 1$.
 
 In the former case, we can use the induction hypothesis of \eqref{18}
 for the finitely may translates $R(n^\prime)W$ where $n^\prime$ lies in a suitable compact subgroup 
 of $\overline{w_i}^{-1} N_{\alpha_i} \overline{w_i}$ (depending only on $W$).
 
 In the latter case, 
 \[
 |\alpha^\ast|(t^\prime) = |\alpha^\ast|(t)
 \]
 for all $\alpha \not \in S^\circ(w^\prime)$.
 Hence, $t^\prime \in A^\varepsilon(w^\prime)$ since $t \in A^\varepsilon(w)$. 
 Since $w^\prime < w$, we may apply the inductive assumption, namely \eqref{17} for $w^\prime$.
 Then we find that $gk$ and therefore, $g$ is compactly supported modulo $N$, which finishes the proof of Proposition~\ref{prp1}.
%
%
%
%
%
%
%
%
%
%
%
%
%
\subsection{Proof of Lemma~\ref{lmm1}}
This is proved in the same way as the proof of \cite[Lemma~1]{LM13}
using Lemma~\ref{lmm6} instead of \cite[Lemma~6]{LM13}.
%
%
%
%
%
%
%
%
%
%
%
%
%
\subsection{Proof of Proposition~2}
Let $U_0 = N \cap \mathbf{K}$.
Fix $a \in A$ such that $|\alpha(a)| >1$, for all $\alpha \in \triangle_0$ (and hence for all $\alpha \in \Phi_{{\rm rel}, +}$).
For any $m \geq 1$ define $U_m= a^m U_0 a^{-m}$.
Thus, $U_1 \subset U_2 \subset \cdots$ and $\cup_{m=1}^\infty U_m = N$.
Set
\[
W_m := R_{U_m, \psi_N} W = \frac{1}{\mathrm{vol}(U_m)}\int_{U_m} R(u) W \psi_N(u)^{-1} \, du \in \Omega(N \backslash G, \psi_N).
\]
Clearly,
\begin{equation}
\label{19}
W_m(gu) = \psi_N(u) W_m(g), \quad \text{for all } u \in U_m, g \in G.
\end{equation}
We fix the reduced decomposition $s_{\tilde{\alpha}_l} \cdots s_{\tilde{\alpha}_1}$ of $w_0$.
Note that $N_{w_0}^- = N$
Set 
\[
\tilde{\beta}_i = \tilde{w}_i^{-1} \tilde{\alpha}_i.
\]
If necessary, after renumbering, we may suppose that $\tilde{\alpha}_1, \dots, \tilde{\alpha}_{s}$ are of  type (I) and $\tilde{\alpha}_{s+1}, \dots, \tilde{\alpha}_{l}$ are ot type (II).
\begin{lemma}[cf. Lemma~8 in \cite{LM13}]
\label{lemma8}
Let $n = n_{\tilde{\beta}_l}(x_l) \cdots n_{\tilde{\beta}_m}(x_m) \cdot x_{\beta_{m+1}}(p_{m+1}, q_{m+1}) \cdots x_{\beta_{1}}(p_{1}, q_{1})$.
Then $u \in U_m$ if and only if $|x_i| \leq |\tilde{\beta}_i(a^m)|$ and $|q_{j}| \leq |\tilde{\beta}_i(a^{2m})|$
for all $i, j$.
\end{lemma}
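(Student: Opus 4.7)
The plan is to reduce the membership criterion to $\mathbf{K}$-integrality by using the defining relation $U_m = a^m U_0 a^{-m}$ with $U_0 = N \cap \mathbf{K}$: an element $u \in N$ lies in $U_m$ exactly when $a^{-m} u a^m$ lies in $N \cap \mathbf{K}$. We then analyze this condition one root-subgroup factor at a time in the stated decomposition.

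The first step is the root subgroup decomposition of $N \cap \mathbf{K}$. For the ordering of positive roots coming from a reduced expression of $w_0$ (suitably chosen so that the type~I and type~II factors cluster as in the statement), the product map $\prod_i N_{\tilde\beta_i} \to N$ is a bijection of varieties and moreover restricts to a bijection $\prod_i (N_{\tilde\beta_i} \cap \mathbf{K}) \to N \cap \mathbf{K}$. This is standard for Chevalley groups and extends to quasi-split groups by reduction to the rank-one factors $\tilde M_\alpha$, for which the normalizations $n_\alpha(\mathcal{O}_{F_\alpha}) = N_\alpha \cap \mathbf{K}$ (type~I) and $\{x_\alpha(r,m) : r,m \in \mathcal{O}_{E_\alpha}\} = N_\alpha \cap \mathbf{K}$ (type~II) were fixed at the outset via the covering maps $f_\alpha$.

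The second step is to track how conjugation by $a^m$ rescales each factor. For a type~I root, the parametrization gives $a^{-m} n_{\tilde\beta_i}(x) a^m = n_{\tilde\beta_i}(\tilde\beta_i(a^m)^{-1} x)$, and the integrality condition $|\tilde\beta_i(a^m)^{-1} x| \leq 1$ becomes $|x_i| \leq |\tilde\beta_i(a^m)|$. For a type~II root, a direct matrix computation inside the SU(3)-block $\tilde M_{\tilde\beta_i}$ gives $a \cdot x_{\tilde\beta_i}(r,m) \cdot a^{-1} = x_{\tilde\beta_i}(\tilde\beta_i(a) r,\, \tilde\beta_i(a)^2 m)$ for $a \in A$, the square reflecting that the second parameter spans the $2\tilde\beta_i$-eigenspace of $\mathfrak{g}$. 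Hence $a^{-m} x_{\tilde\beta_j}(p_j,q_j) a^m$ lies in $N_{\tilde\beta_j} \cap \mathbf{K}$ iff $|p_j| \leq |\tilde\beta_j(a^m)|$ and $|q_j| \leq |\tilde\beta_j(a^{2m})|$. The inequality on $p_j$ is already forced by the constraint $N_{E_{\tilde\beta_j}/F_{\tilde\beta_j}}(p_j) = -\mathrm{Tr}_{E_{\tilde\beta_j}/F_{\tilde\beta_j}}(q_j)$, since this yields $|p_j|^2 = |N(p_j)| \leq |q_j| \leq |\tilde\beta_j(a^{2m})| = |\tilde\beta_j(a^m)|^2$. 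Thus only the bound on $q_j$ needs to be recorded.

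The main obstacle will be the decomposition step, where the lemma prescribes a specific ordering of factors that clusters all type~I and all type~II contributions separately. This ordering need not correspond to any reduced expression of $w_0$, so a brute-force appeal to standard Chevalley decomposition is not enough. The remedy is to exploit the commutation relations $[N_{\tilde\beta_i}, N_{\tilde\beta_j}] \subset \prod_{\tilde\gamma} N_{\tilde\gamma}$, where $\tilde\gamma$ runs over positive combinations $a \tilde\beta_i + b \tilde\beta_j$: rearranging the product order modifies the remaining parameters only by quantities of comparable or smaller absolute value, which do not upset the inequalities obtained factor by factor. Once this rearrangement is justified, the forward and reverse implications of the lemma follow immediately from the scaling computation above.
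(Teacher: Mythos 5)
Your proposal is correct but takes a genuinely different route from the paper's. The paper does not invoke the global decomposition $N\cap\mathbf K=\prod_i(N_{\tilde\beta_i}\cap\mathbf K)$ at all. Its ``if'' direction uses only that each $a^m(N_{\tilde\beta_i}\cap\mathbf K)a^{-m}$ lies in the \emph{group} $U_m$, so the product of factors with bounded coordinates is automatically in $U_m$. For ``only if'' it induces on $j$: after reducing to $p_1=q_1=\cdots=p_{j-1}=q_{j-1}=0$, it isolates the $j$-th factor through the identity $x_{\beta_j}(p_j,q_j)=\overline{\tilde w_j}^{-1}\,\pi_{\tilde\alpha_j}(\overline{\tilde w_j}\,n\,\overline{\tilde w_j}^{-1})\,\overline{\tilde w_j}$ with $\pi_{\tilde\alpha_j}\colon N\to N_{\tilde\alpha_j}$ the canonical projection, and then uses the $A$-equivariance of $\pi_{\tilde\alpha_j}$ together with $a^{-m}na^m\in\mathbf K$ to read off the bound directly; this makes no demands on the ordering of $\Phi_{{\rm rel},+}$. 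Your route via factorization of $N\cap\mathbf K$ is also sound, but the ``main obstacle'' you raise is weaker than you suggest: the product map $\prod_\alpha(N_\alpha\cap\mathbf K)\to N\cap\mathbf K$ is a bijection for \emph{every} total ordering of the positive indivisible roots, not only those coming from a reduced expression for $w_0$; the commutator-rearrangement argument you outline is precisely the standard proof of this fact (Chevalley--Steinberg), so it should simply be cited rather than partially reconstructed. The trade-off is that your route reduces cleanly to ``factor, conjugate by $a^{-m}$, read off coordinates'' at the cost of importing the factorization theorem, while the paper's only ingredients are the rank-one scaling computations and the projection identity, so it is self-contained at the price of an extra induction. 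Both routes compute the scaling $a\,x_{\tilde\beta}(r,m)\,a^{-1}=x_{\tilde\beta}(\tilde\beta(a)r,\tilde\beta(a)^2m)$ and observe that the bound on $r$ in a type (II) factor is subsumed by the bound on $m$ via $N_{E_{\tilde\beta}/F_{\tilde\beta}}(r)=-\mathrm{Tr}_{E_{\tilde\beta}/F_{\tilde\beta}}(m)$.
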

\begin{proof}
Clearly, we have
\[
a^m (N_{\tilde{\beta}_i} \cap \mathbf{K}) a^{-m} = n_{\tilde{\beta}_i}(\tilde{\beta}_i(a^m) \mathcal{O})
\]
for $1 \leq i \leq s$, and 
\[
a^m (N_{\tilde{\beta}_i} \cap \mathbf{K}) a^{-m} = \{ n_{\tilde{\beta}_i}(\tilde{\beta}_i(a^m)p, \tilde{\beta}_i(a^{2m})q) : p, q \in \mathcal{O}_{E_{\alpha_i}}, 
N_{E_{\alpha_i} \slash E_{\alpha_i}}(p) = - \mathrm{Tr}_{E_{\alpha_i} \slash F_{\alpha_i}}(q) \}
\]
for $s+1 \leq i \leq l$.
If $\tilde{\beta}_i(a^{2m})q \in \mathcal{O}_{E_{\alpha_i}}$, then $\tilde{\beta}_i(a^m)p \in \mathcal{O}_{E_{\alpha_i}}$ because of the relation 
$N_{E_{\alpha_i} \slash F_{\alpha_i}}(p) = - \mathrm{Tr}_{E_{\alpha_i} \slash F_{\alpha_i}}(q)$.
Then the "if" direction follows.

Let prove the "only if" direction for $j$.
For the induction step, as well as for the base of the induction, we may assume that $p_1 = q_1 = \cdots =p_{j-1} = q_{j-1} =0$,
i.e. $n \in {}_{i-1} \tilde{N}$.
In this case, we observe that 
\[
x_{\beta_{j}}(p_{j}, q_{j}) = \overline{\tilde{w}_i}^{-1} \pi_{\tilde{\alpha}_i} (\overline{\tilde{w}_i} n \overline{\tilde{w}_i}^{-1}) \overline{\tilde{w}_i},
\]
where $\pi_{\tilde{\alpha}_i} : N \rightarrow N_{\tilde{\alpha}_i}$ is the canonical projection.
Write $n = a^m n^\prime a^{-m}$ where $n^\prime \in {}_{j-1} \tilde{N} \cap \mathbf{K}$.
Then since $\pi_{\tilde{\alpha}_i}$ is equivariant with respect to conjugation by $A$ we get
\[
x_{\beta_{j}}(p_{j}, q_{j})  = \overline{\tilde{w}_i}^{-1} \pi_{\tilde{\alpha}_i} (\overline{\tilde{w}_i} a^m n^\prime a^{-m} \overline{\tilde{w}_i}^{-1}) \overline{\tilde{w}_i}
=a^m  \overline{\tilde{w}_i}^{-1} \pi_{\tilde{\alpha}_i} (\overline{\tilde{w}_i}  n^\prime   \overline{\tilde{w}_i}^{-1}) \overline{\tilde{w}_i} a^{-m},
\]
or,
\[
x_{\beta_{j}}(\tilde{\beta}_i(a^{-m})p_{j}, \tilde{\beta}_i(a^{-2m})q_{j})   = \overline{\tilde{w}_i}^{-1} \pi_{\tilde{\alpha}_i} (\overline{\tilde{w}_i}  n^\prime   \overline{\tilde{w}_i}^{-1}) \overline{\tilde{w}_i} \in \mathbf{K}
\]
Then the required inequality follows.
\end{proof}
%
%
%
%
%
%
%
%
%
%
%
%
%
\begin{lemma}[cf. Lemma~9 in \cite{LM13}]
Let $n = {}_i n x_{\tilde{\beta}_i}(p_i, q_i)$ with ${}_i n  \in {}_i \tilde{N}$ and $p_i, q_i \in E_{\alpha_i}$ such that 
$N_{E_{\alpha_i} \slash F_{\alpha_i}}(p_i) = -\mathrm{Tr}_{E_{\alpha_i} \slash F_{\alpha_i}}(q_i)$.
Assume that $x_{\tilde{\beta}_i}(p_i, q_i) \not \in U_m$.
Then for any $n^\prime \in U_m$, we have $n n^\prime = {}_i \tilde{n} x_{\tilde{\beta}_i}(\tilde{p}_i, \tilde{q}_i) \tilde{n}_i$ where ${}_i \tilde{n} \in {}_i \tilde{N}$
$\tilde{n}_i \in U_m \cap \tilde{N}_i$ and $|q_i| = |\tilde{q}_i|$.
\end{lemma}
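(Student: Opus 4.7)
The plan is to follow the strategy of Lemma~9 in \cite{LM13}, adapted to the type (II) situation where $N_{\tilde\beta_i}$ is a Heisenberg-type subgroup with two coordinates $(p,q)$ constrained by $N_{E_{\alpha_i}/F_{\alpha_i}}(p)=-\mathrm{Tr}_{E_{\alpha_i}/F_{\alpha_i}}(q)$. The key new feature is that one must track both coordinates simultaneously, but only the ``imaginary'' coordinate $q$ will have its absolute value preserved.

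First I would decompose the given element $n'\in U_m$ along the factorization $N={}_i\tilde N\cdot N_{\tilde\beta_i}\cdot\tilde N_i$, writing $n'={}_i n'\cdot x_{\tilde\beta_i}(p'_i,q'_i)\cdot n'_i$. The preceding Lemma~8 shows that each factor automatically lies in $U_m$; in particular $|p'_i|\leq|\tilde\beta_i(a^m)|$, $|q'_i|\leq|\tilde\beta_i(a^{2m})|$, and $n'_i\in U_m\cap\tilde N_i$, so this last factor serves as the required $\tilde n_i$. Next, because ${}_{i-1}\tilde N={}_i\tilde N\rtimes N_{\tilde\beta_i}$, the group $N_{\tilde\beta_i}$ normalizes ${}_i\tilde N$, so there exists ${}_i n''\in{}_i\tilde N$ with $x_{\tilde\beta_i}(p_i,q_i)\cdot{}_i n'={}_i n''\cdot x_{\tilde\beta_i}(p_i,q_i)$. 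Setting ${}_i\tilde n:={}_i n\cdot{}_i n''$ and invoking the explicit multiplication in $\mathrm{SU}(3)_{\alpha_i}$,
\[
x_{\tilde\beta_i}(p_i,q_i)\,x_{\tilde\beta_i}(p'_i,q'_i)=x_{\tilde\beta_i}\bigl(p_i+p'_i,\;q_i+q'_i-p_i\bar p'_i\bigr),
\]
I obtain the desired decomposition $nn'={}_i\tilde n\cdot x_{\tilde\beta_i}(\tilde p_i,\tilde q_i)\cdot\tilde n_i$ with $\tilde q_i=q_i+q'_i-p_i\bar p'_i$.

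The substantive step is the norm equality $|q_i|=|\tilde q_i|$, and this is where I expect the only real obstacle. The defining Heisenberg relation gives $|p_i|^2=|p_i\bar p_i|=|\mathrm{Tr}_{E_{\alpha_i}/F_{\alpha_i}}(q_i)|\leq|q_i|$ by the ultrametric inequality. Hence the hypothesis $x_{\tilde\beta_i}(p_i,q_i)\notin U_m$, which by Lemma~8 means either $|p_i|>|\tilde\beta_i(a^m)|$ or $|q_i|>|\tilde\beta_i(a^{2m})|$, can in fact be sharpened to $|q_i|>|\tilde\beta_i(a^{2m})|$: indeed if $|q_i|\leq|\tilde\beta_i(a^{2m})|=|\tilde\beta_i(a^m)|^2$ then $|p_i|\leq|q_i|^{1/2}\leq|\tilde\beta_i(a^m)|$, a contradiction. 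Consequently $|q'_i|\leq|\tilde\beta_i(a^{2m})|<|q_i|$ and $|p_i\bar p'_i|=|p_i|\,|p'_i|\leq|q_i|^{1/2}\,|\tilde\beta_i(a^m)|<|q_i|$, so the ultrametric inequality forces $|\tilde q_i|=|q_i|$. The main obstacle is precisely this norm bookkeeping; in the type (I) case of \cite{LM13}, having a single coordinate makes the analogous estimate trivial, whereas here one must verify that the Heisenberg-type constraint between $p_i$ and $q_i$ is exactly tight enough for the type (II) analogue to still go through.
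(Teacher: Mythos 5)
Your argument is correct and follows the same route as the paper's: decompose $n'$ along ${}_i\tilde N\cdot N_{\tilde\beta_i}\cdot\tilde N_i$, absorb the ${}_i\tilde N$-factor by normality, use the $\mathrm{SU}(3)$ multiplication law $x_{\tilde\beta_i}(p,q)x_{\tilde\beta_i}(p',q')=x_{\tilde\beta_i}(p+p',q+q'-p\bar p')$, and finish by the ultrametric inequality. The only real difference is cosmetic: where the paper writes ``it is easy to see that $|p_i\overline{p_i'}|<|q_i|$,'' you spell out the bookkeeping, namely that the Heisenberg constraint $|p_i|^2=|N_{E_{\alpha_i}/F_{\alpha_i}}(p_i)|=|\mathrm{Tr}_{E_{\alpha_i}/F_{\alpha_i}}(q_i)|\leq|q_i|$ both collapses the hypothesis $x_{\tilde\beta_i}(p_i,q_i)\notin U_m$ to the single condition $|q_i|>|\tilde\beta_i(a^{2m})|$ and yields $|p_i\overline{p_i'}|\leq|q_i|^{1/2}|\tilde\beta_i(a^m)|<|q_i|$. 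This is exactly the observation already implicit in the paper's Lemma~8 (whose proof notes that the $q$-bound forces the $p$-bound via $N(p)=-\mathrm{Tr}(q)$), so your argument is a cleaner expansion of the same proof rather than a genuinely new one.
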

\begin{proof}
Write $n^\prime = {}_i n^{\prime} x_{\tilde{\beta}_i}(p_i^\prime, q_i^\prime) n_i^\prime$ with ${}_i n^{\prime}  \in {}_i \tilde{N}$
$p_i, q_i \in E$ such that $N_{E \slash L}(p_i) = - \mathrm{Tr}_{E \slash L}(q_i)$ and $n_i^\prime \in \tilde{N}_i$.
Then
\[
n n^\prime = {}_i n x_{\tilde{\beta}_i}(p_i, q_i) {}_i n^{\prime} x_{\tilde{\beta}_i}(p_i^\prime, q_i^\prime) n_i^\prime
=  {}_i n n^{\prime \prime} x_{\tilde{\beta}_i}(p_i+ p_i^\prime, q_i+q_i^\prime-p_i \overline{p_i^\prime}) n_i^\prime,
\]
where $n^{\prime \prime} = x_{\tilde{\beta}_i}(p_i, q_i)  {}_i n^{\prime} x_{\tilde{\beta}_i}(p_i, q_i)^{-1} \in {}_i \tilde{N}$.
From the definition of $N_i$, we may write $n_i = n_{\tilde{\beta}_{l(i)}}(x_{l(i)}) \cdots n_{\tilde{\beta}_{m(i)}}(x_{m(i)}) \cdot x_{\beta_{m(i)+1}}(p_{m(i)+1}, q_{m(i)+1}) \cdots x_{\beta_{1}}(p_{1}, q_{1})$.
Since $n_{j+1} = n_{j} n_{\beta_j}$ or $n_{j+1} = n_{j} x_{\beta_j}$, we see that $|q_1| \leq |\beta_i(a^{2m})|$ by Lemma~\ref{lemma8}.
Repeatedly using the lemma, we see that $n_i^\prime \in U_m$.
Further, we see that 
\[
|q_i^\prime| \leq |\beta_i(a^{2m})|.
\]
Since  $x_{\tilde{\beta}_i}(p_i, q_i) \not \in U_m$, we have
\[
 |\beta_i(a^{2m})|< |q_i|.
\]
Then it is easy to see that 
\[
|p_i \overline{p_i^\prime}|< |q_i|,
\]
and thus
\[
|q_i+q_i^\prime-p_i \overline{p_i^\prime}| = |q_i|.
\]
The lemma follows.
\end{proof}
%
%
%
%
%
%
%
%
%
%
%
%
%
Now let $w \in \mathbf{W}$ and set $N_w^+ = N \cap \bar{w}^{-1} N\bar{w}$.
Fix a reduced decomposition $s_{\alpha_k} \cdots s_{\alpha_1}$ of $w$ and use the notation of Section~~\ref{bruhat order}.
\begin{corollary}
\label{cor1 (II)}
Let $i=1, \dots, k$ and $n= {}_i n n_{\beta_i}(p_i, q_i)$ with ${}_i N$ and $p_i, q_i \in E_{\alpha_i}$ such that 
$N_{E_{\alpha_i} \slash F_{\alpha_i}}(p_i) =- \mathrm{Tr}_{E_{\alpha_i} \slash F_{\alpha_i}}(q_i)$.
Assume that $n_{\beta_i}(p_i, q_i) \not \in U_m$.
Then for any $n^\prime \in U_m$, we have $n n^\prime = {}_i \tilde{n} n_{\tilde{\beta}_i}(\tilde{p}_i, \tilde{q}_i) \tilde{n}_i$ where ${}_i \tilde{n} \in N_{w}^+ \cdot {}_i \tilde{N}$
$\tilde{n}_i \in U_m$ and $|q_i| = |\tilde{q}_i| > |\beta_i(a^{2m})|$.
\end{corollary}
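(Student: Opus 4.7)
My plan is to deduce Corollary~\ref{cor1 (II)} from the immediately preceding lemma (its type-(II) case) by extending the fixed reduced decomposition of $w$ to one of $w_0$ and then matching up the resulting factorizations.

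First I will choose a reduced expression $s_{\gamma_l}\cdots s_{\gamma_{k+1}}$ of $w_0 w^{-1}$ and concatenate it with the given reduced decomposition $s_{\alpha_k}\cdots s_{\alpha_1}$ of $w$, obtaining a reduced decomposition of $w_0$. Since none of the assertions of the corollary depend on the particular choice of the $w_0$-decomposition, it is harmless to replace the one fixed earlier by this extension. Under this choice the data attached to indices $j \le k$ agree with those coming from $w$: $\tilde\alpha_j = \alpha_j$, $\tilde w_j = w_j$, and in particular $\tilde\beta_j = \beta_j$, so ${}_i N \subset {}_i\tilde N$ for $i \le k$. For $k < j \le l$ a short direct computation using $\tilde w_j = \tilde w_j^{(w')}\cdot w$, where $\tilde w_j^{(w')}$ is the analogous partial product for $w' = w_0w^{-1}$, gives $w\tilde\beta_j \in \Phi_{\mathrm{rel},+}$, hence $N_{\tilde\beta_j} \subset N_w^+$; in particular the extra factors of ${}_i\tilde N$ relative to ${}_i N$ all sit in $N_w^+$.

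Next I will apply the preceding lemma to $n = {}_i n \cdot n_{\beta_i}(p_i,q_i)$, now regarded as ${}_i\tilde n \cdot x_{\tilde\beta_i}(p_i,q_i)$ in the extended decomposition (with ${}_i\tilde n = {}_i n$). The hypothesis $n_{\beta_i}(p_i,q_i)\notin U_m$ coincides with $x_{\tilde\beta_i}(p_i,q_i)\notin U_m$, so the lemma produces
\[
n n^\prime = {}_i\tilde n' \cdot n_{\tilde\beta_i}(\tilde p_i,\tilde q_i) \cdot \tilde n_i
\]
with ${}_i\tilde n' \in {}_i\tilde N$, $\tilde n_i \in U_m \cap \tilde N_i \subset U_m$, and $|\tilde q_i| = |q_i|$. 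Because $\tilde\beta_i = \beta_i$, the middle factor is $n_{\beta_i}(\tilde p_i,\tilde q_i)$, and the containment ${}_i\tilde n' \in {}_i\tilde N \subset N_w^+\cdot {}_i\tilde N$ is automatic; this is precisely the factorization the corollary requires.

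Finally, the inequality $|\tilde q_i| > |\beta_i(a^{2m})|$ follows from $|\tilde q_i| = |q_i|$ combined with the non-membership hypothesis: by Lemma~\ref{lemma8} the latter forces $|p_i| > |\beta_i(a^m)|$ or $|q_i| > |\beta_i(a^{2m})|$, while the relation $N_{E_{\alpha_i}/F_{\alpha_i}}(p_i) = -\mathrm{Tr}_{E_{\alpha_i}/F_{\alpha_i}}(q_i)$ yields $|p_i|^2 \le |q_i|$, so in both cases $|q_i| > |\beta_i(a^{2m})|$. I do not expect any serious obstacle; the main point requiring care is the first step, namely confirming that the concatenated word is a reduced decomposition and that $w\tilde\beta_j \in \Phi_{\mathrm{rel},+}$ for $j > k$, both of which are standard facts about Bruhat combinatorics but should be made explicit.
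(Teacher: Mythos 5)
Your proposal is correct and matches the paper's intended (unwritten) route: since the paper states the corollary without proof as a direct consequence of the preceding lemma, the natural argument is precisely the one you give — extend the reduced decomposition of $w$ to one of $w_0$ using $\ell(w_0)=\ell(w_0w^{-1})+\ell(w)$, observe that ${}_iN\subset{}_i\tilde N$ and that the extra factors $N_{\tilde\beta_j}$ for $j>k$ land in $N_w^+$ (since $w\tilde\beta_j=\beta'_{j-k}\in\Phi_{\mathrm{rel},+}$), and then invoke the lemma. One small simplification: by Lemma~\ref{lemma8} (and its proof, which shows the $q$-condition already forces the $p$-condition via the norm--trace relation), the hypothesis $n_{\beta_i}(p_i,q_i)\notin U_m$ is \emph{equivalent} to $|q_i|>|\beta_i(a^{2m})|$, so you do not need the case split through $|p_i|^2\le|q_i|$ — though your roundabout argument is also correct.
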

For the convenience to the reader, we record a similar result in the case of type (I) by Lapid-Mao~\cite{LM13}.
 \begin{corollary}[Corollary~1 in \cite{LM13}]
\label{cor1 (I)}
Let $i=1, \dots, k$ and $n= {}_i n n_{\beta_i}(x_i)$ with ${}_i N$.
Assume that $n_{\beta_i}(x_i) \not \in U_m$.
Then for any $n^\prime \in U_m$, we have $n n^\prime = {}_i \tilde{n} n_{\tilde{\beta}_i}(\tilde{p}_i, \tilde{q}_i) \tilde{n}_i$ where ${}_i \tilde{n} \in N_{w}^+ \cdot {}_i \tilde{N}$
$\tilde{n}_i \in U_m$ and $|x_i| > |\beta_i(a^{m})|$.
\end{corollary}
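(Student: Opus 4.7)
The proof strategy is to reduce the statement to the preceding Lemma (the type (II) analogue of Lemma~9 in \cite{LM13}) by a careful choice of reduced decomposition. The key observation is that, although that Lemma was formulated for the fixed reduced decomposition $s_{\tilde{\alpha}_l} \cdots s_{\tilde{\alpha}_1}$ of $w_0$, its proof uses only generic features of how $U_m$ interacts with the sequence of root groups, so it applies to any reduced decomposition of $w_0$.

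The plan is as follows. Given the reduced decomposition $s_{\alpha_k} \cdots s_{\alpha_1}$ of $w$, I would extend it to a reduced decomposition of $w_0$ by choosing any reduced decomposition $s_{\alpha_l} \cdots s_{\alpha_{k+1}}$ of $w_0 w^{-1}$ and concatenating, so that $w_0 = s_{\alpha_l} \cdots s_{\alpha_{k+1}} s_{\alpha_k} \cdots s_{\alpha_1}$. With this choice one has $\tilde{w}_j = w_j$ and $\tilde{\beta}_j = \beta_j$ for $1 \leq j \leq k$, while the roots $\{\tilde{\beta}_j\}_{k < j \leq l}$ enumerate $\Phi_{\rm rel,+} \cap w^{-1}\Phi_{\rm rel,+}$, so that the corresponding root groups together generate $N_w^+$. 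In particular, ${}_i \tilde{N}$ (the piece for this $w_0$-decomposition) factors as ${}_i \tilde{N} = N_w^+ \cdot {}_i N$, where ${}_i N$ denotes the analogous piece for the $w$-decomposition, and in particular ${}_i N \subset {}_i \tilde{N}$.

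Next I would observe that, under this identification, the hypothesis $n = {}_i n \cdot n_{\beta_i}(p_i, q_i)$ with ${}_i n \in {}_i N$ is exactly a special case of the setup of the preceding Lemma applied with the extended reduced decomposition of $w_0$. Invoking the Lemma on $n$ and $n' \in U_m$ yields
\[
nn' = {}_i \tilde{n} \cdot n_{\tilde{\beta}_i}(\tilde{p}_i, \tilde{q}_i) \cdot \tilde{n}_i
\]
with ${}_i \tilde{n} \in {}_i \tilde{N} = N_w^+ \cdot {}_i N$, $\tilde{n}_i \in U_m \cap \tilde{N}_i \subset U_m$, and $|\tilde{q}_i| = |q_i|$. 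The final inequality $|\tilde{q}_i| > |\beta_i(a^{2m})|$ then follows immediately from Lemma~\ref{lemma8} applied to the hypothesis $n_{\beta_i}(p_i, q_i) \notin U_m$, recalling that $\tilde{\beta}_i = \beta_i$.

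The only non-trivial point is to justify the factorization ${}_i \tilde{N} = N_w^+ \cdot {}_i N$, which is a standard root-theoretic assertion: the set $\{\tilde{\beta}_j\}_{j > i}$ partitions into $\{\beta_j\}_{k \geq j > i}$ (contributing ${}_i N$) and $\{\tilde{\beta}_j\}_{j > k}$ (contributing $N_w^+$), and in the product of root groups making up ${}_i \tilde{N}$ one can always reorder so that the factors from $N_w^+$ appear on the left, by iterated commutator relations. I do not anticipate any substantial obstacle here; apart from this bookkeeping, the entire argument is a direct specialization of the preceding Lemma, in complete parallel with the type (I) case treated in Corollary~1 of \cite{LM13}.
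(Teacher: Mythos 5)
Your strategy (extending the reduced decomposition $s_{\alpha_k}\cdots s_{\alpha_1}$ of $w$ to a reduced decomposition of $w_0$, applying the preceding Lemma with that extended choice, and using the factorization ${}_i\tilde{N}=N_w^+\cdot{}_iN$) is sound, and it is indeed how Lapid--Mao deduce Corollary~1 from Lemma~9 in \cite{LM13}. Note, though, that the paper gives no proof of Corollary~\ref{cor1 (I)}: it is quoted verbatim from \cite{LM13} (as the attribution ``Corollary~1 in \cite{LM13}'' indicates) without argument, so there is no in-paper proof to compare against.

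What you have actually written out, however, is the type~(II) argument rather than the type~(I) one. Corollary~\ref{cor1 (I)} is the type~(I) statement: $\alpha_i$ is of type~(I), $n_{\beta_i}(x_i)$ carries a single parameter, and the conclusion involves $|x_i|>|\beta_i(a^m)|$ with exponent $m$. (The two-parameter $n_{\tilde{\beta}_i}(\tilde{p}_i,\tilde{q}_i)$ in the displayed conclusion is a typo carried over from Corollary~\ref{cor1 (II)}; it should read $n_{\beta_i}(\tilde{x}_i)$ with $|\tilde{x}_i|=|x_i|>|\beta_i(a^m)|$.) Your write-up uses the type~(II) parametrization $(p_i,q_i)$ throughout, invokes the paper's type~(II) analogue of Lemma~9, and derives $|\tilde{q}_i|>|\beta_i(a^{2m})|$ with exponent $2m$; that is, you have reproduced the argument for Corollary~\ref{cor1 (II)}, not Corollary~\ref{cor1 (I)}. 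For type~(I) one should instead invoke Lemma~9 of \cite{LM13} itself (the one-parameter version) and apply the $|x_i|\leq|\tilde{\beta}_i(a^m)|$ criterion from Lemma~\ref{lemma8}; the reduced-decomposition extension and the factorization ${}_i\tilde{N}=N_w^+\cdot{}_iN$ carry over unchanged. You clearly understand this, since you remark at the end that everything is ``in complete parallel with the type (I) case,'' but as drafted your proof addresses the wrong corollary.
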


%
%
%
%
%
%
%
%
%
%
%
%
%
\subsubsection{A special case}
\begin{lemma}
\label{lmm10}
There exists $M$ such that for all $W \in \Omega(N \backslash G, \psi_N)$, $w \in \mathbf{W}$ $m \geq M$ and $\alpha \in S^\circ(w)$
we have $\mathrm{inf} |\alpha_0| (\mathrm{supp}_{Bw} W_m) > 0$.
Here, $\alpha_0$ is an extension of $\alpha$ to $T$
\end{lemma}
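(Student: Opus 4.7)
The plan is to follow closely the argument of Lemma~10 in \cite{LM13}, with modifications to accommodate simple roots of type~(II). First I would use the left $(N,\psi_N)$-equivariance of $W_m$ together with the Iwasawa decomposition to reduce the claim to the following: for $m$ sufficiently large (with $M$ independent of $W$), whenever $g = t\bar{w}$ with $t \in T$ satisfies $W_m(g) \neq 0$, one has $|\alpha_0(t)|$ bounded below for every $\alpha \in S^\circ(w)$.

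The main tool is equation~\eqref{19}. For $u \in U_m \cap N_w^+$ the conjugate $\tilde{u} := \bar{w}u\bar{w}^{-1}$ lies in $N$, and comparing the Whittaker equivariance on the left with \eqref{19} on the right yields
\[
\psi_N(u) = \psi_N(t\tilde{u}t^{-1}), \qquad u \in U_m \cap N_w^+,
\]
whenever $W_m(t\bar{w}) \neq 0$. I would then specialize to $u$ ranging over a single root subgroup $N_\gamma$ with $\gamma \in \triangle_0$ and $w\gamma > 0$, using the parametrizations $n_\gamma(x)$ (type~(I)) or $x_\gamma(r,m)$ (type~(II)) and the conjugation relations recorded in Section~\ref{bruhat order}. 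Two cases arise: if $w\gamma > 0$ is non-simple, then $\psi_N|_{N_{w\gamma}}$ is trivial while $\psi_N|_{U_m \cap N_\gamma}$ is non-trivial for $m$ larger than the conductor of $\psi$ (the lattice of parameters grows with $m$), so the identity cannot hold and $W_m$ must vanish on $Bw$ entirely, rendering the claim vacuous there; if $w\gamma \in \triangle_0$, the identity reduces to $\psi(x) = \psi(c \cdot (w\gamma)(t)\, x)$ on a lattice $\gamma(a^m)\mO$ (with the Heisenberg analogue in type~(II)), forcing $|(w\gamma)_0(t) - c_0|$ to be of order $|\gamma_0|(a^{-m})$ for a fixed $c_0$ of absolute value~$1$ and thus giving a lower bound on $|(w\gamma)_0(t)|$ uniform in $W$.

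Collecting these constraints over all $\gamma$ would yield bounds on $|\alpha_0(t)|$ precisely for $\alpha$ in the set $S' := \{w\gamma : \gamma \in \triangle_0,\, w\gamma > 0\}$, and to finish I would invoke the combinatorial identity $S^\circ(w) = S'$ valid for every ``allowed'' $w$ (those for which $w\gamma > 0$ forces $w\gamma \in \triangle_0$): such a $w$ restricts to a Dynkin diagram isomorphism between the standard Levi generated by $\{\gamma \in \triangle_0 : w\gamma > 0\}$ and the one generated by $S'$, so $ww_0$ lies in the parabolic subgroup determined by $S'$ and hence $S(ww_0) = S'$.

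The main obstacle is the correct handling of simple roots of type~(II): the root subgroup $N_\gamma$ is non-abelian (a Heisenberg group over $E_\gamma$) and the character $\psi_N|_{N_\gamma}$ factors through the centre, so the specialization step must use the relations \eqref{(6) type II}--\eqref{(7) type II} to parametrize $\tilde{u}$, and the resulting character identity naturally pairs with the extension $|\alpha_0|$ (which incorporates the factor $|N_{E_\gamma/F_\gamma}|^{1/2}$ and the self-dual measure on $E_\gamma$ with respect to $\psi$) rather than with $|\alpha|$ itself. Once this bookkeeping is carried out, the uniform constant $M$ depends only on $G$, $\psi$ and the element $a$, as required.
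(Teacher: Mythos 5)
Your proposal is correct and follows essentially the same path as the paper's proof: both derive the character identity $\psi_N(u)=\psi_N(t\tilde u t^{-1})$ from the two-sided equivariance of $W_m$, split according to whether $w$ sends a simple root $\gamma$ (with $w\gamma>0$) to a simple or non-simple positive root (forcing vanishing of $W_m$ on $Bw$ in the latter case and a uniform lower bound on $|(w\gamma)_0(t)|$ in the former, via conductor growth of the lattices $\gamma(a^m)\mathcal{O}$), and conclude by identifying the roots so constrained with $S^\circ(w)$. The paper compresses the type (I)/(II) bookkeeping and the combinatorial matching into a citation of Steinberg's Lemma 89 together with [LM13, Lemma 10], whereas you make the combinatorial identity $S^\circ(w)=\{w\gamma : \gamma\in\triangle_0,\ w\gamma>0\}$ explicit in the non-vacuous case; these are the same content. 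One small imprecision: $\psi_N|_{N_\gamma}$ for $\gamma$ of type (II) factors through the abelianization $N_\gamma/N_{2\gamma}$ (it is trivial on the centre, not supported on it), but this does not affect the argument.
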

\begin{proof}
Suppose that $\alpha \in \triangle_0$ is of type (I) (resp. type (II)). Then we let $\psi_\alpha : F_\alpha \rightarrow \mC$ (resp. $\psi_\alpha:  E_\alpha \rightarrow \mC$) 
be the nontrivial character defined by 
\[
\psi(n_\alpha(x)) \quad
\left(\text{resp. } \psi \left(x_\alpha \left(x, -\frac{N_{E_\alpha \slash F_\alpha}(x)}{2} \right) \right) \right)
\]
Denote by $\mathrm{cond}(\psi_\alpha)$ its conductor, namely the maximal fractional ideal of $\mathcal{O}_{F_\alpha}$ or $\mathcal{O}_{E_\alpha}$
on which $\psi_\alpha$  is trivial.
For any $\alpha, \beta \in \triangle_0$ of the same type, let $c_{\alpha, \beta} \in F_\alpha^\times$ or $E_\alpha^\times$ (depending on the type)
 such that $\psi_{\beta} = \psi_{\alpha}(c_{\alpha, \beta} \cdot)$.

Suppose that $\beta \in \triangle_0$ is of type (II) and there exists $\alpha \in \Phi_{{\rm rel}, +} \setminus \triangle_0$ such that $\beta = w^{-1} \alpha$.
Then we have
\[
\psi_{\beta}(x) W_m(t \overline{w}) = W_m \left(t  \overline{w} x_\beta \left(x, -\frac{N_{E_\alpha \slash F_\alpha}(x)}{2} \right) \right)
= W_m \left(x_\alpha \left(\ast \alpha_0(t)x, -\frac{N_{E_\alpha \slash F_\alpha}(\ast \alpha_0(t)x)}{2} \right)  t  \overline{w} \right) = W_m(t \overline{w})
\]
for any $t \in T$ and all $x \in \beta(a^m) \mathcal{O}_{E_\alpha}$. 
It follows that in fact $W_m(s \overline{w}) = 0$ for all $s \in T$ provided that $\beta(a^m) \not \in \mathrm{cond(\psi_\alpha)}$.
If $\beta \in \triangle_0$ is of type (I) and there exists $\alpha \in \Phi_{{\rm rel}, +} \setminus \triangle_0$ such that $\beta = w^{-1} \alpha$,
then we can prove our claim in a similar way as above or 
as the proof of \cite[Lemma~10]{LM13}.

On the other hand, suppose that these does not exist $\beta \in \triangle_0$ such that $\beta = w^{-1} \alpha$ for some $\alpha \in \Phi_{{\rm rel}, +} \setminus \triangle_0$.
Then using Steinberg \cite[Lemma~89]{St}, in a similar way as \cite[Lemma~10]{LM13}, we find that  if $W_m(s \overline{w}) \ne 0$ and $m$ is sufficiently large, then 
\[
|\alpha_0|(s) = |c_{\alpha, w^{-1} \alpha}|
\]
which concludes our proof.
\end{proof}
\subsubsection{The general case}
To prove Proposition~\ref{prp2}, we will show by induction on $\ell(w)$ that, for any $w \in \mathbf{W}$,
\begin{equation}
\label{20}
\begin{array}{r}
\text{there exists $M$ depending on $K$ such that for any $W \in \Omega(N \backslash G, \psi_N)^K$, $m \geq M$}
\\
\text{and $\alpha \in S^\circ(w)$ we have $\mathrm{inf} |\alpha_0| (\mathrm{supp}_{BwB} W_m) > 0$.}
\end{array}
\end{equation}
The case $w = 1$ follows from Lemma~\ref{lmm10}.
To carry out the induction step assume that \eqref{20}
holds for all $w^\prime < w$.

Fix a reduced decomposition of $w$ and use the notation of Section~\ref{bruhat order}.
For any $m$ and $i= 1, \dots, k$, let
\[
\mathcal{B}_w(i, m) = \{ b\bar{w}n : b \in B, n \in N_w^-, n_i \in U_m, n_{i+1} \not \in U_m \}.
\]
Consider the following auxiliary statement.
\begin{equation}
\label{21}
\begin{array}{r}
\text{There exists $M$ depending on $K$ such that for any $W \in \Omega(N \backslash G, \psi_N)^K$, $m \geq M$}
\\
\text{and $\alpha \in S^\circ(w)$ we have $\mathrm{inf} |\alpha_0| (\mathrm{supp}_{\mathcal{B}_w(i, m)} W_m) > 0$.}
\end{array}
\end{equation}
We will show this statement by induction on $i$.
This will yield \eqref{20} for $w$.
Indeed, we may take $M$ for which \eqref{21} holds for all $i$.
Then, for any $m \geq M$ we have
\[
\mathrm{inf} |\alpha_0| (\mathrm{supp}_{\cup_{i=1}^k \mathcal{B}_w(i, m)} W_m) > 0
\]
On the other hand, the complement of $\cup_{i=1}^k \mathcal{B}_w(i, m)$ in $\mathcal{B}_w$
is $BwU_m$, and by \eqref{19} we have
\[
\mathrm{supp}_{BwU_m} W_m = (\mathrm{supp}_{Bw} W_m)U_m.
\]
Therefore, Lemma~\ref{lmm10}. implies that 
\[
\mathrm{inf} |\alpha_0| (\mathrm{supp}_{BwU_m} W_m) > 0
\]
for all $\alpha \in S^\circ(w)$ as well.

It remains to prove \eqref{21}.
By \eqref{19}, we may replace $\mathcal{B}_w(i, m)$ in \eqref{21} by the set
\[
\mathcal{B}_w^\prime(i, m)
=\{ b\bar{w}n : b \in B, n \in N_w^-, n_i =1, n_{i+1} \not \in U_m \}.
\]
Let $M$ (depending on $K$) such that \eqref{21} holds for all $j < i$
and \eqref{20} holds for all $w^\prime < w$.
We choose $M_1 \geq M$ depending only on $K$ such that 
\[
n_{-\beta_i}(\beta_i(a^{-M_1}) \mathcal{O}) \subset \bigcap_{n \in U_M} nKn^{-1}
\]
when $\alpha_i$ if of type (I), and
\[
\{ n_{-\beta_i}(\tilde{\beta}_i(a^{-M_1})p, \tilde{\beta}_i(a^{-2M_1})q) : p, q \in \mathcal{O}_{E_{\alpha_i}}, 
N_{E_{\alpha_i} \slash E_{\alpha_i}}(p) = - \mathrm{Tr}_{E_{\alpha_i} \slash F_{\alpha_i}}(q) \}
 \subset \bigcap_{n \in U_M} nKn^{-1}
\]
when $\alpha_i$ if of type (II) for $\beta_i = w_i^{-1} \alpha_i$.

Assume that $W \in \Omega(N \backslash G, \psi_N)^K$, $g \in \mathcal{B}_w^\prime(i, m)$,
and $W_m(g) \ne 0$ with $m \geq M_1$.
Write $g = t\bar{w}n$.
Since
\[
W_m(g) = \frac{1}{\mathrm{vol}(U_m)} \int_{U_m} W_M(gn^\prime) \psi_N(n^\prime)^{-1} \, dn^\prime
= 
\]
there exists $n^\prime \in U_m$ such that $W_M(gn^\prime) \ne 0$.
Let $n n^\prime = n_+ \tilde{n}$ where $n_+  \in N_w^+$ and $\tilde{n} \in N_w^-$.
Write 
\[
\tilde{n} = 
\left\{
\begin{array}{ll}
{}_i \tilde{n} n_{\beta_i}(\tilde{x}_i) \tilde{n}_i & \text{ if $\alpha_i$ is of type (I),} \\
&\\
{}_i \tilde{n} n_{\beta_i}(\tilde{r}_i, \tilde{m}_i) \tilde{n}_i  & \text{ if $\alpha_i$ is of type (II).}
\end{array}
\right.
\]
From our assumption on $n$ and $n^\prime$, by Corollary~\ref{cor1 (II)}, \ref{cor1 (I)}
we have $|\tilde{x}_i| > |\beta_i(a)^m|$ and $|\tilde{m}_i| > |\beta_i(a)^{2m}|$.
In particular, $\tilde{n}_{i+1} \not \in U_m$ by \eqref{n i+1}.

Let $j \leq i+1$ be the smallest index for which $\tilde{n}_j \not \in U_M$.
If $j \leq i$, then $\tilde{g} \in \mathcal{B}_w(j-1, M)$.
Thus, we may apply our inductive assumption, and by the choice of $M$
we have $|\alpha_0(\tilde{g})| \geq \delta_1$ for all $\alpha \in S^\circ(w)$
where $\delta_1 >0$ depends only on $W$.
Hence, by  \eqref{4} we also have 
$|\alpha_0(g)| \geq \delta_2$ for a suitable constant $\delta_2 = \delta_2(m, W) > 0$.

Assume that $j=i+1$.
Then $\tilde{n}_i \in U_M$ and therefore
$W_M(g^\prime) \ne 0$ where 
\[
g^\prime = 
\left\{
\begin{array}{ll}
t \bar{w} {}_i \tilde{n} n_{\beta_i}(\tilde{x}_i) & \text{ if $\alpha_i$ is of type (I),} \\
&\\
t \bar{w} {}_i \tilde{n} n_{\beta_i}(\tilde{r}_i, \tilde{m}_i) & \text{ if $\alpha_i$ is of type (II).}
\end{array}
\right.
\]
Here, we note that $g^\prime \in N\tilde{g}  \tilde{n}_i^{-1} $ since $tw(n_+)(tw)^{-1}$.
On the other hand, since $\tilde{n}_{i+1} \not \in U_M$, we get
\[
\left\{
\begin{array}{ll}
x_i^{-1} \in \beta_i(a^{-m}) \mathcal{O}_{F_{\alpha_i}} & \text{ if $\alpha_i$ is of type (I),} \\
&\\
m_i^{-1} \in \beta_i(a^{-m}) \mathcal{O}_{E_{\alpha_i}} & \text{ if $\alpha_i$ is of type (II),}
\end{array}
\right.
\]
and thus $W_M$ is right invariant by 
$n_{-\beta_i}(-\tilde{x}_i^{-1})$ (resp. $n_{-\beta_i}(-rm^{-1}, \bar{m}^{-1})$) if $\alpha_i$ is of type (I) (resp. type (II))
since these belong to $\cap n \in U_M nK n^{-1}$ by the choice of $M_1$.
Hence, $W_M(g^{\prime \prime}) \ne 0$
where $g^{\prime \prime}:= g^{\prime} n_{-\beta_i}(-\tilde{x}_i^{-1})$
(resp. $g^\prime n_{-\beta_i}(-rm^{-1}, \bar{m}^{-1})$)
 if $\alpha_i$ is of type (I) (resp. type (II)).
 By the first part of Lemma~\ref{lmm5}, \ref{lmm5 I}, 
 we have $g^{\prime \prime} \in Bw^{\prime} B$
 with $w^{\prime} < w$.
We conclude from the choice of $M$ that $|\alpha_0|(g^\prime) = |\alpha_0|(g^{\prime \prime}) \geq \delta_3$
for all $\alpha \in S^\circ(w^\prime)$ for a suitable constant $\delta_3 = \delta_3(W, M)$ by our inductive assumption.
In particular, this holds for all $\alpha^\circ(w)$. 
Once again by \eqref{4} we infer that $|\alpha_0|(g) \geq \delta_4$ 
for a suitable constant $\delta_4 = \delta_4(W, m)$. 
This concludes the proof of Proposition~\ref{prp2}.
%
%
%
%
%
%
%
%
%
%
%
%
%
\subsection{Non-vanishing of Bessel functions}
Suppose that $\mathbf{G}$ is quasi-split reductive group over $F$.
We shall use the same notation as above.
Put $B_0 = A \ltimes N$ and $G^\circ = B_0 w_0 B_0$.
\begin{theorem}
\label{Theorem A.1}
For any tempered $\pi \in \mathrm{Irr}_{{\rm gen}, \psi_N} G$,
the function $\mathbb{B}_\pi$ is not identically zero on $G^\circ$.
\end{theorem}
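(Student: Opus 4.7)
The plan is to identify $\mathbb{B}_\pi^{\psi_N}$ on $G^\circ$ as the proportionality constant between a stable-integral Whittaker functional and evaluation at the identity, and then to exhibit a Whittaker function on which this functional is visibly non-zero.

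First I would reduce the problem to a single element. A direct change of variables in the defining stable integral gives the bi-equivariance $\mathbb{B}_\pi^{\psi_N}(n_1 g n_2) = \psi_N(n_1) \psi_N(n_2) \mathbb{B}_\pi^{\psi_N}(g)$ on the relevant locus, and since $\bar{w}_0^{-1} N \bar{w}_0 \cap N = \{e\}$ one checks $g^{-1} N g \cap N = \{e\}$ for every $g \in G^\circ = N \cdot A \bar{w}_0 \cdot N$; thus every element of $G^\circ$ is relevant, and it suffices to find $t \in A$ with $\mathbb{B}_\pi^{\psi_N}(t \bar{w}_0) \ne 0$. I would then identify $\mathbb{B}_\pi^{\psi_N}(g)$ functional-theoretically: for $g \in G^\circ$ and $W \in \mathbb{W}^{\psi_N}(\pi)$, the stable integral
\[
\Lambda_g(W) := \int_N^{\rm st} W(gn) \psi_N(n)^{-1} \, dn
\]
is well-defined by Theorem~\ref{stability}, and a shift of the stabilizing compact subgroup gives $\Lambda_g(R(n_0)W) = \psi_N(n_0) \Lambda_g(W)$. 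Hence $\Lambda_g$ is a Whittaker functional on $\mathbb{W}^{\psi_N}(\pi)$; by uniqueness of the Whittaker functional on the generic irreducible $\pi$, $\Lambda_g(W) = \mathbb{B}_\pi^{\psi_N}(g) \cdot W(e)$. The problem thus reduces to exhibiting a pair $(W, t_0)$ with $\Lambda_{t_0 \bar{w}_0}(W) \ne 0$.

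Second, I would exhibit such a pair. Take any non-zero $W \in \mathbb{W}^{\psi_N}(\pi)$; by $G$-irreducibility, a suitable right translate has $W(\bar{w}_0) \ne 0$. Let $K_0 \in \mathcal{CSGR}(\mathbf{K})$ stabilize $W$ from the right. By Theorem~\ref{stability}, $\Lambda_{t_0 \bar{w}_0}(W)$ equals the ordinary integral of $W(t_0 \bar{w}_0 \cdot) \psi_N^{-1}$ over some $N'(K_0) \in \mathcal{CSGR}(N)$. I would choose $t_0 \in A$ deep in the positive chamber so that $t_0^{-1} N'(K_0) t_0 \subset N \cap K_0$ and $\psi_N$ is trivial on $t_0^{-1} N'(K_0) t_0$; after the change of variables $n = t_0 n' t_0^{-1}$, the $(N, \psi_N)$-equivariance of $W$ combined with right $K_0$-invariance collapses the integrand to a constant multiple of $W(\bar{w}_0) \ne 0$, giving the required non-vanishing.

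The main obstacle will be the incompatibility between the two scales: $N'(K_0)$ must be large enough for the stable integral to stabilize, yet $t_0$-conjugation must simultaneously compress $N'(K_0)$ inside $K_0$. The tempered hypothesis enters to legitimize this compression-limit argument, via the asymptotic decay of $W(t \bar{w}_0 \cdot)$ along the positive chamber (Casselman's square-integrability criterion), which controls subleading contributions to $W(t_0 \bar{w}_0 n)$ as $t_0$ goes to infinity. If the direct compression argument proves technically intractable, a fallback is to invoke Casselman's asymptotic expansion for tempered Whittaker functions on the big cell, from which $\mathbb{B}_\pi^{\psi_N}(t\bar{w}_0)$ is readable as a leading-order coefficient whose non-vanishing is a standard consequence of genericity.
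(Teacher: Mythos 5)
Your reduction to a single torus point and the identification of $\Lambda_g(W):=\int_N^{\rm st}W(gn)\psi_N(n)^{-1}\,dn$ as a Whittaker functional (hence proportional to $W(e)$ by uniqueness) are both correct, and are indeed implicit in the paper's Remark~\ref{stability rmk}. However, the core of your argument --- the compression step --- has a genuine computational gap, and the fallback you offer is a placeholder rather than a proof.

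Concretely: after the substitution $n = t_0 n' t_0^{-1}$ with $n'\in t_0^{-1}N'(K_0)t_0\subset K_0$, the integrand becomes $W(t_0\bar w_0\, t_0 n' t_0^{-1})$. The element $t_0\bar w_0 t_0$ is not $\bar w_0$ (one gets $t_0\,(\bar w_0 t_0\bar w_0^{-1})\,\bar w_0$, where the extra torus factor is deep in a chamber, not in $T\cap K$), and the residual $t_0^{-1}$ sitting to the right of $n'$ is far from $K_0$ and cannot be absorbed by right $K_0$-invariance. So the integrand does not collapse to a constant times $W(\bar w_0)$; the change of variables simply trades one hard expression for another. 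There is a second, independent gap you gesture at but do not resolve: in Theorem~\ref{stability} the stabilizing subgroup $N'$ depends on the element $g=t_0\bar w_0$ as well as on $K_0$, and Proposition~\ref{stable Y def} only gives \emph{locally} uniform stabilization in $t$. As $t_0$ recedes into the chamber, $N'$ is allowed to grow, and nothing in the statements you invoke guarantees that the growth of $N'$ can be outrun by the compression $t_0^{-1}(\cdot)t_0$. Finally, the Casselman-asymptotics fallback, while a reasonable intuition, is not a proof as written: for a general quasi-split $G$ the leading coefficient of the asymptotic expansion of a tempered Whittaker function along the big cell is not automatically nonvanishing as "a standard consequence of genericity," and establishing this is essentially the content of the theorem.

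The paper's proof is genuinely different in mechanism. Rather than scaling $t_0$ to infinity, it follows the torus-averaging argument of Lapid--Mao \cite[Appendix~A]{LMb}: one introduces the shrinking neighborhoods $A_n\subset A^d$ of the identity in the torus and the increasing compact open subgroups $U_n\subset N$, and proves (Lemma~\ref{lmmA4}) that averaging $\psi_N(tut^{-1})$ over $t\in A_n$ is $\psi_N(u)$ for $u\in U_n$ and $0$ otherwise. This "projection onto $U_n$" device, combined with the stability results of Appendix~B, lets one relate the stable integral defining $\mathbb B_\pi$ near $\bar w_0$ to the Whittaker functional $W\mapsto W(e)$ in a way that forces non-vanishing; the only change from \cite{LMb} is replacing $N_n$ by $U_n$ to accommodate the quasi-split (not merely split) root system, in particular the $\mathrm{SU}(3)$-type simple roots. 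You would need to supply some argument of that type --- or a full Casselman-asymptotics argument identifying and bounding the leading term --- to close the gap.
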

Let $A^d = A \cap G^{\rm der}$. Consider the family
\[
A_n = \{t \in A^d : |\alpha(t)-1| \leq q^{-n} \text{ for all $\alpha \in \triangle_0$} \} \in \mathcal{CSGR}(A^d)
\]
which forms a basis of neighborhoods of $1$ of $A^d$.

Let $U_n$ be the group generated by 
$\langle  n_\alpha(\varpi^{-n} \mathcal{O}_{F_\alpha}), n_{\beta}(\varpi^{-n}\mathcal{O}_{E_\beta}, \varpi^{-2n}\mathcal{O}_{E_\beta}) : \alpha \in \triangle_0^{(I)}, 
\beta  \in \triangle_0^{(II)}\rangle$.
\begin{lemma}
\label{lmmA4}
For any $u \in N$ we have
\[
(\mathrm{vol}(A_n))^{-1}
\int_{A_n} \psi_N(tut^{-1}) \,dt
= \left\{
\begin{array}{ll}
\psi_N(u) & u \in N_n \\
&\\
0 & \text{otherwise}
\end{array}
\right.
\]
\end{lemma}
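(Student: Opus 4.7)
The plan is to reduce the statement to Fourier orthogonality on the simple root subgroups. First, I would recall that $\psi_N$ factors through the abelianization $N^{\mathrm{ab}} = N/[N,N]$, which decomposes canonically as $\bigoplus_{\alpha \in \triangle_0} N_\alpha^{\mathrm{ab}}$: for $\alpha$ of type (I), $N_\alpha^{\mathrm{ab}} = N_\alpha \cong F_\alpha$; for $\alpha$ of type (II), $N_\alpha^{\mathrm{ab}}$ is $N_\alpha$ modulo its center $\{x_\alpha(0,m)\}$, parametrized by $r \in E_\alpha$. Thus $\psi_N(v) = \prod_{\alpha \in \triangle_0} \psi_\alpha(v_\alpha)$, where $\psi_\alpha$ is a nontrivial additive character of $F_\alpha$ (resp.\ $E_\alpha$) and $v_\alpha$ denotes the linear coordinate of the $N_\alpha$-component of $v$ ($x$ in type (I), $r$ in type (II); the $m$-variable lies in the center and is killed by $\psi_\alpha$).

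Second, I would record how conjugation by $t \in A$ acts on each simple root subgroup. Since $A$ normalizes $N_\alpha$, the action is by a rational character of $A$: in type (I), $t n_\alpha(x) t^{-1} = n_\alpha(\alpha(t) x)$, and in type (II), $t x_\alpha(r,m) t^{-1} = x_\alpha(\chi_\alpha(t) r,\,\chi_\alpha(t)^2 m)$ for a suitable rational character $\chi_\alpha$ of $A$ (related to $\alpha$ via the coroot pairing and the fact that $t$ is split so $\overline{\chi_\alpha(t)} = \chi_\alpha(t)$). Writing $\chi_\alpha(t) = 1 + \delta_\alpha(t)$ for $t \in A_n$, so $\delta_\alpha(t) \in \varpi^n \mathcal{O}_{F_\alpha}$, one obtains
\[
\frac{\psi_N(t u t^{-1})}{\psi_N(u)} \;=\; \prod_{\alpha \in \triangle_0} \psi_\alpha\bigl(\delta_\alpha(t) \cdot r_\alpha(u)\bigr),
\]
where $r_\alpha(u)$ is the linear coordinate of the $N_\alpha$-component of $u$.

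Third, I would change variables via the isogeny $\Phi = \prod_{\alpha} \chi_\alpha : A^d \to \prod_{\alpha} F_\alpha^\times$. Since $\mathbf{G}$ is semisimple, $\Phi$ has finite kernel, and via the $p$-adic logarithm it restricts, for $n$ sufficiently large, to a measure-preserving isomorphism (up to a fixed Jacobian absorbed into $\mathrm{vol}(A_n)$) from $A_n$ onto $\prod_{\alpha} (1 + \varpi^n \mathcal{O}_{F_\alpha})$. The averaged integral then factors:
\[
\frac{1}{\mathrm{vol}(A_n)} \int_{A_n} \psi_N(t u t^{-1}) \, dt \;=\; \psi_N(u) \prod_{\alpha \in \triangle_0} \frac{1}{\mathrm{vol}(\varpi^n \mathcal{O}_{F_\alpha})} \int_{\varpi^n \mathcal{O}_{F_\alpha}} \psi_\alpha\bigl(\delta \cdot r_\alpha(u)\bigr) \, d\delta,
\]
and each simple-root factor equals $1$ or $0$ by Fourier orthogonality, according to whether $r_\alpha(u)$ lies in $\varpi^{-n}$ times the conductor lattice of $\psi_\alpha$. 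Intersecting these conditions across all $\alpha \in \triangle_0$ recovers exactly the defining conditions of $U_n$ (so $N_n$ in the statement should be read as $U_n$), giving the conclusion.

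The main obstacle will be Step 3: rigorously identifying $A_n$ with $\prod_\alpha \varpi^n \mathcal{O}_{F_\alpha}$ both as topological groups and with compatibly normalized Haar measures. This requires invoking the $p$-adic exponential/logarithm, which is a homeomorphism $\varpi^n \mathcal{O}_{F_\alpha} \xrightarrow{\sim} 1 + \varpi^n \mathcal{O}_{F_\alpha}$ once $n$ exceeds a bound depending on the residue characteristic, and requires that the finite kernel of $\Phi$ intersect $A_n$ trivially (again for $n$ large). Once this identification is in place, the remaining orthogonality computation is routine and the matching with the definition of $U_n$ is a direct bookkeeping in the conductor normalizations.
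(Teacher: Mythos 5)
Your strategy --- factor $\psi_N$ through $N^{\mathrm{ab}}$, observe that conjugation by $t\in A$ scales each simple-root coordinate by $\alpha(t)\in F^\times$, change variables along the isogeny $\prod_\alpha\alpha$, and apply Fourier orthogonality factor by factor --- is the natural one, and since the paper supplies no proof of its own (it defers entirely to the corresponding argument in \cite{LMb}), this is the right mechanism. Two points in Step~3 need repair, one minor and one substantial.

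Minor: because $A$ is $F$-split, $\alpha(t)-1$ ranges over (roughly) $\varpi^n\mathcal{O}_F$, not $\varpi^n\mathcal{O}_{F_\alpha}$ as you write, and the orthogonality criterion you obtain for the $\alpha$-coordinate $r_\alpha(u)$ is $r_\alpha(u)\in\varpi^{-n}\,\mathrm{cond}(\psi_\alpha)$. This coincides with the lattice $\varpi^{-n}\mathcal{O}_{F_\alpha}$ (resp.\ $\varpi^{-n}\mathcal{O}_{E_\alpha}$) appearing in the definition of $U_n$ only if each $\psi_\alpha$ has conductor the full ring of integers, a normalization the paper never imposes (the proof of Lemma~\ref{lmm10} explicitly carries $\mathrm{cond}(\psi_\alpha)$ as a parameter). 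You should either impose this normalization or record that the identity holds up to a fixed shift in $n$.

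Substantial: the criterion you derive depends on $u$ only through its image in $N^{\mathrm{ab}}$, hence it characterizes the preimage $U_n\cdot[N,N]$, which is strictly larger than $U_n$ once $N$ is nonabelian. For $u\in[N,N]\setminus U_n$ one has $\psi_N(tut^{-1})\equiv 1$ for all $t$, so the average equals $1=\psi_N(u)\neq 0$ even though $u\notin U_n$; in $\mathrm{SL}_3$, for instance, take $u$ with only the $(1,3)$-entry nonzero and of absolute value $>q^{2n}$. So your concluding assertion that the intersected conditions ``recover exactly the defining conditions of $U_n$'' is false, and the ``otherwise'' branch of the lemma is not reached by your argument (nor does it hold as literally printed, where $N_n$ should be read as $U_n$). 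The vanishing criterion must be phrased in terms of $N^{\mathrm{ab}}$, i.e.\ with $U_n\cdot[N,N]$ in place of $U_n$, or the lemma must be applied only to $u$ ranging over a fixed bounded set with $n$ taken sufficiently large, which is how it is in fact used downstream.
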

In the same argument as the proof of \cite[Theorem~A.1]{LMb},
we can prove Theorem~\ref{Theorem A.1} substituting $U_n$ for $N_n$ in the argument \cite[Appendix~A]{LMb}.
%
%
%
%
%
%
%
%
%
%
%
%
%
%
%
%
%
%
%
%
%
%
%
%
%
%
%
%
%
%
%
%
%
%
%
%
%
%
%
%

%
%
%
%
%
%
%
%
%
%
%
%
%
%
\end{document}